\documentclass[11pt]{amsart}

\usepackage[dvipsnames]{xcolor}
\usepackage[utf8]{inputenc}
\usepackage{comment,enumerate,graphicx}
\usepackage{enumitem,amsmath,amssymb}
\usepackage{mathrsfs}
\usepackage{amsthm}
\usepackage{thmtools}
\usepackage{url}
\usepackage{caption}
\usepackage{subcaption}
\usepackage[top=22 mm, bottom=22 mm, left=20 mm, right= 20 mm]{geometry}
\usepackage{relsize}
\usepackage[hidelinks]{hyperref}
\newcommand{\setupstar}{\hyperref[setup:star]{Setup ($\star$)}}
\makeatletter
\newcommand{\labelinthm}[1]{%
   \label{temp#1}
   \protected@write \@auxout {}{\string \newlabel{#1}{{\emph{\ref{temp#1}}}{\thepage}{\emph{\ref{temp#1}}}{temp#1}{}} }%
}
\makeatother
\usepackage{adjustbox}
\usepackage{tikz}
\usetikzlibrary{math,calc,intersections, scopes}
\usetikzlibrary{positioning,arrows,shapes,decorations.markings,decorations.pathreplacing, decorations.pathmorphing,matrix,patterns}
\tikzstyle{vertex}=[circle,draw=black,fill=black,inner sep=0,minimum size=5pt,text=white,font=\footnotesize]
\tikzstyle{redvertex}=[circle,draw=red,fill=red,inner sep=0,minimum size=5pt,text=white,font=\footnotesize]
\definecolor{amber}{rgb}{1.0, 0.75, 0.0}
\definecolor{darkgreen}{rgb}{0.18, 0.7, 0.46}

\declaretheorem[name=Theorem,numberwithin=section]{theorem}

\newtheorem{lemma}[theorem]{\bf Lemma}

\newtheorem{claim}[theorem]{\bf Claim}
\newtheorem{proposition}[theorem]{\bf Proposition}
\newtheorem{conjecture}[theorem]{\bf Conjecture}

\newtheorem*{theorem*}{\bf Theorem}

\theoremstyle{definition}

\newtheorem{definition}[theorem]{\bf Definition}

\newcommand\claimproofend{\renewcommand{\qedsymbol}{$\boxdot$}
\end{proof}
\renewcommand{\qedsymbol}{$\square$}
}

\newlist{lemenum}{enumerate}{1}
\setlist[lemenum]{label=(\roman*), ref=\thelemma(\roman*)}

\def\eps{\varepsilon}

\def\cC{\mathcal{C}}
\def\cD{\mathcal{D}}
\def\cE{\mathcal{E}}
\def\cF{\mathcal{F}}
\def\cG{\mathcal{G}}
\def\cH{\mathcal{H}}

\def\cN{\mathcal{N}}

\def\cS{\mathcal{S}}
\def\cT{\mathcal{T}}

\def\bE{\mathbb{E}}
\def\bR{\mathbb{R}}
\def\bS{\mathbb{S}}

\def\Pb{\mathbb{P}}

\def\ba{\boldsymbol{a}}

\def\be{\boldsymbol{e}}
\def\bg{\boldsymbol{g}}
\def\bu{\boldsymbol{u}}
\def\bv{\boldsymbol{v}}
\def\bw{\boldsymbol{w}}
\def\bx{\boldsymbol{x}}
\def\by{\boldsymbol{y}}
\def\bz{\boldsymbol{z}}

\newcommand{\hide}[1]{}

\DeclareMathOperator{\spn}{span}
\DeclareMathOperator{\polylog}{polylog}
\DeclareMathOperator{\TV}{TV}
\DeclareMathOperator{\Var}{Var}

\renewcommand{\diamond}{\mathbin{\vcenter{\hbox{\begin{tikzpicture}[scale=0.12, baseline=-0.5ex]
\coordinate (A) at (0,0);
\coordinate (B) at (1.6,0);
\coordinate (C) at (0.8,1.25);
\coordinate (D) at (0.8,-1.25);
\draw (A)--(C)--(B)--(D)--(A);
\draw (A)--(B);
\end{tikzpicture}}}}}

\title{Distinguishability threshold for random geometric graphs}
\date{}

\author{Zach Hunter}
\author{Aleksa Milojevi\'c}
\author{Benny Sudakov}
\thanks{Department of Mathematics, ETH Z\"urich, Switzerland. Email: {\tt \{zach.hunter, aleksa.milojevic, benjamin.sudakov\}@math.ethz.ch}. Research supported in part by SNSF grant 200021-228014.}

\begin{document}

\begin{abstract}
The spherical random geometric graph $G(n,d,p)$ is obtained by sampling
$n$ independent points uniformly on the unit sphere
$\bS^{d-1}\subseteq\bR^d$ and joining pairs of points which are
sufficiently close, where the threshold is chosen so that the edge
probability is $p$. The central question related to this model, and to a
broad class of other models, is the following: when does the underlying
geometry affect the resulting graph in a way which makes it
distinguishable from the Erd\H{o}s--R\'enyi random graph $G(n,p)$, as
measured in total variation distance? The precise answer to this
question was conjectured by Bubeck, Ding, Eldan, and R\'acz, who
predicted that $G(n,d,p)$ and $G(n,p)$ are indistinguishable precisely
when $d \gg n^3p^3(\log p^{-1})^3$, and provided a test for
distinguishing these models in the low-dimensional regime.

Although this conjecture attracted considerable attention from researchers in
probability, theoretical computer science, and high-dimensional
statistics, it was previously fully proved only in the constant-density case.
In this paper, we resolve the distinguishability conjecture in the broad
range $1/3 \geq p \geq n^{-1/5} \text{polylog}(n)$. The key ingredient of our proof is a stronger statement which gives a
precise asymptotic formula for the probability that $G(n,d,p)$ realizes
a prescribed graph $H$: above the conjectured threshold, this probability
is at most $(1+o(1))$ times the corresponding probability for
$G(n,p)$, with the signed triangle count of $H$ appearing as the leading
correction term.
\end{abstract}

\maketitle

\section{Introduction}

The classical binomial random graph model with parameters $n\geq 1$ and $p\in (0, 1)$ is obtained by fixing a set of $n$ vertices and connecting any two among them randomly with probability $p$, independently of all other pairs. This model, which we denote by $G(n, p)$, was introduced by Erd\H{o}s and R\'enyi \cite{ER59} in 1959 and has played a central role in probability and combinatorics ever since. Although the assumption that the edges appear independently enables a precise analysis of this model, it offers a rather crude approximation of the real-world networks. In physical, biological, and social systems, the interactions between objects are typically determined by proximity, with distant objects having weaker interactions than nearby ones.

A familiar instance of this principle arises in the study of synchronization. The Kuramoto model \cite{Kuramoto75}, originally proposed to describe systems of weakly coupled oscillators, has been used to model the collective behavior of biological populations such as fireflies, which tend to synchronize their flashing with their immediate neighbors \cite{Strogatz00}. In this and many related settings, the strength of the coupling is governed by the distance between oscillators, and only sufficiently close pairs exert a noticeable influence on one another.

This perspective motivates a general class of random graph models in which the vertices are points in a metric space, and two vertices are joined by an edge whenever their distance is sufficiently small. The prototypical example is the \emph{random geometric graph} (which we abbreviate RGG), introduced by Gilbert \cite{Gilbert61} in 1961 (see also \cite{Penrose03, DC22} for more recent accounts). This model is defined as follows: given a positive integer $d$, a probability distribution $\nu$ on $\mathbb{R}^d$ with a bounded density, and a parameter $r=r(n)>0$, one samples points $\bx_1,\dots,\bx_n\in\mathbb{R}^d$ independently from $\nu$ and joins $\bx_i$ to $\bx_j$ whenever $\|\bx_i-\bx_j\|<r$; the resulting graph is denoted by $G(n,r)$. In the classical setting, the dimension $d$ is fixed and one studies the asymptotic behavior of various graph parameters as $n\to\infty$, possibly tuning $r=r(n)$ to obtain graphs of appropriate density. For example, questions related to connectivity \cite{Pen99}, clique number \cite{Mu08}, chromatic number \cite{MM11} and Hamiltonicity \cite{BBKMW11} of $G(n,r)$ have been extensively studied.

In recent years, the \emph{high-dimensional} regime, where the ambient dimension $d=d(n)$ is allowed to grow with $n$, has attracted considerable attention. Two models have emerged as particularly prominent. The \emph{spherical random geometric graph} $G(n,d,p)$ is obtained by sampling vectors $\bx_1,\dots,\bx_n$ independently and uniformly from the unit sphere $\mathbb{S}^{d-1}\subset \mathbb{R}^d$, and joining $i$ and $j$ by an edge whenever $\langle \bx_i,\bx_j\rangle \geq c_{p,d}/\sqrt{d}$, where the threshold $c_{p,d}$ is chosen so that every individual edge appears with probability $p$. Note that the inner product condition stated above can be equivalently reformulated as a distance condition, since all points lie on the unit sphere. The \emph{Gaussian random geometric graph} is defined in the same way, with the vectors instead sampled independently from the $d$-dimensional Gaussian $\mathcal{N}(0, \frac{1}{d}I_d)$. In this work, we will focus on the spherical model.

A first result in this direction is due to Devroye, Gy\"orgy, Lugosi, and Udina \cite{DGLU11}, who studied the clique number of $G(n,d,p)$, motivated by the problem of testing for dependencies in a collection of normal random variables. They showed that, for fixed $p$, the clique number $\omega(G(n,d, p))$ matches $\omega(G(n,p))$ once $d$ is at least polylogarithmic in $n$. The question of understanding the clique number of $G(n, d, p)$ has recently found a striking application in Ramsey theory: in a recent breakthrough, Ma, Shen, and Xie \cite{MSX26} obtained the first exponential improvement over Erd\H{o}s's classical lower bound \cite{Erd47} on the Ramsey numbers $R(\ell, C\ell)$ (where $C>1$ is a fixed constant) by using a construction based on geometric random graphs. This improvement relies crucially on a precise analysis of clique probabilities in a carefully parametrized spherical RGG (see also \cite{HMS25}).

This application highlights that the value of the high-dimensional random geometric graph model lies precisely in its departure from the binomial random graph $G(n,p)$: the geometric structure introduces correlations between edges which are absent in the binomial random graphs. It is therefore natural to ask:
\[\text{For which values of $d$ and $p$ does $G(n,d,p)$ differ from $G(n,p)$?}\] The natural measure of closeness between two such distributions is the \emph{total variation distance}: for two probability distributions $\mu, \mu'$ on a finite set $\Omega$ it is defined as $\TV(\mu, \mu')=\frac{1}{2}\sum_{x\in \Omega}|\mu(x)-\mu'(x)|$. Concretely, the goal is to determine the threshold for $d=d(n,p)$ at which $\TV(G(n,p), G(n,d,p))$ transitions from being close to $1$, in which case the two models are statistically distinguishable, to being close to $0$, in which case they are indistinguishable. This is one of the central questions related to random geometric graphs, of interest to high-dimensional statistics, probability, and theoretical computer science communities. This question was pioneered in the visionary work of Bubeck, Ding, Eldan, and R\'acz \cite{BDER16}, who formulated the following conjecture. Here and throughout, we write $a\gg b$ to mean that $\lim_{n\to \infty} a/b= \infty$.

\begin{conjecture}[\cite{BDER16}]\label{conj:BDER}
If $d\gg n^3p^3(\log 1/p)^{3}$, then $\TV(G(n,p), G(n,d,p))\to 0$ as $n\to\infty$.
\end{conjecture}

Let us give some context for the form of the threshold appearing in this conjecture. Due to their underlying geometry, random geometric graphs exhibit a triangle-closing property, i.e., triangles arise more frequently in $G(n,d,p)$ than in $G(n,p)$. The intuition is based on the triangle inequality: if a vertex $i$ is close to both $j$ and $k$, then $j$ and $k$ are themselves more likely to be close to each other. Bubeck, Ding, Eldan, and R\'acz made this quantitative by showing that for any triple of vertices $i, j, k$, the probability that they form a triangle in $G(n, d, p)$ is at least $p^3(1+c/\sqrt{d})$, for some constant $c\asymp (\log p^{-1})^{3/2}>0$ (see also \cite{LMSY22}). The expected number of triangles in $G(n, d, p)$ therefore exceeds the corresponding expectation in $G(n, p)$ by an additive factor of order $cn^3p^3/\sqrt{d}$. Whenever this gap dominates the standard deviation of the triangle counts in $G(n, p)$ and $G(n, d, p)$, the two distributions can be told apart simply by counting triangles.

A key observation of \cite{BDER16} is that one can do substantially better by passing to the \emph{signed triangle count}: for a graph $H$ on $[n]$ with adjacency matrix $A$, define
\begin{equation}\label{eqn:signed_triangle_count_definition}
N_{\triangle}(H)=\sum_{1\leq i<j<k\leq n}(A_{ij}-p)(A_{jk}-p)(A_{ik}-p).
\end{equation} The centering ensures that $\bE[N_{\triangle}(G(n, p))]=0$, while the variance $\Var(N_{\triangle}(G(n, p)))=O(n^3p^3)$, which is much smaller than the variance of the unsigned triangle count. On the other hand, $\mathbb{E}[N_{\triangle}(G(n,d,p))]\gtrsim n^3p^3/\sqrt{d}$, so the gap in expectations is unchanged. Balancing this signal against the standard deviation leads to the threshold $d\asymp n^3 p^3$ that appears in Conjecture~\ref{conj:BDER}. The above analysis was sharpened in \cite{LMSY22}, where it was shown that the signed triangle test succeeds whenever $d\ll n^3p^3(\log p^{-1})^3$.\footnote{In fact, Conjecture~\ref{conj:BDER} was originally stated without the precise logarithmic correction, and this correction was added by the authors of \cite{LMSY22}.} Hence, a natural interpretation of Conjecture~\ref{conj:BDER} is that, in fact, the signed triangle test is optimal among all tests for distinguishing between $G(n, p)$ and $G(n, d, p)$.

Since its formulation, Conjecture~\ref{conj:BDER} has attracted considerable interest from the probability, theoretical computer science, and high-dimensional statistics communities. The original work of Bubeck, Ding, Eldan, and R\'acz \cite{BDER16} established the conjecture in the dense setting $p=\Theta(1)$, where they identified the sharp threshold $d\asymp n^3$. Hence, the future works, including ours, focused on the regime where $p\to 0$. A couple of years later, Brennan, Bresler, and Nagaraj \cite{BBN20} obtained the first results beyond the dense case, proving indistinguishability for $d\gg \widetilde \Omega(\min\{pn^3, p^2n^{7/2}\})$. The current best results on this conjecture are given by Liu, Mohanty, Schramm, and Yang \cite{LMSY22}, who proved that $G(n, p)$ and $G(n, d, p)$ are indistinguishable whenever $d\gg p^2n^3$. They also obtained an almost sharp result in the case $p=\Theta(1/n)$, showing that $d\gg (\log n)^{36}$ suffices for indistinguishability in this case, thereby almost resolving Conjecture~\ref{conj:BDER} in that case. Beyond the original spherical model, a number of variants of the conjecture have been studied in related geometric settings, including when points are sampled from anisotropic Gaussian distributions \cite{EM20, BBH24}, when a certain amount of noise is added to the edges \cite{LR21, LR23}, and for the $L_\infty$ metric on the torus \cite{BaB24}. Further, \cite{PV21} studied the relaxation of the conjecture with the total-variation distance replaced by so-called inclusion-divergence.

In spite of the progress mentioned above, Conjecture~\ref{conj:BDER} has remained open throughout the intermediate range $1/n\ll p\ll 1$. The main result of this paper is a resolution of the conjecture, for every $p\gtrsim n^{-1/5}$. We provide a tight bound for the threshold dimension $d$ at which $G(n, d, p)$ can be distinguished from $G(n, p)$.

\begin{theorem}\label{thm:main}
For every $\eps>0$, there exists a large constant $C_\eps>0$  with the following property. Let $n$ be a large positive integer and $p\in (0, 1)$ a probability satisfying $1/3\geq p\geq n^{-1/5}(\log n)^A$, where $A$ is a large absolute constant. For every $d\geq C_\eps n^3p^3 (\log p^{-1})^3$, we have
\[\TV(G(n, p), G(n, d, p))\leq \eps.\]
\end{theorem}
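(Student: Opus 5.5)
The plan follows the strategy announced in the abstract: prove a pointwise upper bound on the likelihood ratio and then convert it into a total variation bound. Write $\mu=G(n,p)$ and $\nu=G(n,d,p)$. For any $H$ we have $\TV(\mu,\nu)=\sum_H(\nu(H)-\mu(H))_+$. It therefore suffices to show that for a set $\mathcal{G}$ of graphs with $\mu(\mathcal{G})\geq 1-\eps/2$, every $H\in\mathcal{G}$ satisfies
\[\frac{\nu(H)}{\mu(H)}\leq (1+o(1))\exp\Big(\lambda N_\triangle(H)-\tfrac{\lambda^2}{2}\sigma^2\Big),\]
where $\lambda\asymp (\log p^{-1})^{3/2}/(p^3\sqrt d)$ is the triangle tilt coefficient and $\sigma^2=\Var_\mu N_\triangle\asymp n^3p^3$. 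Since $\lambda\sigma\asymp ((\log p^{-1})^3n^3p^3/d)^{1/2}\leq C_\eps^{-1/2}$, the right-hand side is $1+O(C_\eps^{-1/2})$ on graphs whose signed triangle count is typical, which is a $\mu$-probable event by Chebyshev. Summing gives $\TV\leq \eps/2+O(C_\eps^{-1/2})+o(1)\leq\eps$ once $C_\eps$ is large.

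To establish the pointwise bound, I will write
\[\nu(H)=\bE\Big[\prod_{i<j}\big(\mathbf 1[\langle \bx_i,\bx_j\rangle\geq t]\big)^{A_{ij}}\big(\mathbf 1[\langle \bx_i,\bx_j\rangle< t]\big)^{1-A_{ij}}\Big]\]
and reveal the points one at a time, as in \cite{LMSY22}. Conditionally on $\bx_1,\dots,\bx_{k-1}$, the new vertex's adjacency pattern is determined by which caps contain $\bx_k$. The conditional probability of the prescribed pattern should then be compared with $p^{\deg}(1-p)^{k-1-\deg}$ via a Gaussian approximation of the projections $\langle \bx_k,\bx_i\rangle$. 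These projections are approximately jointly Gaussian with covariance equal to the Gram matrix of the previous points, which has off-diagonal entries of order $d^{-1/2}$. Expanding the orthant-type probability to second order in these off-diagonal entries, the first-order terms vanish in a controlled way. The second-order terms give a product over pairs $i,j<k$ of factors $1+c\,(A_{ik}-p)(A_{jk}-p)\langle\bx_i,\bx_j\rangle/p^2$. Averaging over $\langle \bx_i,\bx_j\rangle$ conditioned on the edge status $A_{ij}$ yields the factor $1+\lambda(A_{ij}-p)(A_{ik}-p)(A_{jk}-p)$, which is exactly the signed triangle. Multiplying over $k$ produces $\exp(\lambda N_\triangle(H))$, up to a deterministic quadratic correction, which gives the $-\lambda^2\sigma^2/2$ term. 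Higher cycle terms (signed $C_4$ and beyond) must be shown to be negligible on typical $H$: their coefficients are $d^{-\ell/2}$ times polylogarithmic factors, and their fluctuations under $\mu$ are of order $n^{\ell/2}p^{\ell/2}$, so they are small when $d\gg n^3p^3$.

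The main obstacle is making this expansion rigorous uniformly when $p\to 0$. The Gaussian approximation and the Taylor expansion of cap intersections in the sparse regime produce error terms involving $\log p^{-1}$ and large-deviation events (atypical Gram matrices, vertices whose degrees or codegrees deviate). I would handle these by conditioning on a high-probability good event for the points, namely $|\langle\bx_i,\bx_j\rangle|\lesssim\sqrt{\log n/d}$ and Gram-matrix operator-norm bounds. I would also restrict $\mathcal G$ to graphs with typical degrees, codegrees, and small-subgraph counts. The condition $p\geq n^{-1/5}\polylog n$ is where I expect the accumulated cubic error terms, of order $n^{5}p^{\ldots}/d^{3/2}$ summed over vertices, to remain $o(1)$; I would first try to bound these by bootstrapping with $d\geq n^3p^3$.
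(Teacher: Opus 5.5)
\textbf{The total variation reduction is wrong.} In $\TV(\mu,\nu)=\sum_H(\nu(H)-\mu(H))_+$, the graphs outside $\mathcal G$ contribute up to $\nu(\mathcal G^c)$, not $\mu(\mathcal G^c)$. An \emph{upper} bound on $\nu(H)/\mu(H)$ over a $\mu$-likely set says nothing about how much mass the geometric model puts elsewhere. If $\nu$ were supported on $\mathcal G^c$, your hypothesis would hold vacuously while $\TV=1$. Because your pointwise estimate is one-sided, you must show that $G(n,d,p)$ itself lands in the good set with high probability. Chebyshev under $G(n,p)$ gives nothing here.

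In the paper this is a substantial ingredient. The proof of Theorem~\ref{thm:main} uses
$\TV\le\nu(\cF^c)+\nu(\cE^c\cup\cD^c)+\sum_{H\in\cE\cap\cD}(\nu(H\wedge\cF)-\mu(H))_+$
together with Lemmas~\ref{lemma:geometric_graph_event_E}, \ref{lemma:vector_event_F} and~\ref{lemma:geometric_graph_event_D}. These give:
\begin{itemize}
\item degree/codegree concentration and $\|A-pJ\|_{op}=O(\sqrt{np})$ for the random geometric graph, via the cap-intersection estimate of Liu--Mohanty--Schramm--Yang and the spectral bound of Abdalla--Bandeira--Invernizzi;
\item typicality of $N_\triangle$, $N_\square$, $N_\diamond$ \emph{under $\nu$}. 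This uses the Bok--Li--Yu triangle mean $\binom n3\Theta(p^3(\log p^{-1})^{3/2}/\sqrt d)$ and variance bound, which is a second place where $d\ge Cn^3p^3(\log p^{-1})^3$ is needed. It also uses the Bangachev--Bresler bounds for the $C_4$ and diamond second moments.
\end{itemize}
Nothing in your outline plays this role. The only alternative would be a matching \emph{lower} bound on $\nu(H)/\mu(H)$, which you do not propose.

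\textbf{The pointwise bound is not derived.} The step ``averaging over $\langle\bx_i,\bx_j\rangle$ conditioned on the edge status $A_{ij}$'' is where the whole difficulty sits. In vertex exposure you need
$\bE[P_k(\bx_{<k})\mid \bx_1,\dots,\bx_{k-1}\text{ realize }H[k-1]]$.
Under this conditioning the Gram entries are neither independent nor governed by $A_{ij}$ alone. Their conditional means are shifted by signed codegree sums $\sum_r\lambda_{ir}\lambda_{jr}$.
\begin{itemize}
\item Bounded term by term, these shifts cost of order $n^{7/2}p^4/d$ up to logarithms over all steps. This is polynomially large at the threshold, so they must be tracked exactly with their signs. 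They feed into $\kappa_\diamond N_\diamond/d$ and the negative part of $\kappa_\square$.
\item Likewise, in the paper's bookkeeping $\bE Y$ alone sums to order $n^3p\log(p^{-1})/d\asymp(p\log p^{-1})^{-2}$ at the threshold. Only its exact cancellation against $\tfrac12\Var(X)$ leaves the small signed four-cycle terms.
\end{itemize}
Vertex exposure, as you note, is the route of Liu--Mohanty--Schramm--Yang, and it only reaches $d\gg n^3p^2$.

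The missing idea is the paper's coordinate exposure in the Bartlett (Gram--Schmidt) basis:
\begin{itemize}
\item At step $s$ the only new randomness is the conditionally independent, one-sidedly truncated coordinates $\bv_i(s)$.
\item The induction hypothesis (Lemma~\ref{lemma:induction-statement}) carries the factors $\Phi_{sij}$ of the partial inner products and the correction $Z_s$, which records exactly those common-neighbour shifts.
\item The one-step MGF of the resulting quadratic form is computed precisely (Lemma~\ref{lemma:tilting}, Proposition~\ref{prop:lindeberg}).
\end{itemize}
Without a substitute for this mechanism, your sketch correctly predicts the shape of the answer, including the $-\lambda^2\sigma^2/2$ term, but does not prove it.
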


Our entry point to the world of random geometric graphs was the exciting recent paper of Ma, Shen, and Xie \cite{MSX26} discussed above. In \cite{HMS25}, we significantly simplified their construction using Gaussian random geometric graphs and, as a byproduct, obtained quantitatively better lower bounds on the off-diagonal Ramsey numbers. The central technical ingredient in \cite{HMS25} was a precise estimate of the probability that a prescribed set of points in the random geometric graph forms a clique.

However, the techniques developed in \cite{HMS25} for controlling the clique probabilities provide only the first steps in the proof of Theorem~\ref{thm:main}.
In the present paper, we go considerably further: beyond estimating the probability of a clique or an independent set in $G(n, d, p)$, we give the corresponding estimate for essentially any graph $H$ on $n$ vertices, with the signed triangle count of $H$ appearing as the leading correction term. In fact, our proof gives an understanding of the probability $\Pb[G(n, d, p)=H]$ also below the distinguishability threshold, which may be useful for studying RGGs in the lower-dimensional regime.
Beyond the signed triangle counts we further identify the lower-order correction terms which correspond to signed counts of larger subgraphs (see Proposition \ref{prop:main_prop}). In the remainder of this section, we discuss these technical ingredients and some additional ideas in more detail.

Besides the testing problem, the high-dimensional random geometric graph $G(n,d,p)$ has been studied from a number of other angles. For example, spectral properties of the adjacency matrix of $G(n, d, p)$ have been studied by \cite{LMSY23,ABI24,CZ25}, and in \cite{BB24} it was shown that random geometric graphs of appropriately high dimension can be sandwiched between two Erd\H{o}s-R\'enyi random graphs.

\subsection{Sketch of the proof}\label{subsec:proof_outline}
Let us give an overview of the proof of Theorem~\ref{thm:main}. The centerpiece of our proof is the following estimate on the probability that a random graph $G(n, d, p)$ realizes some fixed graph $H$. In order to state it, let us introduce the parameters $z_p$ and $a$, which will be used throughout the proof. We define $z_p$ to be the unique real number for which $\Pb[\cN(0, 1)\geq z_p]=p$, i.e., $z_p=\Phi^{-1}(1-p)$  (where $\Phi$ is the Gaussian CDF). Also, we let $a=\phi(z_p)=e^{-z_p^2/2}/\sqrt{2\pi}$ be the value of the Gaussian PDF at $z_p$. Finally, recall that $N_{\triangle}(H)$ denotes the signed triangle count defined above. Taking $d\geq  Cn^3p^3(\log p^{-1})^3$, $1/3\geq p\geq n^{-1/5}\polylog(n)$ (where the upper bound on $p$ is a technicality ensuring that $z_p$ is not close to $0$), we will essentially show that for any pseudorandom graph $H$ after intersecting with a high probability event $\cF$ (which we define and motivate later), we have

\begin{equation}\label{eqn:main_estimate}
    \Pb\big[(G(n,d,p)=H)\wedge \cF\big]\leq p^{e(H)}(1-p)^{\binom{n}{2}-e(H)}\exp\Big(\frac{(1+o(1))a^3}{p^3(1-p)^3\sqrt d}N_{\triangle}(H)\Big)
\end{equation}

The pseudorandomness assumptions on $H$ required to prove this estimate are quite mild: for example, they include the assumption that the degrees and codegrees are well-concentrated around their means. Although our proof technique can be used to give an estimate on the probability of realizing any graph $H$, having a pseudorandomness assumption is very useful in controlling various error terms in the proof, and allows us to show that all of them are negligible. We thus define the collection of graphs $\cE$ which satisfy the required assumptions (see Definition~\ref{def:graph_event_E_reg} for the precise conditions), such that both $G(n, p)$ and $G(n, d, p)$ lie in $\cE$ with high probability. Then, bounding the total variation distance only on the set of graphs $\cE$ suffices to bound the total variation distance of the whole distributions.

On the other hand, $\cF$ is the event that the vectors $\bx_1, \dots, \bx_n$ defining the graph $G(n, d, p)$ are pseudorandom, in the sense that they are almost orthogonal and that all of their coordinates are small in absolute value (see Definition~\ref{def:vector_event_F} for an exact formulation).  Again, we will show that $\bx_1, \dots, \bx_n$ satisfy these pseudorandomness conditions with high probability, which means that intersecting with the event $\cF$ has a negligible effect on the total variation distance between the two distributions.

The main message of the estimate (\ref{eqn:main_estimate}) is that the likelihood of realizing $H$ as a spherical random geometric graph is determined, to the first order, by the structure of triangles in $H$. As we alluded to earlier, the underlying geometry makes triangles more frequent in $G(n, d, p)$ compared to $G(n, p)$. The result (\ref{eqn:main_estimate}) shows that conversely, if there are very few triangles in $H$, then $H$ is hard to realize as a RGG. Beyond triangles, the estimate takes into account all patterns of edges on 3 vertices. For example, having exactly two edges on 3 vertices is less likely in the RGG than in the binomial random graph. Since this pattern contributes to $N_\triangle(H)$ with negative weight (its weight is $-p(1-p)^2$), we see that having many such structures in $H$ makes it harder to realize $H$ as an outcome of $G(n, d, p)$. By aggregating the contributions of all three-vertex patterns, we obtain the signed triangle count of $H$. Finally, let us note that, a priori, there is nothing special about patterns of edges on 3 vertices - the signed count of four-cycles and larger graphs in $H$ also influences $\Pb[G(n, d, p)=H]$. However, four-cycles and larger graphs have weaker effects, and therefore we choose to highlight $N_\triangle(H)$ as the leading term. More on the effects of larger graphs will be said in Section~\ref{sec:main_estimate}.

Let us briefly explain why this estimate, as formalized in Proposition~\ref{prop:main_prop}, implies Theorem~\ref{thm:main}. As $p\to 0$, we have $z_p=(1+o(1))\sqrt{2\log p^{-1}}$ and $a=\Theta(p\sqrt{2\log p^{-1}})$. When $d\gg n^3p^3(\log p^{-1})^3$, the signed count of triangles is not significantly different between $G(n, p)$ and $G(n, d, p)$, and it is with high probability on the order of $O(n^{3/2}p^{3/2})$. This shows that for almost all graphs we have $\left|\frac{a^3}{p^3(1-p)^3\sqrt d}N_{\triangle}(H)\right|\lesssim \sqrt{\frac{n^3p^3(\log p^{-1})^3}{d}}=o(1)$. Thus, on the event $(G(n, d, p)\in \cE)\wedge \cF$, which has probability $1-o(1)$, we get an upper bound $\Pb[G(n, d, p)=H\wedge \cF]\leq (1+o(1))\Pb[G(n, p)=H]$, which is sufficient to show $\TV(G(n, d, p), G(n, p))=o(1)$.
\medskip

Let us now give some insights into the proof of (\ref{eqn:main_estimate}), which is the subject of Proposition~\ref{prop:main_prop}. The first step of our proof is to use the Gram-Schmidt orthogonalization procedure, which is a standard idea in the analysis of random geometric graphs (see, e.g., \cite{BBN20}). Let $\bx_1,\dots,\bx_n$ be the points defining $G(n,d,p)$. We choose an orthonormal basis $\be_1,\dots,\be_d$ by Gram--Schmidt so that $\mathrm{span}\{\be_1,\dots,\be_s\}=\mathrm{span}\{\bx_1,\dots,\bx_s\}$ for every $1\leq s\leq n$. To emphasize the change of basis, let us denote the vectors $\bx_1, \dots, \bx_n$ by $\bv_1, \dots, \bv_n$ in this new basis. Then, the matrix $M$ with rows $\bv_1,\dots,\bv_n$ is lower triangular. The key property of this basis is that even after the first $s-1$ columns have been exposed, the distribution of the remaining coordinates $\bv_i(s)=\langle \bv_i,\be_s\rangle$, $i>s$, is essentially the spherical distribution (see Lemma~\ref{lemma:distribution_v_i} for a precise statement).

The proof of (\ref{eqn:main_estimate}) then proceeds by an inductive argument. If we fix an integer $0\leq s\leq n-1$, and assume that the first $s$ columns of the matrix $M$ have been exposed (in other words, we reveal the coordinates $\bv_i(j)$ for all $i,j$ with $j\leq s$), then our goal is to estimate the probability that the resulting graph agrees with $H$ on the vertices $\{s+1,\dots,n\}$, when the unrevealed coordinates are sampled according to the prescribed conditional distribution. The precise form of this estimate is given in Lemma~\ref{lemma:induction-statement}.

In the course of an induction step from $s$ to $s-1$, we will estimate the probability that, given the first $s-1$ columns, the edges from vertex $s$ to $\{s+1,\dots,n\}$ agree with $H$. Recall that the vertices $1\leq s< i\leq n$ are connected if $\langle \bv_s, \bv_i\rangle=\bv_s(s)\bv_i(s)+\sum_{j< s}\bv_s(j)\bv_i(j)\geq c_p/\sqrt{d}$. Crucially, this expression contains only one random variable, namely $\bv_i(s)$ (note that $\bv_s(s)$ can be computed from $\bv_s(j), j<s$). This has two consequences: firstly, the edges incident to the vertex $s$ are now conditionally independent, and their probabilities can be easily computed. Moreover, the event that $si$ is an edge introduces only a simple truncation on $\bv_i(s)$, which allows us to maintain a good understanding of their conditional distribution. This computation is performed in Lemma~\ref{lemma:vertex_s}, where the description of the resulting conditional distribution is also given.

Having estimated the probability that the edges incident to $s$ agree with $H$, we use the induction hypothesis to estimate the probability that $G(n, d, p)$ agrees with the remaining graph $H_s$ on $\{s+1,\dots,n\}$. An important point is that the bound given by the induction hypothesis depends on the coordinates $\bv_i(s)$, $i>s$. The final bound that $G(n, d, p)$ agrees with $H$ on $\{s, \dots, n\}$ is then obtained by averaging the bound from the induction hypothesis over the realizations of $\bv_i(s)$, $i>s$ that guarantee $s$ is connected to the correct set of vertices. For details, see the proof of Lemma~\ref{lemma:induction-statement}. Computing this expectation precisely is the main technical challenge of the proof, and the needed estimate is presented in Lemma~\ref{lemma:ratio}, whose proof spans the remainder of the paper.

\medskip\noindent
\textbf{Notation and conventions.} Throughout this paper, we will use the standard asymptotic notation $O(\cdot)$ and $\widetilde O(\cdot)$. If $n$ is a growing parameter, we write $f(n)=g(n)+O(h(n))$ if there is a constant $C$ such that $g(n)-Ch(n)\leq f(n)\leq g(n)+Ch(n)$ for sufficiently large $n$, and we write $f(n)\leq g(n)+O(h(n))$ or $f(n)\geq g(n)+O(h(n))$ when only one of these two inequalities is meant. Further, $\widetilde O(\cdot)$ is allowed to contain constant-power logarithmic terms depending on $n, p$ and $d$. When we write $f\gg g$, we mean that as $n\to \infty$ we have $f/g\to \infty$ as well, although we use this notation mostly informally. Finally, ${\rm polylog}(n)$ denotes a sufficiently high but constant power of $(\log n)$. Further, since we will be often dealing with tuples of vectors, say $\bv_1, \dots, \bv_n$, we denote the $j$-th coordinate of the vector $\bv_i$ by $\bv_i(j)$.

\medskip\noindent
\textbf{Paper organization.} In Section~\ref{sec:prelims}, we outline a set of useful preliminaries about the sphere and Gaussian distributions. Since the proofs of these results are mostly standard, we postpone them to the appendix and encourage the reader to simply read through the statements on the first pass. In Section~\ref{sec:main_estimate}, we state the main estimate on the pointwise probabilities of the RGGs, and use it to show Theorem~\ref{thm:main}. The proof of Proposition~\ref{prop:main_prop} then begins in Section~\ref{sec:induction}, where we outline the inductive scheme by exposing one vertex at a time, as outlined in the above sketch. However, we do not complete the proof in Section~\ref{sec:induction}, as we leave the bound on the one-step contribution in Lemma~\ref{lemma:ratio} unjustified. In Section~\ref{sec:perturbation} we prepare the proof of Lemma~\ref{lemma:ratio}, by analyzing the conditional distribution of the variables $\bv_i(s)$, $s<i\leq n$. In Section~\ref{sec:ratio} we then show the only ingredient left unproven in Section~\ref{sec:induction}, Lemma~\ref{lemma:ratio}, modulo an estimate on a certain moment generating function given in Lemma~\ref{lemma:S_bound}. The proof is finally completed in Section~\ref{sec:MGF}, where we establish Lemma~\ref{lemma:S_bound}.

\medskip\noindent
\textbf{Acknowledgments.} We would like to thank Kiril Bangachev for telling us about Conjecture~\ref{conj:BDER}, Yuval Wigderson and Sahar Diskin for valuable comments which improved the presentation of this paper, and Nina Kam\v{c}ev for insightful discussions about RGGs. Additionally, we would like to acknowledge the use of AI tools for polishing this paper, and note that all of the main proof ideas were entirely human-generated.

\medskip\noindent
{\bf Note added in proof.} In the final stages of polishing this paper, we learned that Conjecture \ref{conj:BDER} was independently proven in the regime $p\gg n^{-2/3}/\log n$ by Du, Mao, Sun, Wu, and Xu \cite{DMSWX26}. Their approach is rather different from ours and is based on KL divergence expansion and Pinsker's inequality.

\section{Preliminaries}\label{sec:prelims}

In this section, we set up notation and list a set of useful preliminary observations.
Omitted proofs are postponed to the appendix.

\subsection{Gaussians and sphere distributions.}\label{subsec:gaussian_and_sphere}

Let $\phi(t)={e^{-t^2/2}}/{\sqrt{2\pi}}$ be the probability density function of the standard Gaussian $\cN(0, 1)$ and let $\Phi(t)=\int_{-\infty}^t \phi(x) \,dx$ be its cumulative distribution function. Let us define $\lambda(t)={\phi(t)}/{\Phi(t)}$ and present two properties of $\Phi$, which we can think of as log-concavity and second-order Taylor expansion.

\begin{lemma}\label{lemma:tail-expansion}
For every $\alpha,\beta\in \bR$ we have
\begin{align}\label{eqn:log_concavity}
\Phi(\alpha+\beta) &\leq \Phi(\alpha)\exp\Big(\lambda(\alpha)\beta\Big),\text{ and }\\
\label{eqn:second_order_taylor}
\Phi(\alpha+\beta) &\leq \Phi(\alpha)\exp\Big(\lambda(\alpha)\beta-\lambda(\alpha)^2\frac{\beta^2}{2}-\alpha\lambda(\alpha)\frac{\beta^2}{2}+|\beta|^3\Big).
\end{align}
\end{lemma}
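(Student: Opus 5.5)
Both inequalities are statements about the function $g(t)=\log\Phi(t)$, and the plan is to obtain them from a Taylor expansion of $g$ around $\alpha$. First I will compute its derivatives. Clearly $g'=\phi/\Phi=\lambda$. Using $\phi'(t)=-t\phi(t)$ gives
\[\lambda'(t)=-t\lambda(t)-\lambda(t)^2=-\lambda(t)\big(t+\lambda(t)\big).\]
Differentiating once more,
\[\lambda''(t)=-\lambda'(t)\big(t+\lambda(t)\big)-\lambda(t)\big(1+\lambda'(t)\big)=\lambda(t)\Big[\big(t+\lambda(t)\big)\big(t+2\lambda(t)\big)-1\Big].\]

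For \eqref{eqn:log_concavity}, it suffices to show that $g$ is concave, i.e. $\lambda'\le 0$, i.e. $t+\lambda(t)\ge 0$. This is the standard Mills-ratio fact. Write $\Phi(t)=\int_{-\infty}^t\phi$ and note that $t\Phi(t)+\phi(t)=\int_{-\infty}^t\Phi(x)\,dx>0$, which follows by differentiating both sides. Hence $\lambda(t)>-t$. Concavity gives $g(\alpha+\beta)\le g(\alpha)+g'(\alpha)\beta$, and exponentiating yields \eqref{eqn:log_concavity}.

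For \eqref{eqn:second_order_taylor}, I will use Taylor's theorem with Lagrange remainder:
\[g(\alpha+\beta)=g(\alpha)+\lambda(\alpha)\beta+\lambda'(\alpha)\frac{\beta^2}{2}+\lambda''(\xi)\frac{\beta^3}{6}\]
for some $\xi$ between $\alpha$ and $\alpha+\beta$. Here $\lambda'(\alpha)\beta^2/2=-\lambda(\alpha)^2\beta^2/2-\alpha\lambda(\alpha)\beta^2/2$, which are exactly the quadratic terms in \eqref{eqn:second_order_taylor}. So everything reduces to the uniform bound $\sup_{t\in\bR}|\lambda''(t)|\le 6$. The remainder is exact, so no case distinction on the size of $|\beta|$ is needed.

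The main work is this sup bound, and I would prove it by splitting according to the sign of $t$.

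\emph{Case $t\ge 0$.} Here $\Phi(t)\ge 1/2$, so $0<\lambda(t)\le 2\phi(t)$. This makes $\lambda(t)(t+2\lambda(t))^2$ and $\lambda(t)$ bounded by explicit small constants, since $\phi(t)\max(1,t^2)$ is bounded.

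\emph{Case $t<0$.} Set $u=t+\lambda(t)\ge 0$. I will use the known fact that $0<\lambda(t)(t+\lambda(t))<1$, i.e. $-1<\lambda'<0$. This holds because $1+\lambda'(t)$ is the variance of a standard Gaussian conditioned to lie below $t$, which is positive.
\begin{itemize}
\item This gives $\lambda u^2\le u$, and hence $|\lambda''|\le \lambda u^2+\lambda^2u+\lambda\le u+1+\lambda$.
\item I then need $u$ bounded for $t<0$ and $\lambda(t)\le |t|+1$ or so. The asymptotics $\lambda(t)=-t-1/t+O(|t|^{-3})$ as $t\to-\infty$ give $u\to 0$, while $\lambda$ itself grows.
\item So this crude bound is not enough. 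Instead I would group terms as $\lambda''=\lambda u(u+\lambda)-\lambda=\lambda u\cdot u+\lambda\big(\lambda u-1\big)$.
\item Then $|\lambda(\lambda u-1)|=\lambda(1-\lambda u)=\lambda\big(1+\lambda'\big)$. Using the second-order Mills bound $1+\lambda'(t)\le C/t^2$, this term is $O(1/|t|)$ for $t\le -1$.
\item The first term satisfies $\lambda u^2\le u$, which is bounded.
\end{itemize}
On the compact range $t\in[-1,0]$, all quantities are continuous and bounded by explicit numerics (e.g. $\lambda\le 1.6$, $u\le 1$).

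Assembling the two cases should give $|\lambda''|\le 6$ with plenty of room. The only delicate point is the cancellation for $t\to-\infty$, which is why I would rewrite $\lambda''$ as above, in terms of the bounded quantities $\lambda u$ and $1+\lambda'$.
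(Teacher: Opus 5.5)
Your proposal is correct and is essentially the paper's proof. Concavity of $\log\Phi$ (i.e.\ $\lambda'\le 0$) gives the first inequality, and a second-order Taylor expansion of $\log\Phi$ with a uniform bound on $(\log\Phi)'''=\lambda''$ gives the second.

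The only difference is how you control $\lambda''$. The paper uses $0\le\lambda''\le 2$: it proves the upper bound exactly as in your step $\lambda''\le\lambda u^2\le u$, and cites Sampford for $\lambda''\ge 0$. You only need $|\lambda''|\le 6$, and you replace Sampford by $\lambda''\ge-\lambda(1+\lambda')$ together with $1+\lambda'(t)=\Var(X\mid X\le t)=O(t^{-2})$ for $X\sim\cN(0,1)$. This works once the constant is made explicit, which the coefficient $1$ on $|\beta|^3$ requires. For example, for $t\le -2$ use $\bE[(t-X)^2\mid X\le t]\le 2(t^2+1)/t^4$ together with $\lambda\le |t|+1$, and on $[-2,0]$ use $\lambda(1+\lambda')\le\lambda\le\lambda(-2)<2.4$.
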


\begin{lemma}\label{lemma:mills_ratio}
The function $\lambda(t)$ is analytic on $\bR$, with $-1\leq \lambda'(t)\leq 0, 0\leq \lambda''(t)\leq 2$ and $\lambda'(t)=-\lambda(t)(t+\lambda(t))$ for all $t\in \bR$. Also, for every $t\leq 0$ we have $|t|\leq \lambda(t)\leq |t|+1$.
\end{lemma}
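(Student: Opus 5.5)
We need to prove Lemma~\ref{lemma:mills_ratio}: $\lambda=\phi/\Phi$ is analytic, $-1\le\lambda'\le 0$, $0\le\lambda''\le 2$, $\lambda'=-\lambda(t+\lambda)$, and $|t|\le\lambda(t)\le|t|+1$ for $t\le 0$.

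\emph{Analyticity and the derivative formula.} Both $\phi$ and $\Phi$ are entire functions (the latter as an integral of an entire function), and $\Phi>0$ on $\bR$, so $\lambda$ is real analytic. Since $\phi'(t)=-t\phi(t)$ and $\Phi'=\phi$, the quotient rule gives
\[\lambda'=\frac{-t\phi\Phi-\phi^2}{\Phi^2}=-\lambda(t+\lambda).\]

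\emph{Sign of $\lambda'$ and the bounds for $t\le0$.} The claim $\lambda'\le 0$ amounts to $t+\lambda(t)\ge 0$, i.e.\ $\phi(t)\ge -t\Phi(t)$. This is trivial for $t\ge0$. For $t<0$, substitute $u=-t>0$ and use $\Phi(t)=1-\Phi(u)$; the claim becomes the Mills-ratio bound $\int_u^\infty\phi\le\phi(u)/u$, which follows from $\int_u^\infty\phi(x)\,dx\le\int_u^\infty (x/u)\phi(x)\,dx=\phi(u)/u$. This also gives $\lambda(t)\ge|t|$ for $t\le0$. For the upper bound $\lambda(t)\le|t|+1$ it suffices to show $\lambda(t)\le \tfrac{1}{2}\bigl(|t|+\sqrt{t^2+4}\bigr)\le |t|+1$, which is the classical Birnbaum--Sampford lower bound $1-\Phi(u)\ge \frac{2\phi(u)}{u+\sqrt{u^2+4}}$. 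I would cite it, or prove it with the auxiliary function $g(u)=\int_u^\infty\phi-\frac{2\phi(u)}{u+\sqrt{u^2+4}}$, which tends to $0$ at $\infty$ and can be shown to have $g'\le0$.

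\emph{The bounds on $\lambda'$ and $\lambda''$.} This is the main obstacle. Write $h=t+\lambda\ge0$, so $\lambda'=-\lambda h$. The bound $\lambda'\ge-1$ is equivalent to $\lambda(t+\lambda)\le1$, a sharp form of the Mills-ratio inequality; in terms of $X\sim\cN(0,1)$ conditioned on $X\le t$, it is the statement $\Var(X\mid X\le t)\ge 0$. Indeed $\bE[X\mid X\le t]=-\lambda$ and $\bE[X^2\mid X\le t]=1-t\lambda$, so
\[\Var(X\mid X\le t)=1-t\lambda-\lambda^2=1+\lambda',\]
which gives $\lambda'\ge -1$ immediately.

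For $\lambda''$, differentiate again: $\lambda''=-\lambda'h-\lambda h'=\lambda h^2-\lambda(1+\lambda')=\lambda\bigl(h^2-\Var(X\mid X\le t)\bigr)$. Equivalently, $\lambda''$ is a combination of truncated moments; the third central moment of the truncated Gaussian appears, since $\frac{d}{dt}\Var(X\mid X\le t)=\lambda''$. So $\lambda''\ge0$ means that the conditional variance is increasing in $t$, which is a known consequence of log-concavity of the Gaussian (variance of upper-truncations of a log-concave density increases with the truncation point). For $\lambda''\le2$, I would bound $h\le$ something like $1/\lambda$ plus lower order terms and split into cases. For $t\ge0$, $\lambda\le\phi(0)/\Phi(0)<1$ and $h\le t+\lambda$ with $\lambda h^2$ exponentially small for large $t$, so a direct check on a compact range plus the asymptotics suffices. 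For $t\to-\infty$, asymptotic expansion gives $\lambda''\to 0$, with $\lambda''\sim 2/|t|^3$. On the remaining compact interval, a crude numerical bound (the maximum is well below $2$) closes the argument. I expect making $\lambda''\ge0$ fully rigorous (via the log-concavity/variance monotonicity fact, which I would cite) to be the hardest step.
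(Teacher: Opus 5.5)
\textbf{Where the gap is.} Most of your proposal is sound. Two parts are more self-contained than the paper, which simply cites Sampford and Birnbaum for $-1\le\lambda'\le 0$, $\lambda''\ge 0$ and $|t|\le\lambda(t)\le|t|+1$:
\begin{itemize}
\item the identity $1+\lambda'=\Var(X\mid X\le t)$ is a clean proof of $\lambda'\ge -1$;
\item recasting $\lambda''\ge 0$ as monotonicity of the truncated variance of a log-concave density is an acceptable citable substitute for Sampford.
\end{itemize}
The gap is in $\lambda''\le 2$, which is the one bound the paper actually proves rather than cites. Your plan there is not a proof as written. The limits $\lambda''\to 0$ as $t\to\pm\infty$ give no explicit threshold beyond which $\lambda''\le 2$. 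The ``crude numerical bound'' on the remaining compact interval needs derivative or error control, which you do not supply.

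\textbf{The missing step.} Discard the nonpositive term in your own formula. With $h=t+\lambda$, and since $\lambda>0$ and $1+\lambda'=\Var(X\mid X\le t)\ge 0$,
\[\lambda''=\lambda h^2-\lambda(1+\lambda')\le \lambda h^2=(-\lambda')\,h .\]
For $t\le 0$, both factors lie in $[0,1]$. You already showed $0\le -\lambda'\le 1$, and $0\le h\le 1$ is exactly $|t|\le \lambda(t)\le |t|+1$. Hence $\lambda''\le 1$.

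For $t\ge 0$, $\Phi(t)\ge 1/2$ gives $\lambda\le 2\phi(t)\le 2\phi(0)<1$, so $h\le t+1$ and $\lambda''\le \lambda h^2\le 2\phi(t)(t+1)^2$. The derivative of $\phi(t)(t+1)^2$ is $-\phi(t)(t+1)(t+2)(t-1)$, so its maximum on $[0,\infty)$ is $4\phi(1)=4/\sqrt{2\pi e}<1$, which gives $\lambda''<2$.

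Your remark about bounding $h$ by $1/\lambda$ was pointing in exactly this direction: $\lambda h\le 1$ is just $\lambda'\ge -1$. With that observation the bound takes two lines, needs no asymptotics or numerics, and is precisely the paper's argument.
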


Truncated Gaussian random variables will also play an important role in the paper. If $Z\sim \cN(0, \sigma^2)$ is a one-dimensional Gaussian and $(a, b)\subseteq \bR$ is an interval, the \textit{truncated Gaussian} is the distribution of $Z$ conditioned on $Z\in (a, b)$. Our distributions will always be truncated only on one side, i.e. we will only consider cases where $a=-\infty$ or $b=+\infty$. If $a\neq -\infty$, we have a lower truncated variable, and if $b\neq+\infty$ we have an upper truncated variable. We begin our discussion of truncated Gaussians by computing their expectation.

\begin{lemma}\label{lemma:expectation_truncated_gaussian}
Let $t$ be a real number and let $X\sim \cN(0, \sigma^2)$. If $X_t^+$ denotes the variable $X$ conditioned on $X\geq t\sigma$, and $X_t^-$ denotes the variable $X$ conditioned on $X\leq t\sigma$, then
\begin{align*}
    &\bE\big[X_t^-\big]=-\lambda(t)\sigma \qquad \text{ and }\qquad\Var\big(X_t^-\big)=\sigma^2\Big(1-t\lambda(t)-\lambda(t)^2\Big),\text{ and }\\
    &\bE\big[X_t^+\big]=\lambda(-t)\sigma\qquad \text{ and }\qquad\Var\big(X_t^+\big)=\sigma^2\Big(1+t\lambda(-t)-\lambda(-t)^2\Big).
\end{align*}
\end{lemma}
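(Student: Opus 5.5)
The plan is to reduce everything to the standard normal by scaling, then compute the first two moments of a one-sided truncated standard Gaussian directly, using $\phi'(x)=-x\phi(x)$. Write $X=\sigma Z$ with $Z\sim\cN(0,1)$. Then $X_t^-$ has the law of $\sigma Z$ conditioned on $Z\le t$, so $\bE[X_t^-]=\sigma\bE[Z\mid Z\le t]$ and $\Var(X_t^-)=\sigma^2\Var(Z\mid Z\le t)$. It therefore suffices to treat $\sigma=1$. The lower-truncated case will then follow from the upper-truncated one by symmetry.

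For the first moment, use $\int_{-\infty}^t x\phi(x)\,dx=-\phi(t)$, which holds because $\phi'=-x\phi$. Dividing by $\Pb[Z\le t]=\Phi(t)$ gives $\bE[Z\mid Z\le t]=-\phi(t)/\Phi(t)=-\lambda(t)$.

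For the second moment, integrate by parts:
\[
\int_{-\infty}^t x^2\phi(x)\,dx=\int_{-\infty}^t x\cdot x\phi(x)\,dx=\big[-x\phi(x)\big]_{-\infty}^t+\int_{-\infty}^t\phi(x)\,dx=-t\phi(t)+\Phi(t).
\]
Hence $\bE[Z^2\mid Z\le t]=1-t\lambda(t)$. Subtracting the square of the mean yields $\Var(Z\mid Z\le t)=1-t\lambda(t)-\lambda(t)^2$, which after multiplying by $\sigma^2$ is the first line of the lemma.

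For the upper truncation, use that $-Z\sim\cN(0,1)$. Conditioning $Z$ on $Z\ge t$ is the same as conditioning $-Z$ on $-Z\le -t$, so $X_t^+\overset{d}{=}-X_{-t}^-$. This gives $\bE[X_t^+]=\lambda(-t)\sigma$, and
\[
\Var(X_t^+)=\sigma^2\big(1-(-t)\lambda(-t)-\lambda(-t)^2\big)=\sigma^2\big(1+t\lambda(-t)-\lambda(-t)^2\big),
\]
exactly as claimed.

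There is no real obstacle here. The only care needed is that the boundary terms at $-\infty$ vanish, since $x\phi(x)\to0$, and that the sign of $t$ is handled correctly in the reflection step.
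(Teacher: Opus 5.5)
Your proof is correct and follows essentially the same route as the paper: reduce to $\sigma=1$, then use $\phi'(x)=-x\phi(x)$ to get $\int_{-\infty}^t x\phi(x)\,dx=-\phi(t)$ and $\int_{-\infty}^t x^2\phi(x)\,dx=\Phi(t)-t\phi(t)$. The only cosmetic difference is in the upper-truncated case: you obtain it from the lower-truncated one via the reflection ($X_t^+$ has the law of $-X_{-t}^-$), whereas the paper computes the integrals over $[t,\infty)$ directly.
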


Further, for a positive integer $k\geq 3$, we denote by $\cS_k$ the law of $\sqrt{k}\by(1)$, where $\by$ is a uniform random point on the unit sphere $\bS^{k-1}\subseteq \bR^k$ in $k$ dimensions. The PDF and CDF of the distribution $\cS_k$ will be denoted by $\psi_k$ and $\Psi_k$, respectively. The reason for the scaling factor $\sqrt{k}$ is the following theorem, which shows that $\phi, \Phi$ and $\psi_k, \Psi_k$ are very close.

\begin{lemma}\label{lemma:sphere_to_gaussian_CDF}
Let $k\geq 3$ be a positive integer and let $t$ be any real number. Then, we have
\begin{equation*}
 |\Psi_k(t)-\Phi(t)|\leq O\Big(\frac{1+t^4}{k}\Big)\Phi(t),\qquad \text{ and}\qquad  |\psi_k(t)-\phi(t)|\le O\Big(\frac{1+t^4}{k}\Big)\phi(t).
\end{equation*}
\end{lemma}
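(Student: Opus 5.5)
The plan is to work with the explicit density of $\cS_k$ and prove the density bound first. The CDF bound then follows by integrating and using Gaussian tail estimates. Recall that if $\by$ is uniform on $\bS^{k-1}$, then $\by(1)$ has density proportional to $(1-u^2)^{(k-3)/2}$ on $[-1,1]$. Hence
\[
\psi_k(t)=c_k\Big(1-\frac{t^2}{k}\Big)^{(k-3)/2}\mathbf 1_{|t|<\sqrt k},\qquad c_k=\frac{\Gamma(k/2)}{\sqrt{\pi k}\,\Gamma((k-1)/2)}.
\]
By Stirling's formula, $\Gamma(k/2)/\Gamma((k-1)/2)=\sqrt{k/2}\,(1+O(1/k))$. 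Therefore $c_k\sqrt{2\pi}=1+O(1/k)$.

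\textbf{Density.} For $|t|<\sqrt k$ I would write
\[
\log\frac{\psi_k(t)}{\phi(t)}=\log(c_k\sqrt{2\pi})+\frac{k-3}{2}\log\Big(1-\frac{t^2}{k}\Big)+\frac{t^2}{2}.
\]
Two regimes follow from this.
\begin{enumerate}[label=(\alph*)]
\item \emph{Upper bound everywhere.} Since $\log(1-u)\le -u$, the right-hand side is at most $O(1/k)+3t^2/(2k)\le O(1)$. So $\psi_k\le O(1)\phi$ on the whole line (for $|t|\ge\sqrt k$ we simply have $\psi_k=0$). Consequently, whenever $(1+t^4)/k\ge c$ for a small fixed constant $c$, the bound $|\psi_k-\phi|\le O(1)\phi\le O((1+t^4)/k)\phi$ is trivial. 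This covers in particular every $|t|\ge \sqrt k$ and every $t^2\ge k/2$.
\item \emph{Small regime.} When $(1+t^4)/k<c$, we have $u=t^2/k\le 1/2$. Then I would expand $\log(1-u)=-u-u^2/2+O(u^3)$. The log-ratio becomes
\[
O(1/k)+\frac{3t^2}{2k}-\frac{t^4}{4k}+O\Big(\frac{t^4}{k^2}+\frac{t^6}{k^2}\Big)=O\Big(\frac{1+t^4}{k}\Big),
\]
using $t^6/k^2\le t^4/k$ because $t^2\le k$. Since this quantity is bounded, $|e^x-1|\le O(|x|)$ gives $|\psi_k(t)-\phi(t)|\le O((1+t^4)/k)\phi(t)$.
\end{enumerate}

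\textbf{CDF.} By symmetry of both distributions, $1-\Psi_k(t)=\Psi_k(-t)$ and $1-\Phi(t)=\Phi(-t)$, so $|\Psi_k(t)-\Phi(t)|=|\Psi_k(-t)-\Phi(-t)|$. Moreover $\Phi(t)\ge 1/2\ge \Phi(-t)$ for $t\ge0$. It therefore suffices to treat $t\le 0$, since the bound at $-t$ implies the bound at $t$.

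For $t\le0$ I would integrate the density estimate:
\[
|\Psi_k(t)-\Phi(t)|\le \int_{-\infty}^t|\psi_k-\phi|\le O\Big(\frac1k\Big)\int_{-\infty}^t(1+s^4)\phi(s)\,ds.
\]
It remains to show $\int_{-\infty}^t s^4\phi(s)\,ds\le O(1+t^4)\Phi(t)$ for $t\le 0$.
\begin{itemize}
\item For $-1\le t\le 0$ this is immediate, since the left side is at most $3$ and $\Phi(t)\ge\Phi(-1)$.
\item For $t\le -1$, I would integrate by parts using $\phi'(s)=-s\phi(s)$. This gives
\[
\int_{-\infty}^t s^4\phi(s)\,ds=(|t|^3+3|t|)\phi(t)+3\Phi(t)=O\big(|t|^3\phi(t)+\Phi(t)\big).
\]
The standard Mills-ratio lower bound $\Phi(t)\ge \frac{|t|}{1+t^2}\phi(t)$ yields $|t|^3\phi(t)\le O(t^4)\Phi(t)$.
\end{itemize}

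\textbf{Main obstacle.} The only delicate point is the density estimate uniformly in $t$, in particular near the edge of the support $|t|\approx\sqrt k$, where the Taylor expansion of $\log(1-u)$ breaks down. The case split in (a)–(b) handles this: in that region the claimed error factor $(1+t^4)/k$ is already of order $k$, so the crude bound $\psi_k\le O(1)\phi$ suffices. The remaining steps are routine: Stirling for $c_k$, and the $O(t^6/k^2)$ terms absorbed into $t^4/k$.
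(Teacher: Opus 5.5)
Your proposal is correct and follows essentially the same route as the paper. You use the explicit density of $\cS_k$ with the Gamma-ratio estimate, compare $\log(\psi_k/\phi)$ pointwise (using the crude bound $\psi_k\le O(1)\phi$ wherever $(1+t^4)/k$ is not small), and then integrate the density error against $(1+s^4)\phi(s)$. The differences are cosmetic: you get both directions of the density bound from one second-order expansion of $\log(1-u)$ instead of two one-sided inequalities, and you bound $\int_{-\infty}^t s^4\phi(s)\,ds$ by integration by parts and the Mills-ratio lower bound (after a valid symmetry reduction to $t\le 0$) rather than by the paper's dyadic geometric-series comparison.
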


Finally, for a fixed value $p\in (0, 1/3)$, let $z_p=\Phi^{-1}(1-p)$ and $c_{p, k}=\Psi_k^{-1}(1-p)$ be the unique real numbers for which $\Pb[X\geq z_p]=p$, where $X\sim \cN(0, 1)$, and $\Pb[Y\geq c_{p, k}]=p$, where $Y\sim \cS_k$. Since we will mostly be working in $\bR^d$, we write $c_p=c_{p, d}$. These values have the following asymptotic behavior.

\begin{lemma}\label{lemma:quantile_asymptotics}
Let $k\geq 3$ be an integer and let $p\in (0, 1/3)$ be a probability. We have
\begin{equation*}
z_p=(1+o(1))\sqrt{2\log p^{-1}}\text{ as $p\to 0$}, \text{ and if $p\geq 1/k$ we have }c_{p, k}=z_p+\widetilde O\Big(\frac{1}{k}\Big).
\end{equation*}
\end{lemma}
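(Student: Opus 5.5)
The statement has two parts, and I would handle them separately: the asymptotics of $z_p$ as $p\to 0$, and the comparison $c_{p,k}=z_p+\widetilde O(1/k)$ for $p\geq 1/k$.

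\emph{Asymptotics of $z_p$.} I will use the standard Mills-ratio bounds: for $t>0$,
\[
\frac{t}{1+t^2}\,\phi(t)\;\leq\; 1-\Phi(t)\;\leq\; \frac{\phi(t)}{t}.
\]
These follow from Lemma~\ref{lemma:mills_ratio}, since $1-\Phi(t)=\Phi(-t)=\phi(t)/\lambda(-t)$ and $t\leq\lambda(-t)\leq t+1$.

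Plugging in $t=z_p$ gives $p=\phi(z_p)/\Theta(z_p)$ whenever $z_p\geq 1$. Taking logarithms,
\[
\log p^{-1}=\frac{z_p^2}{2}+\log z_p+O(1).
\]
Since $p\to 0$ forces $z_p\to\infty$, this yields $z_p^2=2\log p^{-1}\,(1+o(1))$, which is the first claim.

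\emph{Comparison of $c_{p,k}$ and $z_p$.} I will work with the lower tails, writing $1-\Psi_k(c)=\Psi_k(-c)$ and $1-\Phi(z)=\Phi(-z)$; this uses the symmetry of $\cS_k$ and of the Gaussian. By definition $\Psi_k(-c_{p,k})=\Phi(-z_p)=p$. Lemma~\ref{lemma:sphere_to_gaussian_CDF} gives, at $t=-z_p$,
\[
\Psi_k(-z_p)=\Phi(-z_p)\bigl(1+\eta\bigr),\qquad |\eta|=O\Bigl(\frac{1+z_p^4}{k}\Bigr).
\]
Because $p\geq 1/k$, we have $z_p\leq \sqrt{2\log k}+O(1)$, so $|\eta|=O\bigl((\log k)^2/k\bigr)=\widetilde O(1/k)$.

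It then remains to convert this relative error in the CDF into an additive error in the quantile. I would do this with the mean value theorem between $-c_{p,k}$ and $-z_p$:
\[
|\Psi_k(-z_p)-\Psi_k(-c_{p,k})|=\psi_k(\xi)\,|c_{p,k}-z_p|
\]
for some $\xi$ between the two points. The left side is $|\eta|\,p$. I need a lower bound on $\psi_k(\xi)$ of order $p\,z_p$, up to constants.

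To get it, I first establish a crude localization $|c_{p,k}-z_p|\leq 1/(1+z_p)$. This follows by applying Lemma~\ref{lemma:sphere_to_gaussian_CDF} at the points $-z_p\pm 1/(1+z_p)$. Over a window of that width, $\Phi$ changes by a constant factor bounded away from $1$, which dominates the $\widetilde O(1/k)$ relative error once $k$ is large. (For small $k$ the claim is trivial by adjusting constants.)

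On this window, $\phi(\xi)=\Theta(\phi(z_p))$. The density comparison in Lemma~\ref{lemma:sphere_to_gaussian_CDF} then gives $\psi_k(\xi)=\Theta(\phi(z_p))=\Theta\bigl(p\,(1+z_p)\bigr)$, using the Mills-ratio bound from the first part. Hence
\[
|c_{p,k}-z_p|=O\Bigl(\frac{|\eta|}{1+z_p}\Bigr)=\widetilde O(1/k).
\]

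The main obstacle is this quantile inversion, specifically ensuring that $\xi$ stays in a region where $\psi_k$ is comparable to $\phi(z_p)$. The crude localization step handles it, and it is valid exactly because $p\geq 1/k$ keeps $z_p^4/k$ small.
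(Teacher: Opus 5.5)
Your proof is correct. Your first part (Mills-ratio bounds, then take logarithms) is essentially the paper's argument.

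For the second part you take a different route. The paper applies Lemma~\ref{lemma:sphere_to_gaussian_CDF} at the \emph{sphere} quantile $-c_{p,k}$. It first needs a one-sided a priori bound $c_{p,k}\le\log k$: otherwise $\Psi_k(-c_{p,k})\le O(\Phi(-\log k))\ll 1/k\le p$. This gives $\Phi(-c_{p,k})=p\bigl(1+\widetilde O(1/k)\bigr)$. The paper then inverts the \emph{Gaussian} log-CDF: since $(\log\Phi)'=\lambda$ and $\lambda(-t)\ge t$ by Lemma~\ref{lemma:mills_ratio}, a $\widetilde O(1/k)$ change in $\log\Phi$ forces a $\widetilde O(1/k)$ change in the argument.

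You instead apply the comparison at the Gaussian quantile $-z_p$ and invert $\Psi_k$ by the mean value theorem. This obliges you to lower-bound the sphere density $\psi_k$ near the quantile, which is why you need the two-sided localization $|c_{p,k}-z_p|\le 1/(1+z_p)$ and the density half of Lemma~\ref{lemma:sphere_to_gaussian_CDF}.

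Both arguments need some crude a priori control of $c_{p,k}$ so that the $(1+t^4)/k$ error is small at the relevant point. The paper's version is somewhat more economical, because all the inversion happens on the explicit log-concave function $\Phi$, whose log-derivative is controlled by Lemma~\ref{lemma:mills_ratio}. Yours reaches the same $\widetilde O(1/k)$ conclusion with a little more bookkeeping (localization, then density comparability on the window).

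One small inaccuracy: the lower bound $\frac{t}{1+t^2}\phi(t)\le 1-\Phi(t)$ is a true classical inequality, but it does not follow from Lemma~\ref{lemma:mills_ratio}. That lemma only yields $\phi(t)/(1+t)$, which is weaker for $t>1$. This is harmless, since you only use $p=\phi(z_p)/\Theta(z_p)$.
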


\begin{lemma}\label{lemma:sphere_coupling_moments}
Let $t\in \mathbb R$ and $m\ge 1$ be such that $|t|<m^{1/8}$. Let $X$ be distributed as the first coordinate of a uniform point on $\bS^{m-1}$, conditioned on being at least $t/\sqrt{m}$, and let $Y\sim \cN(0,1/m)$ conditioned on being at least $t/\sqrt{m}$. Then
\[ \big|\bE[X]-\bE[Y]\big| \le O\Big(\frac{|t|^{5}+1}{m^{3/2}}\Big)\quad \text{ and }\quad \big|\bE[X^2]-\bE[Y^2]\big| \le O\Big(\frac{|t|^{6}+1}{m^{2}}\Big)\]
\end{lemma}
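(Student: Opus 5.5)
Write $X=\sqrt{m}\,\tilde X$ and $Y=\sqrt m\,\tilde Y$, so that $\tilde X\sim\cS_m$ and $\tilde Y\sim\cN(0,1)$, both conditioned on being at least $t$. The claims then become
\[
\big|\bE[\tilde X]-\bE[\tilde Y]\big|\le O\big((|t|^5+1)/m\big),\qquad \big|\bE[\tilde X^2]-\bE[\tilde Y^2]\big|\le O\big((|t|^6+1)/m\big).
\]
I will compare the conditional expectations directly as ratios of integrals. For $r\in\{1,2\}$,
\[
\bE[\tilde X^r]=\frac{\int_t^{\sqrt m} x^r\psi_m(x)\,dx}{1-\Psi_m(t)},\qquad \bE[\tilde Y^r]=\frac{\int_t^{\infty} y^r\phi(y)\,dy}{1-\Phi(t)},
\]
where the support of $\tilde X$ is $[-\sqrt m,\sqrt m]$. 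The plan is to show that the numerators and the denominators each agree up to a relative error $O((1+t^4)/m)$, and then to combine these.

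\textbf{Denominators.} By symmetry, $1-\Psi_m(t)=\Psi_m(-t)$ and $1-\Phi(t)=\Phi(-t)$. Lemma~\ref{lemma:sphere_to_gaussian_CDF} then gives $|\Psi_m(-t)-\Phi(-t)|\le O((1+t^4)/m)\,\Phi(-t)$, which is a relative error. This needs $m\ge 3$; small $m$ is trivial, since then $|t|<m^{1/8}$ is bounded and both sides are $O(1)$.

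\textbf{Numerators.} On the common range $[t,\sqrt m]$ I use the pointwise density bound $|\psi_m(x)-\phi(x)|\le O((1+x^4)/m)\phi(x)$ to bound
\[
\Big|\int_t^{\sqrt m}x^r(\psi_m-\phi)\,dx\Big|\le \frac{C}{m}\int_t^\infty |x|^r(1+x^4)\phi(x)\,dx .
\]
The Gaussian tail beyond $\sqrt m$ is $e^{-m/2}$-small, which is negligible.

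\textbf{Main obstacle.} The key step is bounding $\int_t^\infty |x|^r(1+x^4)\phi(x)\,dx$ by $O((1+|t|)^{r+4})\,\Phi(-t)$, i.e.\ by the relevant power of $t$ times the tail mass. For $t\le 0$ the tail mass $\Phi(-t)\ge 1/2$, so this is immediate. For $t>0$ I would use the standard Mills-ratio bound: for a Gaussian conditioned on $[t,\infty)$, moments of order $k$ are $O((1+t)^k)$. This follows from Lemma~\ref{lemma:mills_ratio}, since $\lambda(-t)\le t+1$, or by integrating by parts with the substitution $x=t+u$, under which the overshoot $u$ has exponentially small tails at scale $1/(1+t)$.

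Dividing by the tail mass gives the unnormalized error $O((1+|t|)^{r+4}/m)$. This matches the claimed exponents 5 and 6 for $r=1,2$.

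\textbf{Combining.} Write $N_X/D_X-N_Y/D_Y=(N_X-N_Y)/D_X+N_Y(D_Y-D_X)/(D_XD_Y)$. The first term is bounded by the numerator estimate. For the second, $N_Y/D_Y=\bE[\tilde Y^r]=O((1+|t|)^r)$ and $|D_X-D_Y|/D_X=O((1+t^4)/m)$. Since $|t|<m^{1/8}$, we have $t^4/m<m^{-1/2}$, so $D_X=(1+o(1))D_Y$ and the second term is $O((1+|t|)^{r+4}/m)$ as well. Rescaling back by $\sqrt m$ and $m$ gives the stated bounds.
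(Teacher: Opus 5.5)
Your proposal is correct and follows essentially the same route as the paper. Both arguments rescale by $\sqrt m$ and write the conditional moments as ratios of tail integrals. Both compare the denominators via Lemma~\ref{lemma:sphere_to_gaussian_CDF} and the numerators via the pointwise density bound, reduce everything to the Gaussian tail-moment estimate $\int_t^\infty |u|^k(1+u^4)\phi(u)\,du=O_k\big((1+|t|^{k+4})(1-\Phi(t))\big)$, and combine using $|t|<m^{1/8}$. The differences are cosmetic: the paper proves the tail-moment estimate by a dyadic geometric-series comparison rather than your Mills-ratio/integration-by-parts argument, and it does not separately treat small $m$ or the cutoff at $\sqrt m$, since the pointwise density bound already covers $|u|>\sqrt m$.
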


\subsection{Subgaussian random variables.}
A random variable $X$ is called \textit{subgaussian} if there exists a positive number $\sigma^2$ such that \[\bE\big[\exp\big(t (X-\bE X)\big)\big]\leq \exp\big(\sigma^2t^2/2\big) \text{ for all }t\in \bR.\] The smallest such constant $\sigma^2$ is called the \textit{variance proxy}. Two basic examples of subgaussian variables are Gaussian random variables, as follows from a direct calculation, and bounded random variables, by Hoeffding's lemma.

\begin{lemma}\label{lemma:hoeffding}
Let $Z$ be a random variable with $|Z|\leq M$. Then, $Z$ is subgaussian with variance proxy at most $\sigma^2\leq O(M^2)$.
\end{lemma}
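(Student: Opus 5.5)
The statement to prove is Lemma~\ref{lemma:hoeffding}: a random variable $Z$ with $|Z|\le M$ is subgaussian with variance proxy $O(M^2)$. This is Hoeffding's lemma, and I would prove it by the standard convexity argument.

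First, center the variable. Set $Y=Z-\bE Z$, so that $\bE Y=0$ and $Y\in[m-M-\bE Z,\ M-\bE Z]$, an interval $[\alpha,\beta]$ of length at most $2M$. Since $\alpha\le 0\le\beta$, convexity of $y\mapsto e^{ty}$ on $[\alpha,\beta]$ gives
\[e^{ty}\le \frac{\beta-y}{\beta-\alpha}e^{t\alpha}+\frac{y-\alpha}{\beta-\alpha}e^{t\beta}.\]
Taking expectations and using $\bE Y=0$ yields
\[\bE e^{tY}\le \frac{\beta e^{t\alpha}-\alpha e^{t\beta}}{\beta-\alpha}.\]
(If $\beta=\alpha$ then $Y\equiv 0$ and there is nothing to prove.)

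Next, write $u=t(\beta-\alpha)$ and $q=-\alpha/(\beta-\alpha)\in[0,1]$. The right-hand side equals $e^{L(u)}$, where
\[L(u)=-qu+\log\big(1-q+qe^{u}\big).\]
One checks directly that $L(0)=0$ and $L'(0)=0$. The second derivative is $L''(u)=\rho(1-\rho)$ with $\rho=qe^u/(1-q+qe^u)\in[0,1]$, so $L''(u)\le 1/4$.

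By Taylor's theorem with remainder, $L(u)\le u^2/8$. Hence
\[\bE e^{tY}\le \exp\big(t^2(\beta-\alpha)^2/8\big)\le \exp\big(t^2M^2/2\big).\]
This gives variance proxy at most $M^2$, which is $O(M^2)$ as claimed.

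No step here is a real obstacle. The only point requiring care is the computation $L''=\rho(1-\rho)$. If one only needs the $O(M^2)$ bound, a cruder alternative avoids it entirely: use $|Y|\le 2M$, $e^x\le 1+x+x^2e^{|x|}/2$ and $\bE Y=0$ to handle $|t|\le 1/M$, and use $\bE e^{tY}\le e^{2M|t|}\le e^{2M^2t^2}$ for $|t|\ge 1/M$.
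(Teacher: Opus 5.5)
Your proof is correct. It is the classical convexity proof of Hoeffding's lemma and gives $\bE e^{t(Z-\bE Z)}\le e^{t^2(\beta-\alpha)^2/8}\le e^{t^2M^2/2}$ for every real $t$. That is variance proxy $M^2$, in exactly the form the paper's definition requires. Your fallback argument via $e^x\le 1+x+x^2e^{|x|}/2$ is also valid and gives proxy $4e^2M^2$. One small typo: the interval for $Y$ should be $[-M-\bE Z,\, M-\bE Z]$, not $[m-M-\bE Z,\, M-\bE Z]$.

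The paper gives no proof of its own. It only cites Vershynin (Exercise 2.24) and Wainwright (Example 2.4). Wainwright's example takes a different and shorter route, by symmetrization:
\begin{itemize}
\item take an independent copy $Z'$ and a Rademacher sign $\varepsilon$;
\item Jensen gives $\bE e^{tY}\le \bE e^{t(Z-Z')}=\bE e^{t\varepsilon(Z-Z')}$;
\item then use $\cosh x\le e^{x^2/2}$ together with $|Z-Z'|\le 2M$.
\end{itemize}
This avoids computing $L''$, but it only yields proxy $4M^2$ instead of your sharp $M^2$. Either constant is enough for the $O(M^2)$ claim, and hence for the paper's use of the lemma, where it bounds the truncated centered coordinates $\xi_i$ by proxy $\widetilde O(1/d)$.
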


For a proof of Hoeffding's lemma, see  e.g. Exercise 2.24 of \cite{Ver} or Example 2.4 of \cite{Wainwright}. A very important theorem about the subgaussian random variables is the Hanson-Wright inequality, which states the following (for a proof, see the very nice article by Rudelson and Vershynin \cite{RV13}).

\begin{theorem}\label{thm:hanson_wright}
Let $M$ be a symmetric $m\times m$ matrix with zero diagonal and let $X_1, \dots, X_m$ be independent centered subgaussian random variables of variance proxy $\sigma^2$. If $X=(X_1, \dots, X_m)$ and $0<\theta\leq c/(\|M\|_{op}\sigma^2)$, then we have
\[\bE[e^{\theta X^TMX}]\leq e^{C \theta^2 \|M\|_{HS}^2\sigma^4},\]
where $c, C>0$ are absolute constants.
\end{theorem}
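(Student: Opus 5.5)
The plan is the standard Rudelson--Vershynin argument: decoupling, then conditioning, then comparison with a Gaussian vector, then diagonalization. We may take $\sigma^2>0$ and $M\neq 0$, since otherwise $X^TMX=0$ almost surely.

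\textbf{Decoupling.} Let $\delta_1,\dots,\delta_m$ be i.i.d.\ Bernoulli$(1/2)$ selectors, independent of $X$. Since $M$ has zero diagonal and $\bE[\delta_i(1-\delta_j)]=1/4$ for $i\neq j$, we have
\[X^TMX=4\,\bE_\delta\Big[\sum_{i,j}\delta_i(1-\delta_j)M_{ij}X_iX_j\Big].\]
Jensen's inequality for the convex map $u\mapsto e^{\theta u}$ then gives
\[\bE\big[e^{\theta X^TMX}\big]\leq \bE_\delta\,\bE_X\Big[\exp\Big(4\theta\sum_{i\in\Lambda,\,j\notin\Lambda}M_{ij}X_iX_j\Big)\Big],\qquad \Lambda=\{i:\delta_i=1\}.\]
Fix $\Lambda$. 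The variables $(X_i)_{i\in\Lambda}$ and $(X_j)_{j\notin\Lambda}$ are independent. We may therefore replace the second group by an independent copy $X'$. We then reinsert the missing coordinates and use Jensen once more: conditionally on everything else they enter linearly and have mean zero, so $\bE[e^{\cdot}]\geq 1$ for the corresponding factor. This bounds the right-hand side by $\bE\exp(4\theta X^TMX')$, where $X'$ is an independent copy of $X$.

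\textbf{Conditioning and Gaussian comparison.} Condition on $X'$. Then $X^TMX'=\sum_i X_i(MX')_i$ is a sum of independent centered subgaussian variables, so
\[\bE_X\big[e^{4\theta X^TMX'}\big]\leq \exp\big(8\theta^2\sigma^2\|MX'\|_2^2\big).\]
Next let $g\sim\cN(0,I_m)$ be independent of everything. We linearize the quadratic exponent using the identity $\exp(\tfrac{\mu^2}{2}\|v\|^2)=\bE_g\exp(\mu\langle g,v\rangle)$ with $\mu=4\theta\sigma$ and $v=MX'$. We swap the order of expectation (Fubini) and apply the subgaussian bound to $X'$ now, with $g$ fixed, using $\langle g,MX'\rangle=\langle Mg,X'\rangle$ and the symmetry of $M$. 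This yields
\[\bE\big[e^{\theta X^TMX}\big]\leq \bE_g\exp\big(8\theta^2\sigma^4\|Mg\|_2^2\big).\]

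\textbf{Gaussian computation.} Diagonalize $M=U^TDU$ with eigenvalues $s_i$. By rotation invariance of $g$,
\[\|Mg\|_2^2\overset{d}{=}\sum_i s_i^2g_i^2,\]
and therefore
\[\bE_g\exp\big(8\theta^2\sigma^4\|Mg\|_2^2\big)=\prod_i\big(1-16\theta^2\sigma^4s_i^2\big)^{-1/2}.\]
Choose $c$ small, say $c=1/8$. The hypothesis $\theta\leq c/(\|M\|_{op}\sigma^2)$ then gives $16\theta^2\sigma^4s_i^2\leq 1/4$ for every $i$. On $[0,1/4]$ we have $(1-x)^{-1/2}\leq e^{x}$, so the product is at most
\[\exp\Big(16\theta^2\sigma^4\sum_i s_i^2\Big)=\exp\big(16\theta^2\sigma^4\|M\|_{HS}^2\big).\]
This is the claim with $C=16$.

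\textbf{Main obstacle.} The decoupling step is the one needing care, in particular the justification for reinserting the unused coordinates. Everything afterwards is mechanical. We also note that only the upper bound on $\theta$ is needed for the Gaussian moment generating function to be finite. The fact that $X$ is centered is what removes any linear term in both the decoupling step and the conditioning step.
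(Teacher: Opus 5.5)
Your argument is correct (it gives $c=1/8$, $C=16$) and is essentially the Rudelson--Vershynin proof that the paper cites instead of proving the statement itself: decoupling, conditioning on one copy, Gaussian linearization of $\exp(\tfrac{\mu^2}{2}\|MX'\|_2^2)$, and diagonalization. The only variation is in the decoupling: you reinsert coordinates via Jensen to reach $\bE e^{4\theta X^TMX'}$, whereas Rudelson and Vershynin keep the restricted matrix $P_\Lambda M P_{\Lambda^c}$ and use that its operator and Hilbert--Schmidt norms are at most those of $M$. Your reinsertion is valid because the added terms have zero conditional mean; note that the cross terms $X_iX'_j$ with $i\notin\Lambda$, $j\in\Lambda$ are bilinear rather than ``linear'', but zero conditional mean is all Jensen needs.
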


In the above statement, $\|M\|_{op}$ denotes the operator norm of $M$, defined by $\|M\|_{op}=\max_{\|v\|_2=1}\|Mv\|_2$, and $\|M\|_{HS}$ denotes its Hilbert-Schmidt (or Frobenius) norm, defined by $\|M\|_{HS}^2=\sum_{i, j=1}^m M_{ij}^2$. In general, we have $\|M\|_{op}\leq \|M\|_{HS}$, and we will use this at various points in the argument.

\subsection{Gram-Schmidt procedure.}\label{subsec:gram_schmidt}
Using the Gram-Schmidt procedure, we can convert the vectors $\bx_i$ into the following form. For a sequence of $n$ linearly independent points $\bx_1, \dots, \bx_n\in \bS^{d-1}$, let us reveal the subspaces ${\rm span}\{\bx_1\}, {\rm span}\{\bx_1, \bx_2\}$, $\dots, {\rm span}\{\bx_1, \dots, \bx_n\}$. Let us denote by $\be_1, \dots, \be_n$ the orthonormal basis of ${\rm span}\{\bx_1, \dots, \bx_n\}$ in which ${\rm span}\{\be_1, \dots, \be_i\}={\rm span}\{\bx_1, \dots, \bx_i\}$ for all $1\leq i\leq n$. Moreover, let $\bv_1, \dots, \bv_n$ be the representation of the vectors in the basis $\be_1, \dots, \be_n$, or more formally, let $\bv_i(j)=\langle \bx_i, \be_j\rangle$. The set of vectors $\bv_1, \dots, \bv_n$ obtained in this way is also known as the Bartlett decomposition.

A very important piece of notation related to this basis will be $\pi_s(\bx)$, which denotes the projection of a vector $\bx$ to the space spanned by $\be_1, \dots, \be_s$.

Note that $\bv_i$ is a unit vector all of whose coordinates after the first $i$ are zero. The crucial property of the vectors $\bv_1, \dots, \bv_n$ which we will use is the following.

\begin{lemma}\label{lemma:distribution_v_i}
Let $1\leq s< i\leq n$. Then, conditional on the coordinates $\bv_i(1), \dots, \bv_i(s-1)$, the value $\bv_i(s)/r_i(s)$ is distributed as the first coordinate of a uniform random point $\by$ on the unit sphere $\bS^{d-s}\subseteq \bR^{d-s+1}$, where $r_i(s)=\sqrt{1-\sum_{k=1}^{s-1} \bv_i(k)^2}$ is the residual length of the vector $\bv_i$ after coordinate $s$. In other words, the first $i-1$ coordinates of $\bv_i$ follow the same distribution as the first $i-1$ coordinates of a uniform random point on the sphere $\bS^{d-1}\subseteq \bR^d$.
\end{lemma}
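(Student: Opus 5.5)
We will prove Lemma~\ref{lemma:distribution_v_i} by exploiting the rotational invariance of the uniform measure on $\bS^{d-1}$ together with the independence of $\bx_i$ from $\bx_1,\dots,\bx_{i-1}$. The key point is that, for $j<i$, the basis vector $\be_j$ is a function of $\bx_1,\dots,\bx_j$ only. Hence the orthonormal vectors $\be_1,\dots,\be_{i-1}$ are measurable with respect to $\bx_1,\dots,\bx_{i-1}$, and therefore independent of $\bx_i$.

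First, we condition on $\bx_1,\dots,\bx_{i-1}$; linear independence holds almost surely. This fixes an orthonormal system $\be_1,\dots,\be_{i-1}$, which we complete to an orthonormal basis of $\bR^d$ by some deterministic rule. Let $U$ be the orthogonal matrix sending this basis to the standard basis. Since $\bx_i$ is uniform on $\bS^{d-1}$ and independent of $U$, the vector $\by=U\bx_i$ is again uniform on $\bS^{d-1}$. Moreover, $\bv_i(j)=\langle\bx_i,\be_j\rangle=\by(j)$ for $j\le i-1$. This already yields the final sentence of the statement: the first $i-1$ coordinates of $\bv_i$ have the law of the first $i-1$ coordinates of a uniform point on $\bS^{d-1}$. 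Since this law does not depend on $\bx_1,\dots,\bx_{i-1}$, the same holds unconditionally.

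Next we derive the conditional statement from the following standard fact: if $\by$ is uniform on $\bS^{d-1}$ and we condition on $\by(1),\dots,\by(s-1)$, then the remaining vector $(\by(s),\dots,\by(d))$ is uniform on the sphere of radius $r=\sqrt{1-\sum_{k<s}\by(k)^2}$ in $\bR^{d-s+1}$. To prove this fact, write $\by=\bg/\|\bg\|$ with $\bg$ a standard Gaussian vector. The tail $(\bg(s),\dots,\bg(d))$ factors as its norm times its direction, and the direction is uniform on $\bS^{d-s}$ and independent of the norm and of $\bg(1),\dots,\bg(s-1)$. The direction of the tail of $\by$ equals the direction of the tail of $\bg$. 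Meanwhile, $\by(1),\dots,\by(s-1)$ are functions of $\bg(1),\dots,\bg(s-1)$ and the tail norm only. So the tail direction is independent of the first $s-1$ coordinates of $\by$. Alternatively, one can argue via invariance under orthogonal maps acting on the last $d-s+1$ coordinates.

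Consequently, $\bv_i(s)/r_i(s)=\by(s)/r$ is the first coordinate of a uniform point on $\bS^{d-s}$, and it is independent of $\bv_i(1),\dots,\bv_i(s-1)$. This gives the first claim, since $s<i$ ensures $s\le i-1$. The only delicate step is the measurability and independence bookkeeping: one must check that $\be_s$ depends only on $\bx_1,\dots,\bx_s$, so that using $\be_s$ for $s\le i-1$ does not involve $\bx_i$. Everything else is routine.
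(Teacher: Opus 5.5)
Your proof is correct and follows essentially the same route as the paper: both arguments use that $\be_s$ is a fixed unit vector in the orthogonal complement of $\spn\{\be_1,\dots,\be_{s-1}\}$, determined by $\bx_1,\dots,\bx_s$ and hence independent of $\bx_i$, and then invoke rotational invariance to conclude that the residual direction of $\bx_i$ is uniform in that complement. You additionally make explicit two steps the paper compresses into ``by rotational invariance'': the measurability and independence bookkeeping for the Gram--Schmidt basis, and, via the representation $\by=\bg/\|\bg\|$, the fact that the tail direction is uniform and independent of the first $s-1$ coordinates.
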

\begin{proof}
Condition on $\bv_i(1), \dots, \bv_i(s-1)$, or equivalently on the projection of $\bx_i$ to $\spn\{\be_1,\dots,\be_{s-1}\}$. The remaining component of $\bx_i$ lies in the orthogonal complement of this $(s-1)$-dimensional subspace and has length $r_i(s)$. By rotational invariance of the uniform measure on the sphere, its direction is uniform on the unit sphere in this orthogonal complement. Since $\be_s$ is one fixed unit vector in that complement, $\bv_i(s)/r_i(s)=\langle \bx_i-\pi_{s-1}(\bx_i),\be_s\rangle/r_i(s)$ has the same distribution as the first coordinate of a uniform random point on $\bS^{d-s}\subseteq \bR^{d-s+1}$.
\end{proof}

\subsection{\texorpdfstring{The pseudorandomness events $\cE$ and $\cF$}{The events E and F}.}\label{subsec:events}

In Section~\ref{subsec:proof_outline}, we have mentioned that in the proof of Proposition~\ref{prop:main_prop}, we will require a pseudorandomness assumption on the graph $H$ and the vectors defining $G(n, d, p)$. We will now state these conditions precisely, and show that $G(n, d, p)$ satisfies them with high probability.

Let us begin by introducing some notation which will be used throughout the proof. Let us fix a graph $H$ on $n$ vertices. For a pair of distinct vertices $i, j\in [n]$, we define $\lambda_{ij}=a/p$ if $ij\in E(H)$ and $\lambda_{ij}=-a/(1-p)$ otherwise (recall that $a=\phi(z_p)$ and $z_p=\Phi^{-1}(1-p)$ were defined in Section~\ref{subsec:proof_outline}). Observe that if $A$ is the adjacency matrix of $H$, then $\lambda_{ij}=\frac{a}{p(1-p)}(A_{ij}-p)$ for $i\neq j$. Finally, for the sake of convenience we also define $\lambda_{ii}=0$ for all $1\leq i\leq n$.

The event $\cE$ encodes regularity properties of $H$, such as the concentration of degrees and codegrees around their expected values, as well as a bound on the eigenvalues of the adjacency matrix.

Note that one may equivalently state all of the required conditions in terms of the centered adjacency matrix $A-pJ$ and in terms of the above-defined variables $\lambda_{ij}$ (here, $J$ denotes the all-ones matrix). Stating the condition in the first way will make proving it easier, while the application of these conditions in the proof will require the second formulation. Therefore, we write both conditions, keeping in mind that $\lambda_{ij}=\frac{a}{p(1-p)}(A_{ij}-p)=\Theta\big(z_p (A_{ij}-p)\big)$.

\begin{definition}\label{def:graph_event_E_reg}
Let $K^{\cE}$ be a large constant. Define $\cE$ to be the collection of graphs $H$ on the vertex set $[n]$ which satisfy the following inequalities (where $A$ denotes the adjacency matrix of $H$):
\begin{enumerate}[label=(\roman*)]
    \item \label{item:degree_concentration} For $k\in \{1, 2\}$ and any distinct $s_1, \dots, s_k\in [n]$ we have
    \[\sum_{1\leq i\leq n} |A_{s_1i}-p|\cdots |A_{s_ki}-p|\leq K^{\cE}p^k n, \quad \text{ which implies}\quad
    \sum_{1\leq i\leq n} |\lambda_{s_1i}\cdots \lambda_{s_ki}| \leq O(K^{\cE} z_p^k p^k n).\]

    \item \label{item:degree_cancellation} For $k\in \{1, 2\}$, any interval $I\subseteq [n]$ and any distinct $s_1, \dots, s_k\in [n]$ we have
    \[\Big|\!\sum_{i\in I} (A_{s_1i}-p)\cdots (A_{s_ki}-p)\Big|\!\leq\! K^{\cE}\sqrt{p^k n\log n}, \text{ which implies } \Big|\!\sum_{i\in I} \!\!\lambda_{s_1i}\cdots \lambda_{s_ki}\Big|\!\leq \!O(K^{\cE}z_p^k \sqrt{p^k n\log n}).\]

    \item \label{item:eigenvalue_bound} We have $\|A-pJ\|_{op}\leq K^{\cE}\sqrt{np}$. If the $n\times n$ matrix $\Lambda$ is defined by $\Lambda_{ij}=\lambda_{ij}$, this also implies $\|\Lambda\|_{op}\leq O(K^{\cE}z_p\sqrt{np})$.

    \item \label{item:triangle_concentration} For any $s\in [n]$ we have \[\sum_{s<i<j\leq n}\big|(A_{si}-p)(A_{sj}-p)(A_{ij}-p)\big|\leq K^{\cE}p^3 n^2, \text{which implies} \sum_{s<i<j\leq n}\big|\lambda_{si}\lambda_{sj}\lambda_{ij}\big|\leq O(K^{\cE}n^2p^3z_p^3).\]
\end{enumerate}
\end{definition}

\begin{lemma}\label{lemma:geometric_graph_event_E}
There exist absolute constants $K^{\cE}, C$ such that for $1/3\geq p\geq n^{-1/3}\polylog(n)$ and $d\geq C n^2p\polylog(n)$ we have $\Pb[G(n, d, p)\in \cE]=1-o(1)$ as $n\to \infty$.
\end{lemma}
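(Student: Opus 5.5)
The plan is to deduce the lemma from the corresponding statement for $G(n,p)$, using a cheap comparison between the two models that holds for $d\geq Cn^2p\polylog(n)$. That dimension is far below the distinguishability threshold. The observation is that each condition in Definition~\ref{def:graph_event_E_reg} involves either a single vertex (or pair), with the remaining vertices contributing roughly independently, or a spectral quantity. For each, I will exhibit a conditional structure in $G(n,d,p)$ that mimics $G(n,p)$ closely enough to run the same concentration argument. Conditions \ref{item:degree_concentration}, \ref{item:degree_cancellation} and \ref{item:triangle_concentration} are handled by conditioning on a few vectors; condition \ref{item:eigenvalue_bound} requires a trace/moment argument.

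\textbf{Conditions \ref{item:degree_concentration} and \ref{item:degree_cancellation}.} Fix $s_1,\dots,s_k$ with $k\leq 2$ and condition on $\bx_{s_1},\dots,\bx_{s_k}$. Given these, the indicators $(A_{s_1i},\dots,A_{s_ki})$ for $i\notin\{s_j\}$ are i.i.d. across $i$. Hence the sums in \ref{item:degree_concentration} and \ref{item:degree_cancellation} are sums of independent bounded terms, and Bernstein's inequality applies.

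What remains is to control the conditional mean and variance of each summand. For $k=1$ the conditional mean of $A_{s_1i}-p$ is exactly $0$. For $k=2$, the conditional mean of $(A_{s_1i}-p)(A_{s_2i}-p)$ depends on $\langle \bx_{s_1},\bx_{s_2}\rangle$, which is $O(\sqrt{\log n/d})$ with high probability. I will show this mean is $O(p^2 z_p^2\sqrt{\log n/d})$, by projecting $\bx_i$ onto the span of $\bx_{s_1},\bx_{s_2}$ (Lemma~\ref{lemma:distribution_v_i}) and comparing with a bivariate Gaussian with correlation $\rho=O(\sqrt{\log n/d})$ (Lemma~\ref{lemma:sphere_to_gaussian_CDF}). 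Multiplied by $n$, this gives a bias of $O(np^2z_p^2\sqrt{\log n/d})$, and this must be $\leq \sqrt{p^2 n\log n}$. That requires $d\gtrsim np^2 z_p^4$, which is implied by the hypothesis on $d$.

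The conditional variance of a summand is $O(p^k)$, so Bernstein gives deviations $O(\sqrt{p^kn\log n}+\log n)$ with failure probability $n^{-10}$. The $\log n$ term is dominated because $p^2n\geq \log n$. A union bound over the $O(n^2)$ choices of $s_j$ and the $O(n^2)$ intervals $I$ then finishes both conditions; condition \ref{item:degree_concentration} follows from the same bounds, since $|A_{si}-p|$ has conditional mean $2p(1-p)$.

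\textbf{Condition \ref{item:triangle_concentration}.} Condition on $\bx_s$; the vertices $i\neq s$ remain i.i.d. Expanding $\big|(A_{si}-p)(A_{sj}-p)(A_{ij}-p)\big|$, the dominant contribution comes from pairs with $A_{si}=A_{sj}=1$. The number of neighbours of $s$ is $O(np)$ by condition \ref{item:degree_concentration}, which gives at most $O(n^2p^2)$ such pairs, each contributing at most $1$. We need the count of pairs among these that are themselves adjacent (or the appropriately weighted sum) to be at most $O(n^2p^3)$. Given $\bx_s$, the points in the neighbourhood form a spherical cap with a cap-internal edge probability of at most $p(1+O(z_p^3/\sqrt d))\leq 2p$. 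This follows from the Gaussian comparison for the projected inner product. A U-statistic concentration argument (or simply Markov plus the codegree bound \ref{item:degree_concentration}) then gives $O(n^2p^3)$. The remaining patterns carry extra factors of $p$ and are trivially bounded by \ref{item:degree_concentration}.

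\textbf{Condition \ref{item:eigenvalue_bound} (main obstacle).} This is the step where the geometric correlations are global. The plan is to write $A-pJ=(A-\bE[A\mid \text{nothing}])$ and control $\|A-pJ\|_{op}$ through its "kernel" structure. The edge indicator is $\mathbf 1[\langle\bx_i,\bx_j\rangle\geq c_p/\sqrt d]$, and I would expand this function of the inner product in Gegenbauer/Hermite harmonics:
\[A_{ij}-p=\sum_{\ell\geq 1}\alpha_\ell\, h_\ell(\sqrt d\langle\bx_i,\bx_j\rangle)+\text{(off-diagonal remainder)}.\]
The degree-$\ell$ part is the Gram matrix of degree-$\ell$ spherical harmonic features. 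Its "signal" is bounded in operator norm by roughly $\alpha_\ell\, n/d^{\ell/2}$, which follows from the norm of a random Gram matrix, $\|X X^T\|\lesssim n/d+1$ after scaling. The leading term $\ell=1$ has $\alpha_1\asymp a$, so it contributes $a\sqrt d\cdot(n/d+\sqrt{n/d})\cdot$ (appropriate normalisation) $\lesssim pz_p\, n/\sqrt d$. This is $\leq\sqrt{np}$ exactly when $d\gtrsim np z_p^2$, which holds under the hypothesis.

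The "noise" part, consisting of the high-degree harmonics, behaves like a Wigner matrix with entry variance $p$. I would bound it by the trace method, $\bE\operatorname{tr}(A-pJ)^{2m}$ with $m\asymp\log n$, comparing closed walks to the $G(n,p)$ count. Each walk's expectation differs from the independent case only through cycles in the walk's graph. I would use the sphere cycle-correlation bound $\bE\prod_{\text{cycle}}(A_e-p)\lesssim p^{|C|}(\polylog/\sqrt d)^{|C|-2}$ to show these are negligible once $d\geq Cn^2p\polylog(n)$. This requirement on $d$ is where the hypothesis of the lemma enters most severely, and it is the step I expect to need the most care.
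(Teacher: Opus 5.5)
Your treatment of conditions \ref{item:degree_concentration}, \ref{item:degree_cancellation} and \ref{item:triangle_concentration} is sound and essentially matches the paper. The paper also conditions on the fixed vectors and shows that the conditional probability $\mu$ of landing in the prescribed intersection of caps is $p^m(1-p)^\ell\big(1+O(1/\sqrt{p^m(1-p)^\ell n\log n})\big)$. It then applies Chernoff to the conditionally binomial count. The paper obtains $\mu$ from Lemma~5.1 of \cite{LMSY22}. Your bivariate-Gaussian computation also works, but it needs a two-dimensional analogue of Lemma~\ref{lemma:sphere_to_gaussian_CDF} and a union bound on $\max_{s_1\neq s_2}|\langle\bx_{s_1},\bx_{s_2}\rangle|$. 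For \ref{item:triangle_concentration} the paper uses your parenthetical remark deterministically: $\sum_i|A_{si}-p|\cdot\max_i\sum_j|A_{ij}-p||A_{sj}-p|\le O(pn)\cdot O(p^2n)$. So the cap-internal edge probability and U-statistic argument are unnecessary.

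The genuine gap is condition \ref{item:eigenvalue_bound}. What you give there is a plan, and its central estimate is exactly the hard part.

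A trace-method bound with $m\asymp\log n$ requires controlling $\bE\prod_e(A_e-p)^{k_e}$ for every multigraph traced out by a closed walk of length $2m$. These are connected graphs with up to $\Theta(\log n)$ edges and possibly many overlapping cycles. The bounds must also have constants whose growth in the number of edges is mild enough to survive the walk count. Tree-like pieces do behave as in $G(n,p)$, since edges of a forest are independent in the spherical model. However, a correlation bound for single cycles says nothing about graphs in which several cycles share edges. The Bangachev--Bresler bound \cite{BB24} quoted later in the paper is stated only for constant-size graphs.

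The sketch is also internally inconsistent. You split $A-pJ$ into low-degree harmonic ``signal'' and high-degree ``noise'', but then run the trace method on the full $A-pJ$. As a result, it is never specified which matrix the walk expansion is applied to, and the cross terms are never handled. Your claim that $d\ge Cn^2p\polylog(n)$ is what makes the cyclic contributions negligible is not backed by any computation.

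Spectral bounds of this kind are substantial results in their own right. The paper closes this step by citing \cite[Lemma~2]{ABI24}. That result gives $\|A-pJ\|_{op}=O(\sqrt{np})$ with probability $1-n^{-\Omega(1)}$ whenever $np>(\log n)^2$ and $d\ge C(n^2p^2+\log^4 n)\log^4 n$. Both conditions are implied by the hypotheses, since $n^2p\ge n^2p^2$. Without that citation or a complete walk-graph moment analysis, \ref{item:eigenvalue_bound} remains unproved.
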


Let us note that it should not be surprising that any of these pseudorandomness properties should hold with high probability for $G(n, d, p)$. Indeed, all of these properties can be verified rather simply for the corresponding binomial random graph $G(n, p)$. So, if $G(n, d, p)$ exhibited a different behavior than $G(n, p)$ in regard to any of these properties, we would have a test for distinguishing these two models even above the $(np\log p^{-1})^3$ threshold.

Let us now discuss the pseudorandomness properties of the vectors $\bv_1, \dots, \bv_n$.

\begin{definition}[Vector event $\cF$.]\label{def:vector_event_F}
Fix an integer $0\leq s\leq n$, and suppose that $\bv_1, \dots, \bv_n$ is an $n$-tuple of vectors sampled from the unit sphere $\bS^{d-1}\subseteq \bR^d$.
\begin{enumerate}[label=(\roman*)]
    \item \label{item:entry_size} Let $\cT_s$ (\textit{the truncation event}) be the event that for any $1\leq s<i\leq n$ we have
    \[|\bv_i(s)|\leq 10\sqrt{\frac{\log d}{d}}.\]
    \item \label{item:norm_concentration}\label{item:inner_product} Let $\Pi_s$ (\textit{the projection event}) be the event that
    \[ \Big| \|\pi_s(\bv_i)\|_2^2-\frac{s}{d}\Big|\leq 10\frac{\sqrt{n\log d}}{d}\text{ for all $s<i\leq n,$ and }  \big|\langle \pi_s(\bv_i),\pi_s(\bv_j)\rangle\big|\leq 10\frac{\sqrt{n\log d}}{d}\text{ for all $s<i<j\leq n$}.\]
\end{enumerate}
Finally, set $\cF=\bigwedge_{s=1}^n(\cT_s\wedge \Pi_s)$.
\end{definition}

Note that $\Pi_s$ depends only on the coordinates $\bv_i(j)$ with $j\leq s$.

\begin{lemma}\label{lemma:vector_event_F}
Let $\bv_1,\dots,\bv_n$ be sampled according to Lemma~\ref{lemma:distribution_v_i} (i.e. a Bartlett decomposition of an $n$-tuple of independent uniform vectors on $\bS^{d-1}$), and assume that $n\leq d/2$. Then, we have \[\Pb[(\bv_1,\dots,\bv_n)\text{ satisfy } \cF]=1-o(1) \text{ as }n\to \infty.\]
\end{lemma}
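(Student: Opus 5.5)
Since the distributional facts needed are all exact consequences of Lemma~\ref{lemma:distribution_v_i}, the plan is to prove the two parts of $\cF$ separately by union bounds. By Lemma~\ref{lemma:distribution_v_i}, for each $i$ the first $i-1$ coordinates of $\bv_i$ are distributed as the first $i-1$ coordinates of a uniform point $\by$ on $\bS^{d-1}$. Since the events in $\cT_s$ and $\Pi_s$ only involve coordinates $\bv_i(j)$ with $j\le s<i$, they only involve these ``free'' coordinates. It therefore suffices to bound each individual bad event by $d^{-c}$ for a sufficiently large $c$ and then union bound over at most $n^3\le d^3$ events. Here we use $n\le d/2$, so that $\log n\le\log d$.

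For the truncation events $\cT_s$, note that $\bv_i(s)$ is distributed as $y(s)$ for $\by$ uniform on $\bS^{d-1}$. By standard concentration on the sphere (or Lemma~\ref{lemma:sphere_to_gaussian_CDF} combined with a crude tail bound for large $t$), we have $\Pb[|y(1)|\ge t/\sqrt d]\le 2e^{-t^2/2}$ up to constants, valid for all $t$ since the density of $\cS_d$ is proportional to $(1-t^2/d)^{(d-3)/2}\le e^{-t^2(d-3)/(2d)}$. Taking $t=10\sqrt{\log d}$ gives a probability of at most $d^{-40}$, which easily absorbs the union bound over $n^2$ pairs $(s,i)$.

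For the projection events $\Pi_s$, we represent $\by=\bg/\|\bg\|$ with $\bg\sim\cN(0,I_d)$. Then $\|\pi_s(\bv_i)\|^2$ has the law of $\sum_{j\le s}g_j^2/\|\bg\|^2$. Bernstein's inequality for $\chi^2$ gives $|\sum_{j\le s}g_j^2-s|\le C\sqrt{s\log d}+C\log d$ and $|\|\bg\|^2-d|\le C\sqrt{d\log d}$ with probability $1-d^{-10}$. Combining these, the ratio lies within $O(\sqrt{s\log d}/d+\log d/d+s\sqrt{\log d}/d^{3/2})$ of $s/d$. Using $s\le n\le d/2$, this is at most $10\sqrt{n\log d}/d$ after adjusting constants, and the constants in the Bernstein step can be tuned so that the final constant is $10$. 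The inner products are handled the same way. Conditionally on $\pi_s(\bv_j)$, the vector $\pi_s(\bv_i)$ is an independent copy, so with $\pi_s(\bv_i)=\pi_s(\bg)/\|\bg\|$ and $\bg$ independent of $\bv_j$, the quantity $\langle\pi_s(\bg),\pi_s(\bv_j)\rangle$ is Gaussian with variance $\|\pi_s(\bv_j)\|^2\le 2s/d$ on the good event. Dividing by $\|\bg\|\approx\sqrt d$ gives a bound of $O(\sqrt{s\log d}/d)$ with probability $1-d^{-10}$.

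The only real obstacle is getting the absolute constant $10$ rather than an unspecified constant. I would handle it by choosing the failure probabilities as $d^{-4}$, so that the union bound over $n^2\le d^2$ pairs and $n$ values of $s$ still gives $o(1)$. This allows tail thresholds like $\sqrt{2\cdot 5\log d}$, which fit under $10\sqrt{\log d}$ with room to spare for the lower-order error terms once $d$ is large.
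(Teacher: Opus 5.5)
Your route is the same as the paper's: write the free coordinates as $\bg_i/\|\bg_i\|$, use $\chi^2$ concentration for $\|\pi_s(\bg_i)\|^2$ and $\|\bg_i\|^2$, treat the inner products as conditionally Gaussian, and union bound over $O(n^3)$ events. However, one calibration step fails as written.

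You require every bad event to have probability at most $d^{-c}$. This forces $\chi^2$ tail parameters $t\asymp\log d$, so the deviation of $\|\pi_s(\bv_i)\|^2$ from $s/d$ contains a term $C\log d/d$. You assert that this is at most $10\sqrt{n\log d}/d$ ``using $s\le n\le d/2$''. That is false once $\log d$ exceeds a constant multiple of $n$, since $\log d\le c\sqrt{n\log d}$ only when $\log d\le c^2 n$.

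The lemma assumes only $n\le d/2$, so $d$ may be, e.g., $e^{n^2}$. In that regime the bad event $\{\|\pi_s(\bv_i)\|^2-s/d>10\sqrt{n\log d}/d\}$ genuinely has probability far larger than $d^{-c}$, because the upper tail of $\chi^2_s$ at $s+x$ decays only like $e^{-x/2}$. So no tuning of constants rescues the $d^{-c}$ scheme, and the same problem affects the inner products.

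The fix is to calibrate by $n$ instead. The union bound is over only $O(n^3)$ events, so failure probability $n^{-4}$ per event suffices, i.e.\ take $t=O(\log n)$. This gives deviations $O(\sqrt{n\log n})$, using $\log n\le n$ to absorb the linear term. The resulting bounds are $O(\sqrt{n\log n}/d)\le O(\sqrt{n\log d}/d)$ uniformly in $d\ge 2n$. This is exactly what the paper does when it proves all bounds ``with $\log n$ instead of $\log d$''. For $\cT_s$ your $\log d$ calibration is harmless, since there the threshold itself is $10\sqrt{\log d/d}$.

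There is also a smaller slip in the inner-product step. On the good event you do not have $\|\pi_s(\bv_j)\|^2\le 2s/d$ for small $s$. For $s=1$ this quantity is $\bv_j(1)^2$, which exceeds $2/d$ with constant probability; you only have $\|\pi_s(\bv_j)\|^2\le (s+10\sqrt{n\log d})/d$. Accordingly, your intermediate claim that $|\langle\pi_s(\bv_i),\pi_s(\bv_j)\rangle|=O(\sqrt{s\log d}/d)$ with probability $1-d^{-10}$ is false. For $s=1$, $d\,\bv_i(1)\bv_j(1)$ is approximately a product of two independent standard Gaussians and has only exponential tails.

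What you actually need is the weaker bound coming from $\|\pi_s(\bv_j)\|^2=O(n/d)$; the paper uses $\|\pi_s(\bg_i)\|^2\le s+4\sqrt{n\log n}\le 2n$. This gives $O(\sqrt{n\log n}/d)$ and, with constants tracked, fits under $10\sqrt{n\log d}/d$.
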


Throughout the proof, we will often work only on the event $\cF$ or on the event $\cT_s$. Therefore, in several sections we will find it very useful to work with truncated variables $\bu_i(s)$ instead of $\bv_i(s)$, where we define $\bu_i(s)=\bv_i(s)\mathbf{1}_{|\bv_i(s)|\leq 10\sqrt{\log d}/\sqrt{d}}$.

\section{Pointwise probability bound and the proof of Theorem~\ref{thm:main}}\label{sec:main_estimate}

In this section, we discuss the centerpiece of our paper, Proposition~\ref{prop:main_prop}, which gives a very precise bound on $\Pb[G(n, d, p)=H\wedge \cF]$. We begin the section by stating and discussing this estimate. Once we do this, we will derive Theorem~\ref{thm:main} from it.

To state Proposition~\ref{prop:main_prop}, we will need a piece of notation. Recall, we have already defined the signed triangle count as $N_{\triangle}(H)=\sum_{1\leq i<j<k\leq n} (A_{ij}-p)(A_{jk}-p)(A_{ik}-p)$. In general, we define the signed count of a fixed graph $F$ in $H$ as
\[N_F(H)=\sum_{F\subseteq K_n} \prod_{ij\in E(F)} (A_{ij}-p),\]
where the sum runs over all unlabeled copies of $F$ in $K_n$. The signed count of four-cycles is denoted by $N_{\square}(H)$ and the signed count of diamonds is denoted by $N_{\diamond}(H)$ (in this paper, a diamond is the unique graph with four vertices and five edges).

\begin{proposition}\label{prop:main_prop}
For every constant $K^{\cE}>0$, there is a constant $C$ such that the following holds.
Let $H\in \cE$ be a fixed graph on $[n]$, let $p$ be a probability such that $1/3\geq p\geq n^{-1/3} (\log n)^A$ (for a sufficiently large absolute constant $A$) and let $d\geq C n^2p (\log n)^A$. Then
\begin{align}\label{eqn:full_estimate}
\Pb\big[G(n,d,p)=\!H\wedge \cF\big]\!\leq \!p^{e(H)}(1\!-\!p)^{\binom{n}{2}-e(H)}\!
\exp\Big(\frac{\kappa_{\triangle}N_{\triangle}(H)}{\sqrt{d}}
\!+\!\frac{\kappa_\square N_{\square}(H)}{d}\!+\!\frac{\kappa_{\diamond} N_{\diamond}(H)}{d}
\!+\!\widetilde O\Big(n R_{n, d, p}\!\Big)\!\Big),
\end{align}
where $\kappa_\triangle=\frac{a^3}{p^3(1-p)^3}, \kappa_\square=\frac{a^4}{p^4(1-p)^4}-\frac{2a^6}{p^5(1-p)^5}$, $\kappa_{\diamond}=\frac{a^5}{p^5(1-p)^5}\Big(z_p-\frac{a(1-2p)}{p(1-p)}\Big)$, and the error term $R_{n, d, p}$ is given by $R_{n, d, p}=\frac{n^{3/2}p^{1/2}}{d}+\frac{n^3 p^2+n^{5/2}}{d^{3/2}}+\frac{n^4p^2}{d^2}$.
\end{proposition}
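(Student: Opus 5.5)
We will prove Proposition~\ref{prop:main_prop} by exposing the Bartlett coordinates one column at a time and inducting backwards on $s$. The induction is run on a conditional statement. Fix $s$ and condition on the first $s$ columns of $M$, i.e. on $\bv_i(j)$ for $j\le s$. For $s<i<j$ write
\[\langle\bv_i,\bv_j\rangle=\langle\pi_s\bv_i,\pi_s\bv_j\rangle+(\text{fresh part}).\]
The claim is that the probability that $G$ agrees with $H$ on $\{s+1,\dots,n\}$ (intersected with the relevant parts of $\cF$) is at most
\[
p^{e(H_s)}(1-p)^{\binom{n-s}{2}-e(H_s)}\exp\Big(\sum_{s<i<j}\lambda_{ij}\sqrt d\,\langle\pi_s\bv_i,\pi_s\bv_j\rangle+Q_s+\text{(corrections)}+\text{error}_s\Big).
\]
Here $Q_s$ is a quadratic correction in the revealed inner products of the form $\tfrac{d}{2}\sum\mu_{ij}\langle\pi_s\bv_i,\pi_s\bv_j\rangle^2$, plus terms of the form $\sum \lambda_{ij}\lambda_{jk}\lambda_{ki}\cdot(\cdots)$. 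This conditional statement should be Lemma~\ref{lemma:induction-statement}.

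The heuristic behind the exponent is as follows. Conditionally on the revealed part, the edge $ij$ occurs iff the fresh Gaussian-like part exceeds $c_p/\sqrt d-\langle\pi_s\bv_i,\pi_s\bv_j\rangle$, rescaled by the residual norms. Lemma~\ref{lemma:tail-expansion} then gives the factor $p\exp(\lambda_{ij}\sqrt d\,\delta_{ij}-\dots)$ for a shift $\delta_{ij}$, with $\lambda(\cdot)$ at $\mp z_p$ producing exactly $a/p$ and $-a/(1-p)$. The base case $s=n-1$ (or $s=n$) is trivial. At $s=0$ the exponent becomes the constant correction terms, which we must identify as $\kappa_\triangle N_\triangle/\sqrt d+\kappa_\square N_\square/d+\kappa_\diamond N_\diamond/d$.

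For the step from $s$ to $s-1$, the approach is this. Given columns $1,\dots,s-1$, the edges from $s$ to $i>s$ depend only on $\bv_i(s)$. By Lemma~\ref{lemma:distribution_v_i} these are independent one-dimensional spherical coordinates, which Lemmas~\ref{lemma:sphere_to_gaussian_CDF} and \ref{lemma:sphere_coupling_moments} let us treat as $\cN(0,1/d)$ up to acceptable errors. The event that $s$ has the correct neighbourhood is then a product of one-sided truncations. Its probability is a product of $\Psi$-values, expanded via Lemma~\ref{lemma:tail-expansion}; this is Lemma~\ref{lemma:vertex_s}. After conditioning, each $\bv_i(s)$ is a truncated Gaussian with mean $\approx\lambda_{si}\bv_s(s)/\sqrt d$ to first order (Lemma~\ref{lemma:expectation_truncated_gaussian}), and the variance is perturbed correspondingly. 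We then average the induction hypothesis at level $s$ over these truncated variables. The exponent at level $s$ differs from that at level $s-1$ by terms $\lambda_{ij}\sqrt d\,\bv_i(s)\bv_j(s)$, plus the change in $Q$. So the key quantity is the moment generating function
\[\bE\exp\Big(\sqrt d\sum_{s<i<j}\lambda_{ij}\bv_i(s)\bv_j(s)+\dots\Big)\]
under independent truncated coordinates.

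To compute this we write $\bv_i(s)=m_i+X_i$, with $m_i$ the conditional mean and $X_i$ centered, bounded on $\cT_s$, and therefore subgaussian by Lemma~\ref{lemma:hoeffding}. The expansion has three parts:
\begin{itemize}
\item The deterministic part $\sqrt d\sum\lambda_{ij}m_im_j\approx \sum\lambda_{si}\lambda_{sj}\lambda_{ij}/\sqrt d$ per vertex $s$. Summed over $s$, this yields $\kappa_\triangle N_\triangle/\sqrt d$.
\item The linear part $\sqrt d\sum\lambda_{ij}m_iX_j$, whose MGF contributes second-order terms giving the four-cycle and diamond corrections.
\item The quadratic chaos $\sqrt d\,X^T\Lambda X$, which we control via Hanson--Wright (Theorem~\ref{thm:hanson_wright}) using $\|\Lambda\|_{op}=O(z_p\sqrt{np})$ from $\cE$\ref{item:eigenvalue_bound}.
\end{itemize}
The diagonal variance contributions must match the change in $Q_s$ exactly. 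This is why $Q$ is included in the hypothesis, and why $\Pi_s$ is needed to replace $\|\pi_s\bv_i\|^2$ by $s/d$. All remaining terms are bounded using the degree, codegree and triangle conditions \ref{item:degree_concentration}--\ref{item:triangle_concentration} of $\cE$, giving per-step error $\widetilde O(R_{n,d,p})$, hence $\widetilde O(nR_{n,d,p})$ in total.

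The main obstacle will be this MGF computation (the analogue of Lemmas~\ref{lemma:ratio} and \ref{lemma:S_bound}). Hanson--Wright gives only an upper bound of order $\theta^2\|\Lambda\|_{HS}^2$. The contribution $d\,\|\Lambda\|_{HS}^2/d^2\sim n^2p z_p^2/d$ per step, i.e. $n^3p/d$ in total, is not negligible, so it cannot be discarded. It must be absorbed exactly into the quadratic correction $Q$, which carries the running sums $\langle\pi_s\bv_i,\pi_s\bv_j\rangle$, with only the fluctuation left to Hanson--Wright. The first thing to try is to complete the square: rewrite the fresh contribution as
\[\sqrt d\,X^T\Lambda X+\tfrac d2\sum\mu_{ij}\big(2\langle\pi_{s-1}\bv_i,\pi_{s-1}\bv_j\rangle\bv_i(s)\bv_j(s)+\bv_i(s)^2\bv_j(s)^2\big).\]
Then use independence across $i$ to compute the expectations of the diagonal terms exactly, to second order, using Lemma~\ref{lemma:sphere_coupling_moments}. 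Finally, apply Hanson--Wright only to the off-diagonal remainder. The interval condition \ref{item:degree_cancellation} should handle the partial sums over $i>s$.
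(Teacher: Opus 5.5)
\paragraph{Overall assessment.} Your architecture matches the paper's:
\begin{itemize}
\item backward induction over the Bartlett columns;
\item the vertex-$s$ factor and truncated conditional law of Lemma~\ref{lemma:vertex_s};
\item a hypothesis carrying the linear term plus a quadratic correction whose coefficient must be $\mu_{ij}=\lambda_{ij}(z_p-\lambda_{ij})$, which is equivalent to the paper keeping $\Phi_{sij}$ exact and expanding around $\alpha_{sij}$;
\item a triangle-weighted term playing the role of $Z_s$.
\end{itemize}
You also locate the obstacle correctly. The problem is that your proposed fix does not remove it.

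\paragraph{Why Hanson--Wright cannot close the step.} Per step, the centred chaos $\sqrt d\,X^T\Lambda X$ has $\tfrac12\Var$ of order $d^{-1}\sum_{i<j}\lambda_{ij}^2\asymp n^2pz_p^2/d$. The $-\tfrac d2\lambda_{ij}^2\bv_i(s)^2\bv_j(s)^2$ part of your $Q$-increment (essentially the paper's $-Y$) cancels this, together with the diagonal part of $\tfrac12\Var$ of the linear term. But it does so only if you know
\[
\log\bE\exp\big(\xi^TM\xi+\ba^T\xi\big)=\tfrac12\Var\big(\xi^TM\xi+\ba^T\xi\big)+\widetilde O(R_{n,d,p})
\]
for the joint quadratic-plus-linear form, with the sharp constant $\tfrac12$. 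The off-diagonal part of $\tfrac12\Var(\ba^T\xi)$ is where the four-cycle terms and one type of diamond term come from, so it too must be computed, not merely bounded.

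Hanson--Wright gives only $\log\bE e^{\xi^TM\xi}\le C\sigma^4\|M\|_{HS}^2$ with an unspecified absolute constant $C$. Taking expectations of the squared terms and applying Hanson--Wright to ``the off-diagonal remainder'' does not help. That remainder is the centred chaos itself: it is pure fluctuation, and its log-MGF is exactly the quantity you need to cancel. You are left with an uncancelled error of order $n^2pz_p^2/d$ per step, i.e.\ $n^3pz_p^2/d$ in total. This exceeds the claimed $\widetilde O(nR_{n,d,p})$, whose leading term is $n^{5/2}p^{1/2}/d$. Near $d\asymp n^3p^3(\log p^{-1})^3$ it is of order $p^{-2}$ up to logarithms. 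Separately, replacing the random quartic term by its mean inside the exponential needs justification, since it is correlated with the chaos.

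\paragraph{What the paper does instead.} The paper fills this hole with two ingredients absent from your plan.
\begin{itemize}
\item Lemma~\ref{lemma:tilting} decouples the quartic term: $\bE e^{X-Y}=\bE e^X\exp\big(-\bE Y+O(\sqrt{\Var Y}+\sqrt{\bE Y^4})\big)$ whenever $\bE e^{2(X-\bE X)}\le 2$. The proof uses a tilted measure and Cauchy--Schwarz.
\item Proposition~\ref{prop:lindeberg} gives $\log\bE e^{\xi^TM\xi+\ba^T\xi}=\tfrac12\Var(\xi^TM\xi+\ba^T\xi)+O(e^{C\delta_2}\delta_1\delta_2)$. It combines a Lindeberg replacement of the bounded centred coordinates by Gaussians with the same first two moments with the exact Gaussian formula $\tfrac12\ba^T(I-2M)^{-1}\ba-\tfrac12\log\det(I-2M)$ of Lemma~\ref{lemma:gaussian_mgf}, expanded to second order in $M$. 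Hanson--Wright and H\"older enter only to control the third-order replacement errors.
\end{itemize}
Here $\delta_1\delta_2$ is within $\widetilde O(R_{n,d,p})$. After that, $\bE X+\tfrac12\Var X-\bE Y$ is computed explicitly (Lemmas~\ref{lemma:EX_formula} and~\ref{lemma:final_computation}), producing the triangle, four-cycle and diamond terms. Your induction step needs some sharp second-cumulant expansion of this kind in order to close, whether a Gaussian comparison or an equivalent sequential exponential-martingale computation.
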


Let us make some remarks about the statement of Proposition~\ref{prop:main_prop}. Firstly, observe that the estimate (\ref{eqn:full_estimate}) works well below the distinguishability threshold $d\gg (np\log 1/p)^3$. While $p^{e(H)}(1-p)^{\binom{n}{2}-e(H)}$ is the probability of obtaining $H$ as a sample from $G(n, p)$, one should think of ${\kappa_{\triangle}N_{\triangle}(H)}/{\sqrt{d}}$ as the leading order correction, determined by the signed triangle count of $H$. In fact, the above estimate shows that the real reason the threshold $d\gg (np\log 1/p)^3$ enters the picture is so that the main term $\kappa_{\triangle}N_{\triangle}(H)/\sqrt{d}$ would be negligible. More on this will be said later in this section, when we derive Theorem~\ref{thm:main} from Proposition~\ref{prop:main_prop}.

Further, it is the shape of the error term $R_{n, d, p}$ that determines the range of $p$ in which we can prove Conjecture~\ref{conj:BDER}. Namely, $nR_{n, d, p}$ contains the term $n^{5/2}p^{1/2}/d$, which decays only when $p\gg n^{-1/5} {\rm polylog}(n)$ (if we insist on taking $d\approx (np\log 1/p)^3$).

Secondly, although \eqref{eqn:main_estimate} is stated as an upper bound, most of the steps in our proof are not lossy, and we believe that it may be possible to turn it into an equality (see additional discussion on this around Lemma~\ref{lemma:ratio}). However, for the purposes of proving Theorem~\ref{thm:main}, the upper bound stated in \eqref{eqn:full_estimate} suffices, and we do not concern ourselves with establishing the lower bound.

Finally, another interesting feature about the statement of Proposition~\ref{prop:main_prop} are the curious coefficients $\kappa_{\triangle}, \kappa_{\square}$ and $\kappa_{\diamond}$ which appear in it. While understanding these values may not be directly relevant for the proof of Theorem~\ref{thm:main}, we feel that these observations may help to understand the distribution of spherical RGGs.

Due to the definition of $G(n, d, p)$, we have already observed that a triple of vertices $i, j, k\in [n]$ is more likely to form a triangle in $G(n, d, p)$ than in $G(n, p)$. Indeed, knowing that $ij, ik$ are edges means that both $\bv_j$ and $\bv_k$ are close to $\bv_i$ and so more likely to be close themselves. Another way to put this is to imagine that $\bv_i$ is one of the coordinate vectors, and observe that since $ij, ik\in E(G)$, we must have $\langle \bv_i, \bv_j\rangle, \langle \bv_i, \bv_k\rangle>c_p/\sqrt{d}$. This means that both $\bv_j$ and $\bv_k$ have a positive coordinate in direction $\bv_i$, making their inner product more likely to be large. One can even calculate precisely that $\Pb[ijk\text{ form }\triangle]= p^3\exp\big(\frac{a^3}{p^3\sqrt{d}} +\widetilde O(1/d)\big)=p^3\exp\big(\kappa_\triangle (1-p)^3/\sqrt{d}+\widetilde O(1/d)\big)$.

Analogously, the probability that $ij, ik\in E(G)$ but $jk\notin E(G)$ (i.e. $ijk$ forms a cherry) is slightly smaller than in $G(n, p)$, in fact, it equals $\Pb[ij, ik\in E(G), jk\notin E(G)]= p^2(1-p)\exp\big(-\kappa_{\triangle} p(1-p)^2/\sqrt{d} +\widetilde O(1/d)\big)$. Having written this equation, one observes that the triangle probability must be corrected by a factor depending on $\kappa_\triangle (1-p)^3/\sqrt{d}$, while the cherry probability is corrected by a factor depending on $-\kappa_{\triangle} p(1-p)^2/\sqrt{d}$, which exactly corresponds to the weights with which the triangles and the cherries are counted in the signed triangle count $N_{\triangle}(H)$. If one goes on and computes the probabilities of appearance of other three-vertex graphs in $G(n, d, p)$, it will be clear that the pattern persists: the correction in case $ij\in E(G), jk, ik\notin E(G)$ is proportional to $\kappa_\triangle p^2 (1-p)/\sqrt{d}$, and if $ij, jk, ik\notin E(G)$, the correction is proportional to $-\kappa_\triangle p^3/\sqrt{d}$.

In light of this observation, the leading order of the estimate (\ref{eqn:full_estimate}) can be represented as
\[\exp\Big(\frac{\kappa_{\triangle}N_{\triangle}(H)}{p^3(1-p)^3\sqrt{d}}\Big)\approx\prod_{1\leq i<j<k\leq n}\frac{\Pb[i, j, k\text{ form the same pattern in $G(n, d, p)$ and $H$}]}{\Pb[i, j, k\text{ form the same pattern in $G(n, p)$ and $H$}]}.\]
The interpretation of this formula is quite simple: if $H$ prescribes a pattern of edges on $i, j, k$ which is easier to realize in $G(n, d, p)$, that should boost the probability of obtaining $H$ as a sample from $G(n, d, p)$. Moreover, the contribution of all three-vertex patterns must be aggregated together in order to obtain a total correction to the probability of obtaining $H$ as a sample from $G(n, d, p)$.

At first glance, it may not be obvious why the corrections corresponding to distinct triples $i, j, k$ should be multiplied, i.e. why the effects of different triples add together independently. Here is a piece of intuition which we found useful while thinking about these graphs. Suppose we sample the vectors $\bv_1, \dots, \bv_n$ in order, and suppose that a triple $1\leq i<j<k\leq n$ forms a triangle in $H$. Having sampled vectors $\bv_1, \dots, \bv_{j-1}$ already, one must ensure that $\bv_j, \bv_k$ are close to $\bv_i$, and hence a bit closer to each other. In other words, the vertex $i$ ``votes'' that the vectors $\bv_j, \bv_k$ are close to each other. Suppose that there was now another vertex $i'<j<k$ also forming a triangle with $j$ and $k$. Then, $i'$ also ``votes'' that $\bv_j, \bv_k$ are close to $\bv_{i'}$, and hence again a bit closer to each other. But the vectors $\bv_i$ and $\bv_{i'}$ are nearly orthogonal (since $d\gg n$)! So, the information about the edge $jk$ obtained from the votes of $i$ and $i'$ is essentially independent.

Put differently, suppose that the orthogonal vectors $\bv_i$ and $\bv_{i'}$ are viewed as two coordinate directions. The conditions
that $ij, ik, i'j, i'k\in E(G)$ 
then force both $\bv_j$ and $\bv_k$ to have large positive projections onto each of the directions $\bv_i$ and $\bv_{i'}$. Since gaussian vectors are rotationally invariant  these events happen independently. Moreover this
makes the inner product of $\bv_j, \bv_k$  even more likely to be at least $c_p/\sqrt{d}$. Hence, the probability that $jk$ is an edge must be corrected twice as much as if we only know that $ijk$ is a triangle.

Having considered triangles and other three-vertex subgraphs, a natural question arises: why stop there? What are the effects of larger subgraph counts in $H$ on $\Pb[G(n, d, p)=H]$? Here, the story is broadly the same, and we will only describe it in broad terms. Again, intuitively speaking, the probability that the quadruple $i<j<k<\ell$ forms a four-cycle in $G(n, d, p)$ is slightly higher than in $G(n, p)$. The justification is similar to before: if $\bv_i, \bv_j$ are already sampled and we know they form an edge, they must be somewhat close. Also, since $i\ell, jk$ are edges, then the pairs $(\bv_i, \bv_\ell)$ and $(\bv_j, \bv_k)$ must be somewhat close. So, the likelihood that $\bv_\ell, \bv_k$ are sufficiently close increases slightly. Of course, since the path linking $k$ and $\ell$ is longer, it is expected that the strength of this effect would be smaller. This is indeed true: the correction per $4$-cycle is on the order of $1/d$ rather than $1/\sqrt{d}$ (as it was for triangles). So, by analyzing carefully the probabilities of obtaining $4$-cycles and their various induced subgraphs, we arrive at the conclusion that the second correction term in (\ref{eqn:full_estimate}) can be written as
\[\exp\Big(\frac{a^4}{p^4(1-p)^4d}N_{\square}(H)\Big)\approx\prod_{(i, j, k, \ell)}\frac{\Pb[G(n, d, p)\text{ agrees with $H$ on the edges $ij, jk, k\ell, \ell i$}]}{\Pb[G(n, p)\text{ agrees with $H$ on the edges $ij, jk, k\ell, \ell i$}]}.\]

The last term comes from the count of diamonds in $H$, and it is more subtle for the following reason. A diamond contains two triangles and a four-cycle, and these subgraphs already affect the probability of obtaining $H$ as a RGG. However, if $H$ has a diamond, there are additional dependencies between edges, which go beyond just having two triangles and a four-cycle. Hence, if one wants to account for these dependencies, we must adjust by a factor of $\Pb[ijk\ell=\diamond\text{ in }G(n, d, p)]/(\Pb[ijk=\triangle\text{ in }G(n, d, p)]^2\Pb[ijk\ell=\square\text{ in }G(n, d, p)])$ for each diamond present in $H$. This is the reason for the more complicated coefficients $\kappa_{\square}$ and $\kappa_{\diamond}$: in particular, this adjustment contributes the negative second term in $\kappa_{\square}$. The additional contributions of yet larger graphs, such as $K_4$ or graphs on at least $5$ vertices are of lower order, and they are absorbed in the error term.

\subsection{Proof of Theorem~\ref{thm:main}}

In this section, we discuss how Proposition~\ref{prop:main_prop} implies Theorem~\ref{thm:main}. To do this, we discuss the typical sizes of $N_\triangle(H)$, $N_{\square}(H)$, and $N_{\diamond}(H)$. More precisely, we define the collection of graphs $\cD$ in which the signed counts of these subgraphs are controlled, and show that it suffices to compare the distributions $G(n, p)$ and $G(n, d, p)$ just on $\cD$.

\begin{definition}[Subgraph count event $\cD$.]
Let $K^{\cD}$ be a large constant. Define $\cD$ to be the collection of graphs $H$ on the vertex set $[n]$ for which we have
\begingroup
\renewcommand{\theenumi}{(\roman{enumi})}
\setcounter{enumi}{0}
\refstepcounter{enumi}\label{item:triangle_cancellation}
\refstepcounter{enumi}\label{item:c4_cancellation}
\refstepcounter{enumi}\label{item:diamond_cancellation}
\[\textup{\ref{item:triangle_cancellation}}\ \big|N_{\triangle}(H)\big|\leq K^{\cD} n^{3/2}p^{3/2},\quad \textup{\ref{item:c4_cancellation}}\ \big|N_{\square}(H)\big|\leq K^{\cD} n^{2}p^{2},\quad \textup{\ref{item:diamond_cancellation}}\ \big|N_{\diamond}(H)\big|\leq K^{\cD} n^{2}p^{5/2}.\]
\endgroup
\end{definition}

\begin{lemma}\label{lemma:geometric_graph_event_D}
For every $\eps>0$ there exists $K^{\cD}>0$, which may depend on $\eps$, such that the following holds. Let $G\sim G(n, d, p)$ with $d\geq Cn^3p^3(\log p^{-1})^3$ and $n^{-1/5}(\log n)^A\leq p\leq 1/3$, for $C$ sufficiently large compared to $K^{\cD}$. Then, we have $\Pb[G\in \cD]\geq 1-\eps$.
\end{lemma}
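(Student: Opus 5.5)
By Chebyshev's inequality, it suffices to bound the first two moments of $N_\triangle$, $N_\square$, $N_\diamond$ under $G\sim G(n,d,p)$. For each $F\in\{\triangle,\square,\diamond\}$ we aim to show
\[
\bE[N_F(G)]^2+\Var(N_F(G))\leq O(\text{target}_F^2),
\]
where $\text{target}_\triangle=n^{3/2}p^{3/2}$, $\text{target}_\square=n^2p^2$ and $\text{target}_\diamond=n^2p^{5/2}$. We then take $K^{\cD}$ of order $\eps^{-1/2}$ times these implicit constants and apply a union bound over the three events.

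\textbf{First moments.} By symmetry, $\bE[N_F(G)]$ equals the number of copies of $F$ times $\bE\prod_{ij\in E(F)}(A_{ij}-p)$ for one fixed copy. For the triangle we use the known expansion: the centered product has expectation of order $a^3/\sqrt d=\Theta(p^3(\log p^{-1})^{3/2}/\sqrt d)$. This gives
\[
|\bE N_\triangle|\lesssim n^3p^3(\log p^{-1})^{3/2}/\sqrt d\leq n^{3/2}p^{3/2}/\sqrt C,
\]
which is exactly where $d\geq Cn^3p^3(\log p^{-1})^3$ is used. For $\square$ and $\diamond$ we only need crude bounds: the centered expectation is $\widetilde O(p^4/d)$, respectively $\widetilde O(p^5/d)$. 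Each bound follows from a Gram--Schmidt/Gaussian comparison on the at most four vectors involved (Lemmas~\ref{lemma:distribution_v_i}, \ref{lemma:sphere_to_gaussian_CDF}), expanding each edge indicator to second order in the small inner products. Multiplying by $n^4$ gives $\widetilde O(n^4p^4/d)\leq n^2p^2$ and $\widetilde O(n^4p^5/d)\leq n^2p^{5/2}$. Both use $d\gtrsim n^3p^3$ together with $p\geq n^{-1/5}\polylog(n)$, which makes the logarithmic factors harmless; for instance $n^2p^{5/2}/(\log)^{O(1)}\gg n$.

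\textbf{Second moments.} We expand $\bE[N_F^2]$ as a sum over ordered pairs of copies $(F_1,F_2)$ and split the sum by the overlap $F_1\cup F_2$. When the two copies are edge-disjoint, or share vertices but no edges, we compare the joint centered expectation with the product of the individual expectations. The difference is controlled by a cumulant-type bound: since every edge of $F_1\cup F_2$ is centered, any nonzero contribution needs geometric correlation linking the pieces, and each such link costs a factor $\widetilde O(p/\sqrt d)$ per "extra" centered edge. When the copies share edges, the shared edges contribute $\bE(A-p)^2\approx p$, exactly as in $G(n,p)$. The leading term is then the $G(n,p)$ variance, namely $O(n^3p^3)$, $O(n^4p^4)$ and $O(n^4p^5)$ respectively (these are $\text{target}_F^2$). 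The remaining overlap patterns are bounded by counting vertices, and each geometric correction gains a factor of order $\sqrt{n^3p^3/d}\leq C^{-1/2}$ or better.

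\textbf{Main obstacle.} The hard part is a uniform estimate for the joint centered moments $\bE\prod_{e\in E(J)}(A_e-p)$ over graphs $J$ with at most $8$ vertices. The needed statement is that such a moment is at most $\widetilde O(p^{|E(J)|})$ times $d^{-\mu(J)/2}$, where $\mu(J)$ counts the "unmatched" cyclic structure; roughly, a tree-like edge yields zero to first order. I would first try to prove this by conditioning on the vectors of a spanning structure, as in the sketch in Section~\ref{subsec:proof_outline}. Exposing one vertex at a time, each edge to an already-exposed vertex becomes an independent truncation, and a leaf edge whose endpoint appears in no other edge contributes exactly zero after centering. If this term-by-term bookkeeping becomes unwieldy, a fallback is to derive the required moment control from Proposition~\ref{prop:main_prop} via a likelihood-ratio argument restricted to $\cE\cap\cF$.
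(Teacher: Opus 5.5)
\textbf{There is a genuine gap.} Your outline has the same architecture as the paper's proof: second moment and Chebyshev, expanding $\bE[N_F^2]$ over ordered pairs of copies sorted by overlap, and taking the triangle mean from the known $\Theta(p^3(\log p^{-1})^{3/2}/\sqrt d)$ asymptotics. However, the estimate that carries all the content is only announced, and the route you sketch does not deliver it.

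\emph{Why a quantitative bound is needed.} The non-trivial overlap cases require actual decay in $d$ for $\bE\prod_{e\in E(J)}(A_e-p)$, where $J$ ranges over connected graphs of minimum degree at least $2$ on $5$ to $7$ vertices. Vanishing alone is not enough. For example, two four-cycles sharing exactly one edge have symmetric difference a $6$-cycle.
\begin{itemize}
\item There are $\Theta(n^6)$ such pairs, each carrying a factor $p(1-p)$ from the shared edge.
\item To stay below $n^4p^4$ you therefore need $|\bE\prod_{e\in E(C_6)}(A_e-p)|\lesssim p^3/n^2$.
\item At $d\approx n^3p^3(\log p^{-1})^3$, a single factor $1/\sqrt d$ gives only about $p^{9/2}n^{-3/2}$. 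This is too weak once $p\geq n^{-1/3}$, so you need a full factor $1/d$.
\end{itemize}
Similar requirements arise near the top of the range of $p$ for two diamonds sharing one edge, or sharing two vertices and no edge.

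\emph{What your sketch actually gives.} The vertex-exposure argument yields only qualitative facts. A pendant vertex forces the centered moment to vanish. Conditioning on a cut vertex and using rotational invariance, copies sharing a single vertex are uncorrelated. It gives no bound at all for $2$-connected graphs such as $C_6$ or $K_{2,3}$. The sentence that each remaining pattern ``gains a factor $\sqrt{n^3p^3/d}$'' is an assertion, not an argument. Also, the decay is governed by the number of vertices of each connected component, not by a count of ``extra'' centered edges.

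\emph{The fallback does not close the gap.} Going through Proposition~\ref{prop:main_prop} would require exponential moments of $f(H)$ under $G(n,p)$ restricted to $\cE$. Membership in $\cE$ only yields $|N_\square(H)|=O(n^3p^2\log n)$, so $\kappa_\square N_\square(H)/d$ is a priori bounded only by roughly $p^{-1}$. You would then face a separate large-deviation problem for signed subgraph counts in $G(n,p)$.

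\textbf{How the paper supplies the missing input.} It does not prove this estimate from scratch; it imports it.
\begin{itemize}
\item For triangles it takes the mean from Bok--Li--Yu and the variance formula from Liu--Mohanty--Schramm--Yang and Bok--Li--Yu.
\item For $\square$ and $\diamond$ it uses the Bangachev--Bresler bound
\[
\Big|\bE\prod_{e\in E(J)}(A_e-p)\Big|\le O\big(p^{|E(J)|}\eta^{\lceil(|V(J)|-1)/2\rceil}\big),\qquad \eta=(\log d)^{3/2}/\sqrt d .
\]
This holds for connected $J$ of constant size whenever $n^{-1+\gamma}\le p\le 1/2$ and $d\ge n^{\gamma}$.
\end{itemize}
With that input your bookkeeping goes through as in the paper:
\begin{itemize}
\item Linearize $(A_e-p)^2=p(1-p)+(1-2p)(A_e-p)$ on shared edges.
\item Factor the expectation over the independent connected components of the symmetric difference together with the chosen subset of shared edges.
\item Observe that the exponent of $\eta$ is smallest when no shared edge is chosen.
\item Run the case analysis on the number of shared edges.
\end{itemize}
To complete the proof you must either cite such a result or actually prove the $1/d$-type decay for the specific $2$-connected overlap graphs that occur. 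As written, the proposal does not establish the lemma.
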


Lemma~\ref{lemma:geometric_graph_event_D} should not be surprising at all, at least if one believes Conjecture~\ref{conj:BDER}. Indeed, we know that $G(n,p)$ typically satisfies the event $\cD$, and so if $G(n, d, p)$ did not typically satisfy it, then this could be used to distinguish the two models even above the distinguishability threshold. However, recent work of Bok, Li and Yu \cite{BLY26} and Bangachev and Bresler \cite{BB24} shows that $G(n, d, p)$ and $G(n, p)$ are not distinguishable using constant-degree polynomials, meaning that Lemma~\ref{lemma:geometric_graph_event_D} should hold. To avoid breaking the flow of the proof, we discuss its proof in Section~\ref{sec:geometric_graph_event_D}.

Note that the fact that $K^{\cD}$ may depend on $\eps$ is not problematic, since in the context of the proof of Theorem~\ref{thm:main}, our goal will be to take $\Pb[G(n, d, p)\in \cD]\geq 1-\eps/4$. Since we will work with a single fixed value of $\eps$ throughout the proof, we will fix the appropriate value of $K^{\cD}$ making these inequalities hold. We now discuss the proof of Theorem~\ref{thm:main} assuming Proposition~\ref{prop:main_prop}.

\begin{proof}[Proof of Theorem~\ref{thm:main} assuming Proposition~\ref{prop:main_prop}.]
Let us fix a positive constant $\eps>0$, with the goal of showing that $\TV(G(n, d, p), G(n, p))\leq \eps$. Let $\mu$ and $\nu$ be the probability measures induced by the models $G(n, p)$ and $G(n, d, p)$, respectively. By a slight abuse of notation, we will write $\nu$ for both the measure on the space of $n$-tuples of latent vectors, as well as for its induced measure on the space of graphs.

By Lemmas~\ref{lemma:geometric_graph_event_E},~\ref{lemma:vector_event_F} and~\ref{lemma:geometric_graph_event_D}, if we choose large enough constants $K^{\cE}, K^{\cD}$, we have that
\[\nu(\cE)\geq 1-\eps/4,\quad \nu(\cD)\geq 1-\eps/4, \quad \text{ and }\quad \nu(\cF)\geq 1-\eps/4.\]

Let us denote by $\cH$ the set of $n$-vertex graphs $H$ for which $\nu(H)>\mu(H)$. Since the total mass over all graphs is $1$ under both $\mu$ and $\nu$, we have $\nu(\cH)-\mu(\cH)=\mu(\cH^c)-\nu(\cH^c)$.
Hence, we can write the total variation distance as follows:
\begin{align*}
\TV(\nu,\mu)&=\nu(\cH)-\mu(\cH)\leq \nu(\cH\wedge \cF)-\mu(\cH)+\nu(\cF^c).
\end{align*}
The last term is at most $\nu(\cF^c)\leq \eps/4$. On the other hand, the first term can be further bounded as follows
\begin{align*}
\TV(\nu,\mu)&\leq \sum_{H\in \cE\cap \cD\cap \cH} \big(\nu(H\wedge \cF)-\mu(H)\big)+\sum_{H\in (\cE^c\cup \cD^c)\cap \cH} \big(\nu(H\wedge \cF)-\mu(H)\big)+\eps/4.
\end{align*}
The second sum is at most $\nu(\cE^c\cup \cD^c)\leq \nu(\cE^c)+\nu(\cD^c)\leq \eps/2$. Thus,
\begin{align*}
\TV(\nu,\mu)&\leq \sum_{H\in \cE\cap \cD\cap \cH} \big(\nu(H\wedge \cF)-\mu(H)\big)+\frac{3\eps}{4}.
\end{align*}
Further, by Proposition~\ref{prop:main_prop}, for every $H\in \cE\cap \cD\cap \cH$ we have
\begin{align*}
\nu(H\wedge \cF)&\leq \mu(H)\exp\Big(\frac{\kappa_{\triangle}N_{\triangle}(H)}{\sqrt{d}}
\!+\!\frac{\kappa_\square N_{\square}(H)}{d}\!+\!\frac{\kappa_{\diamond} N_{\diamond}(H)}{d}
\!+\!\widetilde O\Big(n R_{n, d, p}\!\Big)\!\Big),
\end{align*}
where $\kappa_\triangle=\frac{a^3}{p^3(1-p)^3}, \kappa_\square=\frac{a^4}{p^4(1-p)^4}-\frac{2a^6}{p^5(1-p)^5}$, and $\kappa_{\diamond}=\frac{a^5}{p^5(1-p)^5}\Big(z_p-\frac{a(1-2p)}{p(1-p)}\Big)$. Since $a=\Theta(p\sqrt{\log 1/p})$, we have $\kappa_\triangle=\Theta((\log 1/p)^{3/2})$, and $\kappa_\square=O((\log 1/p)^2)$, $\kappa_{\diamond}=O((\log 1/p)^{5/2})$. Further, since $H\in \cD$, we have $|N_{\triangle}(H)|\leq K^{\cD} n^{3/2}p^{3/2}$, $|N_{\square}(H)|\leq K^{\cD} n^2p^2$, and $|N_{\diamond}(H)|\leq K^{\cD} n^2p^{5/2}$. Hence, for $H\in \cD$ we have $\kappa_\square N_{\square}(H)/d=O(n^2p^2(\log 1/p)^2/d)=o(1)$ and $\kappa_{\diamond}N_{\diamond}(H)/d=O(n^2p^{5/2}(\log 1/p)^{5/2}/d)=o(1)$, if $d\geq (np\log 1/p)^3$ and $p\geq n^{-1/5}$ (say).

On the other hand, we have $nR_{n, d, p}=\frac{n^{5/2}p^{1/2}}{d}+\frac{n^4p^2+n^{7/2}}{d^{3/2}}+\frac{n^5p^2}{d^2}$, and if $p\geq n^{-1/5}(\log n)^A$ and $d\geq (np\log 1/p)^3$, then we have
\[\widetilde O(nR_{n, d, p})=\widetilde O\Big(\frac{n^{5/2}p^{1/2}}{n^3p^3}+\frac{n^4p^2+n^{7/2}}{n^{9/2}p^{9/2}}+\frac{n^5p^2}{n^6p^6}\Big)=\widetilde O\Big(\frac{1}{n^{1/2}p^{5/2}}+\frac{1}{np^{9/2}}+\frac{1}{np^4}\Big)=o(1).\]

Hence, we can write $\nu(H\wedge \cF)\leq \mu(H)\exp\Big(\frac{\kappa_{\triangle}N_{\triangle}(H)}{\sqrt{d}}+o(1)\Big)$. By property~\ref{item:triangle_cancellation} of the event $\cD$, we get that $|N_\triangle(H)|\leq K^{\cD} p^{3/2} n^{3/2}$. Recalling that $d\geq Cn^3p^3 (\log p^{-1})^3$, we have $\frac{\kappa_\triangle N_{\triangle}(H)}{\sqrt{d}}\leq \frac{K^{\cD}\sqrt{p^3n^3(\log p^{-1})^3}}{\sqrt{Cn^3p^3 (\log p^{-1})^3}}\leq \eps/6$, as long as $C$ is sufficiently larger than $K^{\cD}$.

The conclusion is that $\nu(H\wedge \cF)/\mu(H)\leq e^{\eps/6+o(1)}\leq e^{\eps/5}\leq 1+\frac{\eps}{4}$. Hence, we have that
\begin{align*}
\TV(\nu,\mu)&\leq \sum_{H\in \cH\cap \cE\cap \cD}\big(\nu(H\wedge \cF)-\mu(H)\big)+\frac{3\eps}{4}\leq \sum_{H\in \cH\cap \cE\cap \cD}\frac{\eps}{4}\mu(H)+\frac{3\eps}{4}\leq \frac{\eps}{4}+\frac{3\eps}{4}=\eps.    \qedhere
\end{align*}
\end{proof}

\section{Vertex exposure and the main induction}\label{sec:induction}

In this section we will begin our work towards the proof of Proposition~\ref{prop:main_prop}. Throughout the remainder of the paper, that is in Sections~\ref{sec:induction}--\ref{sec:MGF}, the hypotheses of Proposition~\ref{prop:main_prop} remain in force. In particular, $d\geq Cn^2p(\log n)^A$, $p\geq n^{-1/3}(\log n)^A$ for a large absolute constant $A$.
As we have already indicated, the proof will be inductive, and we will now give the exact inductive hypothesis.

We write $\cG_s$ for the $\sigma$-algebra generated by the first $s$ coordinates of all vectors $\bv_1,\dots,\bv_n$, i.e. by the coordinates $\bv_i(j)$ for all $1\leq i\leq n$ and $1\leq j\leq s$. Thus $\cG_0$ is trivial and $\cG_n$ determines the whole graph $G(n,d,p)$. Let us also observe that $\cG_{s-1}$ determines $\bv_s(s)$, since $\|\bv_s\|^2=1$. For $0\leq s\leq n-1$, let $H_s$ denote the subgraph of $H$ induced by the vertex set $\{s+1,\dots,n\}$. Additionally, recall that we have defined $\lambda_{ij}=\frac{a}{p(1-p)}(A_{ij}-p)$, where $A$ is the adjacency matrix of $H$.

\begin{lemma}[Induction statement]\label{lemma:induction-statement}
Assume that $H\in \cE$. Then for every $0\leq s\leq n-1$,
{\relscale{0.95}\begin{align*}
\Pb\Big[G(n,d,p)\big|_{[s+1,n]}\!=\!H_s\wedge \cF \Big|\cG_s\Big]
\leq \!\!\!\!\!\!\prod_{s<i<j\leq n}\!\!\!\!\!\Phi_{sij}
\exp\!\Big(Z_s + &\frac{\kappa_{\triangle}N_{\triangle}(H_s)}{\sqrt{d}}
+\frac{\kappa_\square N_{\square}(H_s)}{d}+\frac{\kappa_{\diamond} N_{\diamond}(H_s)}{d}
+ \widetilde O\Big((n\!-\!s)R_{n, d, p}\Big)\!\Big),
\end{align*}}
where $\kappa_\triangle=\frac{a^3}{p^3(1-p)^3}, \kappa_\square=\frac{a^4}{p^4(1-p)^4}-\frac{2a^6}{p^5(1-p)^5}$, and $\kappa_{\diamond}=\frac{a^5}{p^5(1-p)^5}\Big(z_p-\frac{a(1-2p)}{p(1-p)}\Big)$,
\[
\Phi_{sij}=
\begin{cases}
\Phi\Big(-z_p+\sqrt d\,\langle \pi_s(\bv_i),\pi_s(\bv_j)\rangle\Big),& ij\in E(H),\\[2mm]
\Phi\Big(z_p-\sqrt d\,\langle \pi_s(\bv_i),\pi_s(\bv_j)\rangle\Big),& ij\notin E(H),
\end{cases}
\]
and
$Z_s=\sum_{\substack{s<i< j\leq n\\s<r<n}}\lambda_{ij}\lambda_{ir}\lambda_{jr}(z_p-\lambda_{ij})\langle \pi_s(\bv_i), \pi_s(\bv_j)\rangle .$
\end{lemma}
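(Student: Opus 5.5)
We prove the statement by downward induction on $s$, from $s=n-1$ down to $s=0$, following the vertex-exposure scheme of Section~\ref{subsec:proof_outline}.

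\emph{Base case.} For $s=n-1$ the graph $H_{n-1}$ has a single vertex. The left-hand side is at most $1$, and the right-hand side is an empty product times $\exp(\widetilde O(R_{n,d,p}))\ge 1$, since $Z_{n-1}$ and all signed counts vanish.

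\emph{Inductive step from $s$ to $s-1$.} Condition on $\cG_{s-1}$ and reveal the $s$-th column, that is, the variables $\bv_i(s)$ for $i>s$. Vertex $s$ is adjacent to $i>s$ iff
\[
\bv_s(s)\bv_i(s)\ge c_p/\sqrt d-\langle\pi_{s-1}(\bv_s),\pi_{s-1}(\bv_i)\rangle .
\]
By Lemma~\ref{lemma:distribution_v_i}, these events are conditionally independent given $\cG_{s-1}$. We then apply the tower property. The probability that $G$ agrees with $H_{s-1}$ and $\cF$ holds is at most
\[
\bE\big[\mathbf 1\{\text{edges at } s \text{ agree with } H\}\cdot \mathrm{RHS}_s \,\big|\, \cG_{s-1}\big],
\]
where $\mathrm{RHS}_s$ is the bound given by the induction hypothesis. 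We restrict to $\cT_s\wedge\Pi_{s-1}$, which lets us replace $\bv_i(s)$ by the truncated variables $\bu_i(s)$. The indicator factorizes into one-sided truncations of the independent variables $\bv_i(s)$. Its expectation is $\prod_{i>s}\Phi_{(s-1)si}$ up to sphere-versus-Gaussian corrections, which Lemma~\ref{lemma:sphere_to_gaussian_CDF} and Lemma~\ref{lemma:quantile_asymptotics} control as $O(\widetilde O(1/d))$ per pair. (This is the content of the auxiliary Lemma~\ref{lemma:vertex_s}.) After this step, the $\bv_i(s)$ are independent truncated (near-)Gaussians with variance about $1/d$. Their means are $\approx \lambda_{si}/\sqrt d$ to first order, with second-order corrections computed via Lemma~\ref{lemma:expectation_truncated_gaussian} and Lemma~\ref{lemma:sphere_coupling_moments}.

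\emph{Averaging the induction hypothesis.} Use $\langle\pi_s(\bv_i),\pi_s(\bv_j)\rangle=\langle\pi_{s-1}(\bv_i),\pi_{s-1}(\bv_j)\rangle+\bv_i(s)\bv_j(s)$. Then expand each factor $\Phi_{sij}$ around $\Phi_{(s-1)ij}$ using the second-order bound \eqref{eqn:second_order_taylor}. This gives
\[
\Phi_{sij}\le\Phi_{(s-1)ij}\exp\Big(\lambda'_{ij}\sqrt d\,\bv_i(s)\bv_j(s)-\tfrac12(\cdots)d\,\bv_i(s)^2\bv_j(s)^2+O(d^{3/2}|\bv_i(s)\bv_j(s)|^3)\Big),
\]
where $\lambda'_{ij}$ is the Mills ratio at the shifted argument. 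Expanding $\lambda'_{ij}$ to first order in the inner product, via Lemma~\ref{lemma:mills_ratio}, produces $\lambda_{ij}+\lambda_{ij}(z_p-\lambda_{ij})\sqrt d\langle\pi_{s-1}(\bv_i),\pi_{s-1}(\bv_j)\rangle$. This is exactly where the coefficient $(z_p-\lambda_{ij})$ of $Z_s$ originates. Collect all terms in the exponent as a polynomial in the independent variables $\bv_i(s)$. Write $\bv_i(s)=\mu_i+\xi_i$, with $\mu_i$ the truncated mean and $\xi_i$ centred and subgaussian with variance proxy $O(1/d)$.
\begin{itemize}
\item The deterministic part $\sum_{i<j}\lambda_{ij}\sqrt d\,\mu_i\mu_j\approx\sum_{i<j}\lambda_{si}\lambda_{sj}\lambda_{ij}/\sqrt d$ supplies the new triangles through $s$. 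This is the increment $\kappa_\triangle\big(N_\triangle(H_{s-1})-N_\triangle(H_s)\big)/\sqrt d$.
\item The linear terms in $\xi$ are handled by Gaussian/Hoeffding MGF bounds (Lemma~\ref{lemma:hoeffding}). Their quadratic contribution gives the four-cycle and diamond increments. The term $Z_s$, evaluated at the new column, similarly reproduces the part of $Z_{s-1}$ and the diamond term.
\item The genuinely quadratic form $\sum\lambda_{ij}\sqrt d\,\xi_i\xi_j$ is handled by Hanson--Wright (Theorem~\ref{thm:hanson_wright}). Here $\|\Lambda\|_{op}\le O(z_p\sqrt{np})$ by property \ref{item:eigenvalue_bound} of $\cE$, so the admissibility condition $\sqrt d\,\|\Lambda\|_{op}/d\ll1$ holds since $d\gg np$. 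The output is $O(\|\Lambda\|_{HS}^2/d)=\widetilde O(n^2p/d)$, which must be matched against the main terms or shown to be within $\widetilde O(R_{n,d,p})$.
\end{itemize}
Remaining cross terms are bounded by the degree, codegree and interval-cancellation properties \ref{item:degree_concentration}, \ref{item:degree_cancellation}, \ref{item:triangle_concentration} of $\cE$ and by the projection event $\Pi_{s-1}$. This yields the one-step error $\widetilde O(R_{n,d,p})$, which accumulates to $(n-s+1)R_{n,d,p}$.

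\emph{Main obstacle.} The hard step is the precise evaluation of this conditional MGF. The linear, quadratic and cubic pieces in the $\xi_i$ are coupled through the $\mu_i$, and bounding each crudely loses terms of order $n^2p/d$ per step, which is too large. I would first isolate the exact Gaussian part (completing the square for the linear-plus-quadratic form), and then apply Hanson--Wright only to the residual, with zero diagonal after subtracting means. I would verify carefully that the second-order terms combine into exactly $\kappa_\square$ (including its $-2a^6/p^5(1-p)^5$ correction) and $\kappa_\diamond$. This is the content of Lemma~\ref{lemma:ratio} together with the MGF estimate Lemma~\ref{lemma:S_bound}.
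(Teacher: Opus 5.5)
Your outline matches the paper's proof step for step. It has the same downward induction with a trivial base case, the same total-probability split at vertex $s$, and Lemma~\ref{lemma:vertex_s} for the edges $si$. It also inserts $\mathbf 1_{\cT_s}$ into the induction hypothesis and reduces the averaged hypothesis to a bound on $\bE\big[\mathbf 1_{\cT_s}e^{Z_s-Z_{s-1}}\prod_{s<i<j\le n}\Phi_{sij}/\Phi_{s-1,ij}\big]$, i.e.\ to Lemma~\ref{lemma:ratio}. So as a proof modulo that lemma (which is how the paper proves this statement), your argument is the paper's. The one exception is the step you postpone with ``I would verify carefully'', and it is the one piece of work the paper actually carries out inside this proof. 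From $(A_{ij}-p)^2=(1-2p)(A_{ij}-p)+p(1-p)$ one gets $\lambda_{ij}(z_p-\lambda_{ij})=\big(z_p-\frac{a(1-2p)}{p(1-p)}\big)\lambda_{ij}-\frac{a^2}{p(1-p)}$. Hence the two bracketed terms of Lemma~\ref{lemma:ratio} give $\kappa_{\diamond}\big(N_{\diamond}(H_{s-1})-N_{\diamond}(H_s)\big)/d$, plus $-\frac{2a^2}{p(1-p)}$ times the four-cycle sum; this is the $-2a^6/(p^5(1-p)^5)$ in $\kappa_\square$. You also need to check that every four-cycle and every diamond through $s$ is counted exactly once, and to drop the nonpositive term $-\frac{1}{2d}\sum\lambda_{si}^2\lambda_{sj}^2\lambda_{ij}^2$.

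The real gaps are in your own description of the deferred estimates, which would not close if carried out as written.

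(1) The edges at $s$ do not cost only $\widetilde O(1/d)$ per pair. Given $\cG_{s-1}$, the threshold is rescaled by $\sqrt{d/(d-s+1)}\approx 1+\frac{s}{2d}$. This moves $\log\Pb[E_{si}\mid\cG_{s-1}]-\log\Phi_{s-1,s,i}$ by about $-\frac{s}{2d}z_p\lambda_{si}$, which is of order $s/d$ per edge. In absolute value this totals $\widetilde O(pn^2/d)$ per step, which is far above $R_{n,d,p}$. Only the signed sum $-\frac{s}{2d}z_p\sum_i\lambda_{si}$, combined with property~\ref{item:degree_cancellation} of $\cE$, yields the $\widetilde O(n^{3/2}p^{1/2}/d)$ of Lemma~\ref{lemma:vertex_s}.

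(2) Two terms cannot be matched with the triangle, four-cycle or diamond increments, nor absorbed into $R_{n,d,p}$. These are the $\widetilde O(n^2p/d)$ produced by the quadratic form, and the positive $\widetilde O(n^2p^2/d)$ diagonal ($j=k$) part of the variance of your linear terms. Summed over $s$ they are of order $p^{-2}$ and $p^{-1}$ (up to logarithms) at $d\asymp(np\log p^{-1})^3$. They are cancelled exactly by $-\bE Y$, where $Y=\sum_{s<i<j\le n}\lambda_{sij}^2\beta_{sij}^2/2$ with $\beta_{sij}=\sqrt d\,\bu_i(s)\bu_j(s)$. This is the second-order Taylor term you wrote as $-\frac12(\cdots)$. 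To see the cancellation you need two ingredients:
\begin{itemize}
\item the exact second cumulant, $\log\bE e^X=\bE X+\frac12\Var X+O(\delta_1\delta_2)$, which the paper gets from Proposition~\ref{prop:lindeberg} by comparison with a Gaussian quadratic form. Hoeffding-type bounds lose the constant, cannot produce $\kappa_\square$, and leave a positive $\widetilde O(n^2p^2/d)$ per step.
\item a decoupling of $Y$ from $X$, which is Lemma~\ref{lemma:tilting}.
\end{itemize}
Then $\frac d2\sum\lambda_{sij}^2\bE[\bu_i(s)^2\bu_j(s)^2]$ cancels identically between $\frac12\Var X$ and $\bE Y$. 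What survives is $-\frac{1}{2d}\sum\lambda_{si}^2\lambda_{sj}^2\lambda_{ij}^2$ and the off-diagonal four-cycle and diamond terms.

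(3) The other half of your $(\cdots)$, namely $\sum|\alpha_{sij}|\lambda_{sij}\beta_{sij}^2/2$, is also $\widetilde O(n^2p/d)$ if bounded termwise, so it needs a cancellation argument. The paper applies the spectral property~\ref{item:eigenvalue_bound} of $\cE$ to the vector $(\bu_i(s)^2)_{i>s}$. That property is not among those you list for the remaining cross terms.
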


Note that Proposition~\ref{prop:main_prop} follows directly from Lemma~\ref{lemma:induction-statement} by setting $s=0$. Since $\pi_0(\bv_i)$ is the zero vector for all $i$, we have $Z_0=0$ and $H_0=H$. Finally, if $ij$ is an edge of $H$, then $\Phi_{0ij}=\Phi(-z_p)=p$ and if $ij$ is a nonedge, then $\Phi_{0ij}=\Phi(z_p)=1-p$. Hence, $\prod_{1\leq i<j\leq n}\Phi_{0ij}=p^{e(H)}(1-p)^{\binom{n}{2}-e(H)}$, and we get precisely the bound of Proposition~\ref{prop:main_prop}.

Let us try to provide some intuition for the statement of Lemma~\ref{lemma:induction-statement}. As we already indicated, the idea is to describe the probability that $G(n, d, p)$ agrees with $H$ on vertices $\{s+1, \dots, n\}$. One should think of the product $\prod_{s<i<j\leq n} \Phi_{sij}$ as the leading term of this probability, where $\Phi_{sij}$ corresponds to our current estimate of the probability of the edge $ij$ appearing in $G(n, d, p)$, given the coordinates $\bv_i(1), \dots, \bv_i(s), \bv_j(1),\dots, \bv_j(s)$. Indeed, if $ij\in E(H)$ should be an edge, then $\Phi_{sij}$ equals $\Phi(-z_p)=p$ to first order. Furthermore, there is a correction term based on the already revealed coordinates - if, say, the vectors $\bv_i, \bv_j$ already made progress towards having a large inner product due to $\langle \pi_s(\bv_i), \pi_s(\bv_j)\rangle$ being large and positive, then we should increase the expected probability of the edge $ij$ in the graph $G(n, d, p)$.

Beyond this, there are correction terms similar to those we have already seen in Proposition~\ref{prop:main_prop}. These terms should not be surprising: if $H_s$ has an unfavorable triangle arrangement for being realized as a sample of $G(n, d, p)$ (i.e. it has a very low signed triangle count), then this should decrease the probability of obtaining it on the vertices $s+1, \dots, n$ in $G(n,d,p)$. Finally, the term $Z_s$ describes the influence of the coordinates we have already seen on the realization of the remaining graph.

We now discuss the proof of Lemma~\ref{lemma:induction-statement}. The basic idea of the proof is to argue by induction, starting from $s=n-1$ all the way to $s=0$. We observe that $G(n, d, p)$ and $H$ agree on vertices $s, \dots, n$ if and only if they agree on the edges incident to the vertex $s$ and also agree on vertices $s+1, \dots, n$. The probability of the latter can be controlled through the induction hypothesis, while the probability of the former can be explicitly calculated. However, conditioning on the first event affects the distribution of the coordinates $\bv_i(s)$ for $i\in \{s+1, \dots, n\}$, and our first steps will be to precisely understand the effects of this conditioning.

\begin{lemma}\label{lemma:vertex_s}
Let $1\leq s< n$ be an integer and let $H\in \cE$ be an $n$-vertex graph. Suppose that the $\bv_1, \dots, \bv_n$ are random vectors on $\bS^{d-1}\subset \bR^d$, sampled as in Lemma~\ref{lemma:distribution_v_i}, and let $G$ be the corresponding random geometric graph. Suppose that the set of coordinates $\bv_i(j)$ are revealed for $j\leq s-1$, denote this set $\cG_{s-1}$, and assume that they satisfy the event $\Pi_{s-1}$.

\begin{enumerate}
\item[(a)] The probability, over the remaining randomness, that $G$ and $H$ agree on edges $si$ for all $i\in \{s+1, \dots, n\}$ is
\begin{align}\label{eqn:prob_E_si}
    \Pb\Big[si\in E(G)\Longleftrightarrow si\in E(H)\text{ for all } i\in \{s+1, \dots, n\}\Big| \cG_{s-1}\Big]\leq e^{\widetilde O(n^{3/2}p^{1/2}/{d})}\prod_{s<i\leq n}\Phi_{s-1, s, i}.
\end{align}
\item[(b)] Conditioning on $\cG_{s-1}$ and on the event that $G, H$ agree on the edges $si$ for all $i\in \{s+1, \dots, n\}$, the coordinates $\bv_{s+1}(s), \dots, \bv_n(s)$ have the following distribution:\\
\begin{center}
\fbox{
\parbox{15cm}{\phantomsection\label{setup:star}\emph{Setup ($\star$)}: Suppose that the set of coordinates $\{\bv_i(j)| i\in [n], 1\leq j\leq s-1\}$ is revealed and satisfies the event $\Pi_{s-1}$.
For $i\in \{s+1, \dots, n\}$, let $r_i(s)=\sqrt{1-\sum_{k=1}^{s-1} \bv_i(k)^2}$ be the residual length of the vector $\bv_i$ after coordinate $s$. If $si\in E(H)$, then $\bv_i(s)/r_i(s)$ is distributed as the first coordinate on the unit sphere $\bS^{d-s}\subseteq \bR^{d-s+1}$, conditioned on being at least \[\tau_{si}=\frac{c_p}{\sqrt{d}\bv_s(s)r_i(s)}-\frac{\langle \pi_{s-1}(\bv_i), \pi_{s-1}(\bv_s)\rangle}{\bv_s(s)r_i(s)}.\]
If $si\notin E(H)$, then $\bv_i(s)/r_i(s)$ is distributed as the first coordinate on the unit sphere $\bS^{d-s}\subseteq \bR^{d-s+1}$, conditioned on being smaller than $\tau_{si}$.

Moreover, all the coordinates $\bv_i(s)$ are independent.

Additionally, define $\bu_i(s)$ as the truncated random variable $\bu_i(s)=\bv_i(s)\mathbf{1}_{|\bv_i(s)|\leq 10\sqrt{\log d}/\sqrt{d}}$}
}
\end{center}
\end{enumerate}
\end{lemma}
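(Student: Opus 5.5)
The plan is to condition on $\cG_{s-1}$ and write each event ``$si\in E(G)$'' as a condition on the single unrevealed coordinate $\bv_i(s)$. Since $\bv_s$ has zero coordinates beyond $s$, we have
\[\langle \bv_s,\bv_i\rangle=\bv_s(s)\bv_i(s)+\langle \pi_{s-1}(\bv_s),\pi_{s-1}(\bv_i)\rangle.\]
Here $\bv_s(s)=\sqrt{1-\|\pi_{s-1}(\bv_s)\|^2}>0$ is $\cG_{s-1}$-measurable. So $si\in E(G)$ holds iff $\bv_i(s)/r_i(s)\geq \tau_{si}$, with $\tau_{si}$ exactly as in \setupstar. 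By Lemma~\ref{lemma:distribution_v_i}, conditionally on $\cG_{s-1}$, each $\bv_i(s)/r_i(s)$ is distributed as the first coordinate of a uniform point on $\bS^{d-s}$. These coordinates are independent across $i$, because the $\bx_i$ are independent and $\be_s$ is determined by $\bx_1,\dots,\bx_s$; for $i>s$ the residual directions of $\bx_i$ are independent of that basis. Each edge event depends only on its own coordinate, so conditioning on the intersection of these events preserves independence and simply truncates each variable at $\tau_{si}$. This gives part (b) directly.

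For part (a), independence factorizes the probability as $\prod_{i>s}P_i$. Here $P_i=1-\Psi_{d-s+1}(\sqrt{d-s+1}\,\tau_{si})$ for an edge, and $P_i=\Psi_{d-s+1}(\sqrt{d-s+1}\,\tau_{si})$ for a non-edge. I will compare $P_i$ with $\Phi_{s-1,s,i}$. On $\Pi_{s-1}$ we have $r_i(s)^2=1-\frac{s-1}{d}+O(\sqrt{n\log d}/d)$ and $\bv_s(s)^2=1-\frac{s-1}{d}+O(\sqrt{n\log d}/d)$, together with $|\langle\pi_{s-1}(\bv_i),\pi_{s-1}(\bv_s)\rangle|\le 10\sqrt{n\log d}/d$. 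Hence
\[\sqrt{d-s+1}\,\tau_{si}=\Big(c_p-\sqrt d\langle\pi_{s-1}(\bv_i),\pi_{s-1}(\bv_s)\rangle\Big)\cdot \frac{\sqrt{d-s+1}}{\sqrt d\, \bv_s(s)r_i(s)}.\]
The scaling factor equals $1+\widetilde O(\sqrt n/d)$, because the $(1-\frac{s-1}{d})$ terms cancel to first order and only the $\Pi$-fluctuations of size $\widetilde O(\sqrt n/d)$ remain. Lemma~\ref{lemma:quantile_asymptotics} gives $c_p=z_p+\widetilde O(1/d)$. So the argument is $z_p-\sqrt d\langle\cdot,\cdot\rangle+\delta_i$ with $\delta_i=\widetilde O(\sqrt n/d)$, using that $z_p$ and $\sqrt d\langle\cdot,\cdot\rangle$ are $\widetilde O(1)$. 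By Lemma~\ref{lemma:sphere_to_gaussian_CDF}, replacing $\Psi$ by $\Phi$ costs a factor $1+\widetilde O(1/d)$; I apply it to whichever tail is relevant, using symmetry of $\cS_k$.

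Then \eqref{eqn:log_concavity} gives $P_i\le \Phi_{s-1,s,i}\exp(\lambda(\cdot)|\delta_i|)$. For an edge, $\lambda$ evaluated near $-z_p$ is $\Theta(a/p)=\widetilde O(1)$; for a non-edge it is $\Theta(a/(1-p))=\widetilde O(p)$. So the total error is
\[\sum_i \widetilde O(|\lambda_{si}|\sqrt n/d)+(n-s)\widetilde O(1/d).\]
With property~\ref{item:degree_concentration} of $\cE$, namely $\sum_i|\lambda_{si}|=O(z_p pn)$, this is $\widetilde O(n^{3/2}p/d)$. That is slightly better than the claimed $\widetilde O(n^{3/2}p^{1/2}/d)$, which leaves room to spare.

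The main obstacle is the careful first-order cancellation in the scaling factor $\sqrt{d-s+1}/(\sqrt d\,\bv_s(s)r_i(s))$. If it were not controlled, the $s/d$ terms, of order $n/d$, multiplied by $z_p\lambda$ and summed over $n$ vertices would give a fatal $n^2/d$ error. I will handle it by expanding both residual norms around $\sqrt{1-(s-1)/d}$ using $\Pi_{s-1}$. If the resulting error turns out to be of order $\sqrt{n}/d$ times a factor I have overlooked, the slack between $p$ and $p^{1/2}$ should absorb it. I must also check that the $O(1/d)$ sphere–Gaussian error, applied over $n-s$ factors, is $\widetilde O(n/d)\le\widetilde O(n^{3/2}p^{1/2}/d)$; this holds since $p\ge n^{-1}$.
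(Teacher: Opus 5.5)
Part (b) is fine and matches the paper. Part (a), however, rests on a false algebraic claim: the $(1-\frac{s-1}{d})$ factors do \emph{not} cancel in the scaling factor.

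The numerator $\sqrt{(d-s+1)/d}$ contributes one factor $(1-\frac{s-1}{d})^{1/2}$. The denominator $\bv_s(s)r_i(s)$ contributes two. So on $\Pi_{s-1}$,
\[\frac{\sqrt{d-s+1}}{\sqrt d\,\bv_s(s)r_i(s)}=\Big(1-\frac{s-1}{d}\Big)^{-1/2}\Big(1+\widetilde O\Big(\frac{\sqrt n}{d}\Big)\Big)=1+\frac{s}{2d}+\widetilde O\Big(\frac{\sqrt n}{d}\Big).\]
Only $r_i(s)$ is absorbed by the variance normalization; the factor $1/\bv_s(s)$ survives.

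Consequently your $\delta_i$ contains a deterministic shift $\pm t_{si}\frac{s}{2d}$, where $t_{si}=z_p-\sqrt d\langle\pi_{s-1}(\bv_i),\pi_{s-1}(\bv_s)\rangle$. This shift is as large as $\Theta(z_pn/d)$, not $\widetilde O(\sqrt n/d)$. Bounding it in absolute value and summing with property~\ref{item:degree_concentration} gives only $\widetilde O(n^2p/d)$. That exceeds the claimed $\widetilde O(n^{3/2}p^{1/2}/d)$ by a factor $\sqrt{np}\to\infty$, so the slack between $p$ and $p^{1/2}$ does not absorb it. Your final bound coming out \emph{better} than the statement is a symptom of this dropped term.

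The missing idea is to keep the sign of this shift rather than taking absolute values. Inequality~\eqref{eqn:log_concavity} holds for signed $\beta$. For an edge, the argument of $\Phi$ is $-t_{si}(1+\frac{s}{2d}+\dots)$, giving a factor $\exp(-\frac{a}{p}z_p\frac{s}{2d}+\dots)$. For a non-edge it is $t_{si}(1+\frac{s}{2d}+\dots)$, giving $\exp(+\frac{a}{1-p}z_p\frac{s}{2d}+\dots)$. Both are $\exp(-\frac{s}{2d}z_p\lambda_{si})$, so the product over $i$ is $\exp(-\frac{s}{2d}z_p\sum_{s<i\leq n}\lambda_{si})$. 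Now apply the degree-cancellation property~\ref{item:degree_cancellation} of $\cE$ to the interval $\{s+1,\dots,n\}$: it gives $|\sum_{s<i\leq n}\lambda_{si}|=\widetilde O(\sqrt{np})$, hence a term of order $\widetilde O(n^{3/2}p^{1/2}/d)$. This is exactly the source of the $p^{1/2}$ in \eqref{eqn:prob_E_si}.

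The rest of your bookkeeping is correct and agrees with the paper: the $\widetilde O(\sqrt n/d)$ fluctuations contribute $\widetilde O(n^{3/2}p/d)$, and replacing $\Psi_{d-s+1}$ by $\Phi$ costs $\widetilde O(n/d)$.
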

\begin{proof}
For $s<i\leq n$, define the event $E_{si}$ by
\[
E_{si}=
\begin{cases}
\{\langle \bv_i,\bv_s\rangle\geq c_p/\sqrt d\}=\{\bv_i(s)\bv_s(s)+\langle \pi_{s-1}(\bv_i), \pi_{s-1}(\bv_s)\rangle \geq c_p/\sqrt{d}\},& si\in E(H),\\
\{\langle \bv_i,\bv_s\rangle<c_p/\sqrt d\}=\{\bv_i(s)\bv_s(s)+\langle \pi_{s-1}(\bv_i), \pi_{s-1}(\bv_s)\rangle < c_p/\sqrt{d}\},& si\notin E(H).
\end{cases}
\]
Conditioned on $\cG_{s-1}$, the only random quantity involved in the event $E_{si}$ is the coordinate $\bv_i(s)$. Indeed, if $si\in E(H)$, $E_{si}$ occurs precisely when $\bv_i(s)\geq c_p/(\sqrt{d}\bv_s(s))-\langle \pi_{s-1}(\bv_i), \pi_{s-1}(\bv_s)\rangle/\bv_s(s)$, and if $si\notin E(H)$, $E_{si}$ occurs when $\bv_i(s)$ is smaller than the same threshold. Since the coordinates $\bv_i(s)$ are independent before conditioning on $\bigwedge_{s<i\leq n}E_{si}$, they remain independent after conditioning too. Hence, Lemma~\ref{lemma:distribution_v_i} shows they follow precisely the distribution from \setupstar{}. This shows part (b) of the statement.

Let us now focus on proving the estimate (\ref{eqn:prob_E_si}). Since the events $E_{si}$ are independent after conditioning on $\cG_{s-1}$, we have
\[ \Pb\Big[\bigwedge_{s<i\leq n}E_{si}\,\Big|\cG_{s-1}\Big] = \prod_{s<i\leq n}\Pb\big[E_{si}| \cG_{s-1}\big].\]

Let us denote by $t_{si}=z_p-\sqrt{d}\langle \pi_{s-1}(\bv_i), \pi_{s-1}(\bv_s)\rangle$, so that $(t_{si}+c_p-z_p)/(\sqrt{d}\bv_s(s))$ is the threshold for the event $E_{si}$. Due to Lemma~\ref{lemma:distribution_v_i}, $\bv_i(s)$ conditioned on $\pi_{s-1}(\bv_i)$ is distributed as $\sqrt{1-\|\pi_{s-1}(\bv_i)\|^2}$ times the first coordinate of a uniform random point $\by\sim \bS^{d-s}\subseteq \bR^{d-s+1}$. If $si\in E(H)$, we have
\begin{align}\label{eqn:prob_E_si_via_Psi_1}
\Pb[E_{si}|\cG_{s-1}] = \Pb\Big[\bv_i(s)\geq \frac{t_{si}+c_p-z_p}{\sqrt{d}\bv_s(s)}\Big|\cG_{s-1}\Big] &= \Pb\Big[\by_1 \sqrt{1-\|\pi_{s-1}(\bv_i)\|^2} \geq \frac{t_{si}+c_p-z_p}{\sqrt{d}\bv_s(s)}\Big|\cG_{s-1}\Big]\nonumber\\
&= \Psi_{d-s+1}\Big(\sqrt{\frac{d-s+1}{d}}\cdot \frac{-t_{si}-c_p+z_p}{\bv_s(s)\sqrt{1-\|\pi_{s-1}(\bv_i)\|^2}}\Big),
\end{align}
where the last line follows from the definition $\Psi_{k}(t)=\Pb_{\bz\sim \bS^{k-1}}[\bz_1\leq t/\sqrt{k}]$. Similarly, if $si\notin E(H)$, then
{\relscale{0.95}\begin{align}\label{eqn:prob_E_si_via_Psi_2}
    \Pb[E_{si}|\cG_{s-1}]\! =\! 1-\Psi_{d-s+1}\Big(\sqrt{\frac{d\!-\!s\!+\!1}{d}}\cdot \frac{-t_{si}-c_p+z_p}{\bv_s(s)\sqrt{1\!-\!\|\pi_{s-1}(\bv_i)\|^2}}\Big)=\Psi_{d-s+1}\Big(\sqrt{\frac{d\!-\!s\!+\!1}{d}}\cdot \frac{t_{si}+c_p-z_p}{\bv_s(s)\sqrt{1\!-\!\|\pi_{s-1}(\bv_i)\|^2}}\Big).
\end{align}}

Recall, we are trying to show
\[\prod_{s<i\leq n}\Pb\big[E_{si}| \cG_{s-1}\big]\leq e^{\widetilde O(n^{3/2}p^{1/2}/d)}\prod_{s<i\leq n}\Phi_{s-1, s, i},\]
where $\Phi_{s-1, s, i}=\Phi\big(-z_p+\sqrt{d}\langle \pi_{s-1}(\bv_i), \pi_{s-1}(\bv_s)\rangle\big) =\Phi(-t_{si})$ if $si\in E(H)$ and $\Phi_{s-1, s, i}=\Phi(t_{si})$ otherwise. Thus, to complete the proof, we need to convert $\Psi_{d-s+1}$ into $\Phi$ using Lemma~\ref{lemma:sphere_to_gaussian_CDF} and get rid of the various factors standing around $t_{si}$ in the expressions (\ref{eqn:prob_E_si_via_Psi_1}), (\ref{eqn:prob_E_si_via_Psi_2}). We begin by the latter, using the following simple claim.

\begin{claim}\label{claim:bookkeeping}
If the event $\Pi_{s-1}$ holds for the revealed coordinates, we have
{\relscale{0.95}\[\sqrt{\frac{d\!-\!s\!+\!1}{d}}=\Big(1-\frac{s}{2d}\Big)\Big(1+O\big(\frac{\sqrt{n}}{d}\big)\Big), \frac{1}{\bv_s(s)}=\Big(1+\frac{s}{2d}\Big)\Big(1+\widetilde O\big(\frac{\sqrt{n}}{d}\big)\Big), \frac{1}{\sqrt{1\!-\!\|\pi_{s-\!1}(\bv_i)\|_2^2}}=\Big(1+\frac{s}{2d}\Big)\Big(1+\widetilde O\big(\frac{\sqrt{n}}{d}\big)\Big).\]}
\end{claim}
\begin{proof}
The first expression is a direct consequence of the Taylor expansion of $\sqrt{1-t}=1-t/2+O(t^2)$ and the standing assumption on $d$ and $p$. As for the latter two,
by property~\ref{item:norm_concentration} of the event $\Pi_{s-1}$, we have that
\[ \Big|\sum_{j=1}^{s-1} \bv_i(j)^2-\frac{s-1}{d}\Big|=\widetilde O\big(\frac{\sqrt{n}}{d}\big)\qquad \text{ and  }\qquad \Big|\sum_{j=1}^{s-1} \bv_s(j)^2-\frac{s-1}{d}\Big|=\widetilde O\big(\frac{\sqrt{n}}{d}\big). \]
From here, it follows that
{\relscale{0.95}\begin{align*}
&\frac{1}{\sqrt{1-\!\|\pi_{s-1}(\bv_i)\|^2}}=\sqrt{\frac{d}{d\!-\!s\!+\!1}}\Big(1+\widetilde O\Big(\frac{\sqrt{n}}{d}\Big)\Big)=\Big(1+\frac{s-1}{2(d\!-\!s\!+\!1)}\Big)\Big(1+\widetilde O\Big(\frac{\sqrt{n}}{d}\Big)\Big)=\Big(1+\frac{s}{2d}\Big)\Big(1+\widetilde O\big(\frac{\sqrt{n}}{d}\big)\Big), \\
&\text{ and }\frac{1}{\bv_s(s)}=\frac{1}{\sqrt{1-\sum_{j=1}^{s-1} \bv_s(j)^2}}=\sqrt{\frac{d}{d\!-\!s\!+\!1}}\Big(1+\widetilde O\Big(\frac{\sqrt{n}}{d}\Big)\Big)=\Big(1+\frac{s}{2d}\Big)\Big(1+\widetilde O\big(\frac{\sqrt{n}}{d}\big)\Big).\qedhere
\end{align*}}
\end{proof}

Hence, if $si\in E(H)$, by using Claim~\ref{claim:bookkeeping} and Lemma~\ref{lemma:quantile_asymptotics} (which states that $|c_p-z_p|\leq \widetilde O(1/d)$), we can simplify the above expression to
\[\Pb[E_{si}|\cG_{s-1}] = \Psi_{d-s+1}\Big( -t_{si}\Big(1+\frac{s}{2d}\Big)^2\Big(1-\frac{s}{2d}\Big)\Big(1+\widetilde O\big(\frac{\sqrt{n}}{d}\big)\Big)\Big)=\Psi_{d-s+1}\Big(-t_{si}\Big(1+\frac{s}{2d}+\widetilde O\big(\frac{\sqrt{n}}{d}\big)\Big)\Big).\]
We can now use Lemma~\ref{lemma:sphere_to_gaussian_CDF} to replace $\Psi_{d-s+1}$ by $\Phi$, and then we can apply the log-concavity of $\Phi$ (i.e. inequality (\ref{eqn:log_concavity})) to estimate the resulting term:
\begin{align*}
\Pb[E_{si}|\cG_{s-1}] &= \Phi\Big(-t_{si}\Big(1+\frac{s}{2d}+\widetilde O\big(\frac{\sqrt{n}}{d}\big)\Big)\Big)\Big(1+O\Big(\frac{1+t_{si}^4}{d}\Big)\Big)\\
&\leq \Phi(-t_{si})\exp\Big(-\frac{\phi(-t_{si})}{\Phi(-t_{si})}t_{si}\Big(\frac{s}{2d}+\widetilde O\big(\frac{\sqrt{n}}{d}\big)\Big)\Big)\Big(1+O\Big(\frac{1+t_{si}^4}{d}\Big)\Big).
\end{align*}
Similarly, if $si\notin E(H)$, we can compute
\begin{align*}
\Pb[E_{si}|\cG_{s-1}] \leq \Phi(t_{si})\exp\Big(\frac{\phi(t_{si})}{\Phi(t_{si})}t_{si}\Big(\frac{s}{2d}+\widetilde O\big(\frac{\sqrt{n}}{d}\big)\Big)\Big)\Big(1+O\Big(\frac{1+t_{si}^4}{d}\Big)\Big).
\end{align*}

Before taking the product over all $i$, let us make a couple of observations. Note that $|t_{si}-z_p|=\sqrt{d}|\langle \pi_{s-1}(\bv_i), \pi_{s-1}(\bv_s)\rangle |\leq \widetilde O(\sqrt{{n/d}})$, so $t_{si}=\widetilde O(1)$.
Further, since $\frac{\phi(t)}{\Phi(t)}$ is a $1$-Lipschitz function, we have \[\frac{\phi(-t_{si})}{\Phi(-t_{si})}=\frac{\phi(-z_p)}{\Phi(-z_p)}+\widetilde O(\sqrt{n/d})=\frac{a}{p}\Big(1+\widetilde O\big(\sqrt{\frac{n}{d}}\big)\Big)\text{ and }\frac{\phi(t_{si})}{\Phi(t_{si})}=\frac{\phi(z_p)}{\Phi(z_p)}+\widetilde O(\sqrt{n/d})=\frac{a}{1-p}\Big(1+\widetilde O\big(\sqrt{\frac{n}{dp}}\big)\Big).\]

Hence, for $si\in E(H)$
\begin{align*}
\Pb[E_{si}|\cG_{s-1}] &\leq \Phi(-t_{si})\exp\Big(-\frac{\phi(-t_{si})}{\Phi(-t_{si})}t_{si}\Big(\frac{s}{2d}+\widetilde O\big(\frac{\sqrt{n}}{d}\big)\Big)\Big)\Big(1+\widetilde O\Big(\frac{1}{d}\Big)\Big)\\
&\leq \Phi(-t_{si})\exp\Big(-\frac{a}{p}z_p\frac{s}{2d}+\widetilde O\Big(\frac{\sqrt{n}}{d}\Big)+\widetilde O\Big(\frac{1}{d}\Big)\Big)
\end{align*}
Similarly, for $si\notin E(H)$
\begin{align*}
\Pb[E_{si}|\cG_{s-1}] \leq \Phi(t_{si})\exp\Big(\frac{a}{1-p}z_p\frac{s}{2d}+\widetilde O\Big(\frac{p\sqrt{n}}{d}\Big)+\widetilde O\Big(\frac{1}{d}\Big)\Big).
\end{align*}
Finally, taking the product over all $s<i\leq n$ and observing that $\Phi(-t_{si})=\Phi_{s-1, s, i}$ when $si\in E(H)$ and $\Phi(t_{si})=\Phi_{s-1, s, i}$ when $si\notin E(H)$, we get
\[ \Pb\Big[\bigwedge_{s<i\leq n}E_{si}\,\Big|\cG_{s-1}\Big] \leq \prod_{s<i\leq n}\Phi_{s-1,s,i}\cdot \exp\Big(-\frac{s}{2d}z_p\sum_{s< i\leq n} \lambda_{si}  +\widetilde O\Big(\frac{pn^{3/2}}{d}\Big)\Big). \]
Note that the error term contains $p$ since there are at most $O(pn)$ indices $i$ for which $si\in E(H)$, due to property \ref{item:degree_concentration} of the event $\cE$.
Since $\Big|\sum_{s<i\leq n}\lambda_{si}\Big|\leq \widetilde O(\sqrt{np})$ due to the fact $H\in \cE$, we have $\frac{s}{2d}z_p\sum_{s<i\leq n}\lambda_{si}=\widetilde O(\sqrt{np}\cdot \frac{n}{d})$. Thus, we finally arrive at
\[ \Pb\Big[\bigwedge_{s<i\leq n}E_{si}\,\Big|\cG_{s-1}\Big] \leq \prod_{s<i\leq n}\Phi_{s-1,s,i}\cdot \exp\Big(\widetilde O\Big(\frac{n^{3/2}p^{1/2}}{d}\Big)\Big), \]
which is what we needed to show.
\end{proof}

We will now give a proof of Lemma~\ref{lemma:induction-statement}, modulo one estimate, contained in Lemma~\ref{lemma:ratio}, which we will postpone for the latter sections.

\begin{proof}[Proof of Lemma~\ref{lemma:induction-statement} assuming Lemma~\ref{lemma:ratio}.] We argue by backwards induction on $s$. For $s=n-1$, the product and all the signed subgraph counts are empty and $Z_{n-1}=0$. Moreover, $\cF$ is determined by $\cG_{n-1}$, so the left-hand side is either $0$ or $1$. Thus, the claim follows after enlarging the implicit constant in the error term if necessary.

Fix $1\leq s\leq n-1$, assume the statement is known for $s$, and prove it for $s-1$. If the coordinates revealed by $\cG_{s-1}$ do not satisfy $\Pi_{s-1}$, then the event $\cF$ is impossible and there is nothing to prove. We may therefore assume that $\Pi_{s-1}$ holds.

Observe that the event $G(n, d, p)|_{[s, n]}=H_{s-1}$ is the intersection of the event $G(n, d, p)|_{[s+1, n]}=H_{s}$ with $\bigwedge_{s<i\leq n}E_{si}$. Combinatorially, $G(n, d, p)$ and $H$ agree on vertices $s, \dots, n$ if they agree on vertices $s+1, \dots, n$ and also agree on the edges touching the vertex $s$. By the law of total probability, we then have
\begin{align}
\Pb\Big[G(n, d, p)|_{[s, n]}=H_{s-1}\wedge \cF\Big|\cG_{s-1}\Big]=\bE\Big[\Pb\Big[G(n, d, p)|_{[s+1, n]}=H_{s}\wedge \cF\Big|\cG_{s}\Big]&\Big|\bigwedge_{s<i\leq n}E_{si}, \cG_{s-1}\Big]\nonumber\\
&\cdot \Pb\Big[\bigwedge_{s<i\leq n}E_{si}\Big|\cG_{s-1}\Big].\label{eqn:law_of_total_probability}
\end{align}

The second term can be estimated directly from Lemma~\ref{lemma:vertex_s}, since the revealed coordinates from $\cG_{s-1}$ must satisfy the event $\Pi_{s-1}$. The first term of (\ref{eqn:law_of_total_probability}) can be estimated using the induction hypothesis. In order to slightly simplify the notation, we will introduce
\[f(H)=\frac{\kappa_{\triangle} N_{\triangle}(H)}{\sqrt{d}} + \frac{\kappa_{\square} N_{\square}(H)}{d} + \frac{\kappa_{\diamond}N_{\diamond}(H)}{d}.\]
By the induction hypothesis, over the randomness of coordinates $s+1, \dots, n$, we have
\[
\Pb\Big[G(n, d, p)|_{[s+1, n]}=H_{s}\wedge \cF\Big|\cG_{s}\Big] \leq \mathbf{1}_{\cT_s}\!\!\!\!\prod_{s<i<j\leq n}\!\!\!\!\Phi_{sij}\, \exp\Big(Z_s+f(H_s)+ \widetilde O\Big(R_{n, d, p}(n\!-\!s)\Big)\Big).
\]
Note that we have inserted the indicator $\mathbf{1}_{\cT_s}$ in the above estimate, since if the event $\cT_s$ is not satisfied for the coordinates from $\cG_s$, then $\cF$ cannot be satisfied either and the resulting probability is $0$.
Thus, Lemma~\ref{lemma:vertex_s} and the induction hypothesis give
\begin{align*}
\Pb\Big[G(n, d, p)|_{[s, n]}=H_{s-1}\wedge \cF\Big|\cG_{s-1}\Big] \leq \bE\Big[\mathbf{1}_{\cT_s}\!\!\!\!\prod_{s<i<j\leq n}\!\!\!\!\Phi_{sij}\, e^{Z_s} &\Big|\bigwedge_{s<i\leq n}E_{si}, \cG_{s-1}\Big]\cdot \prod_{s<i\leq n}\Phi_{s-1,s,i}\\
&\cdot \exp\Big(f(H_{s})+\widetilde O\Big(R_{n, d, p}(n\!-\!s\!+\!1)\Big)\Big).
\end{align*}

To complete the proof, it would suffice to show
\[
\bE\Big[\mathbf{1}_{\cT_s}\!\!\!\!\prod_{s<i<j\leq n}\!\!\!\!\Phi_{sij}e^{Z_s}\,\Big|\,\bigwedge_{s<i\leq n}E_{si}, \cG_{s-1}\Big]\leq \prod_{s<i<j\leq n}\!\!\!\!\Phi_{s-1, ij}\exp\Big(Z_{s-1}+f(H_{s-1})-f(H_s)+\widetilde O\Big(R_{n, d, p}\Big)\Big).
\]
This expectation is taken with respect to $\bv_{s+1}(s), \dots, \bv_n(s)$. Importantly, Lemma~\ref{lemma:vertex_s} (b) determines the distribution of these variables conditioned on $\bigwedge_{s<i\leq n}E_{si}$ and $\cG_{s-1}$. Therefore, we have reduced proving Lemma~\ref{lemma:induction-statement} to the following estimate. Although we could state the following estimate in terms of the signed subgraph counts, we find that it is much easier to reason about the sums of $\lambda_{ij}$, which are essentially equivalent to subgraph counts, due to the relation $\lambda_{ij}=\frac{a}{p(1-p)}(A_{ij}-p)$.

\begin{lemma}\label{lemma:ratio}
Let us consider the \setupstar{}. Under the standing hypotheses, we have
\begin{align*}
\bE\bigg[\mathbf{1}_{\cT_s} e^{Z_s-Z_{s-1}}\!\!\!\!\prod_{s<i<j\leq n}\frac{\Phi_{sij}}{\Phi_{s-1, ij}}\bigg]&\leq
\exp\Big( \frac{1}{\sqrt{d}}\sum_{s<i<j\leq n}\lambda_{si}\lambda_{sj}\lambda_{ij}-\frac{1}{2d}\sum_{\substack{s<i<j\leq n}}\!\!\!\!\!\!\lambda_{si}^2\lambda_{sj}^2\lambda_{ij}^2
+\frac{1}{d}\sum_{\substack{s<r\leq n\\s<i<j\leq n}}\lambda_{si}\lambda_{sj}\lambda_{ri}\lambda_{rj}\\
&+\frac{1}{d}\sum_{\substack{s<r\leq n\\s<i<j\leq n}}\lambda_{si}\lambda_{sj}\lambda_{ri}\lambda_{rj}\big[\lambda_{ij}(z_p-\lambda_{ij})+\lambda_{sr}(z_p-\lambda_{sr})\big]+\widetilde O\Big(R_{n, d, p}\Big)\Big).
\end{align*}
\end{lemma}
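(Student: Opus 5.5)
We will expand everything in terms of the new coordinates. Write $w_i=\sqrt d\,\bv_i(s)$ for $s<i\leq n$, so that under \setupstar{} the $w_i$ are independent, approximately Gaussian, one-sided truncated variables. We have
\[\sqrt d\langle\pi_s(\bv_i),\pi_s(\bv_j)\rangle=\sqrt d\langle\pi_{s-1}(\bv_i),\pi_{s-1}(\bv_j)\rangle+w_iw_j/\sqrt d.\]
Hence $\Phi_{sij}/\Phi_{s-1,ij}$ is $\Phi(\alpha_{ij}\pm w_iw_j/\sqrt d)/\Phi(\alpha_{ij})$ with $\alpha_{ij}=\mp z_p+O(\widetilde O(\sqrt{n/d}))$.

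The first step is to apply the second-order bound (\ref{eqn:second_order_taylor}) of Lemma~\ref{lemma:tail-expansion} with $\beta=\pm w_iw_j/\sqrt d$, which is $\widetilde O(1/\sqrt d)$ on $\cT_s$ (hence the indicator). This bounds the product by
\[\exp\Big(\sum_{i<j}\Big[\lambda(\alpha_{ij})\beta_{ij}-\tfrac12\big(\lambda(\alpha_{ij})^2+\alpha_{ij}\lambda(\alpha_{ij})\big)\beta_{ij}^2\Big]+\widetilde O(n^2/d^{3/2})\Big).\]
Using Lemma~\ref{lemma:mills_ratio} (Lipschitzness and $\lambda'=-\lambda(t+\lambda)$), we replace $\lambda(\alpha_{ij})$ by $\lambda_{ij}$ plus a linear correction $\lambda_{ij}(z_p-\lambda_{ij})\sqrt d\langle\pi_{s-1}\bv_i,\pi_{s-1}\bv_j\rangle$. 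Combined with the linear-in-$w_iw_j$ factor, this correction should cancel exactly against the $w$-dependent part of $Z_s-Z_{s-1}$; this is presumably the reason $Z_s$ was defined that way. The increment of $Z_s$ also contains terms $\lambda_{ij}\lambda_{ir}\lambda_{jr}(z_p-\lambda_{ij})w_iw_j/d$ and the removed index-$s$ terms, and these we keep.

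We are left with the expectation of $\exp(Q(w))$, where
\[Q(w)=\frac1{\sqrt d}\sum_{i<j}\lambda_{ij}w_iw_j-\frac1{2d}\sum_{i<j}c_{ij}w_i^2w_j^2+\frac1d\sum_{i<j}\mu_{ij}w_iw_j,\]
with $c_{ij}=\lambda_{ij}^2-z_p\lambda_{ij}+\dots$ and $\mu_{ij}$ collecting the $Z$-increments. Write $w_i=m_i+\xi_i$, where $m_i=\bE w_i$. By Lemma~\ref{lemma:expectation_truncated_gaussian} and Lemma~\ref{lemma:sphere_coupling_moments}, $m_i\approx\lambda_{si}$ up to $\widetilde O(\sqrt{n/d})$ corrections from the threshold $\tau_{si}$, and $\Var\,\xi_i\approx1-\lambda_{si}(\lambda_{si}-z_p)$. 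We then split $Q$ into three pieces.

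\begin{itemize}
\item The constant (deterministic) part $\frac1{\sqrt d}\sum\lambda_{ij}m_im_j$ yields the triangle term $\frac1{\sqrt d}\sum\lambda_{si}\lambda_{sj}\lambda_{ij}$. The first-order errors of $m_i$ in $\tau_{si}$ must be absorbed, partly by the $\lambda_{sr}(z_p-\lambda_{sr})$ terms and partly into $R_{n,d,p}$ using property~\ref{item:degree_cancellation}. The quartic term at the means gives $-\frac1{2d}\sum\lambda_{si}^2\lambda_{sj}^2\lambda_{ij}^2$ after combining with variance corrections.
\item The linear part $\frac1{\sqrt d}\sum_i\xi_i\sum_j\lambda_{ij}m_j$ is controlled by subgaussianity (Lemma~\ref{lemma:hoeffding} applied to truncated $\xi_i$). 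Its exact second-order cumulant $\frac1{2d}\sum_i\Var(\xi_i)\big(\sum_j\lambda_{ij}\lambda_{sj}\big)^2$ produces the four-cycle sum $\frac1d\sum\lambda_{si}\lambda_{sj}\lambda_{ri}\lambda_{rj}$. The variance deviation $-\lambda_{sr}(\lambda_{sr}-z_p)$ gives the bracketed $\lambda_{sr}(z_p-\lambda_{sr})$ diamond term, and the $\mu_{ij}$ means give the $\lambda_{ij}(z_p-\lambda_{ij})$ term. Here we need a genuine second-order MGF estimate rather than just an upper bound with constant $C$, with third cumulants controlled using $\|\cdot\|_\infty$ bounds from $\cE$.
\item The quadratic chaos $\frac1{\sqrt d}\sum\lambda_{ij}\xi_i\xi_j$ is handled with Hanson--Wright (Theorem~\ref{thm:hanson_wright}), using $\|\Lambda\|_{op}=\widetilde O(\sqrt{np})$ from property~\ref{item:eigenvalue_bound}. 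Hanson--Wright applies since $\sqrt{np}/\sqrt d\ll1$, and it contributes $\widetilde O(\|\Lambda\|_{HS}^2/d)=\widetilde O(n^2p/d)$. This is too large, so the leading part of this term must again be computed exactly: its mean is zero by independence, and its variance term $\frac1{2d}\sum\lambda_{ij}^2\Var\xi_i\Var\xi_j$ should cancel against the deterministic $-\frac1{2d}\sum c_{ij}\bE[w_i^2w_j^2]$ contributions.
\end{itemize}

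The cross terms between the linear and quadratic parts are handled by Cauchy--Schwarz/Hölder, splitting the exponential.

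The main obstacle is this final MGF computation. We must evaluate $\bE e^{Q}$ to second order with errors only of size $R_{n,d,p}$, much finer than Hanson--Wright's constant-loss bound, since the quantities at stake ($n^2p/d$ and similar) far exceed $R_{n,d,p}$. Our first attempt would be a decoupling/cumulant expansion: condition on a random half of the indices, compute the conditional MGF of the resulting linear form exactly to second order, and use Hanson--Wright only on genuinely cubic-and-higher remainders, where the $1/d^{3/2}$ scaling makes the crude constant affordable.
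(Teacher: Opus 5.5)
Your Taylor step on $\cT_s$ and your mean/linear/chaos decomposition match the paper, but your bookkeeping of $Z_s$ is wrong, and it matters. Since $\langle\pi_s(\bv_i),\pi_s(\bv_j)\rangle=\langle\pi_{s-1}(\bv_i),\pi_{s-1}(\bv_j)\rangle+\bv_i(s)\bv_j(s)$, the $w$-dependent part of $Z_s-Z_{s-1}$ is exactly $\sum_{s<i<j\le n}\lambda_{ij}(z_p-\lambda_{ij})\big(\sum_{s<r\le n}\lambda_{ir}\lambda_{jr}\big)w_iw_j/d$. Its coefficients are deterministic, and it is what produces the $\lambda_{ij}(z_p-\lambda_{ij})$ diamond term, as you yourself use through $\mu_{ij}$. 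So it cannot cancel the correction $\lambda_{ij}(z_p-\lambda_{ij})\langle\pi_{s-1}(\bv_i),\pi_{s-1}(\bv_j)\rangle w_iw_j$ coming from $\lambda(\alpha_{ij})$, whose coefficient is a revealed inner product. What actually happens is this: that correction, together with the threshold-induced shifts $m_i-\lambda_{si}\approx\lambda_{si}(z_p-\lambda_{si})\sqrt d\,\langle\pi_{s-1}(\bv_i),\pi_{s-1}(\bv_s)\rangle$ of the means, contributes to the expectation exactly the $\cG_{s-1}$-measurable sum $A_s$ of Claim~\ref{claim:Z_s_manipulation} (Lemma~\ref{lemma:EX_formula}). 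This $A_s$ cancels the index-$s$ boundary terms $-A_s$ of $Z_s-Z_{s-1}$, which you say you keep but never account for. Your alternative, absorbing the mean shifts into $R_{n,d,p}$ via property~\ref{item:degree_cancellation} or into the $\lambda_{sr}(z_p-\lambda_{sr})$ terms, does not work. Those diamond terms are deterministic and come from $\Var(\bu_r(s))$. Conditionally on $\cG_{s-1}$ the signs of the revealed inner products are not controlled, so the bounds available from $\Pi_{s-1}$ and $\cE$ are only $\widetilde O(n^2p^2/d)$ for the mean shifts and $\widetilde O(n^{5/2}p^3/d)$ for the $\lambda(\alpha_{ij})$-correction. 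For $d\approx(np)^3$ neither is $\widetilde O(R_{n,d,p})$, and summed over the $n$ induction steps they give order $1/p$ and $\sqrt n$ (up to logarithms).

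The step you flag as the main obstacle is also not closed by the tools you list. The second-order terms are large: $\frac12\Var$ of the linear part is of order $n^2p^2/d$ and that of the chaos is of order $n^2pz_p^2/d$. The latter, together with the diagonal part of the former, must cancel exactly against the mean of the quartic term, leaving only $-\frac1{2d}\sum\lambda_{si}^2\lambda_{sj}^2\lambda_{ij}^2$. Each of your tools breaks this exact cancellation:
\begin{itemize}
\item H\"older/Cauchy--Schwarz splitting of the linear and quadratic parts inflates these variances by the H\"older exponents.
\item Hanson--Wright and decoupling inequalities lose absolute constants.
\item Conditioning on half of the indices still leaves a quadratic form inside each half.
\end{itemize}
You also replace the quartic term $\frac1{2d}\sum c_{ij}w_i^2w_j^2$ by its mean inside the exponential without justification. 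What is needed is a joint expansion $\log\bE e^{\xi^TM\xi+\ba^T\xi}=\frac12\Var(\xi^TM\xi+\ba^T\xi)+O(e^{O(\delta_2)}\delta_1\delta_2)$ for the non-Gaussian centered coordinates. The paper proves this by a Lindeberg replacement of each $\xi_i$ by a Gaussian with the same first two moments, followed by the explicit formula $\frac12\ba^T(I-2M)^{-1}\ba-\frac12\log\det(I-2M)$ (Proposition~\ref{prop:lindeberg}). It separates the quartic part $Y$ with the tilting bound $\bE e^{X-Y}=\bE e^X e^{-\bE Y+O(\sqrt{\Var Y}+\sqrt{\bE Y^4})}$ of Lemma~\ref{lemma:tilting}. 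The $z_p\lambda_{ij}\beta_{sij}^2$ part of your $c_{ij}$ is simplest to remove deterministically, as the paper does, using $\|\Lambda\|_{op}$ from property~\ref{item:eigenvalue_bound}.
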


The proof of this lemma will be spread over the following sections. Here, we focus on explaining the statement itself and showing how it suffices to complete the proof of Lemma~\ref{lemma:induction-statement}.

The first two terms are sums over triangles, where a triangle $sij$ receives weight $\lambda_{si}\lambda_{sj}\lambda_{ij}/\sqrt{d}-\lambda_{si}^2\lambda_{sj}^2\lambda_{ij}^2/(2d)$. Observe that the second summand is always nonpositive, and may therefore be omitted. The latter two terms sum over four-cycles containing $s$. For every such four-cycle, the two terms in the brackets correspond to adding one of its two diagonals, and hence to the two types of diamonds containing $s$.

We now verify that the remaining terms give precisely $f(H_{s-1})-f(H_s)$. Define the discrete derivative $\Delta_sN_F=N_F(H_{s-1})-N_F(H_s)$, for $F\in\{ \triangle, \square, \diamond \}$. Since $\lambda_{ij}=\frac{a}{p(1-p)}(A_{ij}-p)$, we have
\[\sum_{s<i<j\leq n}\lambda_{si}\lambda_{sj}\lambda_{ij}=\frac{a^3}{p^3(1-p)^3}\Delta_sN_{\triangle}, \quad\text{and }\sum_{\substack{s<r\leq n\\s<i<j\leq n}}\lambda_{si}\lambda_{sj}\lambda_{ri}\lambda_{rj}=\frac{a^4}{p^4(1-p)^4}\Delta_sN_{\square}.\]
Note that in the latter sum $i$ and $j$ are the two neighbors of $s$ on the four-cycle and $r$ is the vertex opposite $s$, so every unlabeled four-cycle including $s$ occurs exactly once.

For $s<i<j\leq n$, the identity $(A_{ij}-p)^2=(1-2p)(A_{ij}-p)+p(1-p)$ gives
\[\lambda_{ij}(z_p-\lambda_{ij})=\Big(z_p-\frac{a(1-2p)}{p(1-p)}\Big)\lambda_{ij}-\frac{a^2}{p(1-p)}.\]
Consequently,
\begin{align*}
&\sum_{\substack{s<r\leq n\\s<i<j\leq n}}\lambda_{si}\lambda_{sj}\lambda_{ri}\lambda_{rj}\big[\lambda_{ij}(z_p-\lambda_{ij})+\lambda_{sr}(z_p-\lambda_{sr})\big]\\
&\qquad=\frac{a^5}{p^5(1-p)^5}\Big(z_p-\frac{a(1-2p)}{p(1-p)}\Big)\Delta_sN_{\diamond}-\frac{2a^2}{p(1-p)}\frac{a^4}{p^4(1-p)^4}\Delta_sN_{\square}.
\end{align*}
Again, every diamond containing $s$ occurs exactly once: its unique four-cycle determines $i,j,r$, while its fifth edge is either $ij$ or $sr$. Combining the last three displays and the definitions of $\kappa_{\triangle},\kappa_{\square},\kappa_{\diamond}$, Lemma~\ref{lemma:ratio} yields
\begin{align*}
\bE\Big[\mathbf{1}_{\cT_s}e^{Z_s-Z_{s-1}}\!\!\!\!\prod_{s<i<j\leq n}\frac{\Phi_{sij}}{\Phi_{s-1,ij}}\Big]&\leq \exp\Big(\frac{\kappa_{\triangle}\Delta_sN_{\triangle}}{\sqrt d}+\frac{\kappa_{\square}\Delta_sN_{\square}}{d}+\frac{\kappa_{\diamond}\Delta_sN_{\diamond}}{d}+\widetilde O(R_{n,d,p})\Big)\\
&=\exp\Big(f(H_{s-1})-f(H_s)+\widetilde O(R_{n,d,p})\Big).
\end{align*}
This is the estimate required above. Substituting it into (\ref{eqn:law_of_total_probability}) and absorbing the two one-step error terms gives the induction statement at $s-1$, with error $\widetilde O((n-s+1)R_{n,d,p})$. This completes the induction.
\end{proof}

\section{Perturbative analysis of \texorpdfstring{\setupstar{}}{Setup (star)}}\label{sec:perturbation}
Let us note that \setupstar{} will play a crucial role throughout the paper, due to the fact that the only remaining thing to prove is Lemma~\ref{lemma:ratio}, which is stated under the \setupstar{}. Therefore, we will now present a couple of statements about it whose purpose is both to give an intuition about it, as well as to prepare various steps in the proof of Lemma~\ref{lemma:ratio}.

As we have already alluded to in Lemma~\ref{lemma:sphere_to_gaussian_CDF}, the first coordinate of a uniform random point on the unit sphere $\bS^{k-1}$ has a very similar distribution as a gaussian $\cN(0, 1/k)$. So, for $i\in \{s+1, \dots, n\}$, let us define $g_i$ to be a $\cN(0, 1/d)$ truncated above $z_p/\sqrt{d}$ if $si\in E(H)$ and below $z_p/\sqrt{d}$ if $si\notin E(H)$. Recall, $z_p=\Phi^{-1}(1-p)$ is the unique real number for which $\Pb[\cN(0, 1)\geq z_p]=p$.

Then, we will think of $\bv_i(s)$ as a slightly perturbed version of $g_i$, and the following statement justifies this intuition.

\begin{proposition}\label{prop:comparing_moments}
Let $\bu_i(s)$ be random variables sampled according to the \setupstar{}. Let $g_i$ be a $\cN(0, 1/d)$ truncated above $z_p/\sqrt{d}$ if $si\in E(H)$ and below $z_p/\sqrt{d}$ if $si\notin E(H)$, where $z_p=\Phi^{-1}(1-p)$.
\begin{enumerate}[label=(\roman*)]
    \item \label{item:comparing_first_order}
    We then have
    \begin{equation}\label{eqn:expectation_v_i}
        \bE\big[\bu_i(s)\big]=\bE[g_i]+\lambda_{si}(z_p-\lambda_{si})\langle \pi_{s-1} (\bv_i), \pi_{s-1} (\bv_s)\rangle + \widetilde O\Big(\frac{n}{d^{3/2}}\Big).
    \end{equation} In particular, since $\bE[g_i]=\frac{\lambda_{si}}{\sqrt{d}}$, we have $\bE[\bu_i(s)]=\frac{\lambda_{si}}{\sqrt{d}}+ \widetilde O\Big(\frac{|\lambda_{si}|\sqrt{n}}{d}\Big)$.
    \item \label{item:comparing_general_order}
    We also have \begin{equation}\label{eqn:comparing_moments}
    \big|\bE[g_i^2]-\bE[\bu_i(s)^2]\big|\leq \widetilde O\Big(\frac{|\lambda_{si}|\sqrt{n}}{d^{3/2}}\Big), \text{ and so} \Var(\bu_i(s))=\Big(1+\widetilde O\Big(\frac{\sqrt{n}}{\sqrt{d}}\Big)\Big)\frac{1+\lambda_{si}(z_p-\lambda_{si})}{d}.
    \end{equation}
\end{enumerate}
\end{proposition}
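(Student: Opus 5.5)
The plan is to reduce everything to one-dimensional truncated Gaussian computations: first rescale the threshold $\tau_{si}$ of \setupstar{}, then pass from the sphere to the Gaussian with Lemma~\ref{lemma:sphere_coupling_moments}, and finally Taylor-expand the Mills-type ratio $\lambda$ around $\pm z_p$ using Lemma~\ref{lemma:mills_ratio}.

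\textbf{Step 1: rescaling.} Set $m=d-s+1$ and $\delta_i=\langle \pi_{s-1}(\bv_i),\pi_{s-1}(\bv_s)\rangle$. On $\Pi_{s-1}$ we have $\delta_i=\widetilde O(\sqrt n/d)$.
\begin{itemize}
\item In \setupstar{}, $\bv_i(s)=r_i(s)X$, where $\sqrt m X\sim\cS_m$ is conditioned on $\sqrt m X\geq t_i:=\sqrt m\,\tau_{si}$ (edge case), or on $\sqrt m X<t_i$ (non-edge case).
\item Claim~\ref{claim:bookkeeping} and Lemma~\ref{lemma:quantile_asymptotics} give $\sqrt m/(\sqrt d\,\bv_s(s)r_i(s))=1+\widetilde O(n/d)$. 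Hence
\[ t_i=z_p-\sqrt d\,\delta_i+\widetilde O(n/d), \]
so $|t_i-z_p|=\widetilde O(\sqrt{n/d})$ and $|t_i|\ll m^{1/8}$.
\item Similarly $r_i(s)^2=1-(s-1)/d+\widetilde O(\sqrt n/d)$ and $m/d=1-(s-1)/d$, so $r_i(s)/\sqrt m=d^{-1/2}(1+\widetilde O(\sqrt n/d))$.
\end{itemize}

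\textbf{Step 2: passing to Gaussians.} Let $Y\sim\cN(0,1/m)$ with the same one-sided truncation at $t_i/\sqrt m$. Lemma~\ref{lemma:sphere_coupling_moments} (applied to $-X$ in the lower-truncated case) gives
\[ \bE X=\bE Y+\widetilde O(m^{-3/2}),\qquad \bE X^2=\bE Y^2+\widetilde O(m^{-2}). \]
Lemma~\ref{lemma:expectation_truncated_gaussian} gives the moments of $Y$:
\begin{itemize}
\item edge case: $\bE Y=\lambda(-t_i)/\sqrt m$ and $\bE Y^2=(1+t_i\lambda(-t_i))/m$;
\item non-edge case: $\bE Y=-\lambda(t_i)/\sqrt m$ and $\bE Y^2=(1-t_i\lambda(t_i))/m$.
\end{itemize}

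\textbf{Step 3: the truncation in $\bu_i(s)$.} Replacing $\bv_i(s)$ by $\bu_i(s)$ changes the first two moments by at most $d^{-30}$, say. The tail event $|\bv_i(s)|>10\sqrt{\log d/d}$ has unconditional probability $d^{-40}$. The conditioning event has probability at least $p/2\geq n^{-1/3}$, so the conditional tail probability is still negligible.

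\textbf{Step 4: first moment.} In the edge case, expand $\lambda(-t_i)$ around $-z_p$:
\[ \lambda(-t_i)=\lambda(-z_p)+\lambda'(-z_p)(z_p-t_i)+O\big((z_p-t_i)^2\big), \]
using $|\lambda''|\leq 2$. Here $\lambda(-z_p)=a/p=\lambda_{si}$, and the identity $\lambda'=-\lambda(t+\lambda)$ gives $\lambda'(-z_p)=\lambda_{si}(z_p-\lambda_{si})$. Since $z_p-t_i=\sqrt d\,\delta_i+\widetilde O(n/d)$, multiplying by $r_i(s)/\sqrt m$ yields
\[ \bE\,\bv_i(s)=\frac{\lambda_{si}}{\sqrt d}+\lambda_{si}(z_p-\lambda_{si})\,\delta_i+\widetilde O\Big(\frac{n}{d^{3/2}}\Big). \]
The non-edge case is the same computation around $+z_p$. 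There $-\lambda(z_p)=\lambda_{si}$ and $-\lambda'(z_p)\cdot(-1)\sqrt d\,\delta_i$ equals $\lambda_{si}(z_p-\lambda_{si})\sqrt d\,\delta_i$, so the same formula results. Finally, $\bE g_i=\lambda_{si}/\sqrt d$ is the case $t=z_p$, $m=d$ of the same formula. This proves \eqref{eqn:expectation_v_i}. The ``in particular'' statement follows because $\lambda_{si}(z_p-\lambda_{si})\delta_i=\widetilde O(|\lambda_{si}|\sqrt n/d)$, and $n/d^{3/2}$ is dominated by $|\lambda_{si}|\sqrt n/d$ since $|\lambda_{si}|\gtrsim p\geq n^{-1/3}$ and $d\gg n^2p$.

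\textbf{Step 5: second moment and variance.} Compare $\bE\,\bu_i(s)^2$ with $\bE g_i^2=(1\pm z_p\lambda(\mp z_p))/d$. This is done by differentiating $t\mapsto t\lambda(\mp t)$ over an interval of length $\widetilde O(\sqrt{n/d})$, and by comparing $r_i(s)^2/m$ with $1/d$. The derivative is $\lambda(-t)-t\lambda'(-t)$, which is $O(|\lambda_{si}|\widetilde O(1))$ near $z_p$. The multiplicative error $1+\widetilde O(\sqrt n/d)$ multiplies a quantity $1+z_p\lambda_{si}$ whose deviation from $1$ is already proportional to $\lambda_{si}$.

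The variance formula then follows by subtracting the squared mean from Step 4. This uses $\bE g_i^2-(\bE g_i)^2=(1+\lambda_{si}(z_p-\lambda_{si}))/d$ from Lemma~\ref{lemma:expectation_truncated_gaussian}, and the fact that $1+\lambda_{si}(z_p-\lambda_{si})$ is bounded below by a positive constant times itself, so additive errors become relative $\widetilde O(\sqrt{n/d})$ errors.

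\textbf{Main obstacle.} The delicate point is keeping every error in part (ii) proportional to $|\lambda_{si}|$. This matters most in the non-edge case, where $|\lambda_{si}|\asymp p\sqrt{\log p^{-1}}$ is small. I would handle it by always expanding around the Gaussian quantity at $t=z_p$, $m=d$, rather than around $1/d$. The sphere-coupling errors $O(m^{-2})$ are not proportional to $\lambda_{si}$, but they are still dominated by $|\lambda_{si}|\sqrt n/d^{3/2}$ because $p\sqrt{nd}\geq 1$.
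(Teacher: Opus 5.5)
Your proposal is correct and follows the paper's proof essentially step for step. The shared route is:
\begin{itemize}
\item rescale $\tau_{si}$ using Claim~\ref{claim:bookkeeping} and Lemma~\ref{lemma:quantile_asymptotics};
\item pass from the sphere to the truncated $\cN(0,1/m)$ via Lemma~\ref{lemma:sphere_coupling_moments};
\item Taylor-expand $\lambda$ and $t\mapsto t\lambda(\mp t)$ around $z_p$ using Lemmas~\ref{lemma:expectation_truncated_gaussian} and~\ref{lemma:mills_ratio}.
\end{itemize}
Your joint estimate $r_i(s)/\sqrt m=d^{-1/2}(1+\widetilde O(\sqrt n/d))$, which cancels the $s/d$ drift, is slightly sharper than the paper's separate bounds. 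Your remark that the error from this factor is ``proportional to $\lambda_{si}$'' is not accurate as stated, since the constant part contributes $\widetilde O(\sqrt n/d^2)$. This is harmless, because $\widetilde O(\sqrt n/d^2)$ is dominated by $|\lambda_{si}|\sqrt n/d^{3/2}$ for the same reason you give for the sphere-coupling error.
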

\begin{proof}
Let us fix $i\in \{s+1, \dots, n\}$ with $si\in E(H)$ and prove the statement for it. The case of $si\notin E(H)$ is analogous, and modulo the appropriate sign changes.

First of all, note that it makes no difference whether we prove the above lemma for $\bu_i(s)$ or $\bv_i(s)$, since the difference between their first and second moments is at most $\Pb\big[|\bv_i(s)|\geq 10\sqrt{\frac{\log d}{d}}\big]$, since $|\bv_i(s)|\leq 1$ always. But this probability is at most $d^{-2}$ (say), and so the effect on the moments is negligible. Hence, we will work with $\bv_i(s)$ in this proof.

Recall our definition $r_i(s)=\sqrt{1-\|\pi_{s-1}(\bv_{i})\|^2}$ and \[\tau_{si}=\frac{c_p}{\sqrt{d}\bv_s(s)r_i(s)}-\frac{\langle \pi_{s-1}(\bv_i), \pi_{s-1}(\bv_s)\rangle}{\bv_s(s)r_i(s)}.\] Due to \setupstar{}, we have $\bv_i(s)/r_i(s)$ has the distribution of a first coordinate of a uniform random point on $\bS^{m-1}$, truncated above $\tau_{si}$, where $m=d-s+1$, and $\bu_i(s)=\bv_i(s)\mathbf{1}_{|\bv_i(s)|\leq 10\sqrt{\log d}/\sqrt{d}}$.

The main idea of the proof is the following. Let $\tilde{g}_i$ be the normal variable $\cN(0, 1/m)$ truncated on being larger than $\tau_{si}$. We will first show that the moments of $\bv_i(s)$ are basically the same as the moments of $\tilde{g}_i$, and then we will show how the moments of $\tilde{g}_i$ and $g_i$ relate to each other.

However, before we do that, let us briefly observe that we have \begin{equation}\label{eqn:tau_i_asymptotics}
\tau_{si}=\frac{z_p-\sqrt{d}\langle \pi_{s-1}(\bv_i), \pi_{s-1}(\bv_s)\rangle}{\sqrt{m}}+\widetilde O\Big(\frac{n}{d^{3/2}}\Big).
\end{equation}
Indeed, since Claim~\ref{claim:bookkeeping} implies that $\bv_s(s)=1+\widetilde O(n/d)$ and $r_i(s)=1+\widetilde O(n/d)$, it is easy to conclude $\tau_{si}=(1+\widetilde O(n/d)) \frac{c_p-\sqrt{d}\langle \pi_{s-1}(\bv_i), \pi_{s-1}(\bv_s)\rangle}{\sqrt{d}}=(1+\widetilde O(n/d)) \frac{c_p-\sqrt{d}\langle \pi_{s-1}(\bv_i), \pi_{s-1}(\bv_s)\rangle}{\sqrt{m}}$, where we have used $\sqrt{m/d}=1+O(n/d)$, again from Claim~\ref{claim:bookkeeping}. Finally, Lemma~\ref{lemma:quantile_asymptotics} gives that $|c_p-z_p|\leq \widetilde O(1/d)$, and so we arrive at (\ref{eqn:tau_i_asymptotics}). Observe that this also implies $\tau_{si}=O(z_p/\sqrt{d})$, which we will also find useful throughout the proof.

\medskip
\noindent
To perform the first step of our plan, we appeal to Lemma~\ref{lemma:sphere_coupling_moments}.
Since $\bv_i(s)/r_i(s)$ is truncated to be above $\tau_{si}$ and $t=\tau_{si}\sqrt{m}\leq O(z_p)<m^{1/8}$, we may apply Lemma~\ref{lemma:sphere_coupling_moments}, which gives, for $k=1,2$, that
\begin{equation}\label{eqn:moment_comparison_1}
\big|\bE[\tilde{g}_i^k]-\bE[\bv_i(s)^k]/r_i(s)^k\big|\leq O\Big(\frac{z_p^{k+4}}{d^{k/2+1}}\Big).
\end{equation}
From Claim~\ref{claim:bookkeeping}, we have that $r_i(s)=1+\widetilde O(n/d)$, which gives $r_i(s)^k=1+\widetilde O(n/d)$. Since $\bE[\tilde{g}_i^k]\leq O(z_p^k/d^{k/2})$ due to Lemma~\ref{lemma:expectation_truncated_gaussian}, we would get
\begin{equation}\label{eqn:moment_comparison_2}
\Big|\bE\big[\tilde{g}_i^k\big]-\bE\big[\bv_i(s)^k\big]\Big|\leq \big|r_i(s)^k\bE[\tilde{g}_i^k]-\bE[\bv_i(s)^k]\big|+|r_i(s)^k-1|\cdot |\bE[\tilde{g}_i^k]|\leq
O\Big(\frac{z_p^{k+4}}{d^{k/2+1}}\Big)+\widetilde O\Big(\frac{n}{d} \cdot \frac{z_p^k}{d^{k/2}}\Big) =\widetilde O\Big(\frac{z_p^kn}{d^{k/2+1}}\Big).
\end{equation}

Observe that we have now reduced Proposition~\ref{prop:comparing_moments} to comparing the moments of $\tilde{g}_i$ and $g_i$, both of which are truncated Gaussians, albeit with different variances and thresholds. Luckily, the first two moments of truncated Gaussians can be precisely computed from Lemma~\ref{lemma:expectation_truncated_gaussian}.

We begin by discussing the second moments. It will be convenient to write $t_1=\sqrt m\tau_{si}$, $t_2=z_p$ and $h(t)=1+t\lambda(-t)$. From Lemma~\ref{lemma:expectation_truncated_gaussian},
\begin{align*}
\bE[g_i^2]&=\bE[g_i]^2+\Var(g_i)=\frac{1}{d}\lambda(-z_p)^2+\frac{1}{d}(1+z_p\lambda(-z_p)-\lambda(-z_p)^2)=\frac{1+z_p\lambda(-z_p)}{d}, \text{ and}\\
\bE[\tilde{g}_i^2]&=\bE[\tilde{g}_i]^2+\Var(\tilde{g}_i)=\frac{1}{m}\lambda(-t_1)^2+\frac{1}{m}(1+t_1\lambda(-t_1)-\lambda(-t_1)^2)=\frac{1+t_1\lambda(-t_1)}{m}, \text{ and so }\\
\bE[g_i^2]-\bE[\tilde{g}_i^2] &= \frac{1+z_p\lambda(-z_p)}{d}-\frac{1+t_1\lambda(-t_1)}{m}=\frac{h(t_2)}{d}-\frac{h(t_1)}{m}.
\end{align*}
Since $t_1, t_2=O(z_p)$, for all $t$ between $t_1$ and $t_2$, we have $\lambda(-t)\leq |t|+1=O(z_p)$, and using Lemma~\ref{lemma:mills_ratio} we can compute
\[
h'(t)=\lambda(-t)+t(\lambda(-t))'=\lambda(-t)+t\lambda(-t)(\lambda(-t)-t),\text{ and so }|h'(t)|\leq  O(|\lambda(-z_p)|z_p^2)=O(|\lambda_{si}|z_p^2)
\]
for all $t$ between $t_1$ and $t_2$. Since $t_1=\sqrt{m}\tau_{si}$ and the event $\Pi_{s-1}$ holds,
\[
|t_1-t_2|\leq \sqrt d|\langle \pi_{s-1}(\bv_i),\pi_{s-1}(\bv_s)\rangle|+\widetilde O\Big(\frac{n}{d}\Big)\leq \widetilde O\Big(\sqrt{\frac{n}{d}}\Big).
\]
Therefore,
\begin{align*}
\big|\bE[g_i^2]-\bE[\tilde g_i^2]\big|
=\left|\frac{h(t_2)}{d}-\frac{h(t_1)}{m}\right|
&\leq \left|\frac{1}{d}-\frac{1}{m}\right||h(t_2)|+\frac{|h(t_2)-h(t_1)|}{m}\\
&\leq O\Big(\frac{n}{d^2}\cdot z_p^2\Big)+O\Big(\frac{|\lambda_{si}|z_p^2}{d}\sqrt{\frac{n}{d}}\Big)\leq \widetilde O\Big(\frac{|\lambda_{si}|\sqrt{n}}{d^{3/2}}\Big).
\end{align*}
In passing to the last line, we have used that $|h(t_1)-h(t_2)|\leq |t_1-t_2|\cdot\max_{t\in (t_1, t_2)}|h'(t)|$, and also that $m=d-s+1\geq d/2$, so that $nz_p^2/d^2$ is negligible.

Combining this with \eqref{eqn:moment_comparison_2} for $k=2$, we obtain
\begin{align*}
\big|\bE[g_i^2]-\bE[\bv_i(s)^2]\big| &\leq \big|\bE[g_i^2]-\bE[\tilde g_i^2]\big|+\big|\bE[\tilde g_i^2]-\bE[\bv_i(s)^2]\big| \leq \widetilde O\Big(\frac{|\lambda_{si}|\sqrt{n}}{d^{3/2}}\Big).
\end{align*}

This proves the first part of \textit{\ref{item:comparing_general_order}}. Now, we discuss \textit{\ref{item:comparing_first_order}}.

Since we have shown that $\big|\bE[\bv_i(s)]-\bE[\tilde{g}_i]\big|\leq \widetilde O(\frac{n}{d^{3/2}})$, it would be sufficient to prove
\[\bE[\tilde{g}_i]=\bE[g_i]+\lambda_{si}(z_p-\lambda_{si})\langle \pi_{s-1} (\bv_i), \pi_{s-1} (\bv_s)\rangle + \widetilde  O\Big(\frac{n}{d^{3/2}}\Big).\]
Since both $\tilde{g}_i$ and $g_i$ are truncated Gaussians, we can compute their expectations very precisely using Lemma~\ref{lemma:expectation_truncated_gaussian}, as follows: $\bE[\tilde{g}_i]=\lambda(-t_1)/\sqrt{m}$ and $\bE[g_i]=\lambda(-t_2)/\sqrt{d}$. Moreover, using the second-order Taylor expansion of $\lambda$ we can write
\[\lambda(-t_1)=\lambda(-t_2)+(t_2-t_1)\lambda'(-t_2)+ O(|t_2-t_1|^2)=\lambda(-t_2)+(t_2-t_1)\lambda(-t_2)(t_2-\lambda(-t_2))+ O(|t_2-t_1|^2),\]
where we have used that $\lambda'(x)=-\lambda(x)(x+\lambda(x))$ and also that $|\lambda''(x)|\leq 1$ for all $x\in \bR$ (see Lemma~\ref{lemma:mills_ratio}). Since $t_2=z_p$ and $t_1=z_p-\sqrt{d}\langle \pi_{s-1}(\bv_i), \pi_{s-1}(\bv_s)\rangle+\widetilde O(n/d)$, we have $|t_1-t_2|^2\leq d|\langle \pi_{s-1}(\bv_i), \pi_{s-1}(\bv_s)\rangle|^2+\widetilde O(n^2/d^2)\leq \widetilde O(n/d)$, due to property \ref{item:inner_product} of the event $\Pi_{s-1}$ (which is satisfied by the vectors $\bv_1, \dots, \bv_n$ due to the \setupstar{}). Recalling that $\lambda(-z_p)=\frac{a}{p}=\lambda_{si}$, we obtain
\[\lambda(-t_1)=\lambda_{si}+\lambda_{si}(z_p-\lambda_{si}) \sqrt{d}\langle \pi_{s-1}(\bv_i), \pi_{s-1}(\bv_s)\rangle+ \widetilde  O\Big(\frac{n}{d}\Big).\]
Finally, since $1/\sqrt{m}=(1+O(n/d))/\sqrt{d}$, we conclude
\begin{align*}
    \bE[\tilde{g}_i]=\frac{\lambda(-t_1)}{\sqrt{m}}&=\Big(1+O(n/d)\Big)\frac{\lambda_{si}+\lambda_{si}(z_p-\lambda_{si}) \sqrt{d}\langle \pi_{s-1}(\bv_i), \pi_{s-1}(\bv_s)\rangle+ \widetilde O\Big(\frac{n}{d}\Big)}{\sqrt{d}}\\
    &=\bE[g_i]+\lambda_{si}(z_p-\lambda_{si}) \langle \pi_{s-1}(\bv_i), \pi_{s-1}(\bv_s)\rangle+\widetilde O\Big(\frac{n}{d^{3/2}}\Big).
\end{align*}
The contribution of the $O(n/d)$ factor in the last equation was absorbed into the $\widetilde O(n/d^{3/2})$ term using
that $|\lambda_{si}+\lambda_{si}(z_p-\lambda_{si}) \sqrt{d}\langle \pi_{s-1}(\bv_i), \pi_{s-1}(\bv_s)\rangle|/\sqrt{d}=\widetilde O(1/\sqrt{d})$.
This completes the proof of \textit{\ref{item:comparing_first_order}}. To derive the second statement of \textit{\ref{item:comparing_general_order}}, simply note that 
\[\big|\Var(\bu_i(s))-\Var(g_i)\big|\leq \big|\bE [\bu_i(s)^2]-\bE [g_i^2]\big|+\big|\bE [\bu_i(s)]^2-\bE [g_i]^2\big|\leq \widetilde O\Big(\frac{\sqrt{n}}{d^{3/2}}\Big)+\widetilde O\Big(\frac{\sqrt{n}}{d^{3/2}}\Big).\]
Since Lemma~\ref{lemma:expectation_truncated_gaussian} tells us that $\Var(g_i)=\frac{1+\lambda_{si}(z_p-\lambda_{si})}{d}\geq \widetilde\Omega(1/d)$, we conclude that $\Var(\bu_i(s))=(1+\widetilde O(\sqrt{n/d}))(1+\lambda_{si}(z_p-\lambda_{si}))/d$.
\end{proof}

The probability that $G(n, d, p)$ agrees with $H_{s-1}$ on the vertices $s, \dots, n$ is, in the leading order, determined by the factors $\Phi_{sij}$, defined as $\Phi_{sij}=\Phi(-z_p+\sqrt{d}\langle \pi_{s-1}(\bv_i),\pi_{s-1}(\bv_j)\rangle)$ if $ij\in E(H)$ and $\Phi_{sij}=\Phi(z_p-\sqrt{d}\langle \pi_{s-1}(\bv_i),\pi_{s-1}(\bv_j)\rangle)$ otherwise (see Lemma~\ref{lemma:induction-statement}. Hence, we will introduce the notation
\[\alpha_{sij}=
\begin{cases}
-z_p+\sqrt d\,\langle \pi_{s-1}(\bv_i),\pi_{s-1}(\bv_j)\rangle,& ij\in E(H),\\[2mm]
z_p-\sqrt d\,\langle \pi_{s-1}(\bv_i),\pi_{s-1}(\bv_j)\rangle,& ij\notin E(H),
\end{cases} \]
so that we have $\Phi_{s-1, ij}=\Phi(\alpha_{sij})$. Also, when we preform the Taylor expansions to bound the one-step contribution in the induction, we will encounter the quantities $\lambda_{sij}$ defined as $\lambda_{sij}=\lambda(\alpha_{sij})=\phi(\alpha_{sij})/\Phi(\alpha_{sij})$ if $ij\in E(H)$ and $\lambda_{sij}=-\lambda(\alpha_{sij})=-\phi(\alpha_{sij})/\Phi(\alpha_{sij})$ otherwise. In order to prepare the calculations we will perform later on, let us prove a short estimate on their size, which shows that $\lambda_{sij}\approx \lambda_{ij}$, to first order at least.

\begin{claim}\label{claim:error_in_lambda}
For all $s<i<j\leq n$, if the event $\Pi_{s-1}$ holds for the revealed coordinates, then
\[ \lambda_{sij} = \lambda_{ij} + \lambda_{ij}(z_p-\lambda_{ij})
\sqrt d\,\langle \pi_{s-1}(\bv_i), \pi_{s-1}(\bv_j)\rangle
+ \widetilde O\Big(\frac{n}{d}\Big). \]
As a consequence, we have $\lambda_{sij}=\lambda_{ij}+\widetilde O\big(|\lambda_{ij}|\sqrt{n/d}\big)$.
\end{claim}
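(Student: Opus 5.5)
This is a one-variable Taylor expansion of the Mills-ratio-type function $\lambda(t)=\phi(t)/\Phi(t)$ around the base point $\pm z_p$. The expansion is the same one used for $\lambda(-t_1)$ in the proof of Proposition~\ref{prop:comparing_moments}. I would treat the two cases $ij\in E(H)$ and $ij\notin E(H)$ separately.

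In both cases, write $\delta=\sqrt d\,\langle \pi_{s-1}(\bv_i),\pi_{s-1}(\bv_j)\rangle$. By the inner product property of $\Pi_{s-1}$ we have $|\delta|\leq 10\sqrt{n\log d}/\sqrt d=\widetilde O(\sqrt{n/d})$, so $\delta^2=\widetilde O(n/d)$.

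\emph{Edge case.} If $ij\in E(H)$, then $\alpha_{sij}=-z_p+\delta$ and $\lambda_{sij}=\lambda(-z_p+\delta)$. Taylor expanding to second order and using $0\leq\lambda''\leq 2$ from Lemma~\ref{lemma:mills_ratio} gives
\[\lambda(-z_p+\delta)=\lambda(-z_p)+\delta\lambda'(-z_p)+O(\delta^2).\]
The identity $\lambda'(t)=-\lambda(t)(t+\lambda(t))$ gives $\lambda'(-z_p)=-\lambda(-z_p)(-z_p+\lambda(-z_p))=\lambda(-z_p)(z_p-\lambda(-z_p))$. Since $\lambda(-z_p)=\phi(z_p)/\Phi(-z_p)=a/p=\lambda_{ij}$, this is exactly $\lambda_{ij}+\lambda_{ij}(z_p-\lambda_{ij})\delta+O(\delta^2)$, as claimed.

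\emph{Non-edge case.} If $ij\notin E(H)$, then $\alpha_{sij}=z_p-\delta$ and $\lambda_{sij}=-\lambda(z_p-\delta)$. Expanding gives
\[-\lambda(z_p)+\delta\lambda'(z_p)+O(\delta^2).\]
Here $-\lambda(z_p)=-a/(1-p)=\lambda_{ij}$, and $\lambda'(z_p)=-\lambda(z_p)(z_p+\lambda(z_p))=\lambda_{ij}(z_p-\lambda_{ij})$, because $\lambda(z_p)=-\lambda_{ij}$. So the same formula holds.

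The step needing some care is the size of the error term relative to $|\lambda_{ij}|$ in the non-edge case. The main term is fine with an absolute $\widetilde O(n/d)$ error. For the ``as a consequence'' statement, the linear term is $|\lambda_{ij}|\,|z_p-\lambda_{ij}|\,|\delta|=\widetilde O(|\lambda_{ij}|\sqrt{n/d})$, since $z_p,|\lambda_{ij}|=O(z_p)=\widetilde O(1)$.

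The remainder $\widetilde O(n/d)$ must also be absorbed. In the edge case this is immediate, since $|\lambda_{ij}|=\Theta(z_p)\gtrsim 1$ for $p\leq 1/3$ and $\sqrt{n/d}\leq 1$. In the non-edge case $|\lambda_{ij}|=a/(1-p)=\Theta(p z_p)$ may be small, so I would not use the crude bound on $\lambda''$. Instead I would note that on the interval between $z_p$ and $z_p-\delta$ (with $z_p\geq \Omega(1)$ and $|\delta|=o(1)$), $\lambda''(t)=O(\lambda(t)\,\mathrm{poly}(z_p))=O(|\lambda_{ij}|\,\widetilde O(1))$, using $\lambda(t)\approx\phi(t)$ for positive $t$. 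This gives the remainder $\widetilde O(|\lambda_{ij}|\,n/d)$.

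Alternatively, if we only need $|\lambda_{ij}|\geq \widetilde\Omega(p)$ together with $n/d\ll p\sqrt{n/d}$, this holds under the standing assumption $d\geq Cn^2p\,\polylog(n)$: we need $\sqrt{n/d}\ll p$, i.e.\ $d\gg n/p^2$, which follows from $n^2p\geq n/p^2$ when $p\geq n^{-1/3}$. Either way the consequence follows.
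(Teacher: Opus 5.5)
Your proposal is correct and is essentially the paper's proof. Like the paper, you Taylor-expand $\lambda$ to second order at $-z_p$ (edges) or $z_p$ (non-edges), bound the remainder via $0\le\lambda''\le 2$, recognize the linear coefficient as $\lambda_{ij}(z_p-\lambda_{ij})$ through $\lambda'(t)=-\lambda(t)(t+\lambda(t))$, and use $\Pi_{s-1}$ to get $\delta^2=\widetilde O(n/d)$. The paper asserts the relative-error consequence without comment, so your extra check of the non-edge case, where $|\lambda_{ij}|=a/(1-p)$ can be of order $p$, is a useful addition; either of your two routes works, namely $|\lambda''(t)|=O(\lambda(t)(1+z_p^2))$ near $z_p$, or $\sqrt{n/d}\ll p$ once $d\geq Cn^2p(\log n)^A$ and $p\geq n^{-1/3}$.
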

\begin{proof}
In this proof, we will consider two cases - if $ij\in E(H)$ and if $ij\notin E(H)$. Assume first that $ij\in E(H)$. Recall that we have defined $\lambda_{ij}=\lambda(-z_p)$ and $\lambda_{sij}=\lambda(\alpha_{sij})$, where $\alpha_{sij}=-z_p+\sqrt{d}\langle \pi_{s-1}(\bv_i), \pi_{s-1}(\bv_j)\rangle$.

Using the Taylor series of $\lambda$, due to Lemma~\ref{lemma:mills_ratio} we can write
\[\lambda_{sij}=\lambda(\alpha_{sij})=\lambda(-z_p)+(\alpha_{sij}+z_p)\lambda'(-z_p)+O(|\alpha_{sij}+z_p|^2). \]
Since $\lambda'(-z_p)=z_p\lambda(-z_p)-\lambda(-z_p)^2=\lambda_{ij}(z_p-\lambda_{ij})$,
we get
\[ \lambda_{sij}=\lambda_{ij}+\lambda_{ij}(z_p-\lambda_{ij})\sqrt d\,\langle \pi_{s-1}(\bv_i), \pi_{s-1}(\bv_j)\rangle+\widetilde O\Big(\frac{n}{d}\Big).\]
In the above equation, we have used that $\alpha_{sij}+z_p=\sqrt{d}\langle \pi_{s-1}(\bv_i), \pi_{s-1}(\bv_j)\rangle$, which implies $|\alpha_{sij}+z_p|^2\leq d\langle \pi_{s-1}(\bv_i), \pi_{s-1}(\bv_j)\rangle^2\leq \widetilde O(n/d)$, due to the property \ref{item:inner_product} of the event $\Pi_{s-1}$.

If $ij\notin E(H)$, the proof is very similar, except a couple of sign changes. Using the Taylor series expansion, we have
\[\lambda_{sij}=-\lambda(\alpha_{sij})=-\lambda(z_p)-(\alpha_{sij}-z_p)\lambda'(z_p)+O(|\alpha_{sij}-z_p|^2),\]
where $\alpha_{sij}=z_p-\sqrt{d}\langle \pi_{s-1}(\bv_i), \pi_{s-1}(\bv_j)\rangle$ and $\lambda'(z_p)=-z_p\lambda(z_p)-\lambda(z_p)^2=\lambda_{ij}(z_p-\lambda_{ij})$. So, we get
\[\lambda_{sij}=\lambda_{ij}+\lambda_{ij}(z_p-\lambda_{ij})\sqrt d\,\langle \pi_{s-1}(\bv_i), \pi_{s-1}(\bv_j)\rangle+\widetilde O\Big(\frac{n}{d}\Big).\qedhere\]
\end{proof}

\section{Bounding the one-step contribution}\label{sec:ratio}

In this section, we will prove Lemma~\ref{lemma:ratio}, modulo an auxiliary lemma, which we will state and prove later. We work under the \setupstar{}, and we fix a graph $H\in \cE$ and a probability $1/3\geq p\geq n^{-1/3}\polylog n$ and a dimension $d\geq Cn^2p(\log n)^A$. Then, Lemma~\ref{lemma:ratio} states that
\begin{align*}
\bE\bigg[\mathbf{1}_{\cT_s} e^{Z_s-Z_{s-1}}\!\!\!\!\prod_{s<i<j\leq n}\frac{\Phi_{sij}}{\Phi_{s-1, ij}}\bigg]&\leq
\exp\Big( \frac{1}{\sqrt{d}}\sum_{s<i<j\leq n}\lambda_{si}\lambda_{sj}\lambda_{ij}-\frac{1}{2d}\sum_{\substack{s<i<j\leq n}}\!\!\!\!\!\!\lambda_{si}^2\lambda_{sj}^2\lambda_{ij}^2
+\frac{1}{d}\sum_{\substack{s<r\leq n\\s<i<j\leq n}}\lambda_{si}\lambda_{sj}\lambda_{ri}\lambda_{rj}\\
&+\frac{1}{d}\sum_{\substack{s<r\leq n\\s<i<j\leq n}}\lambda_{si}\lambda_{sj}\lambda_{ri}\lambda_{rj}\big[\lambda_{ij}(z_p-\lambda_{ij})+\lambda_{sr}(z_p-\lambda_{sr})\big]+\widetilde O\Big(R_{n, d, p}\Big)\Big).
\end{align*}
where
\[ \Phi_{sij}=
\begin{cases}
\Phi\Big(-z_p+\sqrt d\,\langle \pi_s(\bv_i),\pi_s(\bv_j)\rangle\Big),& ij\in E(H),\\[2mm]
\Phi\Big(z_p-\sqrt d\,\langle \pi_s(\bv_i),\pi_s(\bv_j)\rangle\Big),& ij\notin E(H),
\end{cases} \]
and $Z_s$ was defined in the statement of the Lemma~\ref{lemma:induction-statement}. Also, the expectation is taken over the randomness of the coordinates $\{\bv_i(s)|s<i\leq n\}$.

\begin{proof}[Proof of Lemma~\ref{lemma:ratio}.]
We begin the proof by defining $\beta_{sij}=\sqrt{d}\bu_i(s)\bu_j(s)$ and \[\alpha_{sij}=
\begin{cases}
-z_p+\sqrt d\,\langle \pi_{s-1}(\bv_i),\pi_{s-1}(\bv_j)\rangle,& ij\in E(H),\\
z_p-\sqrt d\,\langle \pi_{s-1}(\bv_i),\pi_{s-1}(\bv_j)\rangle,& ij\notin E(H),
\end{cases} \]
so that on the event $\cT_s$ we have $\Phi_{s-1, ij}=\Phi(\alpha_{sij})$ and $\Phi_{sij}=\Phi(\alpha_{sij}+\beta_{sij})$ if $ij\in E(H)$ (and $\Phi_{sij}=\Phi(\alpha_{sij}-\beta_{sij})$ otherwise). We will also set $\lambda_{sij}=\lambda(\alpha_{sij})=\phi(\alpha_{sij})/\Phi(\alpha_{sij})$ if $ij\in E(H)$ and $\lambda_{sij}=-\lambda(\alpha_{sij})=-\phi(\alpha_{sij})/\Phi(\alpha_{sij})$ otherwise. Our first step will be to bound the product of ratios $\frac{\Phi_{sij}}{\Phi_{s-1, ij}}$ by using the second-order Taylor expansion of $\Phi$ given in Lemma~\ref{lemma:tail-expansion}. Once we do this, we will state the needed auxiliary lemmas and complete the proof using them.

\begin{claim}\label{claim:reduction_to_Sigma_1}
If the revealed coordinates $\{\bv_i(j)|1\leq j< s\}$ satisfy the event $\Pi_{s-1}$ and the random coordinates $\{\bv_i(s)|s<i\leq n\}$ satisfy the event $\cT_s$, then we deterministically have
\[\prod_{s<i<j\leq n}\frac{\Phi_{sij}}{\Phi_{s-1, ij}}\leq \exp\Big(\Sigma_1+\widetilde O\Big(\frac{n^{3/2}\sqrt{p}}{d}+\frac{n^{5/2}}{d^{3/2}}\Big)\Big),\]
where \[\Sigma_1=\sum_{s<i<j\leq n}\lambda_{sij}\beta_{sij}-\sum_{s<i<j\leq n}\lambda_{sij}^2\frac{\beta_{sij}^2}{2}.\]
\end{claim}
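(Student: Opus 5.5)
The plan is to apply the second-order bound \eqref{eqn:second_order_taylor} of Lemma~\ref{lemma:tail-expansion} to each factor $\Phi_{sij}/\Phi_{s-1,ij}$ separately, and then sum the exponents. On $\cT_s$ we have $\bu_i(s)=\bv_i(s)$. Hence $\langle\pi_s(\bv_i),\pi_s(\bv_j)\rangle=\langle\pi_{s-1}(\bv_i),\pi_{s-1}(\bv_j)\rangle+\beta_{sij}/\sqrt d$, so $\Phi_{sij}=\Phi(\alpha_{sij}\pm\beta_{sij})$, with the sign $+$ for edges and $-$ for nonedges. Applying \eqref{eqn:second_order_taylor} with $\alpha=\alpha_{sij}$ and $\beta=\pm\beta_{sij}$, and recalling that $\lambda_{sij}=\pm\lambda(\alpha_{sij})$ with the same sign, the linear term becomes exactly $\lambda_{sij}\beta_{sij}$ in both cases. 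The quadratic term $-\lambda(\alpha)^2\beta^2/2$ becomes $-\lambda_{sij}^2\beta_{sij}^2/2$. Summing these two contributions gives precisely $\Sigma_1$. What remains is to show that the leftover terms are deterministically within the stated error:
\[
\sum_{s<i<j\le n}\Big(-\tfrac12\alpha_{sij}\lambda(\alpha_{sij})\beta_{sij}^2+|\beta_{sij}|^3\Big).
\]

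The cubic terms are easy. On $\cT_s$ we have $|\beta_{sij}|=\sqrt d\,|\bu_i(s)\bu_j(s)|\le 100\log d/\sqrt d$. So $\sum|\beta_{sij}|^3=\widetilde O(n^2/d^{3/2})$, which is absorbed into $\widetilde O(n^{5/2}/d^{3/2})$.

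For the term with $\alpha\lambda(\alpha)$, the key observation is a sign identity. Replace $\alpha_{sij}$ by its leading value $\mp z_p$. For an edge, $-\alpha\lambda(\alpha)\approx z_p\,a/p=z_p\lambda_{ij}$. For a nonedge, $-\alpha\lambda(\alpha)\approx -z_p\,a/(1-p)=z_p\lambda_{ij}$. So to leading order the leftover is the signed quadratic form
\[
\frac{z_p}{2}\sum_{s<i<j\le n}\lambda_{ij}\beta_{sij}^2=\frac{z_p}{4}\,x^T\Lambda x,\qquad x_i=\sqrt d\,\bu_i(s)^2 ,
\]
with $\Lambda$ restricted to indices in $\{s+1,\dots,n\}$; the diagonal of $\Lambda$ is $0$. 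Deterministically on $\cT_s$ we have $0\le x_i\le \widetilde O(1/\sqrt d)$, so $\|x\|_2^2=\widetilde O(n/d)$. Property~\ref{item:eigenvalue_bound} of $\cE$ gives $\|\Lambda\|_{op}=\widetilde O(\sqrt{np})$; it passes to principal submatrices. Together these bound the form by $\widetilde O(n^{3/2}\sqrt p/d)$, which is exactly the first error term.

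I expect this step to be the main obstacle. A naive termwise bound would lose a factor $\sqrt{np}$, since roughly $pn^2$ edge pairs each contribute $\widetilde O(1/d)$, giving $\widetilde O(n^2p/d)$. One really needs both the cancellation between edges and nonedges and the operator-norm bound on $\Lambda$.

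Finally, I control the correction from replacing $\alpha_{sij}\lambda(\alpha_{sij})$ by $\mp z_p\lambda(\mp z_p)$. The function $t\mapsto t\lambda(t)$ has derivative $O(z_p|\lambda|\,z_p)$ near $\mp z_p$, by Lemma~\ref{lemma:mills_ratio}. Also $|\alpha_{sij}\pm z_p|=\widetilde O(\sqrt{n/d})$ by $\Pi_{s-1}$, in the same way as in Claim~\ref{claim:error_in_lambda}. So the correction per pair is $\widetilde O(|\lambda_{ij}|\sqrt{n/d})$, and it multiplies $\beta_{sij}^2=\widetilde O(1/d)$. Summing with property~\ref{item:degree_concentration} of $\cE$, which gives $\sum_j|\lambda_{ij}|=\widetilde O(np)$, yields $\widetilde O(n^2p\sqrt{n/d}/d)=\widetilde O(n^{5/2}/d^{3/2})$. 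Collecting the three pieces proves the claim.
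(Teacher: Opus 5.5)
Your proposal is correct and follows essentially the same route as the paper. You apply \eqref{eqn:second_order_taylor} termwise, bound $\sum|\beta_{sij}|^3$ trivially, and treat the $|\alpha_{sij}|\lambda_{sij}\beta_{sij}^2$ sum as a quadratic form in the squared coordinates, whose main part $z_p\Lambda$ is controlled by property~\ref{item:eigenvalue_bound} of $\cE$. The only minor difference is in the remainder: you bound it termwise using the sharper $\widetilde O(|\lambda_{ij}|\sqrt{n/d})$ estimate together with degree concentration, while the paper uses the Hilbert--Schmidt norm of a perturbation matrix with uniform entries $\widetilde O(\sqrt{nd})$; both give at most $\widetilde O(n^{5/2}/d^{3/2})$.
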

\begin{proof}
We already noted that $\Phi_{s-1,ij}=\Phi(\alpha_{sij})$ and $\Phi_{sij}=\Phi(\alpha_{sij}+\beta_{sij})$ if $ij\in E(H)$ ($\Phi_{sij}=\Phi(\alpha_{sij}-\beta_{sij})$ if $ij\notin E(H)$). Thus, we can use equation (\ref{eqn:second_order_taylor}) from Lemma~\ref{lemma:tail-expansion} to obtain
\[\frac{\Phi_{sij}}{\Phi_{s-1, ij}}\leq \exp\Big(\lambda_{sij}\beta_{sij}-\lambda_{sij}^2\frac{\beta_{sij}^2}{2}\qquad+|\alpha_{sij}|\lambda_{sij}\frac{\beta_{sij}^2}{2}\qquad+|\beta_{sij}|^3\Big).\]
Due to the event $\Pi_s$, $\alpha_{sij}$ is positive when $ij\in E(H)$ and negative otherwise. So, the signs in the above equation correctly capture the two cases when $ij\in E(H)$ and $ij\notin E(H)$. In order to control the product of the above terms, we define
\[\Sigma_2=\sum_{s<i<j\leq n}|\alpha_{sij}|\lambda_{sij}\frac{\beta_{sij}^2}{2},\quad \Sigma_3=\sum_{s<i<j\leq n}|\beta_{sij}|^3,\quad \text{ so that}\quad
\prod_{s<i<j\leq n} \frac{\Phi_{sij}}{\Phi_{s-1, ij}}\leq \exp(\Sigma_1+\Sigma_2+\Sigma_3).
\]
From here, it is clear that we need to bound $|\Sigma_2|$ and $|\Sigma_3|$ in order to complete the proof. Bounding $\Sigma_3$ turns out to be quite simple: since $|\bu_i(s)|, |\bu_j(s)|\leq \widetilde O(1/\sqrt{d})$, we have $|\beta_{sij}|\leq \widetilde O(1/\sqrt{d})$ and so \[ |\Sigma_3|=\sum_{s<i<j\leq n}|\beta_{sij}|^3\leq  \widetilde O\Big(\frac{n^2}{d^{3/2}}\Big). \]

On the other hand, $\Sigma_2$ can be written as $\Sigma_2=\bw^T M \bw$, where $\bw=(\bu_{s+1}(s)^2, \dots, \bu_{n}(s)^2)$ and $M$ is the $(n-s)\times(n-s)$ symmetric matrix with zero diagonal given by $M_{ij}=d|\alpha_{sij}|\lambda_{sij}/4$.

\begin{claim}\label{clm:op_bound}
We have $\|M\|_{op}\leq \widetilde O(d\sqrt{np}+n^{3/2}\sqrt{d})$.
\end{claim}
\begin{proof}
We begin by observing that $|\alpha_{sij}|\lambda_{sij}$ is very close to $z_p\lambda_{ij}$. Indeed, observe that $|\alpha_{sij}|=z_p\pm \sqrt{d}\langle \pi_{s-1}(\bv_i), \pi_{s-1}(\bv_j)\rangle=z_p+\widetilde O(\sqrt{n/d})$, due to the event $\Pi_{s-1}$. On the other hand, since $\lambda(t)= \phi(t)/\Phi(t)$ is $1$-Lipschitz (see Lemma~\ref{lemma:mills_ratio}), we have $|\lambda_{sij}-\lambda_{ij}| \leq \big||\alpha_{sij}|-z_p\big| \leq \widetilde O\Big(\frac{\sqrt n}{\sqrt d}\Big)$, which means that $\lambda_{sij}=\lambda_{ij}+\widetilde O(\sqrt{n}/\sqrt{d})$. Therefore
\[ \big||\alpha_{sij}|\lambda_{sij}-z_p\lambda_{ij}\big|= \Big|\Big(z_p+\widetilde O\Big(\frac{\sqrt{n}}{\sqrt{d}}\Big)\Big)\Big(\lambda_{ij}+\widetilde O\Big(\frac{\sqrt{n}}{\sqrt{d}}\Big)\Big)-z_p\lambda_{ij}\Big|
\leq \widetilde O\Big(\frac{\sqrt n}{\sqrt d}\Big) \text{ (since $|\lambda_{ij}|, z_p=\widetilde O(1)$)}. \]

Hence, we can write $M=\Lambda+R$, where $\Lambda_{ij}=dz_p\lambda_{ij}/4$ and all entries of $R$ are bounded by $\widetilde O(\sqrt{nd})$, which implies $\|R\|_{op}\leq \widetilde O(n^{3/2}\sqrt{d})$. By property~\ref{item:eigenvalue_bound} of the event $\cE$, we have $\|\Lambda\|_{op}\leq \widetilde O(d\sqrt{np})$ and so $\|M\|_{op}\leq \widetilde O(d\sqrt{np}+n^{3/2}\sqrt{d})$ (by triangle inequality).
\end{proof}

Finally, observe that $\|\bw\|_2^2=\sum_{s<i\leq n}\bu_i(s)^4\leq \widetilde O(n/d^2)$, since $|\bu_i(s)|\leq \widetilde O(1/\sqrt{d})$. Thus, we have $|\Sigma_2|\leq \|M\|_{op}\|\bw\|_2^2\leq \widetilde O(\frac{n^{3/2}\sqrt{p}}{d}+\frac{n^{5/2}}{d^{3/2}})$. This completes the proof of Claim~\ref{claim:reduction_to_Sigma_1}.
\end{proof}

Having bounded the product of ratios in terms of $\Sigma_1$, our goal is to estimate $\bE\big[\mathbf{1}_{\cT_s}e^{Z_s-Z_{s-1}+\Sigma_1}\big]$. In order to do this, we begin by explaining what $Z_s-Z_{s-1}$ actually is.

\begin{claim}\label{claim:Z_s_manipulation}
Let $q_{sij}=\lambda_{ij}(z_p-\lambda_{ij}) \sum_{s<r\leq n}\lambda_{ir}\lambda_{jr}/{\sqrt{d}}$ and
{\relscale{0.95}
\begin{align*}
A_s\!=\!\!\!\!\!\!\sum_{s<i<j\leq n}\!\!\!\!\!\!\lambda_{ij}\lambda_{si}\lambda_{sj}\big(&
 (z_p\!-\!\lambda_{ij})\langle\pi_{s-1}(\bv_i),\pi_{s-1}(\bv_j)\rangle\!+\!(z_p\!-\!\lambda_{si})\langle\pi_{s-1}(\bv_i),\pi_{s-1}(\bv_s)\rangle\!+\!(z_p\!-\!\lambda_{sj})\langle\pi_{s-1}(\bv_j),\pi_{s-1}(\bv_s)\rangle\big).
\end{align*}
}
Then, on the event $\cT_s$ we have $Z_s-Z_{s-1}=-A_s+\sum_{s<i<j\leq n}q_{sij}\beta_{sij}$.
\end{claim}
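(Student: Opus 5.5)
The plan is a direct algebraic comparison of the two sums defining $Z_s$ and $Z_{s-1}$; no probabilistic input is needed beyond the event $\cT_s$. I read the index range in the definition of $Z_s$ as $s<r\leq n$. Terms with $r\in\{i,j\}$ vanish automatically because $\lambda_{ii}=0$. The ordering convention matters: each term of $Z_s$ is indexed by an ordered pair $i<j$, which singles out the edge $ij$ carrying the factor $(z_p-\lambda_{ij})\langle\cdot,\cdot\rangle$, together with a free third vertex $r$.

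\emph{Step 1: decompose $Z_s$.} Since $\pi_s=\pi_{s-1}+(\text{coordinate } s)$, we have $\langle \pi_s(\bv_i),\pi_s(\bv_j)\rangle=\langle \pi_{s-1}(\bv_i),\pi_{s-1}(\bv_j)\rangle+\bv_i(s)\bv_j(s)$. On $\cT_s$ we have $\bv_i(s)=\bu_i(s)$ for all $i>s$, so $\bv_i(s)\bv_j(s)=\beta_{sij}/\sqrt d$. Substituting this splits $Z_s$ into two pieces:
\begin{itemize}
\item a piece $Z_s'$, identical to $Z_s$ but with $\pi_{s-1}$ in place of $\pi_s$;
\item the piece $\sum_{s<i<j\leq n}\lambda_{ij}(z_p-\lambda_{ij})\big(\sum_{s<r\leq n}\lambda_{ir}\lambda_{jr}\big)\beta_{sij}/\sqrt d$.
\end{itemize}
The second piece is exactly $\sum_{s<i<j\leq n} q_{sij}\beta_{sij}$ by the definition of $q_{sij}$.

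\emph{Step 2: decompose $Z_{s-1}$.} Its indices range over $s\leq i<j$ and $r\geq s$. Split these terms according to whether the index $s$ appears among $i,j,r$.
\begin{itemize}
\item Terms with $i,j,r>s$ coincide exactly with $Z_s'$, so they cancel in the difference $Z_s-Z_{s-1}$.
\item Terms with $r=s$ give $\sum_{s<i<j}\lambda_{ij}\lambda_{si}\lambda_{sj}(z_p-\lambda_{ij})\langle\pi_{s-1}(\bv_i),\pi_{s-1}(\bv_j)\rangle$. This is the first summand of $A_s$.
\item Terms with $i=s$, which forces $j>s$ and $r>s$, $r\neq j$, give $\sum_{j}\sum_{r}\lambda_{sj}\lambda_{sr}\lambda_{jr}(z_p-\lambda_{sj})\langle\pi_{s-1}(\bv_s),\pi_{s-1}(\bv_j)\rangle$.
\end{itemize}

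\emph{Step 3: the bookkeeping.} The only point needing care is how the third group is counted. Its sum runs over ordered pairs $(j,r)$ of distinct indices above $s$. Regroup it by the unordered pair $\{i',j'\}=\{j,r\}$ with $s<i'<j'$. Each unordered pair then appears exactly twice: once with the distinguished edge $si'$ and once with the distinguished edge $sj'$. This produces
\[
\lambda_{i'j'}\lambda_{si'}\lambda_{sj'}\Big[(z_p-\lambda_{si'})\langle\pi_{s-1}(\bv_{i'}),\pi_{s-1}(\bv_s)\rangle+(z_p-\lambda_{sj'})\langle\pi_{s-1}(\bv_{j'}),\pi_{s-1}(\bv_s)\rangle\Big],
\]
which are precisely the second and third summands of $A_s$. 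Hence $Z_{s-1}=Z_s'+A_s$.

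Combining the two decompositions gives $Z_s-Z_{s-1}=\sum q_{sij}\beta_{sij}-A_s$, as claimed. I expect no real obstacle; the proof consists of Step 3's regrouping together with keeping the index-range conventions straight (the condition $r\neq i,j$ and the treatment of $r=n$).
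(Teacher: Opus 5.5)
Your proposal is correct and follows the same route as the paper. The paper likewise splits $Z_{s-1}$ into the terms with $i,j,r>s$, which match $Z_s$ after replacing $\pi_s$ by $\pi_{s-1}$, with the difference equal to $\sum q_{sij}\beta_{sij}$ on $\cT_s$, and the boundary terms containing $s$, which sum to $A_s$. Your Step 3 spells out the regrouping of the $i=s$ terms over ordered pairs $(j,r)$, which the paper leaves implicit, and your reading of the range of $r$ as $s<r\leq n$ is the one consistent with the definition of $q_{sij}$.
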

\begin{proof}
Let us compare the terms contained in $Z_s$ and $Z_{s-1}$. The terms in $Z_{s-1}$ split into the terms with $r,i,j>s$ and the boundary terms in which one of $r,i,j$ is equal to $s$. For the first set of terms, the index set is the same as in $Z_s$, and only the set of coordinates where the projection happens changes. Since $\langle \pi_s(\bv_i),\pi_s(\bv_j)\rangle=\langle \pi_{s-1}(\bv_i),\pi_{s-1}(\bv_j)\rangle+\bv_i(s)\bv_j(s)$, these common terms contribute $\sum_{\substack{s<i<j\leq n\\s<r\leq n}}\lambda_{ij}\lambda_{ri}\lambda_{rj}(z_p-\lambda_{ij})\bv_i(s)\bv_j(s)=\sum_{s<i<j\leq n}q_{sij}\beta_{sij}$ to $Z_s-Z_{s-1}$. On the other hand, the terms where one of the indices $r,i,j$ is equal to $s$ contribute precisely $-A_s$. Hence, we have  $Z_s-Z_{s-1}=-A_s+\sum_{s<i<j\leq n}q_{sij}\beta_{sij}$.
\end{proof}

Note that $A_s$ does not depend on the random coordinates $\{\bv_i(s)|s<i\leq n\}$, and for the purposes of estimating $\bE\big[\mathbf{1}_{\cT_s}e^{Z_s-Z_{s-1}+\Sigma_1}\big]$, we can temporarily forget about it. Let $S=\sum_{s<i<j\leq n}(\lambda_{sij}+q_{sij})\beta_{sij}-\lambda_{sij}^2\beta_{sij}^2/2$. By Claim~\ref{claim:Z_s_manipulation}, we have $\Sigma_1+Z_s-Z_{s-1}=S-A_s$, and we bound $\bE\big[\mathbf{1}_{\cT_s}e^{S}\big]\leq \bE\big[e^{S}\big]$ in the following lemma.

\begin{lemma}\label{lemma:S_bound}
Under the \setupstar{}, we have
\begin{align*}
\log \bE[e^{S}]&\leq A_s+\frac{1}{\sqrt{d}}\sum_{s<i<j\leq n}\lambda_{si}\lambda_{sj}\lambda_{ij}-\frac{1}{2d}\sum_{s<i<j\leq n}\lambda_{si}^2\lambda_{sj}^2\lambda_{ij}^2
+\frac{1}{d}\!\!\sum_{\substack{s<r\leq n\\ s<i< j\leq n}}\!\!\!\!\!\! \lambda_{si}\lambda_{sj}\lambda_{ri}\lambda_{rj}\\
&+\frac{1}{d}\!\!\sum_{\substack{s<r\leq n\\ s<i< j\leq n}}\!\!\!\!\!\! \lambda_{si}\lambda_{sj}\lambda_{ri}\lambda_{rj}\big(\lambda_{ij}(z_p-\lambda_{ij})+\lambda_{sr}(z_p-\lambda_{sr})\big) +\widetilde O\Big(\frac{n^{3/2}p^{1/2}}{d}+\frac{p^2n^3}{d^{3/2}}+\frac{p^2n^4}{d^2}\Big) .
\end{align*}
\end{lemma}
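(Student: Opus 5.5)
The plan is to write the $\bu_i(s)$ as means plus centered fluctuations and expand $S$ around the means. The resulting linear and quadratic forms in the fluctuations are then handled with Hanson--Wright (Theorem~\ref{thm:hanson_wright}).

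By Proposition~\ref{prop:comparing_moments} we may write $\bu_i(s)=\mu_i+X_i$, where
\[\mu_i=\frac{\lambda_{si}}{\sqrt d}+\lambda_{si}(z_p-\lambda_{si})\langle\pi_{s-1}(\bv_i),\pi_{s-1}(\bv_s)\rangle+\widetilde O\big(n/d^{3/2}\big).\]
The $X_i$ are independent and centered, bounded by $\widetilde O(1/\sqrt d)$, and have variance $\sigma_i^2=(1+\lambda_{si}(z_p-\lambda_{si}))/d\,(1+\widetilde O(\sqrt{n/d}))$. Setting $c_{ij}=\lambda_{sij}+q_{sij}$, we have $\beta_{sij}=\sqrt d(\mu_i\mu_j+\mu_iX_j+\mu_jX_i+X_iX_j)$, and $S$ splits into three parts:
\begin{itemize}
\item a deterministic part $D=\sum c_{ij}\sqrt d\mu_i\mu_j-\tfrac12\sum\lambda_{sij}^2d\mu_i^2\mu_j^2$;
\item a linear part $L=\sum_i X_i\ell_i$ with $\ell_i=\sqrt d\sum_j c_{ij}\mu_j$ (up to second-order corrections);
\item a quadratic part $Q=\sqrt d\sum c_{ij}X_iX_j$, together with the remaining terms of $-\tfrac12\lambda_{sij}^2\beta_{sij}^2$ that involve the $X$'s.
\end{itemize}

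\textbf{Deterministic part.} Expand $D$ using Claim~\ref{claim:error_in_lambda} for $\lambda_{sij}$. The leading term gives $\frac1{\sqrt d}\sum\lambda_{si}\lambda_{sj}\lambda_{ij}$. The first-order corrections come from the $\langle\pi_{s-1}\cdot,\pi_{s-1}\cdot\rangle$ shifts in $\mu_i,\mu_j$ and in $\lambda_{sij}$, and they reproduce exactly $A_s$. The $q_{sij}$ term gives $\frac1d\sum\lambda_{si}\lambda_{sj}\lambda_{ij}(z_p-\lambda_{ij})\lambda_{ri}\lambda_{rj}$, which is the $\lambda_{ij}(z_p-\lambda_{ij})$ diamond term. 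The $-\frac12\lambda_{sij}^2\beta_{sij}^2$ part contributes $-\frac1{2d}\sum\lambda_{si}^2\lambda_{sj}^2\lambda_{ij}^2$. All remaining cross terms are controlled by the cancellation properties \ref{item:degree_cancellation} and \ref{item:triangle_concentration} of $\cE$ and by $\Pi_{s-1}$.

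\textbf{Linear part.} Since the $X_i$ are independent and bounded, $\log\bE e^{L}\le\frac12\sum_i\sigma_i^2\ell_i^2+\widetilde O(\sum_i|\ell_i|^3d^{-3/2})$. Here $\ell_i\approx\frac1{\sqrt d}\sum_j\lambda_{ij}\lambda_{sj}$, so the variance term is
\[\frac{1}{2d}\sum_i(1+\lambda_{si}(z_p-\lambda_{si}))\Big(\sum_j\lambda_{ij}\lambda_{sj}\Big)^2.\]
Expanding the square, the off-diagonal pairs $j\ne r$ give the four-cycle sum $\frac1d\sum\lambda_{si}\lambda_{sj}\lambda_{ri}\lambda_{rj}$ (relabelled so that $s$ and the opposite vertex are $s,r$). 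The $\lambda_{si}(z_p-\lambda_{si})$ factor supplies the $\lambda_{sr}(z_p-\lambda_{sr})$ diamond term. The diagonal $j=r$ terms are $\widetilde O(n^2p^2/d)$ before cancellation, and they must be handled with \ref{item:degree_concentration}. I expect them to match the $-\frac12\lambda^2$ corrections and to be absorbed into the error.

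\textbf{Quadratic part and joint MGF.} To handle $L$ and $Q$ together, apply Cauchy--Schwarz, $\bE e^{L+Q}\le(\bE e^{2L})^{1/2}(\bE e^{2Q})^{1/2}$. This loses a factor of 2 in the variance of $L$, which is unacceptable, since it changes the four-cycle coefficient. So instead I would condition iteratively or use the exact Gaussian-like identity: first apply Hanson--Wright to $Q$ with matrix $\sqrt d\,c_{ij}$. Its operator norm is $\widetilde O(\sqrt{dnp})\cdot d^{-1}$ relative to variance proxy $1/d$, so the MGF of $Q$ is $\exp(\widetilde O(d\|c\|_{HS}^2/d^2))=\exp(\widetilde O(n^2p^3/d))$. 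I would then handle the coupling with $L$ by a Hölder split with exponents $(1+\eta,1/\eta)$, with $\eta$ small, so that the loss in $L$ is a factor $1+\eta$ on a quantity of size $O(n^2p^2/d)$. This is affordable only if that quantity times $\eta$ is within $R_{n,d,p}$.

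The main obstacle is precisely this interaction. Controlling the cross term between the linear fluctuation and the quadratic chaos, and the exact second-order constants in $\ell_i$, to accuracy $\widetilde O(n^{3/2}p^{1/2}/d+p^2n^3/d^{3/2}+p^2n^4/d^2)$ is where the $n^4p^2/d^2$ term in the error should arise. If the Hölder split proves too lossy, I would instead decouple by writing $Q$ in the eigenbasis and computing $\bE e^{L+Q}$ via a Gaussian comparison of the $X_i$ with matched first two moments.
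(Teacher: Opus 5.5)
\textbf{There is a genuine gap: the quadratic chaos is not negligible.} The problem is how you treat $Q=\sqrt d\sum_{i<j}c_{ij}X_iX_j$ together with the fluctuating part of $Y=\frac12\sum_{i<j}\lambda_{sij}^2\beta_{sij}^2$.

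Your Hanson--Wright estimate uses $\|c\|_{HS}^2=\widetilde O(n^2p^3)$. But $c_{ij}\approx\lambda_{ij}$, and $\lambda_{ij}=a/p\approx z_p$ on the roughly $n^2p/2$ edges of $H$, so in fact $\|c\|_{HS}^2=\widetilde\Theta(n^2p)$. Since $X_i$ has variance $\approx 1/d$ whenever $si\notin E(H)$, this gives $\frac12\Var(Q)=\frac d2\sum_{i<j}c_{ij}^2\sigma_i^2\sigma_j^2=\widetilde\Theta(n^2p/d)$. That is a factor $\sqrt{np}$ larger than the leading error term $n^{3/2}p^{1/2}/d$.

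So $Q$ cannot be bounded away. Hanson--Wright only gives $\exp(C\|\cdot\|_{HS}^2\sigma^4)$ with an unspecified constant, so it cannot even produce the exact coefficient needed. The H\"older split with exponents $1+\eta$ and $(1+\eta)/\eta$ makes things worse, since it turns $\frac12\Var(Q)$ into roughly $\frac{1+\eta}{2\eta}\Var(Q)$.

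What actually happens is an exact cancellation. The quantity $\frac12\Var(Q)$ plus the diagonal ($j=r$) part of $\frac12\Var(L)$ equals $\frac d2\sum_{i<j}c_{ij}^2(\sigma_i^2\sigma_j^2+\mu_i^2\sigma_j^2+\sigma_i^2\mu_j^2)$. Up to the negligible difference $c_{ij}^2-\lambda_{sij}^2=2\lambda_{sij}q_{sij}+q_{sij}^2$, this is exactly the expectation of the $X$-dependent part of $Y$ that you set aside. Your plan never computes that part. It contains $X_i^2X_j^2$, a degree-four term, which neither Hanson--Wright nor a Gaussian comparison for linear-plus-quadratic forms can process jointly with $L+Q$.

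\textbf{The missing idea is to separate $Y$ from $X=\bu^TM\bu$ as a whole instead of expanding it.}
\begin{enumerate}
\item $Y\ge0$ is nearly deterministic, with $\Var(Y)=\widetilde O(n^3p^2/d^2)$ and $\bE[Y^4]=\widetilde O(n^8p^4/d^4)$ (Claim~\ref{claim:Y_bounds}). Tilting by $e^X/\bE e^X$ and applying Cauchy--Schwarz give $\bE e^{X-Y}=\bE e^X\exp\big(-\bE Y+O(\sqrt{\Var(Y)}+\sqrt{\bE[Y^4]})\big)$, provided $\bE e^{2(X-\bE X)}\le2$ (Lemma~\ref{lemma:tilting}). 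This, not an $L$--$Q$ interaction, is where the $n^4p^2/d^2$ error comes from.
\item Next, $\log\bE e^X=\bE X+\frac12\Var(X)$ up to a small error. This follows from your own fallback idea, applied to $X$ alone: replace the centered coordinates by Gaussians with the same first two moments (a Lindeberg swap), and use $\log\bE e^{g^TMg+\ba^Tg}=\frac12\ba^T(I-2M)^{-1}\ba-\frac12\log\det(I-2M)$ (Proposition~\ref{prop:lindeberg}). This captures $L$ and $Q$ together with no loss.
\item In $\frac12\Var(X)-\bE Y$ the large terms then cancel. What remains is $-\frac d2\sum\lambda_{sij}^2\mu_i^2\mu_j^2$ (the term you already placed in $D$), plus the off-diagonal four-cycle sum with its $1+\lambda_{sr}(z_p-\lambda_{sr})$ factor (Lemma~\ref{lemma:final_computation}).
\end{enumerate}

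Your computation of the deterministic part, and of the off-diagonal variance of $L$, agrees with the paper and is fine.
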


Combining Claim~\ref{claim:Z_s_manipulation} and Lemma~\ref{lemma:S_bound}, we obtain precisely the estimate needed by Lemma~\ref{lemma:ratio}, and so the proof is complete.\qedhere
\end{proof}

\section{Bounding the remaining MGF}\label{sec:MGF}

In this section, we will show the bound on $\bE[e^{S}]$ required by Lemma~\ref{lemma:S_bound}. Recall the definition of $S$:
\begin{align*}
S=\underbrace{\!\!\!\!\!\!\sum_{s<i<j\leq n}\!\!\!\! (\lambda_{sij}+q_{sij})\beta_{sij}}_{X}-\underbrace{\sum_{s<i<j\leq n}\!\!\!\! \frac{\lambda_{sij}^2\beta_{sij}^2}{2}}_{Y},\text{ where } \beta_{sij}&=\sqrt{d}\bu_i(s)\bu_j(s)\text{ and }
q_{sij}=\frac{\lambda_{ij}(z_p-\lambda_{ij})}{\sqrt{d}}\!\!\!\!\sum_{s<r\leq n}\!\!\!\!\lambda_{ri}\lambda_{rj}.
\end{align*}

A nice way to think about $S$ is as a quadratic form of the vector $\bu$ defined as $\bu=(\bu_{s+1}(s), \dots, \bu_{n}(s))$. Indeed, if we set $m=n-s$ and define an $m\times m$ matrix $M$ by $M_{ji}=M_{ij}=\frac{\sqrt{d}}{2}(\lambda_{sij}+q_{sij})$ for $i<j$ and $M_{ii}=0$, then the first sum of $S$ equals $\bu^T M \bu$. Additionally, we will write $S=X-Y$, where $X=\bu^T M \bu$ and $Y=\sum_{s<i<j\leq n}\lambda_{sij}^2\beta_{sij}^2/2$. Note that $\lambda_{sij}+q_{sij}$ is should be thought of as very close to $\lambda_{ij}$, because Claim~\ref{claim:error_in_lambda} shows that $\lambda_{sij}=(1+\widetilde O(\sqrt{n/d})\lambda_{ij}$ and $|q_{sij}|\leq \widetilde O\big(|\lambda_{ij}|\big|\sum_{s<r\leq n}\lambda_{sr}\big|/\sqrt{d}\big)\leq \widetilde O\big(|\lambda_{ij}|\sqrt{n/d}\big)$, where we have used property~\ref{item:degree_cancellation} of the event $\cE$.

We bound $\bE[e^{S}]$ by using the following two equalities:
\begin{align*}
\bE[e^{X-Y}] &=\bE[e^X]\exp\Big(-\bE[Y]+\widetilde O\Big(\frac{n^{3/2}p}{d}+\frac{n^4p^2}{d^2}\Big)\Big),\text{ and}\\
\bE[e^X] &=\exp\Big(\bE X+\frac{1}{2}\Var(X) +\widetilde O\Big(\frac{n^{5/2}p^{3/2}}{d^{3/2}} +\frac{n^{7/2}p^{3/2}}{d^2}\Big)\Big).
\end{align*}
Since $d\geq Cn^2p(\log n)^A$ and $p\geq n^{-1/3}\polylog(n)$, the error terms in the second equality are absorbed by those in the first. Hence, $\bE[e^{X-Y}] =\exp\Big(\bE X+\frac{1}{2}\Var(X)-\bE Y +\widetilde O\Big(\frac{n^{3/2}p}{d}+\frac{n^4p^2}{d^2}\Big)\Big)$.
The final step of the argument will be to compute the value of $\bE X+\frac{1}{2}\Var(X)-\bE Y$. We divide the rest of the section into three parts: in the first two, we prove the above two inequalities, while in the last we calculate the value of $\bE X+\frac{1}{2}\Var(X)-\bE Y$ precisely, and put all the pieces together to prove Lemma~\ref{lemma:S_bound}.

\subsection{Separating the variables} Let us begin by showing a general trick for separating the variables in the expectations of the type $\bE[e^{X-Y}]$, and then verify that this trick applies in our situation.

\begin{lemma}\label{lemma:tilting}
Let $X$ be a random variable with the property that $\bE[e^{2(X-\bE[X])}]\leq 2$. Then, for any nonnegative random variable $Y$, we have \[\bE[e^{X-Y}]=\bE[e^X]e^{-\bE[Y]+O(\sqrt{\Var(Y)}+\sqrt{\bE[Y^4]})}.\]
\end{lemma}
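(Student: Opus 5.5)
Both inequalities will be proved separately: an upper bound and a lower bound on $\bE[e^{X-Y}]/\bE[e^X]$, each of the form $e^{-\bE Y+O(\cdots)}$. The natural device is the tilted measure $\tilde\Pb$ with density $e^X/\bE[e^X]$, so that $\bE[e^{X-Y}]=\bE[e^X]\,\tilde\bE[e^{-Y}]$. It then remains to show $\tilde\bE[e^{-Y}]=e^{-\bE Y+O(\sqrt{\Var Y}+\sqrt{\bE Y^4})}$. We may assume that $\sqrt{\Var Y}+\sqrt{\bE Y^4}$ is small, say at most a small absolute constant; otherwise we instead establish the upper bound only up to constants. More carefully, note that $e^{-\bE Y+O(\sqrt{\bE Y^4})}$ with $\sqrt{\bE Y^4}\ge (\bE Y)^2$ lets the error term absorb large $\bE Y$ in the lower direction as well.

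\textbf{Lower bound.} By Jensen under $\tilde\Pb$, we have $\tilde\bE[e^{-Y}]\ge e^{-\tilde\bE Y}$. We then write
\[\tilde\bE Y-\bE Y=\frac{\bE[(Y-\bE Y)e^{X}]}{\bE e^X}=\frac{\bE[(Y-\bE Y)e^{X-\bE X}]}{\bE e^{X-\bE X}}.\]
By Cauchy--Schwarz, the numerator is at most $\sqrt{\Var Y}\,\sqrt{\bE e^{2(X-\bE X)}}\le \sqrt2\sqrt{\Var Y}$. By Jensen, the denominator satisfies $\bE e^{X-\bE X}\ge 1$. Hence $\tilde\bE Y\le \bE Y+O(\sqrt{\Var Y})$, which gives the lower bound.

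\textbf{Upper bound.} We use $e^{-y}\le 1-y+y^2/2$ for $y\ge0$. This gives $\tilde\bE e^{-Y}\le 1-\tilde\bE Y+\tilde\bE Y^2/2$. By the same Cauchy--Schwarz argument, $\tilde\bE Y\ge \bE Y-O(\sqrt{\Var Y})$. Likewise,
\[\tilde\bE Y^2\le \sqrt{\bE Y^4}\sqrt{\bE e^{2(X-\bE X)}}/\bE e^{X-\bE X}\le O(\sqrt{\bE Y^4}).\]
Therefore $\tilde\bE e^{-Y}\le 1-\bE Y+O(\sqrt{\Var Y}+\sqrt{\bE Y^4})\le \exp(-\bE Y+O(\cdots))$, using $1+x\le e^x$.

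\textbf{Main obstacle.} The only real subtlety is the regime where $\bE Y$ is not small, since the quadratic bound is then lossy. The remedy is the observation that $(\bE Y)^2\le \bE Y^2\le\sqrt{\bE Y^4}$. So if $\bE Y\ge 1$, the claimed error term $O(\sqrt{\bE Y^4})\ge \bE Y$ dominates $\bE Y$. In that case the upper bound follows trivially from $e^{-Y}\le1$, and the lower bound from the Jensen step, which did not need smallness. When $\bE Y\le 1$, the Taylor step above is valid as written.
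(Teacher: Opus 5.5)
Your argument is correct and is essentially the paper's proof: you pass to the tilted measure $e^{X}/\bE[e^X]$, use Jensen for the lower bound and $e^{-y}\le 1-y+O(y^2)$ for the upper bound, and compare $\tilde\bE Y$ and $\tilde\bE Y^2$ with $\bE Y$ and $\sqrt{\bE Y^4}$ via Cauchy--Schwarz against $\bE e^{2(X-\bE X)}\le 2$. Your case split on the size of $\bE Y$ is harmless but unnecessary: $1+x\le e^x$ holds for every real $x$, so the Taylor step already gives the upper bound in all regimes, as the paper uses.
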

\begin{proof}
In this proof, we may assume that $X$ is centered, i.e. $\bE[X]=0$, since subtracting $\bE X$ from $X$ only scales the inequality $\bE[e^{X-Y}]\leq \bE[e^X]e^{-\bE[Y]+O(\sqrt{\Var(Y)}+\sqrt{\bE[Y^4]})}$ by $e^{-\bE X}$ on both sides.
Let $\nu$ be the tilted measure given by $e^X/\bE[e^X]$. Since for any positive $t\geq 0$ one has $1-t\leq e^{-t}\leq 1-t+t^2$ and since $Y\geq 0$, we have
\[\frac{\bE[e^{X-Y}]}{\bE[e^X]}= \bE_\nu[e^{-Y}]\leq \bE_\nu[1-Y+Y^2]=1-\bE_\nu[Y]+\bE_\nu[Y^2]\leq e^{-\bE_\nu[Y]+\bE_\nu[Y^2]}.\]
On the lower bound side, we have $\bE_\nu[e^{-Y}]\geq e^{\bE_\nu[-Y]}$, and so $\bE[e^{X-Y}]=\bE[e^X]e^{-\bE_\nu[Y]+O(\bE_\nu[Y^2])}$. Next, observe that $\bE[Y]$ and $\bE_\nu[Y]$ are quite close. To see this, we use Jensen's inequality we can write $\bE[e^X]\geq e^{\bE[X]}=e^0\geq 1$, and so we can write
\begin{align*}
    \Big| \bE_\nu[Y]\!-\!\bE[Y]\Big|&\!=\!\Big| \frac{\bE[e^XY]}{\bE[e^X]}\!-\!\bE[Y]\Big| \!=\! \Big| \frac{\bE[e^XY]\!-\!\bE[e^X]\bE[Y]}{\bE[e^X]}\Big|\!\leq\! \Big|\bE\big[(e^X\!-\!\bE e^X )(Y\!-\!\bE Y)\big]\Big|.
\end{align*}
By the Cauchy-Schwarz inequality, we can bound the last term as follows:
\begin{align*}
    \Big| \bE_\nu[Y]-\bE[Y]\Big|&\leq \bE\big[(e^X-\bE e^X )^2\big]^{1/2}\bE\big[(Y-\bE Y)^2\big]^{1/2}\leq \bE[e^{2X}]^{1/2}\sqrt{\Var(Y)}\leq O(\sqrt{\Var(Y)}).
\end{align*}
On the other hand, to complete the proof, we observe that by the definition of the measure $\nu$ and the Cauchy-Schwarz inequality again, we have
\[\bE_\nu[Y^2]= \frac{\bE[e^X Y^2]}{\bE[e^X]}\leq \bE[e^{2X}]^{1/2} \bE[Y^4]^{1/2}\leq O(\sqrt{\bE[Y^4]}).\]
Hence, $\bE[e^{X-Y}]= \bE[e^X]\bE_\nu[e^{-Y}]= \bE[e^X]e^{-\bE[Y]+O(\sqrt{\Var(Y)}+\sqrt{\bE[Y^4]})}$, as desired.
\end{proof}

To apply Lemma~\ref{lemma:tilting}, we need $\bE[e^{2(X-\bE X)}]\leq 2$ and good bounds on $\Var(Y), \bE[Y^4]$. The bound on $\bE[e^{2(X-\bE X)}]$ will follow from the later detailed analysis of $\bE e^X$, so we will not focus on it now. Instead, we turn our attention to establishing the bounds on $\Var(Y), \bE[Y^4]$.

\begin{claim}\label{claim:Y_bounds}
We have $\Var(Y)\leq \widetilde O\Big(\frac{n^3p^2}{d^2}\Big)$ and $\bE[Y^4]\leq \widetilde O\Big(\frac{n^8p^4}{d^4}\Big)$.
\end{claim}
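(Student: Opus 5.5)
Write $Y=\sum_{s<i<j\leq n} w_{ij}\bu_i(s)^2\bu_j(s)^2$ with $w_{ij}=d\lambda_{sij}^2/2$. This is a deterministic weighted sum of products of the independent variables $x_i=\bu_i(s)^2$ from \setupstar{}. By Claim~\ref{claim:error_in_lambda}, $|\lambda_{sij}|=O(|\lambda_{ij}|)$, so $w_{ij}=O(d\lambda_{ij}^2)$. For nonedges $ij$ of $H$ we have $\lambda_{ij}^2=a^2/(1-p)^2=\widetilde O(p^2)$, while for edges $\lambda_{ij}^2=a^2/p^2=\widetilde O(1)$. By property~\ref{item:degree_concentration} of $\cE$, each vertex has $O(np)$ neighbours in $H$. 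Hence for every $i$ we get $\sum_j w_{ij}\leq \widetilde O(d\cdot np)$, and in total $\sum_{i<j}w_{ij}\leq\widetilde O(dn^2p)$.

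For the fourth moment no cancellation is needed. Since $|\bu_i(s)|\leq 10\sqrt{\log d/d}$ deterministically, $Y\leq \sum_{i<j} w_{ij}\,\widetilde O(d^{-2})=\widetilde O(n^2p/d)$. Then $\bE[Y^4]\leq \widetilde O(n^8p^4/d^4)$, which is the claimed bound.

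For the variance, I would write $x_i=\mu_i+\xi_i$, where $\mu_i=\bE x_i=\widetilde O(1/d)$ (by Proposition~\ref{prop:comparing_moments}) and $\xi_i$ is centered with $|\xi_i|\leq\widetilde O(1/d)$, so $\bE\xi_i^2=\widetilde O(1/d^2)$. Expanding $x_ix_j-\mu_i\mu_j=\mu_i\xi_j+\mu_j\xi_i+\xi_i\xi_j$ splits $Y-\bE Y$ into a linear part $L=\sum_i c_i\xi_i$ with $c_i=\sum_j w_{ij}\mu_j$, and a quadratic chaos $Q=\sum_{i<j}w_{ij}\xi_i\xi_j$. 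By independence and centering these two parts are orthogonal, so $\Var(Y)=\sum_i c_i^2\bE\xi_i^2+\sum_{i<j}w_{ij}^2\bE\xi_i^2\bE\xi_j^2$.

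For the linear part, $c_i\leq \widetilde O(1/d)\cdot\widetilde O(dnp)=\widetilde O(np)$, so it contributes $\widetilde O(n\cdot n^2p^2/d^2)=\widetilde O(n^3p^2/d^2)$. For the quadratic part, the edges of $H$ contribute $\widetilde O(n^2p\cdot d^2/d^4)=\widetilde O(n^2p/d^2)$. The nonedges contribute $\widetilde O(n^2 d^2p^4/d^4)$. Both are dominated by $\widetilde O(n^3p^2/d^2)$, using $p\geq n^{-1/3}$ (so $n^2p\leq n^3p^2$).

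The only delicate step is the linear term, where one must use that $\lambda_{ij}$ is small on nonedges. Otherwise $c_i$ would be of order $n$ rather than $np$, and the bound would lose a factor of $p^{-2}$. Here property~\ref{item:degree_concentration} of $\cE$, together with Claim~\ref{claim:error_in_lambda}, controls $\lambda_{sij}$ uniformly.
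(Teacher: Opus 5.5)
Your proof is correct and is essentially the paper's: the $\bE[Y^4]$ bound is the same deterministic bound $Y\le \widetilde O(n^2p/d)$. Your linear/quadratic split of $Y-\bE Y$ is an exact version of the paper's expansion of $\Var(Y)$ into diagonal terms plus covariances of overlapping pairs $\{i,j\},\{i,k\}$; those covariances are your linear term, since $\mathrm{Cov}(x_ix_j,x_ix_k)=\Var(x_i)\mu_j\mu_k$. In both arguments the dominant $\widetilde O(n^3p^2/d^2)$ comes from $d^{-2}\sum_i\big(\sum_j\lambda_{ij}^2\big)^2$, controlled via $\lambda_{sij}^2=O(\lambda_{ij}^2)=\widetilde O(|\lambda_{ij}|)$ and property~\ref{item:degree_concentration} of $\cE$.
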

\begin{proof}
By definition, we have $Y=\sum_{s<i<j\leq n} \lambda_{sij}^2\beta_{sij}^2/2$, so
\[\Var(Y)=\sum_{s<i<j\leq n} \lambda_{sij}^4\Var(\beta_{sij}^2)/4+\sum_{\substack{s<i<j\leq n\\s<i'<j'\leq n\\(i, j)\neq (i', j')}} \lambda_{sij}^2\lambda_{si'j'}^2{\rm Cov}(\beta_{sij}^2, \beta_{si'j'}^2)/4.\]

Since $|\bu_i(s)|\leq \widetilde O(1/\sqrt{d})$ for all $s<i\leq n$, we have that $|\beta_{sij}|\leq \widetilde O(1/\sqrt{d})$. Hence, $\Var(\beta_{sij}^2)=\widetilde O(1/d^2)$ and ${\rm Cov}(\beta_{sij}^2, \beta_{si'j'}^2)=\widetilde O(1/d^2)$. Additionally, if $\{i,j\}\cap\{i',j'\}=\varnothing$, then $\beta_{sij}$ and $\beta_{si'j'}$ are independent. So, we have
\[\Var(Y)=\widetilde O\Big(\frac{1}{d^2}\sum_{s<i<j\leq n} \lambda_{sij}^4+\frac{1}{d^2}\sum_{\substack{s<j<k\leq n\\s<i\leq n}} \lambda_{sij}^2\lambda_{sik}^2\Big).\]
By Claim~\ref{claim:error_in_lambda}, we have $\lambda_{sij}^2=O(\lambda_{ij}^2)=\widetilde O(|\lambda_{ij}|)$ and so
\[\Var(Y)=\widetilde O\Big(\frac{1}{d^2}\sum_{s<i<j\leq n} |\lambda_{ij}|+\frac{1}{d^2}\sum_{\substack{s<j<k\leq n\\s<i\leq n}} |\lambda_{ij}\lambda_{ik}|\Big)\leq \widetilde O\Big(\frac{n^2p}{d^2}+\frac{n^3p^2}{d^2}\Big),\]
where we have used property~\ref{item:degree_concentration} of $\cE$. Finally, summing the deterministic bounds on $\beta_{sij}$ we have
\[ Y=\sum_{s<i<j\leq n}\beta_{sij}^2\lambda_{sij}^2/2\leq \widetilde O\Big(\frac{1}{d}\sum_{s<i<j\leq n}|\lambda_{sij}|\Big)\leq \widetilde O\Big(\frac{n^2p}{d}\Big),\]
where we have again used property~\ref{item:degree_concentration} of $\cE$.
Hence $\bE[Y^4]\leq \widetilde O(n^8p^4/d^4)$, which completes the proof.
\end{proof}

\subsection{\texorpdfstring{Bounding the MGF of $X$}{Bounding the MGF of X}}

Let us now show how to bound $\bE[e^X]=\bE[e^{\bu^T M\bu}]$, by using the following general tool.

\begin{proposition}\label{prop:lindeberg}
There exist absolute constants $C>c>0$, such that the following holds.

Let $\xi \in \bR^m$ be a vector with independent subgaussian coordinates, each with expectation $0$ and variance proxy at most $\sigma^2$.  Also, fix a symmetric zero-diagonal $m\times m$ matrix $M$, and a vector $\ba\in\bR^m$. Let us define
\[\delta_1=\max\big\{\sigma^2\|M\|_{op}, \sigma\|\ba\|_\infty\big\} \text{  and }\delta_2=\sigma^4\|M\|_{HS}^2+\sigma^2\|\ba\|_2^2.\]
If $\delta_1\leq c$, then
\begin{align*}
\log\bE e^{\xi^TM\xi+\ba^T\xi}
=\frac{1}{2}\Var(\xi^TM\xi+\ba^T\xi)+O(e^{C\delta_2} \delta_1\delta_2)
\end{align*}
\end{proposition}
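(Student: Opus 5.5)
We plan a Lindeberg-type interpolation between $\xi$ and a Gaussian vector with matching covariances. Set $Q(\xi)=\xi^TM\xi+\ba^T\xi$ and $F(\xi)=\log\bE e^{Q(\xi)}$.

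\textbf{Step 1: cumulant expansion.} Write $K(\theta)=\log\bE e^{\theta Q}$ for $\theta\in[0,1]$. Since $\bE Q=0$ (zero diagonal and centered coordinates), Taylor's theorem with integral remainder gives
\[K(1)=\tfrac12 K''(0)+\int_0^1\tfrac{(1-\theta)^2}{2}K'''(\theta)\,d\theta .\]
Here $K''(0)=\Var(Q)$. It then suffices to show $|K'''(\theta)|\le O(e^{C\delta_2}\delta_1\delta_2)$ uniformly in $\theta\in[0,1]$. The quantity $K'''(\theta)$ is the third central moment of $Q$ under the tilted measure $\nu_\theta\propto e^{\theta Q}$.

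\textbf{Step 2: controlling the tilt.} First, Hanson--Wright (Theorem~\ref{thm:hanson_wright}) together with the subgaussian bound for the linear part and Cauchy--Schwarz gives $\bE e^{tQ}\le e^{C t^2\delta_2}$ for $|t|\le c'/\delta_1$. This uses $\|M\|_{op}\sigma^2\le\delta_1$ and $\sigma\|\ba\|_\infty\le\delta_1$. Because $\delta_1\le c$, we may take $t$ up to a large constant, say $t=4$. Then Cauchy--Schwarz gives
\[\bE_{\nu_\theta}|Q-\bE_{\nu_\theta}Q|^3\le \bE[e^{2\theta Q}]^{1/2}\,\bE[|Q-\mu|^6]^{1/2}/\bE e^{\theta Q}.\]
Since $\bE e^{\theta Q}\ge 1$ by Jensen, this reduces everything to moments of $Q$ under the original measure, multiplied by $e^{C\delta_2}$.

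\textbf{Step 3: sharpening the third cumulant.} Step 2 by itself only gives $K'''=O(e^{C\delta_2}\delta_2^{3/2})$, which is too weak when $\delta_2\gg\delta_1^2$. To gain a factor of $\delta_1$ in place of $\sqrt{\delta_2}$, we decouple coordinatewise. We write $K'''(\theta)=\sum_k \bE_{\nu_\theta}[\partial_k$-type terms$]$ using a Lindeberg/leave-one-out swap. We replace $\xi_k$ by an independent copy and Taylor expand $e^{\theta Q}$ in the single coordinate $\xi_k$. The increment of $Q$ in coordinate $k$ is $\xi_k(2(M\xi)_k+a_k)$, and $(M\xi)_k$ is subgaussian with proxy $\sigma^2\|M_k\|_2^2$. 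The third-order contribution of coordinate $k$ is then of size
\[\sigma^3\,\bE|2(M\xi)_k+a_k|^3\lesssim \sigma^3\big(\sigma^3\|M_k\|_2^3+|a_k|^3\big).\]
Summing over $k$ and using $\sigma\|M_k\|_2\le\sigma\|M\|_{op}$, each summand is at most $\delta_1$ times $(\sigma^4\|M_k\|_2^2+\sigma^2a_k^2)$. Summing these gives exactly $\delta_1\delta_2$. The tilt is again absorbed through Step 2 into the factor $e^{C\delta_2}$.

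\textbf{Main obstacle.} The main difficulty is Step 3: making the one-coordinate swap rigorous under the tilted measure. The tilt couples $\xi_k$ to all other coordinates through $(M\xi)_k$, so the conditional expectations must be handled without losing $\sqrt{\delta_2}$. We would first try conditioning on $\xi_{-k}$, which makes $Q$ affine in $\xi_k$. This allows an exact one-dimensional cumulant bound for $\xi_k$ with parameter $\theta(2(M\xi)_k+a_k)$, valid since that parameter is $\lesssim\delta_1/\sigma$ with high probability. The Hanson--Wright tail then controls the atypical event where $(M\xi)_k$ is large.
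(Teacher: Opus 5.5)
\textbf{There is a genuine gap, and it sits in Step 3, which carries the entire content of the proposition.} Steps 1--2 are a legitimate reduction. However, a Lindeberg swap compares $\bE f$ under two \emph{different} laws. It gives no expression for the third cumulant of the single tilted law $\nu_\theta$, under which the coordinates are not even independent.

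More to the point, no per-coordinate bound of the form $|K'''|\lesssim\sum_k\sigma^3\bE|2(M\xi)_k+\ba_k|^3$ can hold, because the skewness of $Q$ is typically a global effect. Take $\sigma=1$, $\xi$ standard Gaussian, $\ba=0$, and $M=\epsilon(J-I)$ with $\epsilon m=c$ a small constant. Then $Q=\epsilon\big((\sum_i\xi_i)^2-\sum_i\xi_i^2\big)\approx c(\chi^2_1-1)$, so $K'''(0)\approx 8c^3$, while $\sum_k\|M_k\|_2^3\approx c^3/\sqrt m$. In general, for standard Gaussian $\xi$ one has $K'''(0)=8\,\mathrm{tr}(M^3)+6\,\ba^TM\ba$. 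These terms are $O(\delta_1\delta_2)$ only through the global bounds $|\mathrm{tr}(M^3)|\le\|M\|_{op}\|M\|_{HS}^2$ and $|\ba^TM\ba|\le\|M\|_{op}\|\ba\|_2^2$. Your final number $\delta_1\delta_2$ has the right size, but the coordinatewise mechanism you describe cannot produce these terms.

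\textbf{What is missing is a reference model in which these global terms are computed exactly.} This is the Gaussian announced in your first sentence, which Steps 1--3 never use. The paper performs the swap directly on $\bE e^{Q}$, replacing $\xi$ coordinate by coordinate with a Gaussian $g$ of the same covariance $D$:
\begin{enumerate}[label=(\roman*)]
\item Conditionally on the other coordinates, $Q$ is affine in the swapped one, with slope $c_i=2(M\bz^{(i)})_i+\ba_i$.
\item Matching the first two moments kills the low-order terms, leaving a remainder $O(|c_i|^3e^{O(c_i^2)})$.
\item The weight $e^{(\bz^{(i)})^TM\bz^{(i)}+\ba^T\bz^{(i)}}$ is handled by H\"older plus Hanson--Wright.
\item Summing gives $|\bE e^{Q(\xi)}-\bE e^{Q(g)}|=O(e^{O(\delta_2)}\delta_1\delta_2)$. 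This passes to logarithms because both expectations are at least $1$ by Jensen.
\end{enumerate}
Your Step 3 computation is essentially this half of the argument, but what it bounds is a difference of two laws, not $K'''$.

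The Gaussian side is then explicit: $\log\bE e^{Q(g)}=\frac12{\ba'}^T(I-2M')^{-1}\ba'-\frac12\log\det(I-2M')$, with $M'=D^{1/2}MD^{1/2}$ and $\ba'=D^{1/2}\ba$. Expanding the resolvent and the $\log\det$ gives $\frac12\Var(Q)+O(\delta_1\delta_2)$. The $\mathrm{tr}((2M')^k)$ terms with $k\ge 3$ and the resolvent correction are precisely where $\|M\|_{op}$ enters. With this route, the cumulant expansion of Steps 1--2 becomes unnecessary.
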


Since Proposition~\ref{prop:lindeberg} is a completely standalone statement, we will postpone its proof to Section~\ref{sec:lindeberg} and focus on how to apply it to prove Lemma~\ref{lemma:S_bound}.

\begin{lemma}\label{lemma:MGF_X}
For $X$ defined as above,
\[\bE[e^X]= \exp\Big(\bE X+\frac{1}{2}\Var(X)+\widetilde O\Big(\frac{n^{5/2}p^{3/2}}{d^{3/2}}+\frac{n^{7/2}p^{3/2}}{d^{2}}\Big)\Big).\]
\end{lemma}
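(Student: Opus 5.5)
We will apply Proposition~\ref{prop:lindeberg} after centering the vector $\bu=(\bu_{s+1}(s),\dots,\bu_n(s))$. Write $\mu_i=\bE[\bu_i(s)]$ and $\xi_i=\bu_i(s)-\mu_i$, so that the $\xi_i$ are independent (by \setupstar{}) and centered. Since $|\bu_i(s)|\leq 10\sqrt{\log d/d}$, Lemma~\ref{lemma:hoeffding} shows that each $\xi_i$ is subgaussian with variance proxy $\sigma^2=\widetilde O(1/d)$. Expanding the quadratic form gives
\[X=\bu^TM\bu=\xi^TM\xi+\ba^T\xi+\mu^TM\mu,\qquad \ba=2M\mu,\]
and the zero diagonal of $M$ gives $\bE X=\mu^TM\mu+\sum_i M_{ii}\Var(\xi_i)=\mu^TM\mu$. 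Hence $\log\bE e^X-\bE X=\log\bE e^{\xi^TM\xi+\ba^T\xi}$. Moreover $\Var(X)=\Var(\xi^TM\xi+\ba^T\xi)$, so Proposition~\ref{prop:lindeberg} yields the claim provided that $\delta_1\leq c$ and that $e^{C\delta_2}\delta_1\delta_2$ is at most the stated error.

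The bulk of the work is bounding the norms of $M$ and $\ba$. Recall $M_{ij}=\frac{\sqrt d}{2}(\lambda_{sij}+q_{sij})$. By Claim~\ref{claim:error_in_lambda} and the remark on $q_{sij}$ following the definition of $S$, we can write $M=\frac{\sqrt d}{2}\Lambda+R$, where $|R_{ij}|\leq\widetilde O(|\lambda_{ij}|\sqrt{n})$.

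\emph{Operator norm.} Property~\ref{item:eigenvalue_bound} of $\cE$ gives $\|\Lambda\|_{op}=\widetilde O(\sqrt{np})$. Bounding $\|R\|_{op}$ by its maximum row $\ell_1$ norm and using property~\ref{item:degree_concentration} gives $\widetilde O(\sqrt n\cdot np)$. Hence $\sigma^2\|M\|_{op}=\widetilde O(\sqrt{np/d}+n^{3/2}p/d)$, which is small under $d\geq Cn^2p(\log n)^A$.

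\emph{Hilbert--Schmidt norm.} From $\sum_{ij}\lambda_{ij}^2=\widetilde O(n^2p)$ we get $\|M\|_{HS}^2=\widetilde O(dn^2p)$, so $\sigma^4\|M\|_{HS}^2=\widetilde O(n^2p/d)$.

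\emph{The vector $\ba$.} This is where the cancellation matters. By Proposition~\ref{prop:comparing_moments}, $\mu_r=\lambda_{sr}/\sqrt d+\widetilde O(|\lambda_{sr}|\sqrt n/d)$. Therefore $a_i=\sqrt d\sum_r(\lambda_{sir}+q_{sir})\mu_r$ has main part $\sum_r\lambda_{ir}\lambda_{sr}$, which is $\widetilde O(\sqrt{p^2n})$ by the codegree cancellation in property~\ref{item:degree_cancellation}. The remaining pieces are controlled by absolute sums via property~\ref{item:degree_concentration} and are $\widetilde O(\sqrt{n/d}\cdot np^2)$. This gives $\sigma\|\ba\|_\infty=\widetilde O(p\sqrt{n/d})$ and $\sigma^2\|\ba\|_2^2=\widetilde O(n^2p^2/d+\dots)$. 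The main-part estimate $\|\ba\|_2^2\approx\sum_i(\sum_r\lambda_{ir}\lambda_{sr})^2$ can alternatively be bounded by $\|\Lambda\|_{op}^2\|\lambda_{s\cdot}\|_2^2=\widetilde O(np\cdot np)$.

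Combining these, $\delta_2=\widetilde O(n^2p/d)$, which is $O(1)$ after absorbing logs by the choice of $A$, so $e^{C\delta_2}=O(1)$. We also have $\delta_1=\widetilde O(\sqrt{np/d}+n^{3/2}p/d)$. Then
\[\delta_1\delta_2=\widetilde O\Big(\frac{n^{5/2}p^{3/2}}{d^{3/2}}+\frac{n^{7/2}p^{2}}{d^2}\Big),\]
which matches the target up to $p^{1/2}$ factors in the second term; that term is in any case at most the stated bound.

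\textbf{Main obstacle.} I expect the hardest part to be controlling the correction $R$ and the non-leading parts of $\ba$ sharply enough. A crude entrywise bound $\|R\|_{op}\leq n\max|R_{ij}|$ loses a factor of $p$, giving $n^{3/2}\sqrt d$ as in Claim~\ref{clm:op_bound}. With that cruder bound, $\delta_1\delta_2$ still yields the term $\frac{n^{7/2}p}{d^{2}}\cdot\sqrt{}$ type error, which may need to be checked against the allowed $n^{7/2}p^{3/2}/d^2$. I would first try the row-sum (Schur test) bound using $|R_{ij}|\lesssim|\lambda_{ij}|\sqrt n$ to recover the missing $p$.

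The condition $\bE[e^{2(X-\bE X)}]\leq 2$ needed in Lemma~\ref{lemma:tilting} follows from the same computation applied to $2M,2\ba$, since $\delta_1$ and $\delta_2$ only change by constant factors and $\Var(X)=O(\delta_2)$ is small.
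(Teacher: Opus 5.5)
Your proposal is correct and follows essentially the same route as the paper: center $\bu$, identify $\mu^TM\mu=\bE X$, apply Proposition~\ref{prop:lindeberg} with $\ba=2M\mu$, and bound the norms through the decomposition $M=\frac{\sqrt d}{2}\Lambda+R$ with $|R_{ij}|\le\widetilde O(|\lambda_{ij}|\sqrt n)$. Your two differences are only sharpenings. First, you bound $\|R\|_{op}$ by the row-sum estimate $\widetilde O(n^{3/2}p)$, where the paper uses $\|R\|_{op}\le\|R\|_{HS}\le\widetilde O(\sqrt{n^3p})$. Second, you use $\|\ba\|_\infty=\widetilde O(p\sqrt n)$, where the paper states $\widetilde O(\sqrt{np})$. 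These land you at an error term $\widetilde O(n^{7/2}p^2/d^2)$, inside the stated one. So the worry in your ``main obstacle'' paragraph is already resolved, and the paper's Hilbert--Schmidt bound would also have sufficed.
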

\begin{proof}
Since $X=\bu^T M\bu$, we can apply Proposition~\ref{prop:lindeberg} to bound $\bE[e^X]$. However, Proposition~\ref{prop:lindeberg} requires us to use centered variables, while $\bu$ is not centered. This is simple to fix, as follows. Let $\xi_i=\bu_i(s)-\bE \bu_i(s)$ be the centered version of $\bu_i$, and let $\xi=(\xi_{s+1}, \dots, \xi_n)$. Then, $\xi_i$ are subgaussian of variance proxy $\widetilde O(1/d)$, by Hoeffding's lemma. Also, we have
\[X=\bu^T M \bu=(\bE\bu)^T M \bE\bu+2\xi^T M \bE\bu+\xi^T M\xi.\]
The first of these three terms is deterministic, and therefore we can write
\begin{align*}
\bE[e^X]&= e^{(\bE\bu)^T M \bE\bu}\cdot \bE\Big[\exp\big(2\xi^T M \bE\bu+\xi^T M\xi\big)\Big]=e^{\bE X}\cdot \bE\Big[\exp\big(2\xi^T M \bE\bu+\xi^T M\xi\big)\Big].
\end{align*}
If we set $\ba=2M^T \bE \bu$, then Proposition~\ref{prop:lindeberg} gives
\[\bE[e^X]=e^{\bE X+\frac{1}{2}\Var(X)}e^{O(e^{C\delta_2}\delta_1\delta_2)},\]
where $\delta_1=\max\big\{\sigma^2\|M\|_{op}, \sigma\|\ba\|_\infty\big\}$ and $\delta_2=\sigma^4\|M\|_{HS}^2+\sigma^2\|\ba\|_2^2$, provided $\delta_1$ is sufficiently small. Let us first show that $\delta_1$ is indeed sufficiently small.

\begin{claim}\label{claim:calculations_norms}
We have $\|M\|_{op}\leq \widetilde O(\sqrt{dnp}+\sqrt{n^3p}), \|M\|_{HS}^2\leq \widetilde O(dn^2p)$, $\|\ba\|_{\infty}\leq \widetilde O(\sqrt{np})$, and $\|\ba\|_{2}^2\leq \widetilde O(n^2p)$.
\end{claim}
\begin{proof}
We can write $M_{ij}=\frac{\sqrt{d}}{2}(\lambda_{ij}+(\lambda_{sij}-\lambda_{ij}+q_{sij}))$. If $\Lambda$ denotes the $m\times m$ matrix of $\lambda_{ij}$, then we can write $M=\frac{\sqrt{d}}{2}\Lambda+R$, where $R$ is a matrix whose entries are given by $R_{ij}=\frac{\sqrt{d}}{2}(\lambda_{sij}-\lambda_{ij}+q_{sij})$. Recall that we have $\|\Lambda\|_{op}\leq \widetilde O(\sqrt{np})$ and $|\lambda_{sij}-\lambda_{ij}|, |q_{sij}|\leq \widetilde O(|\lambda_{ij}|\sqrt{n/d})$ (by properties~\ref{item:degree_cancellation} and~\ref{item:eigenvalue_bound} of the event $\cE$ and Claim~\ref{claim:error_in_lambda}). The latter inequality implies that $\|R\|_{HS}^2\leq \sum_{s<i<j\leq n}|\lambda_{ij}|^2\cdot \widetilde O(n)\leq \widetilde O(n^3p)$, by property~\ref{item:degree_concentration} of the event $\cE$.

In total, we have $\|M\|_{op}\leq \sqrt{d}\|\Lambda\|_{op}+\|R\|_{op}\leq \widetilde O(\sqrt{dnp})+\|R\|_{HS}\leq \widetilde O(\sqrt{dnp}+\sqrt{n^3p})$.

To compute $\|M\|_{HS}^2$, we argue similarly. We have $\|M\|_{HS}^2\leq O(d\| \Lambda\|_{HS}^2+\|R\|_{HS}^2)\leq \widetilde O(dn^2p+n^3p)=\widetilde O(dn^2p)$, where we have used that $d\geq n$ and property~\ref{item:degree_concentration} of the event $\cE$ to bound $\|\Lambda\|_{HS}^2=2\sum_{s<i<j\leq n}\lambda_{ij}^2\leq \widetilde O(n^2p)$.

Finally, we have $\ba=2M\bE\bu$, and so $\ba_i=2\sum_{s<j\leq n}M_{ij}\bE[\bu_j(s)]$ for each $i$. Let us fix $i$ so that $||\ba||_\infty = |\ba_i|$. By Proposition~\ref{prop:comparing_moments} \ref{item:comparing_first_order}, we have $\bE[\bu_j(s)]= (1+\widetilde O(\sqrt{n/d}))\lambda_{sj}/\sqrt{d}$, and we have already argued that $M_{ij}=\frac{\sqrt{d}}{2}\lambda_{ij}(1+\widetilde O(\sqrt{n/d}))$. Hence, $\ba_i=\sum_{s<j\leq n}\lambda_{ij}\lambda_{sj}(1+\widetilde O(\sqrt{n/d}))$, and so
\[\|\ba\|_\infty\leq \widetilde O\Big(\sqrt{np}+\sqrt{n/d}\cdot \sum_{s<j\leq n}|\lambda_{sj}||\lambda_{ij}|\Big)\leq \widetilde O(\sqrt{np}+\sqrt{n/d}\cdot np^2)=\widetilde O(\sqrt{np}),\]
 where we have used properties~\ref{item:degree_cancellation} and \ref{item:degree_concentration} of the event $\cE$ to bound the main term and error respectively. Consequently, we also have $\|\ba\|_2^2\leq n\|\ba\|_\infty^2\leq \widetilde O(n^2p)$, thus completing the proof.
\end{proof}

Now, $\delta_1=\max\{\sigma^2\|M\|_{op}, \sigma\|\ba\|_\infty\}$, where $\sigma=\widetilde O(1/\sqrt{d})$. By Claim~\ref{claim:calculations_norms}, we have $\sigma^2\|M\|_{op}\leq \widetilde O(\sqrt{np/d}+\sqrt{n^3p}/d)\ll 1$, and $\sigma\|\ba\|_\infty\leq \widetilde O(\sqrt{np/d})\ll 1$, since $d\gg n^2p$. Hence, $\delta_1$ is sufficiently small, and we can apply Proposition~\ref{prop:lindeberg}.

A similar calculation shows that $\delta_2=\sigma^4\|M\|_{HS}^2+\sigma^2\|\ba\|_2^2\leq \widetilde O(n^2p/d+n^2p/d)=\widetilde O(n^2p/d)$. By our assumption on $d$, we also get that $\delta_2\leq 1$. Hence, we get
\[\bE[e^X]=\exp\Big(\bE X+\Var(X)/2+O(\delta_1\delta_2)\Big)= \exp\Big(\bE X+\Var(X)/2+\widetilde O\Big(\frac{n^{5/2}p^{3/2}}{d^{3/2}}+\frac{n^{7/2}p^{3/2}}{d^{2}}\Big)\Big).\qedhere\]
\end{proof}

\subsection{The final calculation}

In order to complete the proof of Lemma~\ref{lemma:S_bound}, the only real piece of work left is to estimate $\bE X+\frac{1}{2}\Var(X)-\bE Y$, which we will now do.

\begin{lemma}\label{lemma:EX_formula}
Recalling the definition of $A_s$ from the statement of Claim~\ref{claim:Z_s_manipulation}, we have
\begin{align}\label{eqn:EX_formula}
\bE X\!=\!A_s\!+\!\frac{1}{\sqrt{d}}\!\sum_{s<i<j\leq n} \!\!\!\! \lambda_{si}\lambda_{sj}\lambda_{ij}+\frac{1}{d}\!\!\!\!\sum_{s<i<j\leq n} \!\!\!\! \lambda_{si}\lambda_{sj}\lambda_{ij}(z_p-\lambda_{ij}) \!\sum_{s<r\leq n}\!\!\lambda_{ir}\lambda_{jr} +\widetilde O\Big(\frac{n^3p^2}{d^{3/2}}\Big).
\end{align}
\end{lemma}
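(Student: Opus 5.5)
Since $X=\sum_{s<i<j\leq n}(\lambda_{sij}+q_{sij})\sqrt d\,\bu_i(s)\bu_j(s)$ and the coordinates $\bu_i(s)$ are independent under \setupstar{}, we have exactly
\[\bE X=\sqrt d\sum_{s<i<j\leq n}(\lambda_{sij}+q_{sij})\,\bE[\bu_i(s)]\,\bE[\bu_j(s)].\]
The plan is to substitute first-order expansions for each of the three factors, multiply out, and keep the terms of order $1/\sqrt d$ and $1/d$.

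\textbf{Expansions.} By Proposition~\ref{prop:comparing_moments}\ref{item:comparing_first_order},
\[\sqrt d\,\bE[\bu_i(s)]=\lambda_{si}+\lambda_{si}(z_p-\lambda_{si})\sqrt d\,\langle\pi_{s-1}(\bv_i),\pi_{s-1}(\bv_s)\rangle+\widetilde O(n/d)=:\lambda_{si}+\epsilon_i+\widetilde O(n/d),\]
with $\epsilon_i=\widetilde O(|\lambda_{si}|\sqrt{n/d})$. By Claim~\ref{claim:error_in_lambda},
\[\lambda_{sij}=\lambda_{ij}+\eta_{ij}+\widetilde O(n/d),\qquad \eta_{ij}=\lambda_{ij}(z_p-\lambda_{ij})\sqrt d\,\langle\pi_{s-1}(\bv_i),\pi_{s-1}(\bv_j)\rangle.\]
Then
\[\bE X=\frac{1}{\sqrt d}\sum_{s<i<j\leq n}(\lambda_{ij}+\eta_{ij}+q_{sij})(\lambda_{si}+\epsilon_i)(\lambda_{sj}+\epsilon_j)+\text{errors}.\]

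\textbf{Identifying the main terms.}
\begin{itemize}
\item The product of the leading terms gives $\frac{1}{\sqrt d}\sum\lambda_{si}\lambda_{sj}\lambda_{ij}$.
\item The three linear corrections, $\eta_{ij}\lambda_{si}\lambda_{sj}$, $\lambda_{ij}\epsilon_i\lambda_{sj}$ and $\lambda_{ij}\lambda_{si}\epsilon_j$, divided by $\sqrt d$, give exactly the three pieces of $A_s$ from Claim~\ref{claim:Z_s_manipulation}. Each piece has the factor $\sqrt d$ in $\eta$ or $\epsilon$ cancelling the $1/\sqrt d$, leaving $\lambda_{ij}\lambda_{si}\lambda_{sj}(z_p-\cdot)\langle\cdot,\cdot\rangle$.
\item The term $q_{sij}\lambda_{si}\lambda_{sj}/\sqrt d$ gives the $\frac1d\sum\lambda_{si}\lambda_{sj}\lambda_{ij}(z_p-\lambda_{ij})\sum_r\lambda_{ir}\lambda_{jr}$ term, directly from the definition of $q_{sij}$.
\end{itemize}

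\textbf{Error terms.} The remaining terms are the quadratic cross terms (e.g.\ $\eta_{ij}\epsilon_i\lambda_{sj}$, $q_{sij}\epsilon_i\lambda_{sj}$, $\epsilon_i\epsilon_j\lambda_{ij}$) and the $\widetilde O(n/d)$ remainders. Each carries a relative factor $\widetilde O(n/d)$ times a product like $|\lambda_{ij}\lambda_{si}\lambda_{sj}|/\sqrt d$, or $|\lambda_{si}\lambda_{sj}|/\sqrt d$ for the remainder in $\lambda_{sij}$. To bound these I would use property~\ref{item:degree_concentration} of $\cE$: $\sum_{i,j}|\lambda_{si}\lambda_{sj}|=\widetilde O(n^2p^2)$, so these sums are $\widetilde O(n^2p^2\cdot n/d^{3/2})=\widetilde O(n^3p^2/d^{3/2})$, matching the claimed error. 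For the remainder in $\sqrt d\,\bE\bu_i(s)$ I would use $\sum_j|\lambda_{ij}\lambda_{sj}|=\widetilde O(np^2)$, either through property~\ref{item:degree_concentration} with $k=2$ or through property~\ref{item:triangle_concentration}.

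\textbf{Main obstacle.} The hardest part is the careful bookkeeping. The remainder in Proposition~\ref{prop:comparing_moments}\ref{item:comparing_first_order} is stated as $\widetilde O(n/d^{3/2})$ with no factor of $|\lambda_{si}|$. Multiplied against $\lambda_{ij}\lambda_{sj}$ and summed, this gives $\widetilde O(n^2p^2\cdot n/d^{3/2})$, which is still fine. I would check that this bound holds uniformly for $si\notin E(H)$, where $|\lambda_{si}|\sim p z_p$ is small. I also need to confirm that no $1/d$-order term without a matching $\sqrt d$ inner product survives beyond the ones listed in \eqref{eqn:EX_formula}. If an unwanted product such as $\eta_{ij}\epsilon_i$ turns out too large under the crude bounds, I would bound it using the inner-product estimate of $\Pi_{s-1}$ together with the sparsity of $\lambda_{si}$.
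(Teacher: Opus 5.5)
Your proposal is correct and follows essentially the paper's proof: you factor $\bE X$ using independence, insert the first-order expansions of $\bE[\bu_i(s)]$ and $\lambda_{sij}$, read off $A_s$ from the three linear corrections and the $1/d$ term from $q_{sij}\lambda_{si}\lambda_{sj}/\sqrt d$, and bound everything else via property~\ref{item:degree_concentration} of $\cE$, together with $|q_{sij}|\le \widetilde O(|\lambda_{ij}|\sqrt{n/d})$, which comes from property~\ref{item:degree_cancellation}. The only difference is cosmetic: the paper treats the $q_{sij}$ sum separately and uses property~\ref{item:triangle_concentration} to get a sharper $\widetilde O(n^3p^3/d^{3/2})$ bound on the doubly-corrected terms, whereas your cruder $\widetilde O(n^3p^2/d^{3/2})$ already suffices.
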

\begin{proof}
From the definition of $X$, we have
\begin{align}\label{eqn:EX_definition}
    \bE X=\sqrt{d}\sum_{s<i<j\leq n}\lambda_{sij}\bE[\bu_i(s)]\bE[\bu_j(s)]+\sqrt{d}\sum_{s<i<j\leq n}q_{sij}\bE[\bu_i(s)]\bE[\bu_j(s)].
\end{align}
We will show that the first sum corresponds to $A_s+\frac{1}{\sqrt{d}}\sum_{s<i<j\leq n}
 \lambda_{si}\lambda_{sj}\lambda_{ij}$, while the second sum corresponds to the last term of (\ref{eqn:EX_formula}).
By Proposition~\ref{prop:comparing_moments} \ref{item:comparing_first_order}, and by Claim~\ref{claim:error_in_lambda}, we have
\begin{align*}
\bE[\bu_i(s)]&=\frac{\lambda_{si}}{\sqrt{d}}+\lambda_{si}(z_p-\lambda_{si})\langle \pi_{s-1}(\bv_i),\pi_{s-1}(\bv_s)\rangle+\widetilde O\Big(\frac{n}{d^{3/2}}\Big),\\
\lambda_{sij}&=\lambda_{ij}+\lambda_{ij}(z_p-\lambda_{ij})\sqrt{d}\,\langle \pi_{s-1}(\bv_i),\pi_{s-1}(\bv_j)\rangle+\widetilde O\Big(\frac{n}{d}\Big),\\
\bE[\bu_j(s)]&=\frac{\lambda_{sj}}{\sqrt{d}}+\lambda_{sj}(z_p-\lambda_{sj})\langle \pi_{s-1}(\bv_j),\pi_{s-1}(\bv_s)\rangle+\widetilde O\Big(\frac{n}{d^{3/2}}\Big),
\end{align*}
To simplify the notation, let us briefly write $\Delta_{si}=\lambda_{si}(z_p-\lambda_{si})\langle \pi_{s-1}(\bv_i),\pi_{s-1}(\bv_s)\rangle$ and similarly for $\Delta_{sj}, \Delta_{ij}$. Then, the first sum of (\ref{eqn:EX_definition}) equals
\[\sqrt{d}\!\!\!\sum_{s<i<j\leq n}\!\!\!\lambda_{sij}\bE[\bu_i(s)]\bE[\bu_j(s)]=d\!\!\!\sum_{s<i<j\leq n}\!\!\!\Big(\frac{\lambda_{si}}{\sqrt{d}}+\Delta_{si}+\widetilde O\big(\frac{n}{d^{3/2}}\big)\Big)\Big(\frac{\lambda_{ij}}{\sqrt{d}}+\Delta_{ij}+\widetilde O\big(\frac{n}{d^{3/2}}\big)\Big)\Big(\frac{\lambda_{sj}}{\sqrt{d}}+\Delta_{sj}+\widetilde O\big(\frac{n}{d^{3/2}}\big)\Big).\]
In the above expression, every parenthesis has a leading term (e.g. $\lambda_{si}/\sqrt{d}$), a correction $\Delta_{si}$ and an error term. Note that, due to property~\ref{item:inner_product} of the event $\Pi_{s-1}$, we have $|\Delta_{si}|\leq \widetilde O(|\lambda_{si}|\sqrt{n}/d)$.

The total contribution of all error terms is at most $\widetilde O(d\frac{n}{d^{3/2}})\cdot \sum_{s<i<j\leq n}\big(|\lambda_{si}\lambda_{sj}|+|\lambda_{si}\lambda_{ij}|+|\lambda_{ij}\lambda_{sj}|\big)/d\leq \widetilde O({n^3p^2}/{d^{3/2}})$, which is acceptable.

Similarly, all the terms coming from the product of at least two of the correction factors $\Delta_{si},\Delta_{sj},\Delta_{ij}$ contribute at most $O(d)\cdot \sum_{s<i<j\leq n} |\lambda_{si}||\lambda_{ij}||\lambda_{sj}|\cdot \widetilde O(\frac{\sqrt{n}}{d}\cdot \frac{\sqrt{n}}{d}\cdot \frac{1}{\sqrt{d}})=\widetilde O(n^3p^3/d^{3/2})$ due to property~\ref{item:triangle_concentration} of the event $\cE$.

Hence, we conclude that \[\sqrt{d}\!\!\!\sum_{s<i<j\leq n}\!\!\!\lambda_{sij}\bE[\bu_i(s)]\bE[\bu_j(s)]=\!\!\!\sum_{s<i<j\leq n}\!\!\!\frac{\lambda_{si}\lambda_{sj}\lambda_{ij}}{\sqrt{d}}+\!\!\!\sum_{s<i<j\leq n}\!\!\! \big(\lambda_{si}\lambda_{sj}\Delta_{ij}+\lambda_{si}\lambda_{ij}\Delta_{sj}+\lambda_{sj}\lambda_{ij}\Delta_{si}\big)+\widetilde O\Big(\frac{n^3p^2}{d^{3/2}}\Big).\]
The latter sum is $A_s$, by definition, which completes the first part of the proof.

In the second part, we evaluate $\sqrt{d}\sum_{s<i<j\leq n}q_{sij}\bE[\bu_i(s)]\bE[\bu_j(s)]$. By using Proposition~\ref{prop:comparing_moments} \ref{item:comparing_first_order}, we get that $\bE[\bu_i(s)]=(1+\widetilde O(\sqrt{n/d}))\frac{\lambda_{si}}{\sqrt{d}}, \bE[\bu_j(s)]=(1+\widetilde O(\sqrt{n/d}))\frac{\lambda_{sj}}{\sqrt{d}}$, and so
\[\sqrt{d}\sum_{s<i<j\leq n}q_{sij}\bE[\bu_i(s)]\bE[\bu_j(s)]=\frac{1}{d}\sum_{s<i<j\leq n} (1+\widetilde O(\sqrt{n/d}))\lambda_{si}\lambda_{sj}\lambda_{ij}(z_p-\lambda_{ij})\sum_{s<r\leq n}\lambda_{ri}\lambda_{rj}.\]
To complete the proof, note that the error term contributed by $\widetilde O(\sqrt{n/d})$ is at most
\[\widetilde O\Big(\frac{\sqrt{n/d}}{d}\Big)\sum_{s<i<j\leq n}|\lambda_{si}\lambda_{sj}\lambda_{ij}|\cdot \Big|\sum_{s<r\leq n}\lambda_{ri}\lambda_{rj}\Big|\leq \widetilde O\Big(\frac{\sqrt{n/d}}{d}\sum_{s<i<j\leq n}|\lambda_{si}\lambda_{sj}\lambda_{ij}|\cdot \sqrt{n} p\Big)\leq \widetilde O\Big(\frac{n^3p^4}{d^{3/2}}\Big),\]
where we have used properties~\ref{item:degree_cancellation} and \ref{item:triangle_concentration} of the event $\cE$ to respectively bound $\Big|\sum_{s<r\leq n}\lambda_{ri}\lambda_{rj}\Big|\leq \widetilde O(\sqrt{n}p)$ and $\sum_{s<i<j\le n}|\lambda_{si}\lambda_{sj}\lambda_{ij}|\le \widetilde{O}(n^2p^3)$.
\end{proof}

\begin{lemma}\label{lemma:final_computation}
We have \begin{align*}
    \frac{1}{2}\Var(X)-\bE[Y]=-\!\!\!\!\!\sum_{s< i<j\leq n}\!\!\!\!\! \frac{\lambda_{si}^2\lambda_{sj}^2\lambda_{ij}^2}{2d}+ \frac{1}{d}\!\!\!\!\!\sum_{\substack{s<r\leq n\\s< i<j\leq n}}\!\!\!\!\! &\lambda_{si}\lambda_{sj}\lambda_{ri}\lambda_{rj}(1+\lambda_{sr}(z_p-\lambda_{sr}))+\widetilde O\Big( \frac{n^3p^2}{d^{3/2}} \Big).
\end{align*}
\end{lemma}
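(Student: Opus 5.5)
Both quantities are explicit polynomial expressions in the independent variables $\bu_i(s)$, so the plan is to compute them directly. The second-moment information comes from Proposition~\ref{prop:comparing_moments}, and the error terms are controlled with the properties of $\cE$.

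\textbf{Computing $\Var(X)$.} Write $X=\sum_{s<i<j\leq n}w_{ij}\bu_i(s)\bu_j(s)$ with $w_{ij}=\sqrt d(\lambda_{sij}+q_{sij})$. Set $\mu_i=\bE\bu_i(s)$ and $\sigma_i^2=\Var(\bu_i(s))$. Since the coordinates are independent, the covariance of $\bu_i\bu_j$ and $\bu_{i'}\bu_{j'}$ vanishes unless the pairs share an index, so
\[\Var(X)=\sum_{s<i<j\leq n}w_{ij}^2\big(\sigma_i^2\sigma_j^2+\sigma_i^2\mu_j^2+\sigma_j^2\mu_i^2\big)+2\sum_{\substack{r\\ i<j,\ i,j\neq r}}w_{ri}w_{rj}\,\sigma_r^2\mu_i\mu_j,\]
where the second sum runs over unordered pairs $\{i,j\}$ sharing the index $r$.

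In the second sum, substitute $w_{ri}\approx\sqrt d\lambda_{ri}$, $\mu_i\approx\lambda_{si}/\sqrt d$ and $\sigma_r^2\approx(1+\lambda_{sr}(z_p-\lambda_{sr}))/d$. This gives, after halving, exactly
\[\frac1d\sum\lambda_{si}\lambda_{sj}\lambda_{ri}\lambda_{rj}\big(1+\lambda_{sr}(z_p-\lambda_{sr})\big),\]
with $r$ the vertex opposite $s$. The relative errors are $\widetilde O(\sqrt{n/d})$, coming from Claim~\ref{claim:error_in_lambda}, the bound on $q_{sij}$, and Proposition~\ref{prop:comparing_moments}. Multiplied by $\frac1d\sum|\lambda_{si}\lambda_{sj}\lambda_{ri}\lambda_{rj}|=\widetilde O(n^3p^2/d)$ (using property~\ref{item:degree_concentration} of $\cE$ twice), they contribute $\widetilde O(n^{7/2}p^2/d^{3/2})$.

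This is too large, so this is the step I expect to be the main obstacle. I would handle it by not taking absolute values too early. First sum over $r$ with the leading factor $\lambda_{ri}\lambda_{rj}$ and use the cancellation in property~\ref{item:degree_cancellation}. Alternatively, keep the explicit first-order corrections $\Delta$ from Proposition~\ref{prop:comparing_moments} and bound them via the inner-product event $\Pi_{s-1}$ combined with the operator-norm bound~\ref{item:eigenvalue_bound}, e.g.\ writing the sum as a quadratic form $\bu^T\Lambda D\Lambda\bu$ with $\|\Lambda\|_{op}=\widetilde O(\sqrt{np})$. The target error is $\widetilde O(n^3p^2/d^{3/2})$.

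\textbf{Diagonal terms against $\bE Y$.} In the first (diagonal) sum, $\frac12\sum w_{ij}^2\sigma_i^2\sigma_j^2$ is of size $\widetilde O(n^2p/d)$ and must cancel with a matching part of $\bE Y$. Expand
\[\bE Y=\frac d2\sum_{s<i<j\leq n}\lambda_{sij}^2\,\bE[\bu_i^2]\,\bE[\bu_j^2],\qquad \bE[\bu_i^2]=\sigma_i^2+\mu_i^2 .\]
This gives
\[\frac12\Var(X)-\bE Y\;\approx\;\frac d2\sum\lambda_{ij}^2\big(\sigma_i^2\sigma_j^2+\sigma_i^2\mu_j^2+\sigma_j^2\mu_i^2-(\sigma_i^2+\mu_i^2)(\sigma_j^2+\mu_j^2)\big)=-\frac d2\sum\lambda_{ij}^2\mu_i^2\mu_j^2,\]
which is $-\frac1{2d}\sum\lambda_{si}^2\lambda_{sj}^2\lambda_{ij}^2$, the triangle term.

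Because the diagonal terms are large, the discrepancy $w_{ij}^2$ versus $d\lambda_{sij}^2$ (i.e.\ the $q_{sij}$ contribution, with relative size $\widetilde O(\sqrt{n/d})$ in general but only $|\sum_r\lambda_{ri}\lambda_{rj}|/\sqrt d=\widetilde O(\sqrt n p/\sqrt d)$ by property~\ref{item:degree_cancellation}) must be checked carefully. Here the relevant weights are $\lambda_{ij}^2\sigma_i^2\sigma_j^2$, whose sum is $\widetilde O(n^2p/d)$, so this contributes $\widetilde O(n^{5/2}p^2/d^{3/2})$. That is acceptable, being dominated by $n^3p^2/d^{3/2}$.

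The variance-type terms in the expansion are identical in $\Var(X)$ and $\bE Y$ with the same coefficient $\lambda_{sij}^2$, so no comparison error arises there. Finally, collect all remaining products of error factors using properties~\ref{item:degree_concentration}--\ref{item:triangle_concentration} of $\cE$, checking that each is $\widetilde O(n^3p^2/d^{3/2})$.
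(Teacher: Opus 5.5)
Your skeleton is the same as the paper's, and the diagonal part is handled correctly. Only index pairs that share a vertex are correlated. The $\sigma_i^2\sigma_j^2$ and $\sigma_i^2\mu_j^2$ diagonal pieces cancel against $\bE Y$, up to the $q_{sij}$ discrepancy of size $\widetilde O(n^{5/2}p^2/d^{3/2})$. What survives, $-\frac d2\sum\lambda_{sij}^2\mu_i^2\mu_j^2$, is the triangle term, and the shared-index covariances produce the four-cycle term.

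\textbf{The gap.} The step you flag and leave open is controlling the $(1+\widetilde O(\sqrt{n/d}))$ relative errors in
\[T'=\sum_{r}\sigma_r^2\sum_{i<j}w_{ri}w_{rj}\mu_i\mu_j .\]
Your first suggestion does not work. It fixes $\{i,j\}$, sums over $r$, and tries to use $|\sum_r\lambda_{ri}\lambda_{rj}|=\widetilde O(\sqrt n\,p)$. But the relative errors of $w_{ri},w_{rj},\sigma_r^2$ depend on $r$, and so does the weight $1+\lambda_{sr}(z_p-\lambda_{sr})$. The inner sum is therefore a weighted codegree, about which property~\ref{item:degree_cancellation} says nothing.

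You are left with $\widetilde O(\sqrt{n/d}\,np^2)$ per pair. After summing $|\mu_i\mu_j|$, the total is $\widetilde O(n^{7/2}p^4/d^{3/2})$. This is also the true size of the naive bound: property~\ref{item:degree_concentration} with $k=2$ at each center gives $\frac1d\sum|\lambda_{si}\lambda_{sj}\lambda_{ri}\lambda_{rj}|=\widetilde O(n^3p^4/d)$, not $\widetilde O(n^3p^2/d)$. This total exceeds $n^3p^2/d^{3/2}$ whenever $p\gg n^{-1/4}$, in particular throughout the range of Theorem~\ref{thm:main}.

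\textbf{The fix.} The cancellation to use is the codegree of $r$ and $s$, not of $i$ and $j$. Fix the center $r$ and complete the square:
\[\sum_{i<j}w_{ri}w_{rj}\mu_i\mu_j=\frac12\Big(\sum_i w_{ri}\mu_i\Big)^2-\frac12\sum_i w_{ri}^2\mu_i^2 ,\qquad \sum_i w_{ri}\mu_i=\sum_i\lambda_{ri}\lambda_{si}+E_r .\]
The main term is $\widetilde O(\sqrt n\,p)$ by property~\ref{item:degree_cancellation} applied to the pair $(r,s)$. The error satisfies $|E_r|\le\widetilde O(\sqrt{n/d})\sum_i|\lambda_{ri}\lambda_{si}|=\widetilde O(n^{3/2}p^2/\sqrt d)$ by property~\ref{item:degree_concentration}, however unstructured the relative errors are.

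Squaring therefore costs $\widetilde O(n^2p^3/\sqrt d)$ per $r$. After weighting by $\sigma_r^2=\widetilde O(1/d)$ and summing over $r$, this is $\widetilde O(n^3p^3/d^{3/2})$. The diagonal piece contributes $\widetilde O(n^{5/2}p^2/d^{3/2})$. So does the relative error of $\sigma_r^2$, which now multiplies $(\sum_i\lambda_{ri}\lambda_{si})^2=\widetilde O(np^2)$. This is exactly the paper's argument.

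Your operator-norm alternative can also be made to work, but only in the form $\|D^{1/2}(\Lambda\lambda_s+E)\|_2^2$. Here $D=\mathrm{diag}(\sigma_r^2)$ and $\lambda_s=(\lambda_{si})_i$ is a deterministic vector, not $\bu$. The bounds $\|\Lambda\lambda_s\|_2\le\|\Lambda\|_{op}\|\lambda_s\|_2=\widetilde O(np)$ and $\|E\|_2=\widetilde O(n^2p^2/\sqrt d)$ then suffice. Tracking the first-order corrections through $\Pi_{s-1}$ is unnecessary. As submitted, neither route is carried out, so the lemma's main estimate is not yet established.
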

\begin{proof}
By a slight abuse of notation, we will write $u_i=\bu_i(s)$, in the hope of simplifying the formulas.

Since $X=\sum_{s<i, j\leq n} u_i M_{ij} u_j$, we can write \[\Var(X)=\sum_{s<i, j\leq n} M_{ij}^2\Var(u_iu_j)+\sum_{\substack{s<i, j\leq n\\s<i', j'\leq n\\(i,j)\neq(i',j')}} M_{ij}M_{i'j'}{\rm Cov}(u_iu_j, u_{i'}u_{j'}).\]

Observe that if $\{i, j\}\cap \{i', j'\}=\varnothing$, then ${\rm Cov}(u_iu_j, u_{i'}u_{j'})=0$, since $u_i, u_j, u_{i'}, u_{j'}$ are independent. On the other hand, ${\rm Cov}(u_iu_j, u_iu_k)=\bE[u_i^2u_ju_k]-\bE[u_iu_j]\bE[u_iu_k]=(\bE[u_i^2]-\bE[u_i]^2)\bE[u_j]\bE[u_k]$. So,
\begin{align}\label{eqn:VarX_expression}
\frac{1}{2}\Var(X)=\frac{d}{2}\!\!\!\!\sum_{s<i< j\leq n} \!\!\!\!(\lambda_{sij}+q_{sij})^2\big(\bE[u_i^2u_j^2]-\bE[u_iu_j]^2\big)+4\!\!\!\!\sum_{\substack{s<i\leq n\\s<j<k\leq n}} \!\!\!\!M_{ij}M_{ik}\Var(u_i)\bE[u_j]\bE[u_k].
\end{align}

On the other hand, we have $\bE[Y]=\frac{d}{2}\sum_{s<i<j\leq n} \lambda_{sij}^2 \bE[u_i^2u_j^2]$. So, writing $S:= \frac{1}{2}\Var(X)-\bE Y$, we get that
\begin{align*}
S =&\frac{d}{2}\!\!\sum_{s<i<j\leq n}
\!\!\!\!\!\!(2\lambda_{sij}q_{sij} + q_{sij}^2)\bE[u_i^2u_j^2]
-\!\!\sum_{\substack{s<i,j\leq n}}\!\!\!\!M_{ij}^2\bE[u_i]^2\bE[u_j]^2+4
\!\!\!\!\!\!\sum_{\substack{s<i\leq n\\s<j<k\leq n}}\!\!\!\!\!\!
M_{ij}M_{ik}\Var(u_i)\bE[u_j]\bE[u_k].
\end{align*}

The first sum is negligible: since $|2\lambda_{sij}+q_{sij}|\leq O(|\lambda_{ij}|)$, $|q_{sij}|\leq \widetilde O(p\sqrt{n/d})$ and $\bE[u_i^2u_j^2]\leq \widetilde O(1/d^2)$, the total contribution of the first sum is at most $\widetilde O(d\cdot \sum_{s<i<j\leq n}|\lambda_{ij}|p\sqrt{n/d}/d^2)=\widetilde O(n^{5/2}p^2/d^{3/2})$, where we have used property~\ref{item:degree_concentration} of the event $\cE$.

The second sum can be estimated as follows. We have $M_{ij}=\frac{\sqrt{d}}{2}(\lambda_{sij}+q_{sij})=(1+\widetilde O(\sqrt{n/d}))\frac{\sqrt{d}}{2}\lambda_{ij}$, since $\lambda_{sij}=(1+\widetilde O(\sqrt{n/d}))\lambda_{ij}$ and $q_{sij}=\widetilde O(|\lambda_{ij}|\sqrt{n/d})$.\hide{can win other lambda} Also, by Proposition~\ref{prop:comparing_moments} \ref{item:comparing_first_order} we have $\bE[u_i]=\frac{\lambda_{si}}{\sqrt{d}}(1+\widetilde O(\sqrt{n/d}))$. So
\[\sum_{s<i,j\leq n}M_{ij}^2\bE[u_i]^2\bE[u_j]^2=(1+\widetilde O(\sqrt{n/d}))\sum_{s<i<j\leq n}\frac{\lambda_{si}^2\lambda_{sj}^2\lambda_{ij}^2}{2d}.\]
Additionally, note that the error term $\widetilde O(\sqrt{n/d})$ can be removed, since the total contribution of this term is at most $\widetilde O(\sqrt{n/d} \sum_{s<i<j\leq n}\lambda_{ij}^2\lambda_{si}^2\lambda_{sj}^2/d)=\widetilde O(n^{5/2}p^{3}/d^{3/2})$, by property~\ref{item:triangle_concentration} of the event $\cE$.

Let us now discuss the last sum of (\ref{eqn:VarX_expression}). If we had $M_{ij}=\frac{\sqrt{d}}{2}\lambda_{ij}, M_{ik}=\frac{\sqrt{d}}{2}\lambda_{ik}$, $\bE[u_j]=\frac{\lambda_{sj}}{\sqrt{d}}, \bE[u_k]=\frac{\lambda_{sk}}{\sqrt{d}}$ and $\Var(u_i)=\frac{1+\lambda_{si}(z_p-\lambda_{si})}{d}$, then the last sum would exactly equal $\frac{1}{d}\sum_{\substack{s<r\leq n\\s<i<j\leq n}}\lambda_{si}\lambda_{sj}\lambda_{ri}\lambda_{rj}(1+\lambda_{sr}(z_p-\lambda_{sr}))$. However, in all of the above expressions, we have a multiplicative error of $1+\widetilde O(\sqrt{n/d})$, and controlling their contribution requires a little bit of work in this situation. Calling this sum $T$, we have
\begin{align*}
    T=4\!\!\!\!\!\sum_{\substack{s<i\leq n\\s<j<k\leq n}}\!\!\!\!\!M_{ij}M_{ik}\Var(u_i)\bE[u_j]\bE[u_k]=&4\sum_{s<i\leq n}\Var(u_i)\sum_{s<j<k\leq n}M_{ij}M_{ik}\bE[u_j]\bE[u_k]\\
    =&2\!\!\sum_{s<i\leq n}\!\!\Var(u_i)\Big( \sum_{s<j\leq n}M_{ij}\bE[u_j]\Big)^2\!-2\!\sum_{s<i\leq n}\!\!\Var(u_i)\sum_{s<j\leq n}M_{ij}^2\bE[u_j]^2.
\end{align*}
For each fixed $i$, we have $\sum_{s<j\leq n}M_{ij}\bE[u_j]=\frac{1}{2}\sum_{s<j\leq n}(1+\widetilde O(\sqrt{n/d}))\lambda_{ij}\lambda_{sj}=\frac{1}{2}\sum_{s<j\leq n}\lambda_{ij}\lambda_{sj}+\widetilde O(n^{3/2}p^2/\sqrt{d})$, where the error term comes from property~\ref{item:degree_concentration} of the event $\cE$, since $\sum_{s<j\leq n}|\lambda_{ij}\lambda_{sj}|\leq \widetilde O(np^2)$. Thus, we have $\big(\sum_{s<j\leq n}M_{ij}\bE[u_j]\big)^2=\frac{1}{4}\big(\sum_{s<j\leq n}\lambda_{ij}\lambda_{sj}\big)^2+\widetilde O(n^{2}p^3/\sqrt{d})$, where we have used that $\big|\sum_{s<j\leq n}\lambda_{ij}\lambda_{sj}\big|\leq \widetilde O(\sqrt{n} p)$ by property~\ref{item:degree_cancellation} of the event $\cE$.

Similarly, $\sum_{s<j\leq n}M_{ij}^2\bE[u_j]^2=\frac{1}{4}(1+\widetilde O(\sqrt{n/d}))\sum_{s<j\leq n}\lambda_{ij}^2\lambda_{sj}^2=\frac{1}{4}\sum_{s<j\leq n}\lambda_{ij}^2\lambda_{sj}^2+\widetilde O(n^{3/2}p^2/\sqrt{d})$, where we have used property~\ref{item:degree_concentration} of the event $\cE$ to bound $\sum_{s<j\leq n}\lambda_{ij}^2\lambda_{sj}^2\leq \widetilde O(np^2)$. Putting all of this together (and using that $p\ge n^{-1/2}$), we get that
\begin{align*}
    T&=\frac{1}{2}\sum_{s<i\leq n}\Var(u_i)\Big(\sum_{s<j\leq n}\lambda_{ij}\lambda_{sj}\Big)^2-\frac{1}{2}\sum_{s<i\leq n}\Var(u_i)\sum_{s<j\leq n}\lambda_{ij}^2\lambda_{sj}^2+\widetilde O\Big(\frac{n^{3}p^3}{d^{3/2}}\Big).
\end{align*}
Since $\Var(u_i)=(1+\widetilde O(\sqrt{n/d}))\frac{1+\lambda_{si}(z_p-\lambda_{si})}{d}$, we can bound the error term induced by $\widetilde O(\sqrt{n/d})$ in each of the sums by $\widetilde O(\sqrt{n/d}\cdot n/d\cdot np^2)=\widetilde O(n^{5/2}p^2/d^{3/2})$ (again using the bounds $\big|\sum_{s<j\leq n}\lambda_{ij}\lambda_{sj}\big|\leq \widetilde O(\sqrt{n}p)$ and $\sum_{s<j\leq n}\lambda_{ij}^2\lambda_{sj}^2\leq \widetilde O(np^2)$ to control the first and second sums, respectively). Hence, we get that
\begin{align*}
    T&=\frac{1}{2}\sum_{s<i\leq n}\frac{1+\lambda_{si}(z_p-\lambda_{si})}{d}\Big(\sum_{s<j\leq n}\lambda_{ij}\lambda_{sj}\Big)^2-\frac{1}{2}\sum_{s<i\leq n}\frac{1+\lambda_{si}(z_p-\lambda_{si})}{d}\sum_{s<j\leq n}\lambda_{ij}^2\lambda_{sj}^2+\widetilde O\Big(\frac{n^{3}p^3}{d^{3/2}}\Big)\\
    &=\frac{1}{d}\sum_{\substack{s<i\leq n\\s<j<k\leq n}}(1+\lambda_{si}(z_p-\lambda_{si}))\lambda_{ij}\lambda_{ik}\lambda_{sj}\lambda_{sk}+\widetilde O\Big(\frac{n^{3}p^3}{d^{3/2}}\Big).\qedhere
\end{align*}
\end{proof}

\begin{proof}[Proof of Lemma~\ref{lemma:S_bound}.]
The first step of the proof is to apply Lemma~\ref{lemma:tilting} to the random variables $X$ and $Y$. To do that, we must first verify that $\bE[e^{2(X-\bE X)}]\leq 2$.

\begin{claim}\label{claim:MGF_X_bound}
We have $\bE[e^{2(X-\bE X)}]\leq 2$.
\end{claim}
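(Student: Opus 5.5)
Since $2(X-\bE X)=2\xi^TM\xi+2\ba^T\xi$ (with $\xi$, $\ba=2M\bE\bu$ as in the proof of Lemma~\ref{lemma:MGF_X}), we can apply Proposition~\ref{prop:lindeberg} directly to the matrix $2M$ and vector $2\ba$. This mirrors the proof of Lemma~\ref{lemma:MGF_X}, except that the deterministic shift $(\bE\bu)^TM\bE\bu$ is not needed. Indeed, $X-\bE X=2\xi^TM\bE\bu+\xi^TM\xi-\bE[\xi^TM\xi]$, and since $M$ has zero diagonal and the $\xi_i$ are independent and centered, $\bE[\xi^TM\xi]=0$. Hence
\[\bE[e^{2(X-\bE X)}]=\bE\big[e^{\xi^T(2M)\xi+(2\ba)^T\xi}\big].\]

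The coordinates $\xi_i=\bu_i(s)-\bE\bu_i(s)$ are bounded by $\widetilde O(1/\sqrt d)$. By Lemma~\ref{lemma:hoeffding} they are therefore subgaussian with variance proxy $\sigma^2=\widetilde O(1/d)$.

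With $2M,2\ba$ in place of $M,\ba$, the parameters change only by constant factors: $\delta_1'\le 2\delta_1$ and $\delta_2'\le 4\delta_2$. Claim~\ref{claim:calculations_norms} already gives $\delta_1\le\widetilde O(\sqrt{np/d}+\sqrt{n^3p}/d)$ and $\delta_2\le\widetilde O(n^2p/d)$. Under the standing hypothesis $d\ge Cn^2p(\log n)^A$, with $A$ large enough to absorb the hidden polylogarithmic factors, both $\delta_1'$ and $\delta_2'$ are $o(1)$. In particular $\delta_1'\le c$, so Proposition~\ref{prop:lindeberg} applies and yields
\[\log\bE[e^{2(X-\bE X)}]=\tfrac12\Var\big(2(X-\bE X)\big)+O(e^{C\delta_2'}\delta_1'\delta_2')=2\Var(X)+o(1).\]

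It remains to show $\Var(X)=o(1)$. Since $\xi^T(2M)\xi+(2\ba)^T\xi=2(X-\bE X)$, we have
\[\Var(X)=\Var(\xi^TM\xi+\ba^T\xi)\le 2\Var(\xi^TM\xi)+2\Var(\ba^T\xi).\]
For the linear part, $\Var(\ba^T\xi)\le\sigma^2\|\ba\|_2^2$. For the quadratic part, only pairs of entries on the same index pair contribute, since $M$ has zero diagonal and the coordinates are independent and centered; this gives $\Var(\xi^TM\xi)\le O(\sigma^4\|M\|_{HS}^2)$. Together these give $\Var(X)=O(\delta_2)=\widetilde O(n^2p/d)=o(1)$. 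Hence $\log\bE[e^{2(X-\bE X)}]=o(1)\le\log 2$ for $n$ large, which proves the claim.

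The main point needing care is that the factor $2$ does not break the hypothesis $\delta_1\le c$ of Proposition~\ref{prop:lindeberg}. This holds because $\delta_1$ is not merely bounded but tends to zero, since $d\gg n^2p\,\polylog(n)$ with room to spare. The rest is routine bookkeeping using estimates already established.
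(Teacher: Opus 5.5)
Your proof is correct and follows the paper's argument. Both apply Proposition~\ref{prop:lindeberg} with $2M$ and $2\ba$ to get $\log\bE[e^{2(X-\bE X)}]=2\Var(X)+O(\delta_1\delta_2)$, and then show $\Var(X)=o(1)$. The one difference is in that last step: the paper bounds $\Var(X)$ using Lemma~\ref{lemma:final_computation} plus a separate estimate of $\bE Y$, whereas you bound it directly as $\Var(\xi^TM\xi+\ba^T\xi)=O(\sigma^4\|M\|_{HS}^2+\sigma^2\|\ba\|_2^2)=O(\delta_2)$ using independence and the zero diagonal, which is simpler and self-contained.
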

\begin{proof}
Using the same argument as in Lemma~\ref{lemma:MGF_X}, we can show that $\bE[e^{2(X-\bE X)}]=\exp(2\Var(X)+O(\delta_1\delta_2))$, where $\delta_1=\max\{\sigma^2\|M\|_{op}, \sigma\|\ba\|_\infty\}$ and $\delta_2=\sigma^4\|M\|_{HS}^2+\sigma^2\|\ba\|_2^2$. By Claim~\ref{claim:calculations_norms}, we have $\delta_1=\widetilde O(\sqrt{np/d})+\widetilde O(\sqrt{n^3p}/d)\ll 1$ and $\delta_2=\widetilde O(n^2p/d)\ll 1$, since $d\gg n^2p$.

Further, by Lemma~\ref{lemma:final_computation}, we have
\[\Var(X)\leq 2\bE Y+ \frac{2}{d}\sum_{s<i\leq n}(1+\lambda_{si}(z_p-\lambda_{si}))\Big(\sum_{s<j\leq n}\lambda_{ij}\lambda_{sj}\Big)^2+\widetilde O\Big(\frac{n^3p^2}{d^{3/2}}\Big)\leq 2\bE Y+ \widetilde O\Big(\frac{n}{d}\cdot np^2\Big)+\widetilde O\Big(\frac{n^3p^2}{d^{3/2}}\Big),\]
where we have used property~\ref{item:degree_cancellation} of the event $\cE$ to bound $\big|\sum_{s<j\leq n}\lambda_{ij}\lambda_{sj}\big|\leq \widetilde O(\sqrt{n}p)$. Additionally, we have $2\bE Y=d\sum_{s<i<j\leq n}\lambda_{sij}^2\bE[\bu_i(s)^2\bu_j(s)^2]\leq \widetilde O(d\cdot \sum_{s<i<j\leq n}\lambda_{ij}^2/d^2)=\widetilde O(n^2p/d)$, by property~\ref{item:degree_concentration} of the event $\cE$. Hence, we get that $\Var(X)\leq \widetilde O(n^2p/d+n^3p^2/d^{3/2})\ll 1$, since $d\gg n^2p$. In particular, we have $\bE[e^{2(X-\bE X)}]=\exp(2\Var(X)+O(\delta_1\delta_2))\leq 2$.
\end{proof}

Applying Lemma~\ref{lemma:tilting} to $X$ and $Y$, and taking the bounds on $\Var(Y)$ and $\bE[Y^4]$ from Claim~\ref{claim:Y_bounds}, we get
\begin{align*}
    \bE[e^{S}]&=\bE[e^{X-Y}]= \bE[e^X]\exp\Big(-\bE[Y]+\widetilde O\Big(\frac{n^{3/2}p}{d}+\frac{n^4p^2}{d^2}\Big)\Big).
\end{align*}
From Lemma~\ref{lemma:MGF_X}, we have $\bE[e^X]=\exp(\bE X+\Var(X)/2+\widetilde O(n^{5/2}p^{3/2}/d^{3/2}+n^{7/2}p^{3/2}/d^2))$. Finally, by Lemmas~\ref{lemma:EX_formula} and \ref{lemma:final_computation}, we can put all of the pieces together to get
\begin{align*}
    \log \bE[e^{S}]&=\bE X+\frac{1}{2}\Var(X)-\bE[Y]+\widetilde O\Big(\frac{n^{3/2}p}{d}+\frac{n^4p^2}{d^2}\Big)+\widetilde O\Big(\frac{n^{5/2}p^{3/2}}{d^{3/2}}+\frac{n^{7/2}p^{3/2}}{d^2}\Big)\\
    &=A_s+\frac{1}{\sqrt{d}}\sum_{s<i<j\leq n}\lambda_{si}\lambda_{sj}\lambda_{ij}+\frac{1}{d}\sum_{\substack{s<r\leq n\\s<i<j\leq n}}\lambda_{si}\lambda_{sj}\lambda_{ri}\lambda_{rj}\lambda_{ij}(z_p-\lambda_{ij})\\
    &\quad-\frac{1}{2d}\sum_{s<i<j\leq n}\lambda_{si}^2\lambda_{sj}^2\lambda_{ij}^2+\frac{1}{d}\sum_{\substack{s<r\leq n\\s<i<j\leq n}}\!\!\!\!\lambda_{si}\lambda_{sj}\lambda_{ri}\lambda_{rj}(1+\lambda_{sr}(z_p-\lambda_{sr}))+\widetilde O\Big(\frac{n^{3/2}p}{d}+\frac{n^{3}p^{2}}{d^{3/2}}+\frac{n^{4}p^{2}}{d^2}\Big),
\end{align*}
where we have aggregated the error terms by using the assumption $p\geq n^{-1}$. This completes the proof of Lemma~\ref{lemma:S_bound}.
\end{proof}

\appendix
\section{Proof of Preliminaries}

\subsection{Deferred proofs from Section~\ref{subsec:gaussian_and_sphere}}

\begin{proof}[Proof of Lemma~\ref{lemma:tail-expansion}.]
The logarithmic derivative of $\Phi$ is $(\log \Phi)'(t)=\frac{\phi(t)}{\Phi(t)}=\lambda(t)$. Since $\lambda'(t)\leq 0$ from Lemma~\ref{lemma:mills_ratio}, we find that $\Phi$ is log-concave, i.e. that for any two $\alpha, \beta\in \bR$ we have
\[\log \Phi (\alpha+\beta)\leq \log \Phi(\alpha)+\beta (\log \Phi)'(\alpha)=\log \Phi(\alpha)+\beta \lambda(\alpha).\]
Exponentiating this relation gives precisely~(\ref{eqn:log_concavity}). On the other hand, using the second-order Taylor expansion combined with the fact that $(\log \Phi)'''(t)=\lambda''(t)\in [0, 2]$, we find that
\[\log \Phi (\alpha+\beta)\leq \log \Phi(\alpha)+\beta (\log \Phi)'(\alpha)+\frac{\beta^2}{2} (\log \Phi)''(\alpha)+\frac{|\beta|^3}{6}\cdot 2\leq \log \Phi(\alpha)+\beta \lambda(\alpha)-\lambda(\alpha)(\alpha+\lambda(\alpha))\frac{\beta^2}{2}+|\beta|^3.\]
Exponentiating this relation gives~(\ref{eqn:second_order_taylor}).
\end{proof}

\begin{proof}[Proof of Lemma~\ref{lemma:mills_ratio}.]
Since both $\phi$ and $\Phi$ are analytic and nonvanishing on $\bR$, so is $\lambda$. Moreover, the statements $-1\leq \lambda'(t)\leq 0$ (for all $t$) and $|t|\leq \lambda(t)\leq |t|+1$ for $t\leq 0$ are standard facts, proven in\footnote{We note that in \cite{Sampford53}, they use slightly different notation. There, they write ``$\nu(x)$'' to denote what is $-\lambda(x)$ in our notation, and ``$\lambda(x)=\nu'(x)$'' which is $-\lambda(x)$ for us.} \cite[Eq.~(3)]{Sampford53} and \cite{Birnbaum42}, respectively. The first derivative is simple to compute
\[\lambda'(t)=\frac{\phi'(t)\Phi(t)-\Phi'(t)\phi(t)}{\Phi(t)^2}=\frac{-t\phi(t)\Phi(t)-\phi(t)^2}{\Phi(t)^2}=-\lambda(t)(t+\lambda(t)).\]
This leaves us with the task of proving $0\leq \lambda''(t)\leq 2$. The lower bound was established by \cite{Sampford53}, so we only focus on the upper one. Note that $\lambda''(t)=-\lambda'(t)(t+\lambda(t))-\lambda(t)(1+\lambda'(t))\leq -\lambda'(t)(t+\lambda(t))$,
since $\lambda(t)\geq 0$ and $|\lambda'(t)|\leq 1$ guarantee that the second term is negative. If $t\leq 0$, we have $|\lambda'(t)|\leq 1$ and $t+\lambda(t)\leq 1$, implying $\lambda''(t)\leq 1$.

If $t\geq 0$, we have $\lambda(t)=\frac{\phi(t)}{\Phi(t)}\leq 2\phi(t)\leq 1$ and so $\lambda''(t)\leq 2\phi(t)(t+1)^2$. A simple exercise in calculus shows that $\max_{t\geq 0} \phi(t)(t+1)^2\leq 1$. To see this, note that the only critical points of $f(t)=\phi(t)(t+1)^2$ are $t=-2, -1, 1$, and furthermore its maximum is attained at $t=1$, equaling $f(1)=4/\sqrt{2\pi e}\leq 1$.
\end{proof}

\begin{proof}[Proof of Lemma~\ref{lemma:expectation_truncated_gaussian}.]
By rescaling, we may assume $\sigma=1$. Now let $Z\sim \cN(0,1)$ be a standard Gaussian. We record the relations $\phi'(z) = -z e^{-z^2/2}/\sqrt{2\pi} = -z\phi(z)$ and $\big(\Phi(z)-z\phi(z)\big)' = z^2\phi(z)$. Thus, for every $t\in \bR$, we get
$\int_{-\infty}^t z\phi(z) \, dz = -\phi(t)$ and $\int_{-\infty}^t z^2\phi(z) \, dz = \Phi(t)-t\phi(t)$.
Consequently,
{\smaller[0.3]
\begin{align*}
\bE\big[X_t^- \big] = \frac{\int_{-\infty}^{t} z\phi(z) \, dz}{\Phi(t)} = \frac{-\phi(t)}{\Phi(t)} = -\lambda(t),\quad \text{ and }\quad
\bE\big[X_t^+ \big] = \frac{\int_t^{\infty} z\phi(z) \,dz}{1-\Phi(t)} = \frac{\phi(t)}{\Phi(-t)} = \lambda(-t).
\end{align*}}
We can compute the second moments in a similar fashion:
{\smaller[0.3]
\begin{align*}
\bE\big[(X_t^-)^2\big] &= \frac{\int_{-\infty}^{t} z^2\phi(z) \, dz}{\Phi(t)} = \frac{\Phi(t)-t\phi(t)}{\Phi(t)}= 1-t\lambda(t),\\
\bE\big[(X_t^+)^2\big] &= \frac{\int_t^{\infty} z^2\phi(z) \,dz}{1-\Phi(t)} = \frac{\Phi(-t)+t\phi(t)}{\Phi(-t)}= 1+t\lambda(-t).
\end{align*}}
Given the two above relations, the variances of $X_t^+$ and $X_t^-$ can be computed from $\Var(X_t^+)=\bE[(X_t^+)^2]-\bE[X_t^+]^2$ and $\Var(X_t^-)=\bE[(X_t^-)^2]-\bE[X_t^-]^2$.
\end{proof}

\begin{proof}[Proof of Lemma~\ref{lemma:sphere_to_gaussian_CDF}.]
The first coordinate of a uniform point on $\mathbb S^{d-1}$ has the probability density function $c_d(1-t^2)^{(d-3)/2}\mathbf 1_{\{|t|< 1\}}$, where $c_d=\Gamma(d/2)/\sqrt{\pi}\Gamma((d-1)/2)$.
Scaling by $\sqrt{d}$ gives the law $\cS_d$ with density function
\[ \psi_d(x)=\frac{1}{\sqrt d}c_d\Big(1-\frac{x^2}{d}\Big)^{(d-3)/2}\mathbf 1_{\{|x|< \sqrt d\}}. \]

We first compare the prefactor with the Gaussian density. By the standard ratio estimate for Gamma functions, ${\Gamma(d/2)}/{\Gamma((d-1)/2)} = \big(1+ O(1/d)\big)\sqrt{\frac{d-1}{2}}$, and hence $c_d/\sqrt{d} = \big(1+ O(1/d)\big)/\sqrt{2\pi}$.

Next, for $|x|< \sqrt{d}$, we use $1-x^2/d\leq e^{-x^2/d}$ to get $\Big(1-\frac{x^2}{d}\Big)^{(d-3)/2} \leq \exp\Big(-\frac{d-3}{2}\frac{x^2}{d}\Big) = e^{-x^2/2}\exp\Big(\frac{3x^2}{2d}\Big)$.
Combining the last two estimates, we obtain $\psi_d(x) \leq \frac{1}{\sqrt{2\pi}}e^{-x^2/2}\exp\Big(O\Big(\frac1d\Big)+\frac{3x^2}{2d}\Big) =\phi(x)\exp\Big(O\Big(\frac{1+x^2}{d}\Big)\Big)$. 
 Similarly, using that $1-u\ge e^{-u-{u^2}/{2(1-u)}}$ we get that $\psi_d(x)\ge \frac{e^{-x^2/2}}{\sqrt{2\pi}}\exp\big(-O\big(\frac{1+x^4}{d}\big)\big)$ for $|x|<(1/2)\sqrt{d}$, as then $\frac{d-3}{2}\frac{(x^2/d)^2}{2(1-x^2/d)}\le \frac{d-3}{2}(x^2/d)^2\le x^4/d$.

We now quickly get the $|\psi_d(t)-\phi(t)|\le O(\frac{1+t^4}{d})\phi(t)$. Indeed, if $|t|>\sqrt{d}$, we have the trivial upper bound $\phi(t)$. Otherwise, we have $t^2\le d$, and thus have that $\psi_d(t)\le \exp(\frac{3t^2}{2d})\phi(t)\le (1+O(t^2/d))\phi(t)$, which handles when $\psi_d>\phi$. Lastly, when $\phi>\psi_d$, the error is at most $\phi(t)$, so we may in fact assume $|t|<d^{1/4}\le \sqrt{d}/2$. Thus, using the general bound $\exp(-x)\ge 1-x$, we $\phi(t)-\psi_d(t)\le O(\frac{1+t^4}{d})\phi(t)$ for $|t|<\sqrt{d}/2$.

\hide{Assuming $d\ge 4$, write $\alpha = \sqrt{1-3/d}\ge 1-3/d$, and note $1/\alpha \le 1+100/d =\exp(O(1/d))$. We explicitly showed that $g_d(y)\le \exp(O(1/d))\phi(\alpha y)$ for all $y$, and thus have for $|x|\le \sqrt{d}$ that
\[\Pb[Y_d\le x] = \int_{-\infty}^{x} g_d(y)\,dy \le\exp(O(1/d)) \frac{1}{\alpha}\int_{-\infty}^{\alpha x} \phi(y)\,dy =\exp(O(1/d))\Phi(\alpha x)\le  \exp(O(1/d))(\Phi(x) + (1-\alpha)|x|\phi(\alpha x))\]\[ = \Phi(x)(1+O(1/d))+O(|x|\phi(\alpha x)/d).\]This gives the additive error version, as $\sup_x \{\Phi(x)+|x|\phi(\alpha x)\} = O(1)$. By symmetry, we have $\Pb[Y_d\ge x] = \Pb[Y_d\le-x] \le \Phi(-x)+O(1/d)$, and since $\Pb[Y_d\le x]+\Pb[ Y_d\ge x]=1$ it follows that $\Psi_d(x)\ge \Phi(x)-O(1/d)$ also holds.}

We now record that \[|\Psi_d(t)-\Phi(t)|\le \int_{-\infty}^t |\psi_d(u)-\phi(u)|\,du\le O\Big(\frac{1}{d}\Big)\int_{-\infty}^t(1+u^4)\phi(u)\,du,\]meaning we are left to show $I(t):=\int_{-\infty}^t(1+u^4)\phi(u)\,du\le O(1+t^4)\Phi(t)$. We now fix the large constant $C=1000$. If $t\ge -C$, then $\Phi(t)\ge \Phi(-C)= \Omega(1)$, while $I(t)\le \int_{-\infty}^\infty (1+u^4)\phi(u)\,du = O(1) = O(\Phi(t))$.

It remains to handle when $t\le -C$. For any such $t$, the change of variables $w=2u$ gives \[I(2t)-I(4t) = 2\int_{2t}^{t}(1+(2w)^4)\phi(2w)\,dw\le (I(t)-I(2t))\max_{w\in [t,2t]}\Big\{\frac{2(1+(2w)^4)\exp(-2w^2)}{(1+w^4)\exp(-w^2/2)}\Big\}\]\[\le (I(t)-I(2t))2^5\exp(-(3/2)t^2)\le (I(t)-I(2t))/2. \]Whence, we get $I(t)= \sum_{i=0}^\infty (I(2^it)-I(2^{i+1}t))\le (I(t)-I(2t))\sum_{i=0}^{\infty}2^{-i}\le 2\int_{2t}^t(1+u^4)\phi(u)\,du\le (2+2^5t^4)\int_{2t}^t\phi(u)\,du\le O((1+t^4)\Phi(t))$.
\end{proof}

\begin{proof}[Proof of Lemma~\ref{lemma:quantile_asymptotics}]
Observe that $z_p\to \infty$ as $p\to 0$. From Lemma~\ref{lemma:mills_ratio}, we have $z_p\leq \frac{\phi(-z_p)}{\Phi(-z_p)}$, which can be rewritten as $\phi(-z_p)\geq z_p\Phi(-z_p)=pz_p$. So, $e^{-z_p^2/2}\geq \sqrt{2\pi}pz_p$, and by taking logarithms and square roots, we arrive at $z_p^2\leq 2\log p^{-1}-\log(2\pi)-\log z_p$, showing that $z_p$ is at most logarithmic in $p^{-1}$. So, $z_p^2\leq (2+o(1))\log p^{-1}$. On the other hand, $\phi(z_p)\leq (z_p+1)\Phi(-z_p)=(z_p+1)p$, thus giving $z_p^2\geq 2\log p^{-1}-\log(2\pi)-\log (z_p+1)$. Recalling that $z_p\leq O(\log p^{-1})$ gives $z_p=(1+o(1))\sqrt{2\log p^{-1}}$.

To prove the second statement, note that $c_{p, k}\leq \log k$, as otherwise Lemma~\ref{lemma:sphere_to_gaussian_CDF} would give $\Psi_k(-c_{p, k})=\Psi_k(-\log k)\leq O(\Phi(-\log k))\ll 1/k\leq p$, contradicting the definition of $c_{p, k}$. Hence, by invoking Lemma~\ref{lemma:sphere_to_gaussian_CDF} once more, we have $p=\Psi_{k}(-c_{p, k})=\big(1+\widetilde O(1/k)\big)\Phi(-c_{p, k})$,
and so $\Phi(-c_{p, k})=p(1+\widetilde O(1/k))$. We conclude that that \[|\log \Phi(-c_{p, k})-\log \Phi(-z_p)|=|\log p+\log (1+\widetilde O(1/k))-\log p|=\widetilde O(1/k).\]
Consider now the function $f(t)=\log \Phi(t)$ and note that for $t\geq 0$ its derivative satisfies $f'(-t)=\lambda(-t)\geq t$, due to Lemma~\ref{lemma:mills_ratio}. In particular, on the interval $[-2z_p, -z_p/2]$, we have $f'(t)\geq \widetilde\Omega(1)$. This shows that if $|f(-c_{p, k})-f(-z_p)|\leq \widetilde O(1/k)$, then we have $c_{p, k}=z_p+\widetilde O(1/k)$.
\end{proof}

\begin{proof}[Proof of Lemma~\ref{lemma:sphere_coupling_moments}.]
If we define $U=\sqrt{m}X$, $V=\sqrt{m} Y$, then it suffices to prove that $|\bE[U^k]-\bE[V^k]|\leq O_k\big((1+|t|^{k+4})/m\big)$ for $k=1,2$. The moments of $U, V$ can be expressed as follows
\[\bE[U^k]=\frac{I_1}{1-\Psi_m(t)}, \quad \bE[V^k]=\frac{I_2}{1-\Phi(t)}, \quad \text{ where }\quad I_1=\int_t^{\infty}u^k \psi_m(u)\,du,\quad  I_2=\int_t^\infty u^k\phi(u)\,du.\]
By Lemma~\ref{lemma:sphere_to_gaussian_CDF}, we have $1-\Psi_m(t)=\Psi_m(-t)=\Phi(-t)\big(1\pm O\big(\frac{1+t^4}{m}\big)\big)=(1-\Phi(t))\big(1\pm O\big(\frac{1+t^4}{m}\big)\big)$. Assuming $|t|<m^{1/8}$, this error is bounded away from $1$, whence $\frac{1}{1-\Psi_m(t)}= \frac{1}{1-\Phi(t)}\Big(1+O\Big(\frac{1+t^4}{m}\Big)\Big)$. Meanwhile, for all $u\in \bR$ we have that $|\phi(u)-\psi_m(u)|\le O\big(\frac{1+u^4}{m}\phi(u)\big)$. Thus,
\[\Big|\int_t^{\infty}u^k \psi_m(u)\,du-\int_t^\infty u^k\phi(u)\,du\Big|\leq \int_t^\infty |u|^k\phi(u)O\Big(\frac{1+u^4}{m}\Big)\,du.\]Suppose that we also have $E_1:=\int_t^{\infty}|u|^k(1+u^4)\phi(u)\,du\leq O_k\big((1+|t|^{k+4})(1-\Phi(t))\big)$.

Then, noting $|I_1-I_2|\le E_1/m$, we can finish the proof as follows
\begin{align*}
\big|\bE[U^k]-\bE[V^k]\big|=\Big|\frac{I_1}{1-\Phi(t)}\Big(1+O\Big(\frac{1+t^4}{m}\Big)\Big)-\frac{I_2}{1-\Phi(t)}\Big|&\leq \Big|\frac{I_2+E_1/m}{1-\Phi(t)}\Big(1+O\Big(\frac{1+t^4}{m}\Big)\Big)- \frac{I_2}{1-\Phi(t)}\Big|\\
&\leq \Big| O\Big(\frac{1+|t|^{k+4}}{m}\Big)+O\Big(\frac{1+t^4}{m}\Big)\frac{I_2}{1-\Phi(t)}\Big|.
\end{align*} Note that we used the assumption $|t|<m^{1/8}$ in the last line so that $\Big(1+O\Big(\frac{1+t^4}{m}\Big)\Big)\frac{E_1}{1-\Phi(t)} = O\Big(\frac{E_1}{1-\Phi(t)}\Big)$.
Note that the first error term was precisely what we were looking to get, while the second one equals $O\big((1+t^4)|\bE[V^k]|/m\big)$. Thus, it now suffices to show that $E_2:=\int_t^\infty (1+t^4)|u|^k\phi(u)\,du = O((1+|t|^{k+4})(1-\Phi(t))$. If $t<0$, then $1-\Phi(t)\ge 1/2$, and then $E_2/(1-\Phi(t))\le 2(1+t^4)\int_{-\infty}^\infty |u|^k\phi(u)\,du=O_k(1+t^4)$. Otherwise, by a pointwise comparison of integrands, we have that $E_2< E_1 =O_k((1+|t|^{k+4})(1-\Phi(t))$, giving what we desire (modulo our bound on $E_1$).

It remains to justify that $E_1=\int_t^\infty |u|^k\phi(u)(1+u^4)\,du\leq O_k\big((1+|t|^{k+4})(1-\Phi(t))\big)$, which amounts to proving that $\bE\big[|V|^k\big], \bE\big[|V|^{k+4}\big]\leq O_k\big(1+|t|^{k+4}\big)$. For $t\leq 0$, this is clear since $\Phi(t)\leq 1/2$ implies that $\bE\big[|V|^k\big], \bE\big[|V|^{k+4}\big]\leq O_k\big(\int_{-\infty}^{\infty}(1+|u|^{k+4})\phi(u)du\big)\leq O(1)$. In fact, the same argument works for any $t=O_k(1)$. For any $t\geq C_k$, we have that $2^{k}t^{k}(1-\Phi(t))\ge \int_t^{2t}|u|^k\phi(u)\,du\ge \frac{e^{(3/2)t^2}}{2\cdot 4^k}\int_{2t}^{4t}|u|^k\phi(u)\,du\ge 2\int_{2t}^{4t}|u|^k\phi(u)\,du$. Whence summing a geometric series we get $\int_t^\infty|u|^k\phi(u)\,du\le \sum_{i=0}^\infty 2^{-i}\cdot \int_t^{2t} |u|^k\phi(u)\,du\le 2^{k+1}t^k(1-\Phi(t)) =O_k(t^k(1-\Phi(t))$ as desired. Clearly the same analysis shows $\int_t^\infty |u|^{k+4}\phi(u)\,du = O_k(t^{k+4}(1-\Phi(t)))$, as required. This completes the proof.
\end{proof}

\subsection{Deferred proofs from Section~\ref{subsec:events}}

\begin{proof}[Proof of Lemma~\ref{lemma:geometric_graph_event_E}.]
It suffices to prove each of the four properties holds for $G\sim G(n, d, p)$ with high probability. Thus, we tackle one property at the time.
\medskip

\noindent\textbf{Proof of \ref{item:degree_concentration}.}
When $s_k i$ is an edge, then $|A_{s_ji}-p|= 1-p$, and otherwise we have $|A_{s_ji}-p|=p$. Thus, we can write
\[\sum_{1\leq i\leq n} |A_{s_1i}-p|\cdots |A_{s_ki}-p|=\sum_{J\subseteq [k]} (1-p)^{|J|}p^{k-|J|}\Big|\bigcap_{i\in J} N(s_i)\cap \bigcap_{i\notin J} N(s_i)^c\Big|.
\]
Hence, to control the left-hand side, it suffices to control the size of the intersection of neighborhoods and their complements. To do this, we will actually show a more general statement, so that we can also use it to prove \ref{item:degree_cancellation}.

\medskip

We will show that $G$ satisfies the following property with high probability: for any distinct $s_1, \dots, s_m$, $t_1, \dots, t_\ell\in [n]$ with $m+\ell\leq 2$ and any interval $I\subseteq [n]$, we have \begin{equation}\label{eqn:neighborhood_intersection}
    \Big|\bigcap_{i=1}^m N(s_i)\cap \bigcap_{j=1}^\ell N(t_j)^c\cap I\Big|=p^m(1-p)^\ell |I|+O\Big(\sqrt{p^m(1-p)^\ell n \log n}\Big).
\end{equation}
In particular, if we fix $s_1, \dots, s_m$ and $t_1, \dots, t_\ell$ and $I$, it suffices to show that (\ref{eqn:neighborhood_intersection}) holds with probability at least $1-4n^{-5}$, since one can then take the union bound over at most $n^{m+\ell} \cdot n^2\leq n^{4}$ choices of $s_1, \dots, s_m, t_1, \dots, t_\ell$ and $I$.

To see this, we proceed in two steps. Firstly, if we denote by $\bu_1, \dots, \bu_m$ the vectors corresponding to $s_1, \dots, s_m$ and by $\bw_1, \dots, \bw_\ell$ the vectors corresponding to $t_1, \dots, t_\ell$, we estimate the probability that a randomly sampled point $\bx\in \bS^d$ belongs to $\bigcap_{i=1}^m N(s_i)\cap \bigcap_{i=1}^\ell N(t_j)^c$, i.e. satisfies $\langle \bu_i, x\rangle \geq c_{p, d}/\sqrt{d}$ for all $i\in [m]$ and $\langle \bw_j, x\rangle < c_{p, d}/\sqrt{d}$ for all $j\in [\ell]$. A very convenient tool for doing this was developed in \cite{LMSY22} (Lemma 5.1). Here, we give its simplified statement, in the regime where $m$ and $\ell$ are constant, and $p\geq n^{-1/2}$.

\begin{lemma}
Let $\bu_1, \dots, \bu_m, \bw_1, \dots, \bw_\ell$ be sampled uniformly at random from $\bS^{d-1}$, and let \[L=\{x:\langle \bu_i, x\rangle \geq c_{p, d}/\sqrt{d} \text{ for all } i\in [m] \text{ and } \langle \bw_j, x\rangle < c_{p, d}/\sqrt{d} \text{ for all } j\in [\ell]\}.\] If $\mu=\Pb[\bx\in L]$ is the probability that a uniformly sampled point $\bx\in \bS^{d-1}$ lands in $L$, then there is an absolute constant $c>0$ such that for every $\eps>0$ we have
\[\Pb\Big[\big|\frac{\mu}{p^m(1-p)^\ell}-1\big|\geq \eps \Big]\leq 2\exp\left(-\frac{cd\eps^2}{(\log d)^2\log p^{-1}}\right)+O\bigg(p^{-2}\exp\Big(-\frac{cd}{(\log d)^2\log p^{-1}}\Big)\bigg).\]
\end{lemma}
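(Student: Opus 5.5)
We would prove the lemma by reducing $\mu$ to a smooth function of the Gram matrix of $\bu_1,\dots,\bu_m,\bw_1,\dots,\bw_\ell$, and then using concentration of the off-diagonal Gram entries. Throughout, $k=m+\ell$ is a constant. The estimates below are only meaningful when $\eps\gtrsim(\log d)/\sqrt d$; for smaller $\eps$ the right-hand side is at least $1$ and there is nothing to prove.

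\textbf{Step 1: reduce to the Gram matrix.} By rotational invariance, $\mu$ depends only on the Gram matrix $G$, whose entries are $\langle\bu_i,\bu_j\rangle$, $\langle\bu_i,\bw_j\rangle$, $\langle\bw_i,\bw_j\rangle$, with unit diagonal. Fix the vectors and let $\bx$ be uniform on $\bS^{d-1}$. Let $Y=\sqrt d(\langle \bu_1,\bx\rangle,\dots,\langle \bw_\ell,\bx\rangle)$, so that $\mu=\Pb[Y_i\ge c_{p,d}\ (i\le m),\ Y_j<c_{p,d}\ (j>m)]$. Write $Y=\sqrt d\,B^T\bx$, where $B$ has the vectors as columns and $B^TB=G$. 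Then $\sqrt d\,\pi(\bx)$, the rescaled projection of $\bx$ onto the $k$-dimensional span, has density within a factor $1+O((1+|y|^4)/d)$ of a standard Gaussian on the region $|y|\le d^{1/8}$. This is the $k$-dimensional analogue of Lemma~\ref{lemma:sphere_to_gaussian_CDF}, proved the same way from the explicit density $\propto(1-|y|^2/d)^{(d-k-2)/2}$. Combining this with $|c_{p,d}-z_p|=\widetilde O(1/d)$ from Lemma~\ref{lemma:quantile_asymptotics} gives
\[\mu=\big(1+\widetilde O(1/d)\big)\,P(G),\]
where $P(G)=\Pb[Z_i\ge z_p\ (i\le m),\ Z_j<z_p\ (j>m)]$ and $Z\sim\cN(0,G)$. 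The boundary region $|y|>d^{1/8}$ contributes an error that is negligible relative to $p^m(1-p)^\ell\ge p^{k}$.

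\textbf{Step 2: Lipschitz bound in the correlations.} We aim to show that whenever $\max_{a\ne b}|G_{ab}|\le \rho_0$, for a small absolute constant $\rho_0$,
\[\Big|\frac{P(G)}{p^m(1-p)^\ell}-1\Big|\le C\log(p^{-1})\max_{a\ne b}|G_{ab}|.\]
At $G=I$ the left side vanishes. Along the segment from $I$ to $G$ we use Plackett's identity: $\partial P/\partial G_{ab}$ equals, up to sign, the bivariate density of $(Z_a,Z_b)$ at $(z_p,z_p)$ times the conditional probability that the remaining constraints hold. For small correlations this is at most $(1+O(\rho_0 z_p^2))\,\phi(z_p)^2$ times a product of roughly $p$ or $1-p$ factors, by the same truncated-Gaussian computation as in Lemma~\ref{lemma:expectation_truncated_gaussian}. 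Since $\phi(z_p)\le (z_p+1)p$ and $\phi(z_p)\le O(1)(1-p)$ (note $p\le 1/3$), the relative derivative is $O(z_p^2)=O(\log p^{-1})$. We then integrate along the segment.

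\textbf{Step 3: concentration of the Gram entries.} For fixed $a\ne b$ we have $\Pb[|G_{ab}|\ge t]\le 2e^{-cdt^2}$. Take $t=\eps/(C'\log p^{-1})$ with $C'$ large enough that the Step~2 error plus the $\widetilde O(1/d)$ error from Step~1 is below $\eps$. A union bound over the $\binom k2=O(1)$ pairs then gives the first term $2\exp(-cd\eps^2/((\log d)^2\log p^{-1}))$; we only need $(\log p^{-1})^2\le (\log d)^2\log p^{-1}$, which holds comfortably. The second term covers the remaining failure modes: correlations exceeding $\rho_0$, and the exceptional event on which the Step~1 comparison is not uniform, which we handle by truncating at $|G_{ab}|\le(\log d)/\sqrt d$. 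Both have probability $e^{-cd/(\log d)^2}$, and the factor $p^{-2}$ absorbs the crude loss from dividing by $p^{m}$ in the small regime.

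\textbf{Main obstacle.} The delicate part is Step~2. The bound must be \emph{relative}, accurate to a multiplicative $1+O(\log(p^{-1})\rho)$ even though $P(G)$ is as small as $p^m$. A naive bound on the Gaussian density perturbation loses $e^{O(\rho z_p^2)}$ uniformly, which suffices only while $\rho z_p^2\lesssim 1$. We would control this using log-concavity of the orthant indicator together with Lemma~\ref{lemma:mills_ratio}, exactly as the $\lambda(\cdot)$ estimates are used in Lemma~\ref{lemma:vertex_s}.
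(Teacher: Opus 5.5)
The paper gives no proof of this lemma. It quotes it from Lemma~5.1 of \cite{LMSY22}, specialised to constant $m,\ell$ and $p\ge n^{-1/2}$, so that $d\ge n$ and $\log p^{-1}\le\log d$. Your route is a self-contained alternative: reduce to the Gram matrix, compare the $k$-dimensional sphere marginal with a Gaussian, use Plackett's identity, and concentrate $\langle\bu_a,\bu_b\rangle$. In this regime it even gives the stronger exponent $d\eps^2/(\log p^{-1})^2$. As written, however, two of your claims are false and must be removed.

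\textbf{Step~2.} The claim is false on the range $\max_{a\ne b}|G_{ab}|\le\rho_0$ for an absolute constant $\rho_0$. Take $m=2$, $\ell=0$ and correlation $\rho>0$. Then $P(G)/p^2\asymp\exp\big(\rho z_p^2/(1+\rho)\big)$, which is of order $p^{-2\rho/(1+\rho)}$. This is not $1+O(\rho\log p^{-1})$ once $\rho\log p^{-1}\gg1$, so the ``main obstacle'' you describe cannot be overcome by log-concavity or anything else.

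It is also not needed. The event is monotone in $\eps$. For $\eps>1$, the bound at $\eps=1$ suffices after absorbing $2e^{-cd/((\log d)^2\log p^{-1})}$ into the $O(p^{-2}\cdots)$ term. For $\eps\le1$, your choice $t=\eps/(C'\log p^{-1})$ gives $tz_p^2=O(1/C')$, which is exactly where the naive estimate works. There, the bivariate density at $(z_p,z_p)$ and the conditional probabilities of the remaining constraints stay within a factor $e^{O(tz_p^2)}=O(1)$ of their values at $G=I$. Plackett's identity then gives relative Lipschitz constant $O(z_p^2)$ along the segment from $I$ to $G$.

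\textbf{The truncation in Step~3.} The event $|G_{ab}|\le(\log d)/\sqrt d$ fails with probability $e^{-\Theta((\log d)^2)}$, not $e^{-cd/(\log d)^2}$. That failure probability is not covered by the right-hand side, which is already far smaller at $\eps=1/2$. Drop the truncation. Step~1 holds uniformly on $\{\max_{a\ne b}|G_{ab}|<t\}$, because there the conditional fourth moment of the rescaled projection on the constraint region is $O(z_p^4)$. This gives relative error $O((1+z_p^4)/d)$.

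With both changes, for $(\log d)/\sqrt d\le\eps\le1$ the estimate holds deterministically on $\{\max|G_{ab}|<t\}$. The union bound then finishes the proof, and the second term is used only for $\eps>1$.

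\textbf{The regime assumption.} Your remark that $(\log p^{-1})^2\le(\log d)^2\log p^{-1}$ ``holds comfortably'' is really the hypothesis $\log p^{-1}\le(\log d)^2$. It holds here since $p\ge n^{-1/2}$ and $d\ge n$, and it is essential rather than cosmetic. Your own linearisation $\mu/p^2-1\approx z_p^2\langle\bu_1,\bu_2\rangle$ shows the true tail is $\exp(-\Theta(d\eps^2/(\log p^{-1})^2))$. So the displayed bound, with $\log p^{-1}$ to the first power, would fail for $(\log d)^2\ll\log p^{-1}\ll\sqrt d/\log d$.
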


Note that the probability in the above lemma is taken over the choice of $\bu_1, \dots, \bu_m, \bw_1, \dots, \bw_\ell$. The second term of the above equation is much smaller than $n^{-5}$, since $d\geq n^2p(\log n)^A\gg n$. Next, setting $\eps={1}/{\sqrt{p^m (1-p)^\ell n\log n}}$, the first term of the above inequality simplifies to
\[\Pb\Big[\big|\frac{\mu}{p^m(1-p)^\ell}-1\big|\geq \eps \Big]\leq 2\exp\left(-\frac{c'd}{p^m(1-p)^\ell n(\log n)^3\log p^{-1}}\right)+O\Big(p^{-2}e^{-{c'n}/{(\log n)^3}}\Big).\]
Since $d\geq Cn^2p(\log n)^A$ for a large constant $C$, we have that the first exponential is also bounded by $n^{-5}$, showing that with probability at least $1-2n^{-5}$, we have $\mu=p^m(1-p)^\ell+O\big(\sqrt{p^m(1-p)^\ell/(n\log n)}\big)$. 

Let us now pass to the second step of our argument. Observe that for fixed $s_1, \dots, t_\ell$ and $I$, the size $X=\Big|\bigcap_{i=1}^m N(s_i)\cap \bigcap_{i=1}^\ell N(t_j)^c\cap I\Big|$ is distributed as a Binomial random variable $B(|I\setminus \{s_1,\dots,t_\ell\}|, \mu)$, and therefore it has expectation $\bE[X]=\mu |I|\pm O(1)$. By Chernoff bound (see e.g. Theorem 2.1 in \cite{JLR}), we have $\Pb[|X-\bE X|\geq t]\leq 2\exp\big(-t^2 /2(\bE X+t/3)\big)$. For $t=10\sqrt{p^m(1-p)^\ell n \log n}$, it is not hard to check that $t^2/2(\bE X+t/3)\geq 5\log n$ - indeed, if $t\leq \bE X$, then $\frac{100p^m(1-p)^\ell n\log n}{4 \mu |I|}\geq 5\log n$ and if $t\geq \bE X$ we have $t/4\geq 5 p^{m/2} \sqrt{n\log n}\geq 5\log n$, since $p\geq \sqrt{\log n/n
}$ and $m\leq 2$. All in all, we find that \[\Pb\Big[|X-\bE X|\geq 10\sqrt{p^m(1-p)^\ell n \log n}\Big]\leq 2\exp\big(-5\log n\big)\leq 2n^{-5}.\]

Since $\bE X=\mu |I|\pm O(1)=p^m(1-p)^\ell |I|+O\big(\sqrt{p^m(1-p)^\ell n \log n}\big)$, we conclude that with probability at least $1-4n^{-5}$ the estimate (\ref{eqn:neighborhood_intersection}) holds. Hence, a straightforward union bound shows that with high probability all $(m+\ell)$-tuples $s_1, \dots, t_\ell$ and all intervals $I$ satisfy this estimate.

\medskip

We now deduce \ref{item:degree_concentration} from (\ref{eqn:neighborhood_intersection}) deterministically. Fix $1\leq k\leq 2$ and distinct vertices $s_1,\dots,s_k\in [n]$. For every $J\subseteq [k]$, let $X_J=\bigcap_{j\in J}N(s_j)\cap\bigcap_{j\notin J} N^c(s_j)$. Applying (\ref{eqn:neighborhood_intersection}) with $I=[n]$ gives $|X_J|=O(p^{|J|}(1-p)^{k-|J|}n)$. So,
\[\sum_{i=1}^n |A_{s_1i}-p|\cdots |A_{s_ki}-p|=\sum_{J\subseteq [k]} |X_J| p^{k-|J|}(1-p)^{|J|}\leq \sum_{J\subseteq [k]} O(p^k n)\le O(p^k n), \]
where the last inequality follows since $k$ is a constant.

\medskip
\noindent\textbf{Proof of \ref{item:degree_cancellation}.}
We apply the same strategy as in \ref{item:degree_concentration}, but we now keep track of the error terms produced by (\ref{eqn:neighborhood_intersection}) more carefully. Namely, we set $I=[s+1, \dots, n]$ and note that every $i\in X_J$ now contributes $(1-p)^{|J|}p^{k-|J|}$ to the sum. Hence, we can write
\begin{align*}
\Big|\sum_{i\in I} (A_{s_1i}-p)\cdots (A_{s_ki}-p)\Big|&=\bigg|\sum_{J\subseteq [k]} |X_J|(1-p)^{|J|}(-p)^{k-|J|}\bigg|\\
&\leq \!\bigg|\!\sum_{J\subset [k]}\! (-1)^{k-|J|}|I| p^{k} (1-p)^{k}\!+ O\Big((1-p)^{|J|}p^{k-|J|}\sqrt{p^{|J|}(1-p)^{k-|J|} n \log n}\Big)\bigg|.
\end{align*}
Note that the first term cancels out when summed over all $J\subseteq[k]$ because of the binomial formula, and the second term is bounded by $O(p^{k/2} \sqrt{n\log n})$. This completes the proof of \ref{item:degree_cancellation}.

\medskip
\noindent\textbf{Proof of \ref{item:eigenvalue_bound}.}
This item is an immediate consequence of known facts about the eigenvalues of random geometric graphs. For example, Abdalla, Bandeira and Invernizzi \cite[Lemma~2]{ABI24} showed the following theorem (see also \cite{CZ25} and \cite{LMSY22} for similar bounds on the second eigenvalue of $G(n, d, p)$).

\begin{theorem}
Let $A$ be the adjacency matrix of a random graph $G$ sampled from $G(n,d,p)$. There is an absolute constant $C>0$ such that if $np>(\log n)^2$, and $d\geq C (n^2p^2+\log^4n)\log^4n$, then with probability at least $1-n^{-\Omega(1)}$,
\[ \|A-p J\|_{op}\leq O\left(\sqrt{np}\right).\]
\end{theorem}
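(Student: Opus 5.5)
This statement is quoted from \cite[Lemma~2]{ABI24}, and in the proof of Lemma~\ref{lemma:geometric_graph_event_E} we would simply invoke it. To prove it directly, I would use a harmonic (Gegenbauer) decomposition of the edge indicator combined with the trace method.

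\textbf{Step 1: harmonic decomposition.} Write $A_{ij}-p=f(\langle \bx_i,\bx_j\rangle)$ for $i\neq j$, where $f(t)=\mathbf{1}_{t\geq c_p/\sqrt d}-p$. Expand $f$ in the orthonormal Gegenbauer polynomials $\{q_k\}_{k\geq 1}$ for the law of $\langle \bx,\by\rangle$. There is no constant term, since $\bE f=0$. By Parseval, $\sum_{k\geq 1}\hat f_k^2=\Var(A_{ij})=p(1-p)$. Split $A-pJ=L+Q-pI$, where
\begin{itemize}
\item $L$ collects the degree-one part, $L_{ij}=\hat f_1 q_1(\langle\bx_i,\bx_j\rangle)$ with $q_1(t)=\sqrt d\,t$;
\item $Q$ collects all degrees $k\geq 2$.
\end{itemize}
The term $pI$ is harmless.

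\textbf{Step 2: the linear part.} Here $\hat f_1=\bE[\sqrt d\langle\bx,\by\rangle f]\approx a=\Theta(p\sqrt{\log p^{-1}})$. If $X$ is the $n\times d$ matrix with rows $\bx_i$, then
\[
L=\hat f_1\sqrt d\,(XX^T-\mathrm{diag}(XX^T)).
\]
Standard covariance concentration gives $\|XX^T-I\|_{op}\leq O(\sqrt{n/d}+n/d)$ with high probability. Hence
\[
\|L\|_{op}\leq O\big(p\sqrt{\log p^{-1}}\,(\sqrt n+n/\sqrt d)\big).
\]
This is at most $O(\sqrt{np})$ because $p\log p^{-1}\lesssim 1$ and $d\gtrsim np\log p^{-1}$, which the hypothesis $d\geq Cn^2p^2\log^4 n$ (together with $np>\log^2 n$) comfortably implies.

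\textbf{Step 3: the higher-degree part.} I would bound $\bE\,\mathrm{tr}(Q^{2k})$ for $k\asymp\log n$ and aim for $\bE\,\mathrm{tr}(Q^{2k})\leq n\,(C\sqrt{np})^{2k}$, the same bound one gets for a Wigner matrix with entry variance $p$; Markov's inequality then finishes. Expand the trace as a sum over closed walks $i_1\to\cdots\to i_{2k}\to i_1$. For each walk, bound the expectation of the product of $Q$-entries via the addition formula: integrating out one vertex at a time convolves Gegenbauer kernels, and orthogonality kills every walk in which some edge is traversed only once, up to errors.
\begin{itemize}
\item Walks whose underlying graph is a tree traversed twice contribute at most $\big(\sum_{k\geq2}\hat f_k^2\big)^{k}\leq p^k$ per tree. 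This reproduces the Catalan count and gives $n^{k+1}p^k$.
\item Walks whose underlying graph contains cycles are the genuinely geometric contributions. Entries along a cycle correlate only through terms of size roughly $d^{-1/2}$ per degree, like the signed-triangle correction.
\end{itemize}

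\textbf{Main obstacle.} The hardest part is controlling the cyclic walks in Step 3. Each extra cycle should cost a factor $O(np/\sqrt d)$ or better, relative to the tree term, and this is exactly what the hypothesis $d\gg n^2p^2\,\mathrm{polylog}(n)$ should be used to guarantee. I would first handle this by truncating the expansion at degree $K=O(\log n)$, and treat the tail of high degrees via a crude Frobenius bound, since $\sum_{k>K}\hat f_k^2$ is tiny. For the low degrees I would then use the hypercontractive bound $\|q_k(\langle\bx,\by\rangle)\|_{L^r}\leq (r-1)^{k/2}$ to control non-backtracking cycle products, at the price of $\mathrm{polylog}$ factors. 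These polylog factors are absorbed by the $\log^4 n$ slack in the assumption on $d$.
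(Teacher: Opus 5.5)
Your first sentence is exactly the paper's treatment. The statement is imported verbatim from \cite[Lemma~2]{ABI24} and is never proved here, so within the paper the citation is all that is needed.

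The self-contained argument you then sketch has a genuine gap in Step~3; Step~2, the degree-one part via $XX^T$, is fine. The concrete failing step is the truncation: the tail $\sum_{k>K}\hat f_k^2$ is \emph{not} small for $K=O(\log n)$. Because $f$ is a step function, its harmonic coefficients decay only polynomially. For $z_p^2\ll k\ll\sqrt d$ they are close to the Hermite coefficients $\phi(z_p)\mathrm{He}_{k-1}(z_p)/\sqrt{k!}$, whose squares are on average of order $\phi(z_p)k^{-3/2}$. Hence $\sum_{k>K}\hat f_k^2\asymp \phi(z_p)/\sqrt K$, and the tail beyond any $K=O(\log n)$ is at least of order $p/\sqrt{\log n}$ (a constant fraction of $p$ once $p=n^{-\Omega(1)}$). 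The Frobenius norm of the tail matrix is then at least of order $n\sqrt p/(\log n)^{1/4}$, which overshoots the target $\sqrt{np}$ by roughly a factor $\sqrt n$. To make a Frobenius bound work you would need $\sum_{k>K}\hat f_k^2\lesssim p/n$, which forces $K$ to be polynomial in $n$. In that range the hypercontractive factors $(r-1)^{k/2}$ are nowhere near polylogarithmic. In fact they are already polynomial in $n$ at $k\asymp\log n$ and $r\geq 3$, so the ``polylogarithmic price'' in your last step is not available even for the low-degree part. The high-degree part therefore carries a non-negligible share of the variance and must itself go through the trace method, and your plan gives no way of doing that.

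The other half of Step~3, which you rightly flag as the main obstacle, is only asserted. Let $N_a$ denote the dimension of degree-$a$ harmonics. The identity $\bE_y[q_a(\langle x,y\rangle)q_b(\langle y,w\rangle)]=\delta_{ab}\,q_a(\langle x,w\rangle)/\sqrt{N_a}$ gives a decay of $N_2^{-1/2}\approx d^{-1}$ per vertex essentially only for cycles traversed once. In the trace expansion most edges are traversed several times, and $Q_e^m$ for $m\geq2$ is not orthogonal to low degrees. For example, since $f^2=(1-2p)f+p(1-p)$, the function $Q_e^2$ has a nonzero degree-one component (of order $a$ for small $p$). At a vertex of degree at least three in the walk graph, such pieces can pair with the degree-$\geq 2$ harmonic of a singly traversed edge; for instance $\bE_y[q_2(\langle x,y\rangle)q_1(\langle y,w\rangle)q_1(\langle y,w'\rangle)]$ is not identically zero. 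So these walks are not killed by orthogonality, and they decay only at the degree-one rate $d^{-1/2}$ per vertex. The whole content of the theorem is to show, uniformly over all walk shapes of length $2k\asymp\log n$, that every excess cycle costs $O(np/\sqrt d)$ with only polylogarithmic combinatorial losses. The proposal leaves this as a heuristic.
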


\medskip
\noindent\textbf{Proof of \ref{item:triangle_concentration}.}
Let us rewrite the sum as follows
\begin{align*}
\sum_{s<i<j\leq n}\big|(A_{si}-p)(A_{sj}-p)(A_{ij}-p)\big|
&=\sum_{s<i\leq n}|A_{si}-p|\Big(\sum_{i<j\leq n}|A_{ij}-p||A_{sj}-p|\Big)\\
&\leq \sum_{s<i\leq n}|A_{si}-p|\cdot \max_{s<i\leq n}\sum_{i<j\leq n}|A_{ij}-p||A_{sj}-p|.
\end{align*}
By property \ref{item:degree_concentration}, we have that the latter maximum is bounded by $O(p^2n)$, and the sum over $i$ is bounded by $O(pn)$. By multiplying these two quantities, we get $\sum_{s<i<j\leq n}\big|(A_{si}-p)(A_{sj}-p)(A_{ij}-p)\big|\leq O(p^3 n^2)$.
\end{proof}

\begin{proof}[Proof of Lemma~\ref{lemma:vector_event_F}.]
Even though the vectors $\bv_1, \dots, \bv_n$ come from a special distribution, described by Lemma~\ref{lemma:distribution_v_i}, we note that the events $\cT_s$ and $\Pi_s$ depend only on the first $s$ coordinates of the vectors $\bv_{s+1}, \dots, \bv_n$, and the distribution of these is the same as if they were uniformly sampled on the sphere (see Lemma~\ref{lemma:distribution_v_i}). Hence, for $s<i\leq n$, we will sample the vector $\bv_i$ on the sphere by sampling a $d$-dimensional Gaussian vector $\bg_i\sim N(0, I_d)$ and setting $\bv_i(j)=\bg_i(j)/\|\bg_i\|$ for $j=1, \dots, i-1$ and $\bv_i(i)=\sqrt{1-\sum_{k=1}^{i-1} \bv_i(k)^2}$. Further, we prove all the relevant bounds with $\log n$ instead of $\log d$, noting that this is stronger since $d\geq n$.

An important ingredient of the proof will be the $\chi_k^2$ concentration inequality of Laurent and Massart \cite{LM00}. Recall that the $\chi_k^2$ distribution is defined as the sum of squares of $k$ independent standard Gaussian random variables. This inequality states that if $Y\sim\chi^2_k$ and $t\ge0$, then
\[ \Pb\big[Y-k\ge 2\sqrt{kt}+2t\big]\le e^{-t} \quad\text{and}\quad \Pb\big[d-Y\ge 2\sqrt{kt}\big]\le e^{-t}. \]

By the inequality of Laurent and Massart mentioned above, setting $k=d$ and $t=3\log n$, we find $\Pb\big[\big|\|\bg_i\|^2-d\big|\leq 2\sqrt{3d\log n}+3\log n\big]\leq 2e^{-3\log n}\leq 2/n^{3}$. Hence, with high probability, for all $i\in [n]$ simultaneously, we have $\big|\|\bg_i\|^2-d\big|\leq 4\sqrt{d\log n}$. We will assume that this holds throughout the rest of the proof.

\medskip
\medskip
\noindent\textbf{Proof of \ref{item:entry_size}.}
For $1\leq s<i\leq n$, we have $\Pb\big[|\bg_i(s)|\geq 3\sqrt{\log n}\big]\leq e^{-9(\log n)/2}\leq \frac{1}{n^4}$. Since $v_i(s)=\bg_i(s)/\|\bg_i\|$, we can write
\[
\Pb\Big[|\bv_i(s)|\geq 10\sqrt{\frac{\log n}{d}}\Big]\leq \Pb\Big[|\bg_i(s)|\geq 3\sqrt{\log n}\Big]+\Pb\big[\|\bg_i\|\leq d/2\big]\leq \frac{1}{n^4}+\frac{1}{n^3}\leq O(n^{-3}),
\]
A simple union bound over at most $n^2$ pairs $(s, i)$ gives the desired bound on $|\bv_i(s)|$.

\medskip
\noindent\textbf{Proof of \ref{item:norm_concentration}.}
Fix $1\leq s<i\leq n$ and let us discuss the first inequality of $\Pi_s$. Then $\|\pi_s(\bg_i)\|_2^2\sim \chi_s^2$, so the chi-square inequality with $t=3\log n$ gives
\[\Pb\Big[\big|\|\pi_s(\bg_i)\|_2^2-s\big|\geq 2\sqrt{3s\log n}+3\log n\Big]\leq 2e^{-3\log n}\leq 2n^{-3}.\]
A union bound over $\leq n^2$ pairs $1\leq s<i\leq n$ shows that with high probability we have $\|\pi_s(\bg_i)\|_2^2\in (s-4\sqrt{n\log n}, s+4\sqrt{n\log n})$ for all $1\leq s<i\leq n$. If this holds for all $i$ and $s$, combined with $\|\bg_i\|^2\in (d-4\sqrt{d\log n}, d+4\sqrt{d\log n})$ we get
\[\Big|d\|\pi_s(\bg_i)\|_2^2-s\|\bg_i\|^2\Big|\leq \Big| 4d\sqrt{n\log n}+4s\sqrt{d\log n}\Big|\leq 8d\sqrt{n\log n}.\]
Dividing both sides by $d\|\bg_i\|^2$ and observing that $\|\bg_i\|^2\geq 8d/10$ gives:
\[\Big|\|\pi_s(\bv_i)\|_2^2-\frac{s}{d}\Big|\leq \frac{8d\sqrt{n\log n}}{d\|\bg_i\|^2}\leq 10\frac{\sqrt{n\log n}}{d}.\]

\medskip
Let us now discuss the second inequality of $\Pi_s$. Conditionally on $\bg_i$, the inner product $\langle \pi_s(\bg_i),\pi_s(\bg_j)\rangle=\sum_{r=1}^s \bg_i(r)\bg_j(r)$ is a centered Gaussian with variance $\|\pi_s(\bg_i)\|_2^2$. In the above paragraph, we have shown that with high probability, $\|\pi_s(\bg_i)\|_2^2\leq s+4\sqrt{n\log n}\leq 2n$ for all $i, s$. Hence, the standard Gaussian tail estimate gives
\[\Pb\big[|\langle \pi_s(\bg_i),\pi_s(\bg_j)\rangle|\geq 3\sqrt{2n\log n}\,\big|\, \bg_i\big]\leq 2\exp(-9(\log n)/2)\leq n^{-4}.\]
A union bound over the $O(n^3)$ triples $(s, i, j)$ shows that with high probability, $\big|\langle \pi_s(\bg_i),\pi_s(\bg_j)\rangle\big|\leq 5\sqrt{n\log n}$ for all $1\leq s<i<j\leq n$. Combined with $\|\bg_i\|^2\geq 9d/10$ and $\|\bg_j\|^2\geq 9d/10$, we get
\begin{align*}
\big|\langle \pi_s(\bv_i),\pi_s(\bv_j)\rangle\big|&=\Big|\frac{\langle \pi_s(\bg_i),\pi_s(\bg_j)\rangle}{\|\bg_i\|\|\bg_j\|}\Big|\leq \frac{5\sqrt{n\log n}}{(9/10)^2d}\leq 10\frac{\sqrt{n\log n}}{d}.\qedhere
\end{align*}
\end{proof}

\subsection{Deferred proofs from Section~\ref{sec:main_estimate}}\label{sec:geometric_graph_event_D}
\begin{proof}[Proof of Lemma~\ref{lemma:geometric_graph_event_D}.]
\noindent\textbf{Proof of \ref{item:triangle_cancellation}.}
The proof of item~\ref{item:triangle_cancellation} follows from a second moment argument. In order to show that $\Pb[|N_\triangle(G)|\leq K^{\cD} n^{3/2}p^{3/2}]\geq 1-o(1)$, as $K^{\cD}\to\infty$, it suffices to show that $0\leq \bE[N_\triangle(G)]\leq O(n^{3/2}p^{3/2})$ and $\Var[N_\triangle(G)]\leq O(n^3p^3)$.

The first inequality follows directly from the following result of Bok, Li and Yu \cite[Proposition~6.1]{BLY26}, which we state only for triangles.

\begin{theorem}\label{thm:bok-li-yu}
Let $p\leq 1/2$ and $d\geq (5\log (1/p))^4$. Then, for $G\sim G(3, d, p)$, we have
\[\bE_G\big[(A_{12}-p)(A_{23}-p)(A_{13}-p)\big]=\Theta\Big(\frac{p^3 (\log 1/p)^{3/2}}{\sqrt{d}}\Big),\]
where $A$ is the adjacency matrix of $G$ and the constants in the $\Theta$ notation are absolute.
\end{theorem}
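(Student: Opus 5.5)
The plan is to reduce the signed expectation to the excess triangle probability, and then compute that probability by exposing the three points one coordinate at a time, in the same way as in Lemma~\ref{lemma:vertex_s}.

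\textbf{Reduction to the triangle probability.} In $G(3,d,p)$, any two edges are independent. For example, conditioned on $\bx_2$, the events $\{12\in E\}$ and $\{23\in E\}$ depend on the independent points $\bx_1$ and $\bx_3$. By rotational invariance each has probability $p$ whatever $\bx_2$ is, so they are independent. Expanding the product, all the mixed terms therefore collapse, and
\[\bE\big[(A_{12}-p)(A_{23}-p)(A_{13}-p)\big]=\Pb[\triangle]-3p\cdot p^2+3p^2\cdot p-p^3=\Pb[\triangle]-p^3 .\]
It thus suffices to show that $\Pb[\triangle]=p^3\big(1+\Theta((\log 1/p)^{3/2}/\sqrt d)\big)$.

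\textbf{Gram--Schmidt computation.} Take $\bx_1=\be_1$. Write $\bx_2=(u,\sqrt{1-u^2}\,\by)$ and $\bx_3=(w,\sqrt{1-w^2}\,\by')$, where $u,w$ are independent first coordinates and $\by,\by'$ are independent uniform points on $\bS^{d-2}$. The edges $12$ and $13$ are exactly the events $u,w\ge c_p/\sqrt d$. Each has probability $p$, and they leave $\by,\by'$ untouched. Conditioned on these events,
\[\Pb[23\in E\mid u,w]=1-\Psi_{d-1}\Big(\sqrt{\tfrac{d-1}{d}}\,\frac{c_p-\sqrt d\,uw}{\sqrt{(1-u^2)(1-w^2)}}\Big),\]
and $\Pb[\triangle]=p^2\,\bE[\,\cdot\,]$, with the expectation taken over $u,w$ truncated above $c_p/\sqrt d$. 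I would then proceed in three steps.
\begin{enumerate}
\item Replace $\Psi_{d-1}$ by $\Phi$ using Lemma~\ref{lemma:sphere_to_gaussian_CDF}, and $c_p$ by $z_p$ using Lemma~\ref{lemma:quantile_asymptotics}. The normalising factors are $1+O(\log d/d)$ on a truncation event as in $\cT_s$. This reduces the conditional probability to $\Phi(-z_p+\beta)$, where $\beta=G G'/\sqrt d$ and $G=\sqrt d\,u$, $G'=\sqrt d\,w$. All of these steps cost only multiplicative errors of order $\widetilde O(z_p^{O(1)}/d)$.
\item For the upper bound, use log-concavity~(\ref{eqn:log_concavity}): $\Phi(-z_p+\beta)\le p\,e^{(a/p)\beta}$. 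Then $\bE[e^{(a/p)GG'/\sqrt d}]=1+\frac{a}{p\sqrt d}\bE[G]\bE[G']+O(z_p^{O(1)}/d)$. By Lemmas~\ref{lemma:expectation_truncated_gaussian} and~\ref{lemma:sphere_coupling_moments}, $\bE[G]=a/p+O(z_p^5/d)$. This gives the relative excess $(a/p)^3/\sqrt d$.
\item For the lower bound, note that $G,G'\ge c_p>0$ (since $p\le 1/2$), so $\beta\ge0$. Since $\phi$ is increasing on $(-\infty,0]$, we get $\Phi(-z_p+\beta)\ge p+\beta\phi(-z_p)$ whenever $\beta\le z_p$. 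Outside this range one can simply use the monotonicity of $\Phi$. Taking expectations again gives the relative excess $\ge (a/p)^3/\sqrt d-O(z_p^{O(1)}/d)$.
\end{enumerate}
Finally, Lemma~\ref{lemma:mills_ratio} gives $z_p\le a/p\le z_p+1$, and Lemma~\ref{lemma:quantile_asymptotics} gives $z_p\asymp\sqrt{\log 1/p}$. Hence $(a/p)^3\asymp(\log1/p)^{3/2}$.

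\textbf{Main obstacle.} The hard part is making the error terms uniformly dominated by the main term $(\log 1/p)^{3/2}/\sqrt d$, with absolute constants, under only the hypothesis $d\ge(5\log 1/p)^4$. Every error is of the form $O((\log 1/p)^{k}/d)$ for a small fixed $k$. The ones to track are the sphere-versus-Gaussian CDF ratio, which carries $t^4/d$ with $t\approx z_p$; the second-order term $\bE[\beta^2](a/p)^2$; and the $\sqrt{1-u^2}$ factors. So the ratio of error to main term is $(\log 1/p)^{k-3/2}/\sqrt d$. I would first try to check that $k\le 7/2$ in each term, which is exactly what $d\ge(5\log1/p)^4$ makes small. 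Where the generic $O(\cdot)$ constants of Lemma~\ref{lemma:sphere_to_gaussian_CDF} are too lossy, I would redo those estimates with explicit constants, for instance bounding $\Psi_{d-1}(t)/\Phi(t)$ directly from the density formula in its proof.
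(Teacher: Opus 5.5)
The paper does not prove this statement; it quotes it from \cite[Proposition~6.1]{BLY26}. Your route is a natural self-contained one. The identity $\bE[(A_{12}-p)(A_{23}-p)(A_{13}-p)]=\Pb[\triangle]-p^3$ is exactly the one the paper uses, in the opposite direction, in the proof of Lemma~\ref{lemma:geometric_graph_event_D}. Exposing $\bx_1$ and then handling the edge $23$ conditionally is the $n=3$ instance of the vertex-exposure scheme of Lemma~\ref{lemma:vertex_s}. It recovers the formula $\Pb[\triangle]=p^3\exp\big(a^3/(p^3\sqrt d)+\widetilde O(1/d)\big)$, which Section~\ref{sec:main_estimate} states without proof. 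Done carefully, your three steps give $\Pb[\triangle]-p^3=p^3\big((a/p)^3/\sqrt d+E\big)$ with $|E|=O\big((1+z_p)^6/d\big)$. This yields the upper bound on the whole range, and the two-sided bound whenever $d\geq C_0\max\{1,(\log 1/p)^4\}$ for some large unspecified $C_0$. That is all the paper actually needs, since it only uses the order of $\delta$ when $d\geq Cn^3p^3(\log 1/p)^3$.

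The gap is the lower bound with an absolute constant on the full range $d\geq(5\log 1/p)^4$, and this is not mere bookkeeping. As $p\to0$ the hypothesis is stronger than you need, since the ratio of error to main term is $\lesssim 1/z_p$. But for $p$ bounded away from $0$ it only guarantees $\sqrt d\geq 12$ at $p=1/2$ and $\sqrt d\geq 30$ at $p=1/3$. In that regime your error terms are $C/d$, with $C$ coming from the unspecified constants of Lemmas~\ref{lemma:sphere_to_gaussian_CDF} and~\ref{lemma:quantile_asymptotics}, while the main term is only about $0.5/\sqrt d$ at $p=1/2$. Positivity then needs $C$ below roughly $6$, and nothing in your plan controls that. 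You list this as the main obstacle, but it is precisely the content of the lower bound there.

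Several steps also break at the ends of the range:
\begin{itemize}
\item The restriction $\beta\le z_p$ in step~3 becomes vacuous as $p\to1/2$. Use $\Phi(x+\beta)-\Phi(x)\geq\beta\min\{\phi(x),\phi(x+\beta)\}$ instead.
\item The claim $z_p\asymp\sqrt{\log 1/p}$ is false near $p=1/2$. What you need is $a/p\geq\lambda(0)$.
\item Lemma~\ref{lemma:quantile_asymptotics} assumes $p\geq 1/k$ and hides logarithms, whereas here $p$ may be as small as $e^{-d^{1/4}/5}$.
\item The $\cT_s$-type truncation $|u|\leq 10\sqrt{\log d/d}$ is incompatible with $u\geq c_p/\sqrt d$ once $z_p>10\sqrt{\log d}$, which the hypothesis allows.
\end{itemize}
The last two points are repairable: derive $c_p-z_p$ directly from Lemma~\ref{lemma:sphere_to_gaussian_CDF}, and control $u^2$ by moments rather than by truncation.

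To close the lower bound you need one of two things. One option is explicit-constant versions of the comparison lemmas; for instance, $\Psi_k-\Phi$ is odd, hence $O(|t|/k)$ near $t=0$. The other is a separate argument for bounded $d$ with $p$ away from $0$, such as strict positivity of $\Pb[\triangle]-p^3$ for each fixed $d$ combined with compactness.
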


When $d\geq Cn^3p^3(\log p^{-1})^3$, by linearity of expectation we have $\bE[N_\triangle(G)]=\binom{n}{3}\Theta(p^3(\log 1/p)^{3/2}/\sqrt{d})\leq n^{3/2}p^{3/2}$, if the constant $C$ is large enough. On the other hand, the variance of $N_\triangle(G)$ was computed in \cite{LMSY22} and \cite[Equations~(B.3)-(B.6)]{BLY26} (under the minimal assumption $d\gg \log(1/p)^4$). Namely, they have shown that if $\delta>0$ is chosen such that $\Pb[G(n, d, p)=\triangle]=p^3 (1+\delta)$, then $\Var[N_\triangle(G)]\leq O(n^3p^3(1+\delta)+p^5\delta^2n^4+p^6\delta n^4)$.

The precise asymptotics of $\delta$ can be derived from the Theorem~\ref{thm:bok-li-yu}. To do this, observe the random variables $A_{ij}, A_{ik}$ are pairwise independent when $j\neq k$, due to the independence of the latent vectors $\bv_j, \bv_k$. This implies $\bE[A_{ij}A_{ik}]=\bE[A_{ij}]\bE[A_{ik}]=p^2$. Thus, we can use the linearity of expectation to write
\begin{align*}
  \bE_G\big[(A_{12}\!-\!p)(A_{23}\!-\!p)(A_{13}\!-\!p)\big]\!&=\!\bE_G\big[A_{12}A_{23}A_{13}\!-\!p(A_{12}A_{23}\!+\!A_{12}A_{13}\!+\!A_{23}A_{13})\!+\!p^2(A_{12}\!+\!A_{23}\!+\!A_{13})\!-\!p^3\big]\\
  &=\bE_G[A_{12}A_{23}A_{13}]-3p^3+3p^3-p^3=\bE_G[A_{12}A_{23}A_{13}]-p^3.
\end{align*}
So, $\delta p^3=\Pb[G(n, d, p)=\triangle]-p^3=\Theta(p^3(\log 1/p)^{3/2}/\sqrt{d})$ which shows that $\delta=O((\log 1/p)^{3/2}/\sqrt{d})=O(1/n^{3/2}p^{3/2})$, due to our assumption on $d$. Hence, the formula for $\Var[N_\triangle(G)]$ from \cite{LMSY22} or \cite{BLY26} gives $\Var[N_\triangle(G)]\leq O(n^3p^3+p^2 n +p^{9/2}n^{5/2})$. Hence, $\Var[N_\triangle(G)]\leq O(n^3p^3)$ (since $p=\Omega(1/n^2)$), which completes the proof of item~\ref{item:triangle_cancellation}.

\medskip
\noindent\textbf{Proofs of \ref{item:c4_cancellation} and \ref{item:diamond_cancellation}.}
Our goal is now to show that for $G\sim G(n, d, p)$, we have
\[\big|N_{\square}(G)\big|\leq K^{\cD} n^2p^2\text{ and }\big|N_{\diamond}(G)\big|\leq K^{\cD} n^2p^{5/2}\]
with probability $1-o(1)$ as $K^{\cD}\to\infty$.

As before, we will rely on the second moment method. We will show that $\bE[N_{\square}(G)^2]\leq O(n^4p^4)$ and $\bE[N_{\diamond}(G)^2]\leq O(n^4p^5)$, since then a simple application of Markov's inequality suffices to complete the proof. Since both proofs follow the same strategy, we begin by outlining the strategy in general, and only then applying it to the exact subgraph counts. Let $F\in \{\square, \diamond\}$ be a fixed graph, and let $\cC$ to be the collection of the copies of $F$ in $\{1, \dots, n\}$. Then, we write $X=\sum_{C\in \cC}\prod_{e\in E(C)}(A_e-p)$, with the goal of bounding $\bE[X^2]$.

Computing $\bE[X^2]$ would be easy if we were working with an Erd\H{o}s-R\'enyi random graph, since in that case the random variables $A_{ij}$ would be independent. In our case, to estimate $\bE[X^2]$, we will use the following tool from \cite{BB24}. Here, we state only a special case of the main theorem of \cite{BB24} when $F$ is a constant-size graph, in order to simplify the presentation.

\begin{theorem}[{\cite[Theorem~1.1 with Proposition~1.2]{BB24}}]\label{thm:bangachev-bresler}
Suppose that $n^{-1+\gamma}\leq p\leq 1/2$ and $d\geq n^{\gamma}$ for some fixed $\gamma>0$, and suppose that $F$ is a connected graph of constant size. Then, for $G\sim G(n,d,p)$,
\[
\Big|\mathbb{E}\Big[\prod_{(ji)\in E(F)}(A_{ji}-p)\Big]\Big|\leq O\Big(p^{|E(F)|}\times \Big(\frac{(\log d)^{3/2}}{\sqrt d}\Big)^{\big\lceil \frac{|V(F)|-1}{2}\big\rceil}\Big).
\]
\end{theorem}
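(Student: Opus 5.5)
This statement is imported from \cite{BB24}, and for this paper it suffices to cite it. If I had to reprove it, I would expand each centered edge indicator in spherical harmonics. The plan is to write $A_{ij}-p=f(\langle \bx_i,\bx_j\rangle)$ with $f(t)=\mathbf 1_{t\geq c_p/\sqrt d}-p$. Then expand $f$ in the normalized Gegenbauer basis of $L^2$ of the law of $\langle \bx,\by\rangle$ for $\bx,\by\in\bS^{d-1}$: $f=\sum_{k\geq1}\hat f_k\,q_k$. There is no $k=0$ term because $\bE f=0$.

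\textbf{Step 1: the coefficients.} I would aim for $|\hat f_k|\lesssim p\,(C\log(1/p))^{k/2}/\sqrt{k!}$ for $k\leq \log d$ or so. This follows by comparing $q_k(\sqrt d\,\cdot)$ with Hermite polynomials via Lemma~\ref{lemma:sphere_to_gaussian_CDF}. The $k$-th Hermite coefficient of the Gaussian tail indicator is $\phi(z_p)He_{k-1}(z_p)/\sqrt{k!}$, and $\phi(z_p)=\Theta(p\sqrt{\log p^{-1}})$. Large $k$ would be handled through Parseval, since $\sum_k\hat f_k^2=p(1-p)$, together with a truncation at $k\approx\log d$.

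\textbf{Step 2: expand the product and count.} Expanding $\prod_{e\in E(F)}f(\langle \bx_e\rangle)$ gives a sum over labelings $k:E(F)\to\mathbb Z_{\geq1}$ of $\prod_e\hat f_{k_e}$ times $\bE\prod_e q_{k_e}(\langle\bx_i,\bx_j\rangle)$. By the addition theorem, $q_k(\langle\bx,\by\rangle)=N_k^{-1/2}\sum_{m}Y_{k,m}(\bx)Y_{k,m}(\by)$, where $N_k\asymp d^k/k!$ is the dimension of degree-$k$ harmonics. Hence each expectation is a contraction of harmonic tensors integrated vertex by vertex.

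Two consequences follow. First, any leaf edge kills the term, because integrating the leaf vertex gives $\bE_{\bx}q_k(\langle\bx,\by\rangle)=0$ for $k\geq1$. More generally, at each vertex the incident degrees must satisfy a ``polygon'' parity condition. Second, a term with total degree $K=\sum_e k_e$ is bounded by roughly $d^{-K/2}$ times $d^{(\text{number of free index sums})/2}$.

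\textbf{Step 3: the minimal exponent.} The main obstacle, and the combinatorial heart of the argument, is to show that every admissible labeling has magnitude at most $\widetilde O(d^{-\lceil (|V(F)|-1)/2\rceil/2})$. Each vertex beyond the first must ``absorb'' at least half a unit of decay. I would try to prove this by induction on $|V(F)|$, integrating out one vertex at a time, as in the Gram--Schmidt exposure of Lemma~\ref{lemma:distribution_v_i}. The base cases would be a triangle, which gives $d^{-1/2}$, and a $4$-cycle, which gives $d^{-1}$. Summing over labelings then uses the $\sqrt{k!}$ decay from Step~1 against the $k!$ growth of $N_k$. The result is the geometric factor $((\log d)^{3/2}/\sqrt d)^{\lceil(|V(F)|-1)/2\rceil}$ times $p^{|E(F)|}$, and this needs $d\geq n^\gamma$ to make the truncation errors negligible.
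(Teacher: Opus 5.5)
Your first sentence is exactly what the paper does. The statement is quoted from \cite{BB24} (Theorem~1.1 together with Proposition~1.2) and no proof is given, so nothing more is needed here. The reproof you sketch would not go through, however: it breaks at Step~1 and at the truncation built on it.

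The coefficient bound $|\hat f_k|\lesssim p\,(C\log(1/p))^{k/2}/\sqrt{k!}$ is false once $k\gg\log(1/p)$. The edge function is a step function, so its Hermite coefficients $\phi(z_p)He_{k-1}(z_p)/\sqrt{k!}$ decay only polynomially. In the range $k\le\log d$ that you care about, the Gegenbauer coefficients on $\bS^{d-1}$ agree with them to leading order. Already at $p=1/2$ you have $z_p=0$ and $|\hat f_k|=\phi(0)\,(k-2)!!/\sqrt{k!}\asymp k^{-3/4}$ for odd $k$, whereas your bound $\tfrac12(C\log 2)^{k/2}/\sqrt{k!}$ is superexponentially small. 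In general, Plancherel--Rotach asymptotics give $\hat f_k^2\asymp\phi(z_p)\,k^{-3/2}$ on average once $k\gg 1+z_p^2$, where $a=\phi(z_p)\asymp p\sqrt{\log(1/p)}$ for small $p$. Hence the Parseval tail $\sum_{k>K}\hat f_k^2$ at $K\approx\log d$ is of order $a(\log d)^{-1/2}$, or of order $p$ if $z_p^2\gtrsim\log d$. That is only polylogarithmically small, and no assumption like $d\ge n^\gamma$ can make it negligible. Bounding the terms that contain a tail factor by Cauchy--Schwarz gives at best about $\sqrt{a}\,(\log d)^{-1/4}\,p^{(|E(F)|-1)/2}$. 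This misses the target $p^{|E(F)|}\big((\log d)^{3/2}/\sqrt d\big)^{\lceil(|V(F)|-1)/2\rceil}$ by a factor polynomial in both $d$ and $1/p$. Step~3 also uses the same false $1/\sqrt{k!}$ decay to sum over labelings. With the true coefficients, all convergence in $k$ has to come from the moments $\bE\prod_e q_{k_e}$.

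What is missing is a mechanism that makes high-degree harmonics pay through the geometry rather than through Parseval. For the triangle this is automatic. Only labelings $(k,k,k)$ survive, and the reproducing identity gives $\bE\prod_e q_k=N_k^{-1/2}$, which decays extremely fast in $k$ and swamps the $k^{-3/4}$ decay of $\hat f_k$. For general $F$ you would need comparable control, uniform over all labelings and without truncating. Alternatively, you could first smooth the indicators, for example with a noise operator, and bound the smoothing error separately.

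On Step~3 itself, your accounting is off by a square root. In the pairing picture, a labeling of total degree $K$ contributes about $d^{-K/2+L}$, where $L$ is the number of closed index loops. Each loop gives a full factor $d$, and that is what produces $d^{-1/2}$ for the triangle. With this accounting the exponent needs no induction. Tracelessness forbids immediate backtracking, so each loop has length $\ell_j\ge 3$, hence $\ell_j-2\ge(\ell_j-1)/2$. Since the loops cover the connected graph $F$, $\sum_j(\ell_j-1)\ge|V(F)|-1$. Therefore $K/2-L=\tfrac12\sum_j(\ell_j-2)\ge\tfrac{|V(F)|-1}{4}$, and because $K/2-L\in\tfrac12\mathbb{Z}$ this rounds up to $\tfrac12\lceil(|V(F)|-1)/2\rceil$. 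This settles the exponent, but it does not repair the tail problem above.
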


Since $X=\sum_{F\in\cC}\prod_{e\in E(F)}(A_e-p)$, we have that
\[\bE[X^2]=\sum_{F_1,F_2\in\cC}\bE\!\left[\prod_{e\in E(F_1)}(A_e-p)\prod_{f\in E(F_2)}(A_f-p)\right].\]

Let $F'$ be the graph obtained by overlapping $F_1$ and $F_2$, i.e. by taking the union of their vertex sets and edge sets. We split the above sum based on the overlap pattern $F'$ and analyze the contribution of each pattern separately.

For fixed copies $F_1,F_2\in \cC$, let $R=E(F_1)\cap E(F_2)$, $r=|R|$, $m=|E(F)|$ and $\Delta=E(F_1)\triangle E(F_2)$. The first observation is that the edges $R$ appear twice in the product $\prod_{e\in E(F_1)}(A_e-p)\prod_{f\in E(F_2)}(A_f-p)$, giving the square term $(A_e-p)^2$. We linearize this by writing $(A_e-p)^2=p(1-p)+(1-2p)(A_e-p)$. Then, if we denote by $S\subset R$ the set of edges where we pick out the term $(1-2p)(A_e-p)$, the product corresponding to the pair $(F_1, F_2)$ can be rewritten as
\[\bE\!\Big[\prod_{e\in E(F_1)}(A_e-p)\prod_{f\in E(F_2)}(A_f-p)\Big]=\sum_{S\subseteq R}(1-2p)^{|S|}\bigl(p(1-p)\bigr)^{r-|S|}\bE\Big[\prod_{e\in\Delta\cup S}(A_e-p)\Big].\]
Since $p\in (0, 1)$, we have $|1-p|,|1-2p|\leq 1$ and so \[\Big|\bE\!\Big[\prod_{e\in E(F_1)}(A_e-p)\prod_{f\in E(F_2)}(A_f-p)\Big]\Big|\le\sum_{S\subseteq R}p^{r-|S|}\Big|\bE\prod_{e\in\Delta\cup S}(A_e-p)\Big|.\]

Now, we can factorize the product corresponding to $\Delta\cup S$ into products corresponding to its connected components $C_1, \dots, C_t$ and apply Theorem~\ref{thm:bangachev-bresler} to obtain
\[\Big|\bE\prod_{e\in E(\Delta\cup S)}(A_e-p)\Big|= \Big|\prod_{i=1}^t \bE\Big[\prod_{e\in E(C_i)}(A_e-p)\Big]\Big|\leq O\Big(p^{e(\Delta\cup S)}\Big(\frac{(\log d)^{3/2}}{\sqrt d}\Big)^{\sum_{C}\lceil\frac{v(C)-1}{2}\rceil}\Big).\]
Let us denote $\rho(\Delta\cup S)=\sum_{i=1}^t\left\lceil\frac{v(C_i)-1}{2}\right\rceil$. Furthermore, $e(\Delta\cup S)=|\Delta|+|S|=2m-2r+|S|$, so we can write the bound
\begin{align*}
\Big|\bE\Big[\prod_{e\in E(F_1)}\!\!(A_e-p)\!\!\prod_{f\in E(F_2)}\!\!(A_f-p)\Big]\Big|&\le O\bigg(\sum_{S\subseteq R}p^{r-|S|+2m-2r+|S|}\Big(\frac{(\log d)^{3/2}}{\sqrt d}\Big)^{\rho(\Delta\cup S)}\bigg)\\
&=O\bigg(\sum_{S\subseteq R}p^{2m-r}\Big(\frac{(\log d)^{3/2}}{\sqrt d}\Big)^{\rho(\Delta\cup S)}\bigg).
\end{align*}Let us note that $\rho(\Delta\cup S)=\big(|V(F_1\cup F_2)|-|\{i\in [t]:|V(C_i)|\text{ is odd}\}|\big)/2$ is minimized when $S=\emptyset$, as deleting edges can never decrease the number of odd-sized components.

Having obtained this general estimate, we can specialize to the cases where $F=\square$ or $F=\diamond$. Let us start from $F=\square$.
Note that two four-cycles may intersect in $r=0, 1, 2,$ or $4$ edges, and we will analyze each of these separately. To shorten notation, write $\eta=(\log d)^{3/2}/\sqrt d$, and note the crude bound $\eta \le d^{o(1)-1/2} \le O(\frac{1}{np})$.

Let us start with $r=4$, when we have $E(F_1)=E(F_2)=R$ and $\Delta=\emptyset$. There are $O(n^4)$ such ordered pairs, and each of them has a contribution of at most $O(p^4)$. Thus pairs with $r=4$ contribute $O(n^4 p^4)$.

Suppose $r=2$. The two common edges are either adjacent or opposite, and in both cases $\Delta$ is itself a $4$-cycle. Hence $\rho(\Delta)\ge 2$, since already this four-cycle contributes $\lceil (4-1)/2\rceil=2$. Since $|V(C_1)\cup V(C_2)|\le 5$, we have at most $O(n^5)$ ordered pairs with $r=2$, thus giving the contribution of at most $O(n^5p^6 \eta^2)=O(n^3p^4) =O(n^4p^4)$.

If $r=1$, the cycles share a single edge. Since deleting an edge from a cycle leaves it connected, the resulting graph $\Delta$, and have at least $5$ vertices. Hence $\rho(\Delta)\ge 2$, and since $|V(F_1)\cup V(F_2)|\leq 6$, the total contribution is at most $O(n^6p^7\eta^2) =O(n^4p^5)$.

If $r=0$, the two cycles are edge-disjoint. If they are also vertex-disjoint, then $\Delta$ is a union of two disjoint four-cycles and so $\rho(\Delta)\ge 4$. The contribution of this case is at most $O(n^8 p^8 \eta^4)=O(n^4p^4)$.

If $r=0$, the cycles share at least one vertex, and $\Delta$ spans a single connected component on $6$ or $7$ vertices. Hence $\rho(\Delta)\ge 3$, and these pairs contribute $O(n^7 p^8 \eta^3)= O(n^4p^5)$.

Summing over these estimates, we get $\bE[N_\square(G)^2]=O(n^4p^4)$.

Let us now turn to $F=\diamond$, so $m=5$ and $X=N_{\diamond}(G)$. Again, we split the second moment according to $r=|E(F_1)\cap E(F_2)|$. And again, we write $\eta$ as shorthand for $(\log d)^{3/2}/\sqrt{d} = O(\frac{1}{np})$.

If $r=5$, there is only one way $F_1$ and $F_2$ can intersect: they must have the same edge set. There are $O(n^4)$ such pairs, and their total contribution is $O(n^4p^5)$.

If $r=4$, then the vertex sets of $F_1$ and $F_2$ must again be the same (though now there are two ways this could occur: the two unique edges could form a matching, or path of length 2). There are $O(n^4)$ such pairs, and by the general bound their contribution is at most $O(n^4p^6)$.

If $r=3$, then $F_1$ and $F_2$ must share all three edges of one of the triangles (since sharing three edges of a four-cycle forces the fourth edges to be shared as well). Hence, $|V(F_1)\cup V(F_2)|= 5$. Moreover, $\Delta$ is a four-cycle, and so we have $\rho(\Delta)\geq 2$, bounding the final contribution by $O(n^5p^7 \eta^2)=O(n^4p^5)$.

Suppose $r=2$. Then $F_1, F_2$ share at least $3$ vertices, and so $|V(F_1)\cup V(F_2)|\leq 5$. Very crudely, we get $\rho(\Delta)\ge 1$ as $\Delta$ contains an edge. So, the total contribution is therefore at most $O(n^5p^8 \eta)= O(n^4p^7)$.

If $r=1$, then $|V(F_1)\cap V(F_2)|\geq 2$, meaning that $|V(F_1)\cup V(F_2)|\le 6$. Moreover, since deleting one edge cannot disconnect the diamond, the graph $\Delta$ is connected. So, $\rho(\Delta)\geq 2$ and the total contribution is at most $O(n^6p^9\eta^2)=O(n^4p^7)$.

Finally, suppose $r=0$. If the two diamonds are vertex-disjoint, then $\Delta$ is a disjoint union of two diamonds, so $\rho(\Delta)=4$, and this case contributes $O(n^8p^{10} \eta^4)=O(n^4p^6)$.

If the are not vertex-disjoint, their union has $6$ or $7$ vertices (since two edge-disjoint diamonds cannot fit in a $K_5$). Hence, we have $\rho(\Delta)\geq 3$ and at most $O(n^7)$ contributing terms, giving a total contribution of at most $ O(n^7p^{10}\eta^3) =O(n^4p^7)$.

Combining these estimates gives $\bE[X^2] = O(n^4p^5)$. So indeed, $\bE[N_{\diamond}(G)^2]\leq O(n^4p^5)$, and Markov's inequality completes the proof of \ref{item:diamond_cancellation}.
\end{proof}

\subsection{Deferred proofs from Section~\ref{sec:MGF}}\label{sec:lindeberg}

Let us start with some discussion to motivate the proof of Proposition~\ref{prop:lindeberg}, which states the following. If $\xi\in \bR^m$ is a vector with independent centered subgaussian coordinates of variance proxy $\sigma^2$, $M$ is a $m\times m$ matrix with zero diagonal, and $\ba\in \bR^m$ is a fixed vector, then
\begin{align*}
\log \bE e^{\xi^T M \xi+\ba^T \xi}=\frac{1}{2}\Var(\xi^T M \xi+\ba^T \xi)+O\Big(e^{O(\delta_2)}\delta_1\delta_2\Big),
\end{align*}
where $\delta_1=\max\{\sigma^2\|M\|_{op}, \sigma\|\ba\|_\infty\}$ and $\delta_2=\sigma^4 \|M\|_{HS}^2+\sigma^2\|\ba\|_2^2$, provided that $\delta_1$ is sufficiently small. 

The idea behind Proposition~\ref{prop:lindeberg} is simple - if $\xi$ was a gaussian vector, then the relevant moment generating function could be computed explicitly. Hence, we will define $g$ to be a gaussian vector matching $\xi$ in the first two moments. The Gaussian computation will show that $\log \bE e^{g^T M g+\ba^Tg}$ differs from $\frac{1}{2}\Var(\xi^T M \xi+\ba^T \xi)$ by at most $O(\delta_1\delta_2)$, so it remains to prove that $|\bE[e^{\xi^T M \xi+\ba^T\xi}]-\bE[e^{g^T M g+\ba^tg}]|\leq O(e^{O(\delta_2)}\delta_1\delta_2)$. To prove this inequality, we will use a replacement argument, very similar in spirit to that of Chatterjee~\cite{C05}. However, since we are dealing with exponentials, whose third partial derivatives are unbounded, we cannot directly appeal to~\cite{C05}. So, we must redo the proof, showing that the terms coming from the third-order terms are sufficiently small on average.

Let us first present the replacement argument relating $\bE[e^{\xi^T M \xi+\ba^T\xi}]$ to $\bE[e^{g^T M g+\ba^tg}]$, and close the section by computing $\bE[e^{g^T M g+\ba^tg}]$ precisely. Before we present the replacement argument however, let us remind ourselves of some standard properties of subgaussian random variables which we will use in the proof (for a proof, see Proposition 2.6.1 in \cite{Ver}).

As we saw in the preliminaries, a centered random variable $Z$ is subgaussian with variance proxy $\sigma^2$ if $\bE[e^{tZ}]\leq e^{\sigma^2t^2/2}$. If $K$ is a sufficiently large absolute constant, these variables also have $\bE[e^{Z^2/(K\sigma^2)}]\leq 2$, $\bE[Z^q]^{1/q}\leq K\sigma\sqrt{q}$. Moreover, these properties can be used as equivalent definitions of subgaussianity, if one is willing to change $\sigma$ by a constant factor. Finally, if $Z=\sum_{i}c_iZ_i$, and the variables $Z_i$ are independent subgaussians with variance proxy $\sigma_i^2$, then $Z$ is also subgaussian with variance proxy $\sum_i c_i^2\sigma_i^2$, which can be seen directly from the definition.

Let us now state and prove the inequality relating the two relevant moment generating functions.

\begin{proposition}\label{prop:comparison}
Let $\xi \in \bR^m$ be a vector with independent subgaussian coordinates, each with expectation $0$ and variance proxy at most $\sigma^2$, and let $g\in \bR^m$ be a Gaussian random vector, matching $\xi$ in the first two moments. Let $M$ be a $m\times m$ symmetric matrix with zero diagonal, and let $\ba\in \bR^m$ be a fixed vector. Additionally, let $\delta_1=\max\{\sigma^2\|M\|_{op}, \sigma\|\ba\|_\infty\}$ and $\delta_2=\sigma^4 \|M\|_{HS}^2+\sigma^2\|\ba\|_2^2$. If $\delta_1$ is sufficiently small, then
 \[\Big|\bE[e^{\xi^T M \xi+\ba^T \xi}]-\bE[e^{g^T M g+\ba^T g}]\Big|=O\Big(e^{O(\delta_2)}\delta_1\delta_2\Big).\]
\end{proposition}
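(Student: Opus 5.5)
We will prove Proposition~\ref{prop:comparison} by a Lindeberg-type replacement argument, swapping one coordinate at a time. Write $F(x)=e^{x^TMx+\ba^Tx}$. For $0\le k\le m$, let $W^{(k)}=(g_1,\dots,g_k,\xi_{k+1},\dots,\xi_m)$, and take $g$ independent of $\xi$. Then
\[\bE F(\xi)-\bE F(g)=\sum_{k=1}^m\big(\bE F(W^{(k-1)})-\bE F(W^{(k)})\big).\]
Fix $k$ and let $W^{(k,0)}$ be $W^{(k)}$ with the $k$-th coordinate set to $0$. Because $M$ has zero diagonal, the exponent is affine in the $k$-th coordinate $y$:
\[x^TMx+\ba^Tx = Q_k + y\,L_k, \qquad L_k=2\sum_{j\neq k}M_{kj}W_j+\ba_k,\]
where $Q_k$ and $L_k$ depend only on $W^{(k,0)}$. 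Hence $F=e^{Q_k}e^{yL_k}$, and the $k$-th summand equals $\bE\big[e^{Q_k}(\bE_y e^{\xi_kL_k}-\bE_y e^{g_kL_k})\big]$, conditioning on the other coordinates. This exactness in $y$ is what replaces the bounded third derivatives that Chatterjee's argument needs.

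Expand $e^{yL}=1+yL+y^2L^2/2+R_3(y,L)$. The first two moments of $\xi_k$ and $g_k$ match, so only the third-order remainder survives. We have $|R_3|\le |yL|^3e^{|yL|}/6$, and by Cauchy--Schwarz together with subgaussianity ($\bE|y|^6\lesssim\sigma^6$, $\bE e^{2|y L|}\le 2e^{O(\sigma^2L^2)}$),
\[\big|\bE_y e^{\xi_kL}-\bE_y e^{g_kL}\big|\lesssim \sigma^3|L|^3e^{O(\sigma^2L^2)}.\]
Therefore the $k$-th summand is bounded by $\sigma^3\,\bE\big[e^{Q_k}|L_k|^3e^{C\sigma^2L_k^2}\big]$. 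Applying Hölder, this is at most
\[\sigma^3\,\big(\bE e^{2Q_k}\big)^{1/2}\Big(\bE\big[|L_k|^6e^{2C\sigma^2L_k^2}\big]\Big)^{1/2}.\]

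\textbf{Bounding $L_k$.} Conditionally on the mixed vector, $L_k$ is a linear form with coefficients $2M_{kj}$ plus the shift $\ba_k$. It is therefore subgaussian with proxy $\lesssim\sigma^2\sum_j M_{kj}^2$ and mean $\ba_k$. Since $\sigma^2\sum_jM_{kj}^2\le\sigma^2\|M\|_{op}^2\le\delta_1^2/\sigma^2$ is small relative to $1/\sigma^2$, the factor $e^{2C\sigma^2L_k^2}$ is harmless. Using $\sigma|\ba_k|\le\delta_1$, this gives
\[\sigma^3\big(\bE|L_k|^6\big)^{1/2}\lesssim \sigma^3\Big(\sigma^2\sum_jM_{kj}^2+\ba_k^2\Big)^{3/2}\lesssim \delta_1\Big(\sigma^4\sum_jM_{kj}^2+\sigma^2\ba_k^2\Big).\]
Summing over $k$ yields $O(\delta_1\delta_2)$, as required.

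\textbf{Bounding $\bE e^{2Q_k}$.} We must show $\bE e^{2Q_k}\le e^{O(\delta_2)}$. Here $Q_k$ is the quadratic-plus-linear form $x^TMx+\ba^Tx$ evaluated on a vector of independent centered subgaussians: the mixed coordinates, with the $k$-th coordinate zeroed. Split with Cauchy--Schwarz into $\bE e^{4x^TMx}$ and $\bE e^{4\ba^Tx}$. The linear part is at most $e^{O(\sigma^2\|\ba\|_2^2)}$ by subgaussianity. The quadratic part is at most $e^{O(\sigma^4\|M\|_{HS}^2)}$ by Theorem~\ref{thm:hanson_wright} with $\theta=4$, which is allowed because $\sigma^2\|M\|_{op}\le\delta_1$ is small. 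Both factors are $e^{O(\delta_2)}$.

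\textbf{Main obstacle.} The delicate point is the Hölder step: $e^{Q_k}$ and $L_k$ are correlated, and the bound must stay uniform over the hybrid vectors. The exponents have to be tuned so that Hanson--Wright still applies with $\theta=O(1)$, which is exactly where the smallness hypothesis on $\delta_1$ is used. Given these two estimates, summing the $m$ replacement errors gives $\big|\bE F(\xi)-\bE F(g)\big|=O(e^{O(\delta_2)}\delta_1\delta_2)$.
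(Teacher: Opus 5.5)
Your proof is correct and follows essentially the same route as the paper. Both arguments use a one-coordinate-at-a-time Lindeberg swap that exploits the exponent being affine in the swapped coordinate (by the zero diagonal), cancel the first two Taylor terms, bound the third-order remainder through subgaussian moments, and control $\bE e^{cQ_k}$ by Hölder plus Hanson--Wright with $\theta=O(1)$, which is allowed since $\sigma^2\|M\|_{op}\le\delta_1$ is small.

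The differences are cosmetic. The paper rescales to $\sigma=1$ and uses a single four-factor Hölder, bounding $\bE c_i^{12}$ and $\bE e^{O(c_i^2)}$ separately. That extra split is exactly the additional Cauchy--Schwarz you need to make your step ``the factor $e^{2C\sigma^2L_k^2}$ is harmless'' rigorous: it replaces $(\bE|L_k|^6)^{1/2}$ by $(\bE L_k^{12})^{1/4}$, which has the same order of magnitude.
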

\begin{proof}
Replacing $\xi$ and $g$ by $\xi/\sigma$ and $g/\sigma$, and replacing $M$ and $\ba$ by $\sigma^2M$ and $\sigma\ba$, respectively, does not change either exponential in the conclusion or the values of $\delta_1$ and $\delta_2$. Hence, we may assume without loss of generality that $\sigma=1$.

For a vector $\bx\in \bR^m$, let $f(\bx)=e^{\bx^T M \bx+\ba^T \bx}$ and let $\bz^{(i)}=(\xi_1, \dots, \xi_{i-1}, 0, g_{i+1}, \dots, g_m)$. Then, the triangle inequality gives
\begin{align*}
\Big|\bE f(\xi)-\bE f(g)\Big|\leq \sum_{i=1}^m \big|&\bE f(\xi_1, \dots, \xi_{i-1}, \xi_{i}, g_{i+1}, \dots, g_m)-\bE f(\xi_1, \dots, \xi_{i-1}, g_{i}, g_{i+1}, \dots, g_m)\big|
\end{align*}

The difference in step $i$, which we denote by $\Delta_i$, can be written as
\begin{align*}
\Delta_i \!&= \!\Big|\bE \exp\big({(\bz^{(i)})^T\!M\bz^{(i)}\!+\!\ba^T\!\bz^{(i)}\!+\!\xi_i(2M\bz^{(i)})_i\!+\!\ba_i\xi_i}\big)\!-\!\bE \exp\big({(\bz^{(i)})^T\!M\bz^{(i)}\!+\!\ba^T\!\bz^{(i)}\!+\!g_i(2M\bz^{(i)})_i\!+\!\ba_ig_i}\big)\!\Big|\\
&=\Big|\bE_{\bz^{(i)}}\Big[e^{(\bz^{(i)})^TM\bz^{(i)}+\ba^T\bz^{(i)}}\bE_{\xi_i, g_i}\big[e^{c_i\xi_i}-e^{c_ig_i}\big]\Big]\Big|,\text{ where }c_i=2(M\bz^{(i)})_i+\ba_i.
\end{align*}
We can write $e^{c_i\xi_i}=1+c_i\xi_i+\frac{c_i^2\xi_i^2}{2}+O(|c_i\xi_i|^3e^{|c_i\xi_i|})$ and $e^{c_ig_i}=1+c_ig_i+\frac{c_i^2g_i^2}{2}+O(|c_ig_i|^3e^{|c_ig_i|})$. Since we have assumed that $\xi_i$ and $g_i$ have the same mean and variance, the first three terms cancel after subtracting, and we are left with
\[\Big|\bE_{\xi_i, g_i}\big[e^{c_i\xi_i}-e^{c_ig_i}\big]\Big|\leq O\Big(\bE_{\xi_i}\big[|c_i\xi_i|^3e^{|c_i\xi_i|}\big]+\bE_{g_i}\big[|c_ig_i|^3e^{|c_ig_i|}\big]\Big).\]

Since $\xi_i$ is subgaussian, we have $\bE\big[|\xi_i|^3e^{|c_i\xi_i|}\big]\leq \sqrt{\bE [\xi_i^6]}\sqrt{\bE[e^{2|c_i\xi_i|}]}\leq O(1\cdot e^{O(|c_i|^2)})$, where the first bound follows from the standard properties of subgaussians mentioned above, and the second by $\bE[e^{2|c_i\xi_i|}]\leq \bE[e^{2|c_i|\xi_i}]+\bE[e^{-2|c_i|\xi_i}]\leq 2e^{O(|c_i|^2)}$ (here, we have used the definition of subgaussian random variable, which says that $\bE e^{t\xi_i}\leq e^{t^2\sigma^2/2}$). Similarly, $\bE\big[|g_i|^3e^{|c_ig_i|}\big]\leq O(e^{O(|c_i|^2)})$. Hence,
\[\Delta_i\leq O\Big(\bE_{\bz^{(i)}}\Big[e^{(\bz^{(i)})^TM\bz^{(i)}+\ba^T\bz^{(i)}}\cdot |c_i|^3e^{O(|c_i|^2)}\Big]\Big).\]

Using H\"older's inequality, we can bound this by
\[\Delta_i\leq O\Big(\bE_{\bz^{(i)}}\Big[e^{4(\bz^{(i)})^TM\bz^{(i)}}\Big]^{1/4}\cdot \bE_{\bz^{(i)}}\Big[e^{4\ba^T\bz^{(i)}}\Big]^{1/4}\cdot\bE_{\bz^{(i)}}\big[c_i^{12}\big]^{1/4}\cdot  \bE_{\bz^{(i)}}\Big[e^{O(|c_i|^2)}\Big]^{1/4}\Big).\]

Since $c_i-\ba_i=2(M\bz^{(i)})_i$ is a linear combination of centered subgaussian random variables, it is itself a centered subgaussian, with variance proxy $\sigma_i^2\leq 4\sum_{j=1}^m M_{ij}^2$. Hence,
\[\bE_{\bz^{(i)}}\big[c_i^{12}\big]^{1/4}\leq O\Big(\bE_{\bz^{(i)}}\big[\ba_i^{12}+(c_i-\ba_i)^{12}\big]^{1/4}\Big)\leq O\Big( |\ba_i|^3 + \sigma_i^3\Big)\leq O\Big(|\ba_i|^3+\Big(\sum_{j=1}^m M_{ij}^2\Big)^{3/2}\Big) .\]

Further, we have $\bE_{\bz^{(i)}}\big[e^{O(|c_i|^2)}\big]\leq e^{O(|\ba_i|^2)}\bE_{\bz^{(i)}}\big[e^{O(|c_i-\ba_i|^2)}\big]\leq e^{O(|\ba_i|^2)}$, where we have used that $\sigma_i^2\leq 4\sum_{j=1}^m M_{ij}^2\leq 4\|M\|_{op}^2 \leq 4\delta_1^2\ll 1$.

Further, the random variable $4\ba^T\bz^{(i)}$ is a linear combination of independent centered subgaussians, with variance proxy at most $16\|\ba\|_2^2$. Hence, $\bE_{\bz^{(i)}}[e^{4\ba^T\bz^{(i)}}]\leq e^{O(\|\ba\|_2^2)}$.

Finally, the random variable $4(\bz^{(i)})^TM\bz^{(i)}$ is a quadratic form in independent centered subgaussians, and as such its expectation can be bounded by $e^{O(\|M\|_{HS}^2)}$, using the Hanson-Wright inequality (see Theorem~\ref{thm:hanson_wright}). Hence, we have
\[\Delta_i\leq O\Big(e^{O(\|M\|_{HS}^2+\|\ba\|_2^2)}\Big(|\ba_i|^3+\Big(\sum_{j=1}^m M_{ij}^2\Big)^{3/2}\Big)\Big)\leq e^{O(\delta_2)}O\Big(\delta_1\Big(|\ba_i|^2+\Big(\sum_{j=1}^m M_{ij}^2\Big)\Big)\Big),\]
where the last inequality follows since $\delta_1\geq \|\ba\|_\infty$ and $\delta_1\geq \|M\|_{op}\geq \sqrt{\sum_{j=1}^m M_{ij}^2}$. Summing over $i$, we get
\[\Big|\bE e^{\xi^T M \xi+\ba^T \xi}-\bE e^{g^T M g+\ba^T g}\Big|\leq \sum_{i=1}^m |\Delta_i|\leq e^{O(\delta_2)}\delta_1O\Big(\sum_{i=1}^m |\ba_i|^2+\sum_{i,j=1}^m M_{ij}^2\Big)=O\Big(e^{O(\delta_2)}\delta_1\delta_2\Big),\]
which completes the proof of Proposition~\ref{prop:comparison}.
\end{proof}

The final step of the proof of Proposition~\ref{prop:lindeberg} is to compute $\bE[e^{g^T M g+\ba^T g}]$ precisely, which we do in the following lemma. This is a well-known fact (see e.g., \cite[Eq.~(1.3.6)]{MathaiProvostHayakawa1995}), but we include a proof for completeness.

\begin{lemma}\label{lemma:gaussian_mgf}
Let $g\sim \cN(0, I_m)$ be a standard normal in $\bR^m$, let $M$ be a symmetric zero-diagonal matrix satisfying $\|M\|_{op}\leq 1/4$, and let $\ba\in \bR^m$. Then $\log \bE[e^{g^T M g+\ba^T g}]=\frac{1}{2}\ba^T(I-2M)^{-1}\ba-\frac{1}{2}\log\det(I-2M)$.
\end{lemma}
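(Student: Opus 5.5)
The plan is to compute the Gaussian integral directly by diagonalizing $M$ and completing the square. Since $\|M\|_{op}\leq 1/4$, the symmetric matrix $I-2M$ has all eigenvalues in $[1/2,3/2]$. In particular it is positive definite and invertible, so all the Gaussian integrals below converge absolutely. Writing the density of $g$ as $(2\pi)^{-m/2}e^{-\bx^T\bx/2}$, we get
\[\bE[e^{g^TMg+\ba^Tg}]=(2\pi)^{-m/2}\int_{\bR^m}\exp\Big(-\tfrac12\bx^T(I-2M)\bx+\ba^T\bx\Big)\,d\bx.\]
The zero-diagonal assumption is not needed here. It only matters elsewhere, for matching the variance formula.

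Set $Q=I-2M$ and complete the square:
\[-\tfrac12\bx^TQ\bx+\ba^T\bx=-\tfrac12(\bx-Q^{-1}\ba)^TQ(\bx-Q^{-1}\ba)+\tfrac12\ba^TQ^{-1}\ba.\]
Substituting $\by=\bx-Q^{-1}\ba$ reduces the integral to $e^{\frac12\ba^TQ^{-1}\ba}\int e^{-\frac12\by^TQ\by}\,d\by$. To evaluate the remaining integral, write $Q=U^TDU$ with $U$ orthogonal and $D$ diagonal with positive entries $d_1,\dots,d_m$. Change variables to $U\by$, which has unit Jacobian. The integral then factors into one-dimensional Gaussian integrals $\int e^{-d_k y^2/2}\,dy=\sqrt{2\pi/d_k}$, whose product is $(2\pi)^{m/2}\det(Q)^{-1/2}$.

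Combining the two steps gives $\bE[e^{g^TMg+\ba^Tg}]=\det(I-2M)^{-1/2}\exp\big(\tfrac12\ba^T(I-2M)^{-1}\ba\big)$. Taking logarithms yields the claimed formula.

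There is no real obstacle. The only point needing care is justifying convergence and the change of variables, which follows from the positive definiteness of $Q$ guaranteed by $\|M\|_{op}\leq1/4$.
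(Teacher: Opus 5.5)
Your proof is correct and follows the paper's own argument: rewrite the expectation as a Gaussian integral, complete the square with $I-2M$, shift the variable, and evaluate the remaining integral as $(2\pi)^{m/2}\det(I-2M)^{-1/2}$. Two details the paper leaves implicit are also handled well: you check that $I-2M$ is positive definite (its eigenvalues lie in $[1/2,3/2]$), and you evaluate the final integral via orthogonal diagonalization.
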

\begin{proof}
By definition, we have 
\[\bE[e^{g^T M g+\ba^T g}]=\frac{1}{(2\pi)^{m/2}}\int_{\bR^m} \!\!\!\exp\Big(-\frac{1}{2} x^Tx+x^TMx+\ba^Tx\!\Big)dx=\frac{1}{(2\pi)^{m/2}}\int_{\bR^m} \!\!\!\exp\Big(-\frac{1}{2} x^T(I-2M)x+\ba^Tx\!\Big)dx.\]
Setting $A=I-2M$, we can complete the square to write $-\frac12 x^TAx+\ba^Tx=-\frac12(x-A^{-1}\ba)^TA(x-A^{-1}\ba)+\frac12\ba^TA^{-1}\ba$. Setting $y=x-A^{-1}\ba$, we have
\[\bE[e^{g^T M g+\ba^T g}]=\frac{e^{\frac12\ba^TA^{-1}\ba}}{(2\pi)^{m/2}}\int_{\bR^m} \exp\Big(-\frac{1}{2} y^TAy\Big)dy=\frac{e^{\frac12\ba^TA^{-1}\ba}}{\sqrt{\det A}}.\]
Taking logarithms then gives the desired formula.
\end{proof}

Let us put together the pieces to complete the proof of Proposition~\ref{prop:lindeberg}.

\begin{proof}[Proof of Proposition~\ref{prop:lindeberg}.]
Let $g$ be a Gaussian vector matching $\xi$ in the first two moments. As we have previously observed, we can scale $\xi$ and $g$ by $\sigma$, and $M$ and $\ba$ by $\sigma^2$ and $\sigma$, respectively, without changing the conclusion. Hence, we may assume without loss of generality that $\sigma$ is a small constant, chosen such that all components of $\xi$ and $g$ have variance at most $1$ (which is possible, since $\bE[\xi_i^2]\leq O(\sigma^2)$ due to standard properties of subgaussian random variables). Let us call $D$ the covariance matrix of $\xi$ and $g$, noting that it is a diagonal matrix with all entries at most $1$.

By Proposition~\ref{prop:comparison}, we then have $\Big|\bE[e^{\xi^T M \xi+\ba^T \xi}]-\bE[e^{g^T M g+\ba^T g}]\Big|=O\Big(e^{O(\delta_2)}\delta_1\delta_2\Big)$.
Since $\bE[\xi^T M \xi+\ba^T \xi]=\bE[g^T M g+\ba^T g]=0$, Jensen's inequality gives $\bE[e^{\xi^T M \xi+\ba^T \xi}]\geq e^{\bE[\xi^T M \xi+\ba^T \xi]}=1$ and from this we get $\bE[e^{g^T M g+\ba^T g}]\geq e^{\bE[g^T M g+\ba^T g]}=1$. Since $t\mapsto \log t$ is $1$-Lipschitz on $[1, \infty)$, we have
\[\Big|\log \bE[e^{\xi^T M \xi+\ba^T \xi}]-\log \bE[e^{g^T M g+\ba^T g}]\Big|=O\Big(e^{O(\delta_2)}\delta_1\delta_2\Big).\]

Further, by applying Lemma~\ref{lemma:gaussian_mgf} to the matrix $M'=D^{1/2}MD^{1/2}$ and the vector $g'=D^{-1/2}g\sim \cN(0, I_m)$, $\ba'=D^{1/2}\ba$, we find that
$\log \bE[e^{g^T M g+\ba^T g}]=\log \bE[e^{{g'}^T {M'} {g'}+{\ba'}^T g'}]=\frac{1}{2}{\ba'}^T(I-2M')^{-1}\ba'-\frac{1}{2}\log\det(I-2M')$. Note that Lemma~\ref{lemma:gaussian_mgf} still applies since $\|M'\|_{op}\leq \|M\|_{op}\ll 1$, due to the fact $\|D\|_{op}\leq 1$.

And thus to complete the proof, it remains to show that ${\ba'}^T(I-2M')^{-1}\ba'-\log\det(I-2M')=\Var(g^T M g+\ba^T g)+O(\delta_1\delta_2)$.

First, we claim ${\ba'}^T(I-2M')^{-1}\ba'=(1+O(\delta_1))\|\ba'\|_2^2$. This is simple to see since 
\[|{\ba'}^T(I-2M')^{-1}\ba'-{\ba'}^T\ba'|= |{\ba'}^T((I-2M')^{-1}-I)\ba'|\leq \|\ba\|_2^2 \|((I-2M')^{-1}-I)\|_{op}\leq O(\|M'\|_{op}\|\ba\|_2^2),\]
and recall that $\|M'\|_{op}\leq \|M\|_{op}\leq \delta_1$.

On the other hand, using the Taylor series of the function $A\mapsto \log \det A$, we find that when $\|M'\|_{op}$ is sufficiently small, then we have $\log\det(I-2M')=-\sum_{k=1}^\infty {{\rm tr}((2M')^k)}/{k}$. The first term of this sum vanishes since ${\rm tr}(M')=0$. Further, all terms with $k\geq 3$ can be bounded using ${\rm tr}((2M')^k)\leq \|2M\|_{op}^{k-2}{\rm tr}((2M')^2)\leq O(\delta_1)^k \|2M'\|_{HS}^2$. Hence, $\log\det(I-2M')=-\|2M'\|_{HS}^2\big(1/2+O(\sum_{k\geq 3}^\infty\delta_1^{k-2}/k))=-(2+O(\delta_1))\|M'\|_{HS}^{2}$.
So,
\begin{align*}
{\ba'}^T(I-2M')^{-1}\ba'-\log\det(I-2M')&=(1+O(\delta_1))\big(\|\ba'\|_2^2+2\|M'\|_{HS}^{2}\big)=\Var({g'}^T M' {g'}+{\ba'}^T g')+O(\delta_1\delta_2)\\
&=\Var(g^T M g+\ba^T g)+O(\delta_1\delta_2),
\end{align*}
which completes the proof of Proposition~\ref{prop:lindeberg}.
\end{proof}
\end{document}